\renewcommand\a{\alpha}
\newcommand\g{\gamma}
\renewcommand\d{\delta}
\newcommand\la{\lambda}
\newcommand\s{\sigma}
\newcommand\vf{\varphi}
\renewcommand\t{\tau}
\newcommand\D{\Delta}
\newcommand\vL{\varLambda}
\newcommand\vG{\varGamma}
\newcommand\ve{\varepsilon}
\newcommand{\QQ}{\mathbb Q}
\newcommand{\ZZ}{\mathbb Z}
\newcommand{\KK}{\mathbb K}
\renewcommand{\AA}{\mathbb A}
\newcommand\BQ{\mathbf Q}
\newcommand\Bm{\mathbf m}
\newcommand\bx{\mathbf{x}}
\newcommand\Bvf{\boldsymbol{\varphi}}
\newcommand\ZC{\mathcal{C}}
\newcommand\CI{\mathcal{I}}
\newcommand\CJ{\mathcal{J}}
\newcommand\CE{\mathcal{E}}
\newcommand\CO{\mathcal{O}}
\newcommand\CK{\mathcal{K}}
\newcommand\CU{\mathcal{U}}
\newcommand\CT{ \mathcal{T}}
\newcommand\CX{ \mathcal{X}}
\newcommand\CY{ \mathcal{Y}}
\newcommand\FS{\mathfrak S}
\newcommand\Fg{\mathfrak g}
\newcommand\Fm{\mathfrak m}
\newcommand\Fn{\mathfrak n}
\newcommand\Fgl{\mathfrak{gl}}
\newcommand\wt{\widetilde}
\newcommand{\lan}{\langle}
\newcommand{\ran}{\rangle}
\newcommand{\ra}{\rightarrow }
\newcommand\End{\operatorname{End}}
\newcommand\res{\operatorname{res}}
\newcommand\ch{\operatorname{ch}}
\newcommand{\rad}{\operatorname{rad}}
\newcommand{\cmod}{\operatorname{-mod}}
\newcommand{\LR}{\operatorname{LR}}
\newcommand{\gr}{\operatorname{\bf gr}} 
\newcommand{\isom}{\,\raise2pt\hbox{$\underrightarrow{\sim}$}\,}
\newcounter{ichi}
\newcommand{\roi}{\roman{ichi}}
\newcounter{ni}
\newcommand{\roii}{\roman{ni}}
\newcounter{san}
\newcommand{\roiii}{\roman{san}}
\newcounter{yon}
\newcommand{\roiv}{\roman{yon}}
\newcounter{go}
\newcounter{roku}
\newcounter{nana}
\newcounter{hachi}
\newcounter{kyu}
\newcommand{\Sc}{\mathscr{S}}
\newcommand{\He}{\mathscr{H}}
\newtheorem{thm}{Theorem}[section]
\newtheorem{lem}[thm]{Lemma}
\newtheorem{cor}[thm]{Corollary}
\newtheorem{prop}[thm]{Proposition}
\newtheorem{definition}[thm]{Definition}
\def \para{\refstepcounter{thm} \par\medskip\noindent
                \textbf{\thethm .} }
\def \remark{\refstepcounter{thm} \par\medskip\noindent
                \textbf{Remark \thethm .} }
\def \remarks{\refstepcounter{thm} \par\medskip\noindent
                \textbf{Remarks \thethm .} }
\numberwithin{equation}{thm}
\begin{document}
\setlength{\baselineskip}{4.9mm}
\setlength{\abovedisplayskip}{4.5mm}
\setlength{\belowdisplayskip}{4.5mm}
\renewcommand{\theenumi}{\roman{enumi}}
\renewcommand{\labelenumi}{(\theenumi)}
\renewcommand{\thefootnote}{\fnsymbol{footnote}}
\renewcommand{\thefootnote}{\fnsymbol{footnote}}
\parindent=20pt
\newcommand{\dis}{\displaystyle}

\medskip
\begin{center}
{\large \bf New realization of cyclotomic $q$-Schur algebras I}  
\\
\vspace{1cm}
Kentaro Wada 
\\[1em]

\address{Department of Mathematics, Faculty of Science, Shinshu University, 
	Asahi 3-1-1, Matsumoto 390-8621, Japan}
\email{wada@math.shinshu-u.ac.jp} 

\end{center}

\title{}
\maketitle 
\markboth{Kentaro Wada}
{New realization  of cyclotomic $q$-Schur algebras I}




\begin{abstract}
We introduce a Lie algebra $\mathfrak{g}_{\mathbf{Q}}(\mathbf{m})$ 
and an associative algebra $\mathcal{U}_{q,\mathbf{Q}}(\mathbf{m})$ 
associated with the Cartan data of $\mathfrak{gl}_m$ 
which is separated into $r$ parts 
with respect to $\mathbf{m}=(m_1, \dots, m_r)$ such that $m_1+ \dots + m_r =m$. 
We show that the Lie algebra $\mathfrak{g}_{\mathbf{Q}} (\mathbf{m})$ is a filtered deformation 
of the current Lie algebra of $\mathfrak{gl}_m$,  
and we can regard the algebra $\mathcal{U}_{q, \mathbf{Q}}(\mathbf{m})$ 
as a \lq\lq $q$-analogue" of $U(\mathfrak{g}_{\mathbf{Q}}(\mathbf{m}))$. 
Then, we realize a cyclotomic $q$-Schur algebra as a quotient algebra of 
$\mathcal{U}_{q, \mathbf{Q}}(\mathbf{m})$ under a certain mild condition. 
We also study the representation theory for $\mathfrak{g}_{\mathbf{Q}}(\mathbf{m})$ and $\mathcal{U}_{q,\mathbf{Q}}(\mathbf{m})$, 
and we apply them to the representations of the cyclotomic $q$-Schur algebras. 
\end{abstract}

\tableofcontents 

\setcounter{section}{-1}
\section{Introduction} 

\para
Let $\He_{n,r}$ be the Ariki-Koike algebra associated with the complex reflection group 
of type $G(r,1,n)$ over a commutative ring $R$ with parameters $q, Q_0, \dots, Q_{r-1} \in R$, 
where $q$ is invertible in $R$. 
Let $\Sc_{n,r}(\Bm)$ be the cyclotomic $q$-Schur algebra associated with $\He_{n,r}$ introduced in \cite{DJM98}, 
where $\Bm=(m_1, \dots, m_r)$ is an $r$-tuple of positive integers. 
By the result in \cite{DJM98}, 
it is known that $\Sc_{n,r}(\Bm)\cmod$ is a highest weight cover of $\He_{n,r} \cmod$ 
in the sense of \cite{R} if $R$ is a field and $\Bm$ is enough large. 

In \cite{RSVV} and \cite{Lo} independently, 
it is proven that 
$\Sc_{n,r}(\Bm) \cmod$ is equivalent to a certain highest weight subcategory 
of an affine parabolic category $\mathbf{O}$ in a dominant case of an affine general linear Lie algebra 
as a highest weight cover of $\He_{n,r} \cmod$. 
It is also equivalent to the category $\CO$ of rational Cherednik algebra with the corresponding parameters. 
In the argument of \cite{RSVV}, 
the monoidal structure on the affine parabolic category $\mathbf{O}$ 
(more precisely, the structure of $\mathbf{O}$ as a bimodule category over the Kazhdan-Lusztig  category) 
has an important role. 

In the case where $r=1$, 
it is known that the $q$-Schur algbera $\Sc_{n,1}(m)$ is a quotient algebra of the quantum group 
$U_q(\Fgl_m)$ associated with the general linear lie algebra $\Fgl_m$, 
and $\bigoplus_{n \geq 0} \Sc_{n,1}(m) \cmod$ is equivalent to the category $\ZC_{U_q(\Fgl_m)}^{\geq 0}$ 
consisting of finite dimensional polynomial representations of $U_q(\Fgl_m)$ 
(\cite{BLM}, \cite{Du} and \cite{J}). 
The category $\ZC_{U_q(\Fgl_m)}^{\geq 0}$ has a (braided) monoidal structure 
which comes from the structure of $U_q(\Fgl_m)$ as a Hopf algebra. 
Then the monoidal structure on $\ZC_{U_q(\Fgl_m)}^{\geq 0}$ 
is compatible with the monoidal structure on the Kazhdan-Lusztig category 
by \cite{KL}.  
However, it is not known such structures for cyclotomic $q$-Schur algebras in the case where $r >1$ 
although we may expect such structures through the equivalence in \cite{RSVV}.  
This is a motivation of this paper. 

In \cite{W}, we obtained a presentation of cyclotomic $q$-Schur algebras by generators and defining relations. 
The argument in \cite{W} are based on the existence of 
the upper (resp.  lower) Borel subalgebra of the cyclotomic $q$-Schur algebra $\Sc_{n,r}(\Bm)$ 
which is introduced in \cite{DR}. 
In \cite{DR}, it is proven that 
the upper (resp. lower) Borel subalgebra of $\Sc_{n,r}(\Bm)$ 
is isomorphic to the upper (resp. lower) Borel subalgebra of  $\Sc_{n,1}(m)$ 
(i.e. the case where $r=1$) which is a quotient of the upper (resp. lower) Borel subalgebra 
of the quantum group $U_q(\Fgl_m)$ ($m := \sum_{k=1}^r m_k$) 
if $\Bm$ is enough large. 
The presentation  of $\Sc_{n,r}(\Bm)$ in \cite{W} 
is applied to the representation theory of cyclotomic $q$-Schur algebras 
in \cite{W-2} and \cite{W-3}. 
However, this presentation is not so useful in general 
since, in the presentation,  we need some non-commutative polynomials which are computable, 
but we can not describe them  explicitly (see \cite[Lemma 7.2]{W}). 
Hence, we hope more useful realization of cyclotomic $q$-Schur algebras 
like as the fact that the $q$-Schur algebra $\Sc_{n,1}(m)$ is a quotient of the quantum group $U_q(\Fgl_m)$ 
in the case where $r=1$. 
In this paper, by extending the argument in \cite{W}, 
we give a possibility of such realization of cyclotomic $q$-Schur algebras. 


\para
Let $\BQ=(Q_1,\dots,Q_{r-1})$ be an $r-1$ tuple of indeterminate elements over $\ZZ$, 
and $\QQ(\BQ)$ be a field of rational functions with variables $\BQ$. 
In \S2, we introduce a Lie algebra $\Fg_{\BQ}(\Bm)$ with parameters $\BQ$  
associated with the Cartan data of $\Fgl_m$ ($m=\sum_{k=1}^r m_k$) 
which is separated into $r$ parts with respect to $\Bm$ (see the paragraph \ref{Cartan data}). 
Then, in Proposition \ref{Prop glm[x] iso gr g}, 
we prove that $\Fg_{\BQ}(\Bm)$ is a filtered deformation of the current Lie algebra 
$\Fgl_{m}[x] = \QQ(\BQ) [x] \otimes \Fgl_m$ 
of the general linear Lie algebra $\Fgl_m$. 

In Corollary \ref{Cor tri decom of g}, 
we see that $\Fg_{\BQ}(\Bm)$ has a triangular decomposition 
\begin{align*} 
\Fg_{\BQ}(\Bm) = \Fn^- \oplus \Fn^0 \oplus \Fn^+. 
\end{align*} 
Then we can develop the weight theory to study representations of $\Fg_{\BQ}(\Bm)$ in the usual manner (see \S \ref{S Rep FgQ}). 
Let $\ZC_{\BQ}(\Bm)$ be the category of finite dimensional $\Fg_{\BQ}(\Bm)$-modules 
which have the weight space decompositions, and all eigenvalues of the action of $\Fn^0$ belong to $\QQ(\BQ)$.  
Then we see that a simple $\Fg_{\BQ}(\Bm)$-module in $\ZC_{\BQ}(\Bm)$ is a highest weight module. 

There exists a surjective homomorphism of Lie algebras 
$\Fg_{\BQ}(\Bm) \ra \Fgl_m$ (see \eqref{surjection gQm to glm}) 
which can be regarded as a special case of  evaluation homomorphisms (see Remark \ref{Remark evaluation gQ}). 
Let $\ZC_{\Fgl_m}$ be the category of finite dimensional $\Fgl_m$-modules which have the weight space decompositions. 
Then $\ZC_{\Fgl_m}$ is a full subcategory of $\ZC_{\BQ}(\Bm)$ through the above surjection 
(see Proposition \ref{Prop glm fullsub ZCQ(m)}).  

Let $\wt{\BQ}=(Q_0,Q_1,\dots,Q_{r-1})$ be an $r$ tuple of indeterminate elements over $\ZZ$, 
and $\QQ(\wt{\BQ})$ be a field of rational functions with variables $\wt{\BQ}$. 
Put $\Fg_{\wt{\BQ}}(\Bm) = \QQ(\wt{\BQ}) \otimes_{\QQ(\BQ)} \Fg_{\BQ}(\Bm)$, 
and define the category $\ZC_{\wt{\BQ}}(\Bm)$ in a similar way. 
Let $\Sc_{n,r}^{\mathbf{1}}(\Bm)$ be the cyclotomic $q$-Schur algebra over $\QQ(\wt{\BQ})$ 
with parameters $q=1$ and $\wt{\BQ}$. 
In Theorem \ref{Theorem Sc1 in ZC},   
we prove that there exists a homomorphism of algebras 
\begin{align*} 
\Psi_{\mathbf{1}}: U(\Fg_{\wt{\BQ}} (\Bm)) \ra \Sc_{n,r}^{\mathbf{1}}(\Bm), 
\end{align*}  
where $U(\Fg_{\wt{\BQ}}(\Bm))$ is the universal enveloping algebra of $\Fg_{\wt{\BQ}}(\Bm)$. 
Assume that $m_k \geq n$ for all $k=1,2,\dots, r-1$, then $\Psi_{\mathbf{1}}$ is surjective. 
Then $\Sc_{n,r}^{\mathbf{1}}(\Bm) \cmod$ is a full subcategory of $\ZC_{\wt{\BQ}}(\Bm)$ 
through the surjection $\Psi_{\mathbf{1}}$ (see Theorem \ref{Theorem Sc1 in ZC} (\roii)).   
We expect that the surjectivity of $\Psi_{\mathbf{1}}$ also holds without the condition for $\Bm$. 
(We need the condition for $\Bm$ by a technical reason (see Remark \ref{Remark subjectivity of Psi}).)

It is known that $\Sc_{n,r}^{\mathbf{1}}(\Bm)$ is semi-simple,  
and the set of Weyl (cell) modules $\{ \D(\la) \,|\, \la \in \wt{\vL}^+_{n,r}(\Bm)\}$ 
gives a complete set of isomorphism classes of simple $\Sc_{n,r}^{\mathbf{1}}(\Bm)$-modules 
(see \S \ref{Review CSA} and \cite{DJM98}  for definitions).  
The characters of the Weyl modules, denoted by $\ch \D(\la)$ ($\la \in \wt{\vL}_{n,r}^+(\Bm)$), 
are studied in \cite{W-2}. 
We see that $\ch \D(\la)$ ($\la \in \wt{\vL}_{n,r}^+(\Bm)$) is a symmetric polynomial with variables $\bx_{\Bm}$ with respect to $\Bm$. 
Put $\wt{\vL}_{\geq 0}^+(\Bm) = \cup_{n \geq 0} \wt{\vL}_{n,r}^+(\Bm)$. 
Then, for $\la, \mu \in \wt{\vL}_{\geq 0}^+ (\Bm)$, 
it was conjectured that 
\begin{align}
\label{ch Dla ch Dmu}
\ch \D(\la) \ch \D(\mu) = \sum_{\nu \in \wt{\vL}_{\geq 0,r}^+ (\Bm)} \LR^\nu_{\la \mu} \ch \D(\nu) 
\end{align}
in \cite{W-2}, 
where $\LR^\nu_{\la\mu}$ is the product of Littlewood-Richardson coefficients with respect to $\la,\mu$ and $\nu$  (see \S \ref{Ch of Weyl} for details). 
We prove this conjecture in Proposition \ref{Prop ch}. 
We remark that the characters of  Weyl modules of a cyclotomic $q$-Schur algebra do not depend on the choice of a base field and parameters. 

By using the usual coproduct of the universal enveloping algebra $U(\Fg_{\wt{\BQ}}(\Bm))$ of $\Fg_{\wt{\BQ}}(\Bm)$, 
we can consider the tensor product $M \otimes N$ in  $U(\Fg_{\wt{\BQ}}(\Bm)) \cmod$ for $M,N \in U(\Fg_{\wt{\BQ}}(\Bm))\cmod$. 
We regard $\Sc_{n,r}^{\mathbf{1}}(\Bm)$-modules ($n \geq 0$) as a $U(\Fg_{\wt{\BQ}}(\Bm))$-modules through the homomorphism 
$\Psi_{\mathbf{1}}$. 
Take $n, n_1, n_2 \in \ZZ_{>0}$ such that $n=n_1 + n_2$. 
Then, in Proposition \ref{Prop decom tensor Weyl}, 
we prove that, 
for $\la \in \wt{\vL}_{n_1,r}^+(\Bm)$ and $\mu \in \wt{\vL}_{n_2,r}^+(\Bm)$, 
\begin{align} 
\label{decom tensor Weyl in intro}
\D(\la) \otimes \D(\mu) \cong \bigoplus_{ \nu \in \wt{\vL}_{n,r}^+ (\Bm)} \LR_{\la \mu}^{\nu} \D(\nu) 
\end{align}
as $U(\Fg_{\wt{\BQ}} (\Bm))$-modules if $m_k \geq n$ for all $k=1,2,\dots, r-1$, 
where $\LR_{\la \mu}^\nu \D(\nu)$ means the direct sum of $\LR_{\la \mu}^\nu$ copies of $\D(\nu)$. 
In particular, we see that $\D(\la) \otimes \D(\nu) \in \Sc_{n,r}(\Bm) \cmod$. 
The decomposition \eqref{decom tensor Weyl in intro} gives an interpretation of the formula \eqref{ch Dla ch Dmu} in the category $\ZC_{\wt{\BQ}}(\Bm)$. 
We expect that \eqref{decom tensor Weyl in intro} also holds without the condition for $\Bm$. 
(Note that we prove the formula \eqref{ch Dla ch Dmu}  without the condition for $\Bm$ in  Proposition \ref{Prop ch}.) 


\para 
Put $\AA=\ZZ[q,q^{-1}, Q_1, \dots,Q_{r-1}]$, where $q, Q_1,\dots,Q_{r-1}$ are indeterminate elements over $\ZZ$, 
and let $\KK= \QQ(q,Q_1,\dots,Q_{r-1})$ be the quotient field of $\AA$. 
In \S \ref{CUqQ}, 
we introduce an associative algebra $\CU_{q,\BQ}(\Bm)$ with parameters $q$ and $\BQ$ 
associated with the Cartan data of $\Fgl_{m}$ which is separated into $r$ parts with respect to $\Bm$. 

Let $\CU_{\AA, q, \BQ}^{\star}(\Bm)$ be the $\AA$-subalgebra of $\CU_{q,\BQ}(\Bm)$ 
generated by defining generators of $\CU_{q,\BQ}(\Bm)$ (see the paragraph \ref{CUAqQstar}). 
We regard $\QQ(\BQ)$ as an $\AA$-module through the ring homomorphism 
$\AA \ra \QQ(\BQ)$ by sending $q$ to $1$, 
and we consider the specialization $\QQ(\BQ) \otimes_{\AA} \CU_{\QQ,q,\BQ}^\star(\Bm)$ 
using this ring homomorphism. 
Then we have a surjective homomorphism of algebras 
\begin{align} 
\label{Hom gQ(m) UqQ(m) in intro}
U(\Fg_{\BQ}(\Bm)) \ra \QQ(\BQ) \otimes_{\AA} \CU_{\AA,q,\BQ}^\star (\Bm) / \mathfrak{J}, 
\end{align}
where $\mathfrak{J}$ is a certain ideal of $\QQ(\BQ) \otimes_{\AA} \CU_{\AA,q,\BQ}^\star (\Bm) $ 
(see \eqref{Hom gQ(m) UqQ(m)}). 
We conjecture that the surjection \eqref{Hom gQ(m) UqQ(m) in intro} is isomorphic. 
Then we can regard $\CU_{q,\BQ}(\Bm)$ as a \lq\lq $q$-analogue" of $U(\Fg_{\BQ}(\Bm))$. 
Dividing by the ideal $\mathfrak{J}$ in \eqref{Hom gQ(m) UqQ(m) in intro} 
means that the Cartan subalgebra $U(\Fn^0)$ of $U(\Fg_{\BQ}(\Bm))$ deforms to several directions in 
$\CU_{q,\BQ}(\Bm)$ (see the paragraph \ref{CUAqQstar} and Remark \ref{Remark Hom gQ(m) UqQ(m) iso}). 

We see that $\CU_{q,\BQ}(\Bm)$ has a triangular decomposition 
\begin{align}
\label{tri decom intro} 
\CU_{q,\BQ}(\Bm) = \CU^- \CU^0 \CU^+ 
\end{align}
in a weak sense (see \eqref{tri decor U}). 
We conjecture that the multiplication map 
$\CU^- \otimes_{\KK} \CU^0 \otimes_{\KK} \CU^+ \ra \CU_{q,\BQ}(\Bm)$ 
gives an isomorphism as vector spaces.  
More precisely, we expect the existence of a PBW type basis of $\CU_{q, \BQ}(\Bm)$ 
which is compatible with a PBW basis of $U(\Fg_{\BQ}(\Bm))$ through the homomorphism 
\eqref{Hom gQ(m) UqQ(m) in intro}. 

Anyway, 
thanks to the triangular decomposition \eqref{tri decom intro}, 
we can develop the weight theory to study $\CU_{q,\BQ}(\Bm)$-modules in the usual manner 
(see \S \ref{Section Rep CUqQ}). 
Let $\ZC_{q,\BQ}(\Bm)$ be the category of finite dimensional $\CU_{q,\BQ}(\Bm)$-modules 
which have the weight space decompositions, and all eigenvalues of the action of $\CU^0$ 
belong to $\KK$. 
Then we see that a simple $\CU_{q,\BQ}(\Bm)$-module in $\ZC_{q,\BQ}(\Bm)$ is a highest weight module. 

There exists a surjective homomorphism of algebras 
$\CU_{q,\BQ}(\Bm) \ra U_q (\Fgl_m)$ (see \eqref{surjection UqQm to Uqglm}) 
which can be regarded as a special case of evaluation homomorphisms 
(see Remark \ref{Remark evaluation}). 
Let $\ZC_{U_q(\Fgl_m)}$ be the category of finite dimensional $U_q(\Fgl_m)$-modules 
which have the weight space decompositions. 
Then $\ZC_{U_q(\Fgl_m)}$ is a full subcategory of $\ZC_{q,\BQ}(\Bm)$ 
through the above surjection (see Proposition \ref{Prop ZCUqgl subset ZCqQ}). 

Put $\wt{\KK} = \KK(Q_0)$ and $\wt{\AA}=\AA[Q_0]$. 
We also put $\CU_{q,\wt{\BQ}}(\Bm) = \wt{\KK} \otimes_{\KK} \CU_{q,\BQ}(\Bm)$. 
Let $\CU_{\AA,q,\BQ}(\Bm)$ be the $\AA$-form of $\CU_{q,\BQ}(\Bm)$ taking divided powers 
(see the paragraph  \ref{A-form}), 
and put 
$\CU_{\wt{\AA},q, \wt{\BQ}}(\Bm) = \wt{\AA} \otimes_{\AA} \CU_{\AA,q, \BQ}(\Bm)$. 
Let $\Sc_{n,r}^{\wt{\KK}}(\Bm)$ (resp. $\Sc_{n,r}^{\wt{\AA}}(\Bm)$) be the cyclotomic $q$-Schur algebra 
over $\wt{\KK}$ (resp. over $\wt{\AA}$) with parameters $q$ and $\wt{\BQ}$. 
In Theorem \ref{Thm UqQ to Sc}, we prove that 
there exists a homomorphism of algebras 
\begin{align*} 
\Psi : \CU_{q,\wt{\BQ}}(\Bm) \ra \Sc_{n,r}^{\wt{\KK}}(\Bm). 
\end{align*}
By the restriction of $\Psi$ to $\CU_{\wt{\AA},q, \wt{\BQ}}(\Bm)$, 
we have the homomorphism 
$\Psi_{\wt{\AA}} : \CU_{\wt{\AA},q, \wt{\BQ}}(\Bm) \ra \Sc_{n,r}^{\wt{\AA}}(\Bm)$. 
Then we can specialize $\Psi_{\wt{\AA}}$ to any base ring and parameters. 
If $m_k \geq n$ for all $k=1,2, \dots, r-1$, 
then $\Psi$ (resp. $\Psi_{\wt{\AA}}$) is surjective 
(see also Remark \ref{Remark subjectivity of Psi} for surjectivity of $\Psi$). 
In Theorem \ref{Thm Sc Rep}, 
we prove that $\Sc_{n,r}^{\wt{\KK}}(\Bm) \cmod$ is a full subcategory of $\ZC_{q, \wt{\BQ}}(\Bm)$ 
through the surjection $\Psi$ if $\Bm$ is enough large.  

We conjecture that $\CU_{q, \wt{\BQ}}(\Bm)$ has a structure as a Hopf algebra, 
and that the decomposition \eqref{decom tensor Weyl in intro} also holds for Weyl modules of 
$\Sc_{n,r}^{\wt{\KK}}(\Bm)$ ($n \geq 0$) through the homomorphism $\Psi$ 
and the Hopf algebra structure of $\CU_{q, \wt{\BQ}}(\Bm)$.  
(Note that the formula \eqref{ch Dla ch Dmu} holds for $\Sc_{n,r}^{\wt{\KK}}(\Bm)$ ($n \geq 0$).)  

It is also interesting problem  to obtain a monoidal structure for $\CU_{q,\BQ}(\Bm)$ 
(resp. $\CU_{\AA,q,\BQ}(\Bm)$ and its specialization) 
which should be related to the monoidal structure on the affine parabolic category $\mathbf{O}$. 

\textbf{Acknowledgements.} 
This research was supported by JSPS KAKENHI Grant Number 24740007.
The author is grateful to Tatsuyuki Hikita  for his suggestion on the definition of the polynomials 
$\Phi_t^{\pm}(x_1,\dots, x_k)$ (see Remark \ref{Remark def Phi}). 



\section{Notation} 
\para 
For a condition $X$, put 
$
\d_{(X)} = \begin{cases} 1 & \text{ if $X$ is true}, \\ 0 & \text{ if $X$ is false.} \end{cases} 
$
We also put $\d_{i,j} = \d_{(i=j)}$ for simplicity. 

\para 
\textbf{$q$-integers.} 
Let $\QQ(q)$ be the field of rational functions over $\QQ$ with an indeterminate  variable $q$. 
For $d \in \ZZ$, 
put $[d] = (q^d- q^{-d})/ (q-q^{-1})\in \QQ(q)$. 
For $d \in \ZZ_{>0}$, 
put $[d]! =[d][d-1] \dots [1]$, 
and we put $[0]!=1$. 
For $d \in \ZZ$ and $c \in \ZZ_{>0}$, 
put 
\begin{align*} 
\left[ \begin{matrix} d \\ c \end{matrix} \right] = \frac{[d][d-1] \dots [d-c+1]}{[c][c-1] \dots [1]}, 
\text{ and put } 
\left[ \begin{matrix} d \\ 0 \end{matrix} \right] =1.
\end{align*} 
It is well-known that all $[d], [d]!$ and $\left[ \begin{smallmatrix} d \\ c \end{smallmatrix} \right]$ 
belong to $\ZZ[q,q^{-1}]$. 
Thus we can specialize these elements to any ring $R$ and $q \in R$ such that $q$ is invertible in $R$, 
and we denote them by same symbols. 
 
\para 
\label{Cartan data}
\textbf{Cartan data.} 
Let $\Bm=(m_1,\dots, m_r) $ be an $r$-tuple of positive integers. 
Put $m = \sum_{k=1}^r m_k$. 
Let $P= \bigoplus_{i=1}^m \ZZ \ve_i$ be the weight lattice of $\Fgl_m$, 
and let $P^\vee = \bigoplus_{i=1}^m \ZZ h_i $ be its dual with the natural pairing 
$\lan \, , \, \ran : P \times P^\vee \ra \ZZ$ such that 
$\lan \ve_i, h_j \ran = \d_{ij}$. 
put 
$P_{\geq 0} = \bigoplus_{i=1}^m \ZZ_{\geq 0} \ve_i$. 

Set $\a_i = \ve_i - \ve_{i+1}$ for $i=1,\dots, m-1$, 
then $\Pi = \{ \a_i \,|\, 1 \leq i \leq m-1\}$ 
is the set of simple roots, 
and  
$Q=\bigoplus_{i=1}^{m-1} \ZZ \a_i$ is the root lattice of $\Fgl_m$. 
Put $Q^+ = \bigoplus_{i=1}^{m-1} \ZZ_{\geq 0} \a_i$. 

Set $\a_i^{\vee} =h_i - h_{i+1}$ for $i=1,\dots, m-1$, 
then $\Pi^{\vee} =\{ \a_i^{\vee} \,|\, 1 \leq i \leq m-1 \}$ is the set of simple coroots.  

We define a partial order $\geq $ on $P$, 
so called dominance order, 
by $\la \geq \mu$ if $\la - \mu \in Q^+$. 

Put $\vG(\Bm)= \{ (i,k) \,|\, 1 \leq i \leq m_k, 1 \leq k \leq r \}$,
and 
$\vG'(\Bm)= \vG(\Bm) \setminus \{(m_r,r)\}$. 
We identify the set $\vG(\Bm)$ with the set $\{1,2,\dots,m \}$ by the bijection 
\begin{align}
\label{identify vG 1 to m}
\g : \vG(\Bm) \ra \{1,2, \dots, m\} \text{ such that } (i,k) \mapsto \sum_{j=1}^{k-1} m_j +i.
\end{align}
Then, we can identify the set $\vG'(\Bm)$ with the set $\{1,2,,\dots, m-1\}$. 
Under the identification \eqref{identify vG 1 to m}, 
for $(i,k),(j,l) \in \vG(\Bm)$, 
we define 
\begin{align*} 
(i,k)> (j,l) \text{  if } \g((i,k)) > \g((j,l)), 
\text{ and }
(i,k)  \pm (j,l) = \g((i,k)) \pm   \g((j,l)). 
\end{align*}
We also have 
$(m_k+1, k)=(1,k+1)$ for $k=1,\dots,r-1$ 
(resp. $(1-1,k) =(m_{k-1},k-1)$ for $k=2,\dots,r$).  

We may write 
\begin{align*}
P= \bigoplus_{(i,k) \in \vG(\Bm)} \ZZ \ve_{(i,k)}, 
\quad 
P^{\vee} = \bigoplus_{(i,k) \in \vG(\Bm)} \ZZ h_{(i,k)}, 
\quad 
Q= \bigoplus_{(i,k) \in \vG'(\Bm)} \ZZ \a_{(i,k)}.
\end{align*}

For $(i,k) \in \vG'(\Bm)$, $(j,l) \in \vG(\Bm)$, 
put $a_{(i,k)(j,l)} = \lan \a_{(i,k)}, h_{(j,l)} \ran$. 
Then, we have 
\begin{align*}
a_{(i,k)(j,l)} = 
	\begin{cases} 
		1 & \text{ if } (j,l) = (i,k), 		
		\\
		-1 & \text{ if } (j,l) = (i+1,k),
		\\
		0 & \text{ otherwise.} 
	\end{cases}
\end{align*}

Put 
\begin{align*} 
&P^+ =\{ \la \in P \,|\, \lan \la, \a_{(i,k)}^{\vee} \ran \in \ZZ_{\geq 0} \text{ for all } (i,k) \in \vG'(\Bm) \} 
\text{ and }
\\
&P^+_{\Bm} =\{ \la \in P \,|\, \lan \la, \a_{(i,k)}^{\vee} \ran \in \ZZ_{\geq 0} \text{ for all } (i,k) \in \vG(\Bm) \setminus \{(m_k,k)\,|\, 1 \leq k \leq r\} \}. 
\end{align*}
Then $P^+$ is the set of dominant integral weights for $\Fgl_m$, 
and 
$P^+_{\Bm}$ is the set of dominant integral weights for 
Levi subalgebra $\Fgl_{m_1}\oplus \dots \oplus \Fgl_{m_r}$ of $\Fgl_m$ 
with respect to $\Bm=(m_1,\dots,m_r)$.



\section{Lie algebra $\Fg_{\BQ}(\Bm)$} 
\label{Section FgQ(Bm)}

In this section, we introduce a Lie algebra $\Fg_{\BQ}(\Bm)$ 
with $r-1$ parameters $\BQ=(Q_1, \dots, Q_{r-1})$ 
associated with the Cartan data in the paragraph \ref{Cartan data}. 
Then we study some basic structures of $\Fg_{\BQ}(\Bm)$. 
In particular, we prove that 
$\Fg_{\BQ}(\Bm)$ is a filtered deformation of the current Lie algebra 
$\Fgl_m[x]$ of the general linear Lie algebra $\Fgl_m$.  

\para 
Let $\BQ=(Q_1, \dots, Q_{r-1})$ be an $r-1$-tuple of indeterminate elements over $\ZZ$. 
Let $\ZZ[\BQ] =\ZZ[Q_1,\dots,Q_{r-1}]$ be the polynomial ring 
with variables $Q_1,\dots, Q_{r-1}$, 
and $\QQ(\BQ)=\QQ(Q_1,\dots, Q_{r-1})$ be the quotient field of $\ZZ[\BQ]$. 

\begin{definition} 
\label{Def gQ(m)}
We define the Lie algebra $\Fg=\Fg_{\BQ}(\Bm)$ over $\QQ(\BQ)$ by the following generators and defining relations: 
\begin{description}
\item[Generators] 
$\CX_{(i,k),t}^{\pm}$, $\CI_{(j,l),t}$ ($(i,k) \in \vG'(\Bm)$, $(j,l) \in \vG(\Bm)$, $t \geq0$). 

\item[Relations] 
\begin{align*}
&\tag{L1} 
	[\CI_{(i,k),s}, \CI_{(j,l),t}]=0, 
\\
& \tag{L2} 
	[\CI_{(j,l),s}, \CX_{(i,k),t}^{\pm}] = \pm a_{(i,k)(j,l)} \CX_{(i,k),s+t}^{\pm}, 
\\
& \tag{L3} 
	[\CX_{(i,k),t}^+, \CX_{(j,l),s}^-] 
	= \d_{(i,k),(j,l)} 
		\begin{cases}
			\CJ_{(i,k),s+t} & \text{ if } i \not=m_k, 
			\\
			- Q_k \CJ_{(m_k,k),s+t} + \CJ_{(m_k,k),s+t+1} & \text{ if } i=m_k, 
		\end{cases} 
\\
& \tag{L4} 
	[ \CX_{(i,k),t}^{\pm}, \CX_{(j,l),s}^{\pm}] =0 \quad  \text{ if } (j,l) \not= (i \pm 1,k), 
\\
& \tag{L5} 
	[\CX_{(i,k),t+1}^{+}, \CX_{(i \pm1 ,k),s}^{+}] = [\CX_{(i,k),t}^{+}, \CX_{(i \pm 1, k ),s+1}^{+}], 
	\\ &
	[\CX_{(i,k),t+1}^{-}, \CX_{(i \pm1 ,k),s}^{-}] = [\CX_{(i,k),t}^{-}, \CX_{(i \pm 1, k ),s+1}^{-}], 
\\
& \tag{L6} 
	[\CX_{(i,k),s}^+, [\CX_{(i,k),t}^+, \CX_{(i \pm 1,k),u}^+]] 
	= [\CX_{(i,k),s}^-, [\CX_{(i,k),t}^-, \CX_{(i \pm 1,k),u}^-]] 
	=0, 
\end{align*}
\end{description}
where we put $\CJ_{(i,k),t} = \CI_{(i,k),t} - \CI_{(i+1,k),t}$. 
\end{definition}

\para 
\label{Def V tau}
For $\t \in \QQ(\BQ)$, 
let $V_{\t}=\bigoplus_{(j,l) \in \vG(\Bm)} \QQ(\BQ) v_{(j,l)}$ be the $\QQ(\BQ)$-vector space with a basis $\{ v_{(j,l)} \,|\, (j,l) \in \vG(\Bm)\}$. 
We can define the action of $\Fg$ on $V_\t$ by  
\begin{align*}
&\CX_{(i,k),t}^+ \cdot v_{(j,l)} 
	= \begin{cases} 
		\t^t v_{(i,k)} & \text{ if } (j,l)=(i+1,k) \text{ and } i \not= m_k, 
		\\
		(-Q_k + \t) \t^t v_{(m_k,k)} & \text{ if } (j,l) =(1,k+1) \text{ and } i=m_k, 
		\\
		0 & \text{ otherwise}, 
	\end{cases} 
\\
& \CX_{(i,k),t}^- \cdot v_{(j,l)} 
	= \begin{cases} 
		\t^t v_{(i+1,k)} & \text{ if } (j,l) = (i,k), 
		\\
		0 & \text{ otherwise}, 
	\end{cases}
\\
& \CI_{(i,k),t} \cdot v_{(j,l)}  
	= \begin{cases} 
		\t^t v_{(j,l)} & \text{ if } (j,l)=(i,k), 
		\\
		0 & \text{ otherwise}.
	\end{cases} 
\end{align*}
We can check the well-definedness of the above action by direct calculations. 


\para 
For $(i,k),(j,l) \in \vG(\Bm)$ and $t \geq 0$, 
we define the element $\CE_{(i,k)(j,l)}^t \in \Fg$ by 
\begin{align*}
\CE_{(i,k),(j,l)}^t =
	\begin{cases}
		\CI_{(i,k),t} & \text{ if } (j,l)=(i,k), 
		\\
		[\CX_{(i,k),0}^+, [\CX_{(i+1,k),0}^+, \dots, [\CX_{(j-2,l),0}^+, \CX_{(j-1,l),t}^+] \dots] 
		& \text{ if } (j,l) > (i,k), 
		\\
		[\CX_{(i-1,k),0}^-, [ \CX_{(i-2,k),0}^-, \dots, [\CX_{(j+1,l),0}^-, \CX_{(j,l),t}^-] \dots ] 
		& \text{ if } (j,l) < (i,k), 
	\end{cases}
\end{align*} 
in particular, 
we have $\CE_{(i,k),(i+1,k)}^t = \CX_{(i,k),t}^+$ and $\CE_{(i+1,k),(i,k)}^t = \CX_{(i,k),t}^-$. 
\\
If $(j,l) > (i,k)$, we have 
\begin{align*}
\CE_{(i,k),(j,l)}^t 
	&=[\CX_{(i,k),0}^+, \CE_{(i+1,k),(j,l)}^t] 
	\\
	&= [\CE_{(i,k),(j-1,l)}^t, \CX_{(j-1,l),0}^+]. 
\end{align*}
If $(j,l) < (i,k)$, we have 
\begin{align*}
 \CE_{(i,k),(j,l)}^t 
	&=[\CX_{(i-1,k),0}^-, \CE_{(i-1,k),(j,l)}^t] 
	\\
	&=  [\CE_{(i,k),(j+1,l)}^t, \CX_{(j,l),0}^-]. 
\end{align*}

\begin{lem}\
\label{Lemma spanning set g}
\begin{enumerate}
\item 
For $(i,k),(j,l) \in \vG(\Bm)$  such that $(j,l) > (i,k)$,  we have 
\begin{align}
&\notag\\[-2em]
\label{CX+ CE+}
&[\CX_{(a,c),s}^+, \CE_{(i,k),(j,l)}^t] 
= \begin{cases} 
	\CE_{(i-1,k),(j,l)}^{t+s} & \text{ if } (a,c)=(i-1,k), 
	\\
	- \CE_{(i,k), (j+1,l)}^{t+s} & \text{ if } (a,c)=(j,l),  
	\\
	0 & \text{ otherwise}, 
	\end{cases}
\\
\label{CI CE+}
&[\CI_{(a,c),s}, \CE_{(i,k),(j,l)}^t] 
	= \begin{cases} 
		\CE_{(i,k),(j,l)}^{t+s} & \text{ if } (a,c)=(i,k), 
		\\
		- \CE_{(i,k),(j,l)}^{t+s} & \text{ if } (a,c)=(j,l),  
		\\ 0 & \text{ otherwise}, 
	\end{cases} 
\end{align}
\begin{align}
\label{CX- CE+}
&\notag \\[-1.5em] 
&[\CX_{(a,c),s}^-, \CE_{(i,k),(j,l)}^t] 
\\ \notag
&	= \begin{cases}
		- \CE_{(i,k),(i,k)}^{t+s} +  \CE_{(i+1,k), (i+1,k)}^{t+s} 
			& \text{ if } \ell =1, (a,c)=(i,k) \text{ and } i \not=m_k, 
		\\
		Q_k (\CE_{(m_k,k),(m_k,k)}^{t+s} - \CE_{(1,k+1),(1, k+1)}^{t+s}) 
		- \CE_{(m_k,k),(m_k,k)}^{t+s+1} + \CE_{(1,k+1),(1, k+1)}^{t+s+1} 
		\hspace{-15em} 
		\\
			& \text{ if } \ell =1, (a,c)=(i,k) \text{ and } i=m_k, 
		\\
		\CE_{(i+1,k),(j,l)}^{t+s} 
			& \text{ if } \ell >1, (a,c)=(i,k) \text{ and } i \not=m_k, 
		\\
		- Q_k \CE_{(1,k+1),(j,l)}^{t+s} + \CE_{(1,k+1),(j,l)}^{t+s+1} 
			& \text{ if } \ell >1, (a,c)=(i,k) \text{ and } i=m_k, 
		\\
		- \CE_{(i,k),(j-1,l)}^{t+s} 
			& \text{ if } \ell  >1, (a,c)=(j-1,l) \text{ and } j-1 \not=m_l, 
		\\
		Q_l \CE_{(i,k),(m_l,l)}^{t+s} - \CE_{(i,k),(m_l,l)}^{t+s+1} 
			& \text{ if } \ell >1, (a,c)=(j-1,l) \text{ and } j-1=m_l, 
		\\
		0 & \text{ otherwise}, 
	\end{cases}
\end{align} 
where we put $\ell = (j,l)-(i,k)$. 

\item 
For $(i,k), (j,l) \in \vG(\Bm)$ such that $(j,l) < (i,k)$, we have 
\begin{align*}
&[\CX_{(a,c),s}^-, \CE_{(i,k),(j,l)}^t] 
	= \begin{cases} 
		\CE_{(i+1,k),(j,l)}^{t+s} & \text{ if } (a,c)=(i,k), 
		\\
		- \CE_{(i,k),(j-1,l)}^{t+s} & \text{ if } (a,c) =(j-1,l), 
		\\
		0 & \text{ otherwise}, 
	\end{cases}
\\
&[\CI_{(a,c),s}, \CE_{(i,k),(j,l)}^t] 
	= \begin{cases} 
		\CE_{(i,k),(j,l)}^{t+s} & \text{ if } (a,c)=(i,k), 
		\\
		- \CE_{(i,k),(j,l)}^{t+s} & \text{ if } (a,c)=(j,l), 
		\\
		0 & \text{ otherwise}, 
	\end{cases} 
\end{align*}
\begin{align*}
&[\CX_{(a,c),s}^+, \CE_{(i,k),(j,l)}^t] 
\\
&= \begin{cases} 
	 \CE_{(i-1,k),(i-1,k)}^{t+s} - \CE_{(i,k),(i,k)}^{t+s} 
	 	& \text{ if } \ell =1, (a,c)=(i-1,k) \text{ and } i-1 \not=m_k, 
	 \\
	 - Q_k ( \CE_{(m_k,k),(m_k,k)}^{t+s} - \CE_{(1, k+1), (1,k+1)}^{t+s}) 
	 + \CE_{(m_k,k),(m_k,k)}^{t+s+1} - \CE_{(1, k+1), (1,k+1)}^{t+s+1}
	 	\hspace{-15em} 
	 	\\
	 	&\text{ if } \ell =1, (a,c)=(i-1,k) \text{ and } i-1 =m_k, 
	\\
	\CE_{(i-1,k), (j,l)}^{t+s} 
		& \text{ if } \ell >1, (a,c)=(i-1,k) \text{ and } i-1 \not=m_k, 
	\\
	- Q_k \CE_{(m_k,k),(j,l)}^{t+s} + \CE_{(m_k,k),(j,l)}^{t+s+1} 
		& \text{ if } \ell >1, (a,c) =(i-1,k) \text{ and } i-1 =m_k, 
	\\
	- \CE_{(i,k),(j+1,l)}^{t+s} 
		& \text{ if } \ell >1, (a,c)=(j,l) \text{ and } j \not= m_l, 
	\\
	Q_l \CE_{(i,k),(1,l+1)}^{t+s} - \CE_{(i,k),(1,l+1)}^{t+s+1} 
		& \text{ if } \ell >1, (a,c)=(j,l) \text{ and } j=m_l, 
	\\
	0 & \text{ otherwise}, 
	\end{cases}
\end{align*}
where we put $\ell =(i,k) -(j,l)$. 

\item 
For $(i,k) \in \vG(\Bm)$, we have 
\begin{align*}
& [ \CI_{(a,c),s}, \CE_{(i,k),(i,k)}^t] =0, 
\\
& [\CX_{(a,c),s}^{+}, \CE_{(i,k),(i,k)}^t] 
	= - a_{(a,c)(i,k)} \CE_{(a,c),(a+1,c)}^{t+s}, 
\\
& [\CX_{(a,c),s}^-, \CE_{(i,k),(i,k)}^t] 
	=  a_{(a,c)(i,k)} \CE_{(a+1,c),(a,c)}^{t+s}. 
\end{align*}
\end{enumerate} 
\end{lem}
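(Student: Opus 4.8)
The plan is to prove all the bracket formulas in Lemma~\ref{Lemma spanning set g} by induction on the "width" $\ell$ of the element $\CE_{(i,k),(j,l)}^t$ (i.e. on $\ell = |\g((j,l)) - \g((i,k))|$), using the recursive description of $\CE_{(i,k),(j,l)}^t$ as an iterated bracket together with the Jacobi identity and the defining relations (L1)--(L6). The base cases $\ell = 0$ (where $\CE_{(i,k),(i,k)}^t = \CI_{(i,k),t}$) and $\ell = 1$ (where $\CE_{(i,k),(i+1,k)}^t = \CX_{(i,k),t}^\pm$) are handled directly: for $\ell = 0$ the three formulas in part~(iii) are exactly relations (L1), (L2) and (L3) rewritten via $\CJ_{(a,c),t} = \CI_{(a,c),t} - \CI_{(a+1,c),t}$ and the values of $a_{(a,c)(i,k)}$; and (L2), (L3), (L4) give the $\ell = 1$ instances of the formulas in (i) and (ii).

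For the inductive step I would use the identity $\CE_{(i,k),(j,l)}^t = [\CX_{(i,k),0}^+, \CE_{(i+1,k),(j,l)}^t]$ (in the case $(j,l) > (i,k)$; symmetrically with $\CX^-$ in the case $(j,l) < (i,k)$), which is already recorded in the paragraph preceding the lemma. To compute $[\CX_{(a,c),s}^\pm, \CE_{(i,k),(j,l)}^t]$ or $[\CI_{(a,c),s}, \CE_{(i,k),(j,l)}^t]$, apply the Jacobi identity
\begin{align*}
[Y, [\CX_{(i,k),0}^+, Z]] = [[Y, \CX_{(i,k),0}^+], Z] + [\CX_{(i,k),0}^+, [Y, Z]]
\end{align*}
with $Y$ the generator being bracketed and $Z = \CE_{(i+1,k),(j,l)}^t$. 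The first term is evaluated using the already-known $\ell = 1$ relations for $[Y, \CX_{(i,k),0}^+]$, and the second term is evaluated using the induction hypothesis applied to $Z$, which has width $\ell - 1$. One then reassembles the pieces, again invoking the recursive formulas for $\CE$ and, where two $\CX^+$'s meet consecutively, the relations (L5) (to shift the grading index) and (L6) (to kill the Serre-type triple brackets); the relation (L4) is what makes most of the "otherwise" cases vanish. The many cases in the statement (whether $i = m_k$ or not, whether $j-1 = m_l$ or not, whether $\ell = 1$ or $\ell > 1$) correspond exactly to the case split in (L3) between the "non-cyclotomic" and "cyclotomic" nodes, i.e. to whether the $\CX^-$ one is commuting past lands on a node of the form $(m_k,k)$; the extra $-Q_k\,(\cdots) + (\cdots)^{t+s+1}$ terms are precisely the second branch of (L3) propagated through the iterated bracket.

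I would organize the write-up by first proving part~(iii) (it only needs $\ell = 0, 1$ data and one round of Jacobi), then part~(i), then part~(ii) by the evident symmetry $\CX^+ \leftrightarrow \CX^-$, $(i,k) \leftrightarrow (j,l)$, $>\leftrightarrow<$ (formally: the map reversing the order on $\vG(\Bm)$ together with $Q_k \leftrightarrow Q_k$ in the right place). The main obstacle is purely bookkeeping rather than conceptual: keeping track of which of the two branches of (L3) is triggered at each step when the index of summation in the iterated bracket passes through a boundary node $(m_k,k)$, and correctly propagating the grading shift $t \mapsto t+1$ and the factor $-Q_k$ through successive applications of Jacobi. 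A secondary subtlety is verifying that when the generator $\CX_{(a,c),s}^+$ coincides with one of the generators already appearing inside $\CE_{(i+1,k),(j,l)}^t$, the relations (L5) and (L6) are exactly strong enough to collapse the resulting expression to the single term claimed; this is where one must be careful that no spurious terms survive. Once the case analysis is set up cleanly, each individual verification is a short application of Jacobi plus one defining relation.
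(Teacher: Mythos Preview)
Your approach is essentially the one the paper takes: induction on the width $\ell$, Jacobi identity, and the defining relations (L1)--(L6), with (iii) reducing to (L1)--(L2) and (ii) done ``similarly'' to (i).

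There is one tactical difference worth noting. You propose to peel $\CE_{(i,k),(j,l)}^t$ only from the left via $\CE_{(i,k),(j,l)}^t = [\CX_{(i,k),0}^+, \CE_{(i+1,k),(j,l)}^t]$. With this alone, the case $(a,c)=(i,k)$ in \eqref{CX+ CE+} leaves you with $[\CX_{(i,k),0}^+, \CE_{(i,k),(j,l)}^{t+s}]$, a width-$\ell$ bracket not covered by the induction hypothesis; likewise $(a,c)=(i+1,k)$ and $(a,c)=(j-1,l)$ produce expressions not immediately zero. You can still close these cases by unpacking one more layer and invoking (L6) and the width-$(\ell-2)$ hypothesis, as you hint, but it is more work. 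The paper instead also peels from the right, $\CE_{(i,k),(j,l)}^t = [\CE_{(i,k),(j-1,l)}^t, \CX_{(j-1,l),0}^+]$, obtains a second case-split, and compares: whenever one decomposition leaves a stubborn residual, the other gives $0$ or the claimed closed form directly, so equality of the two forces the answer. Only the small overlaps at $\ell=2,3$ then require a direct check with (L4)--(L6). You may find this two-sided comparison cleaner than pushing (L6) through extra layers of Jacobi.

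One minor caution: the ``symmetry'' you invoke for (ii) is not a literal automorphism of $\Fg_{\BQ}(\Bm)$ (relation (L3) is asymmetric in the $Q_k$'s), so (ii) really is a parallel computation rather than a formal consequence of (i). The paper, too, simply says ``similar''.
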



\begin{proof}
We prove \eqref{CX+ CE+} by the induction on $(j,l)-(i,k)$. 

In the case where $(j,l)-(i,k)=1$, it is follows from the relations (L4) and (L5). 
Assume that $(j,l)-(i,k) >1$. 
We have 
\begin{align*}
[\CX_{(a,c),s}^+, \CE_{(i,k),(j,l)}^t] 
&= [\CX_{(a,c),s}^+, [\CX_{(i,k),0}^+, \CE_{(i+1,k),(j,l)}^t]] 
\\
&=  [\CX_{(i,k),0}^+, [\CX_{(a,c),s}^+, \CE_{(i+1,k),(j,l)}^t ]] 
	+ [[ \CX_{(a,c),s}^+, \CX_{(i,k),0}^+], \CE_{(i+1,k),(j,l)}^t]. 
\end{align*}
Applying the assumption of the induction, 
we have 
\begin{align}
\label{CX+ CE+ 1}
[\CX_{(a,c),s}^+, \CE_{(i,k),(j,l)}^t] 
&= 
\begin{cases}
	[\CX_{(i,k),0}^+, \CE_{(i,k),(j,l)}^{t+s}] & \text{ if } (a,c)=(i,k), 
	\\
	- [\CX_{(i,k),0}^+, \CE_{(i+1,k),(j+1,l)}^{t+s}] & \text{ if } (a,c)=(j,l), 
	\\
	[[\CX_{(i-1,k),s}^+, \CX_{(i,k),0}^+], \CE_{(i+1,k),(j,l)}^t] & \text{ if } (a,c)=(i-1,k), 
	\\
	[[\CX_{(i+1,k),s}^+, \CX_{(i,k),0}^+], \CE_{(i+1,k),(j,l)}^t] & \text{ if } (a,c)=(i+1,k), 
	\\
	0 & \text{ otherwise}. 
\end{cases}
\end{align}
We also have 
\begin{align*}
[\CX_{(a,c),s}^+, \CE_{(i,k),(j,l)}^t] 
&= [\CX_{(a,c),s}^+, [\CE_{(i,k),(j-1,l)}^t, \CX_{(j-1,l),0}^+]] 
\\
&=  [[\CX_{(j-1,l),0}^+, \CX_{(a,c),s}^+], \CE_{(i,k),(j-1,l)}^t] 
	+ [[\CX_{(a,c),s}^+, \CE_{(i,k),(j-1,l)}^t], \CX_{(j-1,l),0}^+]. 
\end{align*}
Applying the assumption of the induction, we have 
\begin{align}
\label{CX+ CE+ 2}
[\CX_{(a,c),s}^+, \CE_{(i,k),(j,l)}^t] 
= \begin{cases}
	[[\CX_{(j-1,l),0}^+, \CX_{(j,l),s}^+], \CE_{(i,k),(j-1,l)}^t] & \text{ if } (a,c)=(j,l), 
	\\
	[[ \CX_{(j-1,l),0}^+, \CX_{(j-2,l),s}^+], \CE_{(i,k),(j-1,l)}^t] & \text{ if } (a,c)=(j-2,l), 
	\\
	 [ \CE_{(i-1,k),(j-1,l)}^{t+s}, \CX_{(j-1,l),0}^+] & \text{ if } (a,c)=(i-1,k), 
	 \\
	 - [ \CE_{(i,k),(j,l)}^{t+s}, \CX_{(j-1,l),0}^+] & \text{ if } (a,c)=(j-1,l),
	 \\
	 0 & \text{ otherwise}. 
\end{cases}
\end{align}
By \eqref{CX+ CE+ 1} and \eqref{CX+ CE+ 2}, we have 
\begin{align*}
[\CX_{(a,c),s}^+, \CE_{(i,k),(j,l)}^t] 
&= \begin{cases}
	\CE_{(i-1,k),(j,l)}^{t+s} & \text{ if } (a,c) =(i-1,k), 
	\\
	- \CE_{(i,k),(j+1,l)}^{t+s} & \text{ if } (a,c)= (j,l), 
	\\  
	[\CX_{(i,k),0}^+, \CE_{(i,k),(i+2,k)}^{t+s}] 	& \text{ if } (a,c)=(i,k) =(j-2,l), 
	\\ 
	[[\CX_{(i+1,k),s}^+, \CX_{(i,k),0}^+], \CE_{(i+1,k),(i+3,k)}^t] \hspace{-5em} 
		\\ & \text{ if } (a,c) = (i+1,k) = (j-2,l), 
	\\ 
	[\CX_{(i+1,k),0}^-, \CE_{(i,k),(i+2,k)}^{t+s}] & \text{ if } (a,c) =(i+1,k) = (j-1,l), 
	\\ 0 & \text{ otherwise}. 
\end{cases}
\end{align*} 
By the direct calculations using the relations (L4)-(L6), we also have 
\begin{align*}
[\CX_{(i,k),0}^+, \CE_{(i,k),(i+2,k)}^{t+s}] 
=[[\CX_{(i+1,k),s}^+, \CX_{(i,k),0}^+], \CE_{(i+1,k),(i+3,k)}^t]
= [\CX_{(i+1,k),0}^-, \CE_{(i,k),(i+2,k)}^{t+s}] 
=0. 
\end{align*}
Now we proved \eqref{CX+ CE+}. 

We prove \eqref{CI CE+} by the induction on $(j,l)-(i,k)$. 
In the case where $(j,l)-(i,k)=1$, it is just the relation (L2). 
Assume that $(j,l)-(i,k) >1$. 
We have 
\begin{align*}
[\CI_{(a,c),s}, \CE_{(i,k),(j,l)}^t] 
&= [ \CI_{(a,c),s}, [ \CX_{(i,k),0}^+, \CE_{(i+1,k),(j,l)}^t]] 
\\
&=  [ \CX_{(i,k),0}^+, [\CI_{(a,c),s}, \CE_{(i+1,k),(j,l)}^t]] + [[\CI_{(a,c),s}, \CX_{(i,k),0}^+], \CE_{(i+1,k),(j,l)}^t].  
\end{align*}
Applying the assumption of the induction, we have 
\begin{align*}
[\CI_{(a,c),s}, \CE_{(i,k),(j,l)}^t] 
= \begin{cases}
	[\CX_{(i,k),0}^+, \CE_{(i+1,k),(j,l)}^{t+s}] - [\CX_{(i,k),s}^+,  \CE_{(i+1,k),(j,l)}^t] & \text{ if } (a,c)=(i+1,k), 
	\\ 
	- [ \CX_{(i,k),0}^+, \CE_{(i+1,k),(j,l)}^{t+s}] & \text{ if } (a,c)=(j,l), 
	\\
	[ \CX_{(i,k),s}^+, \CE_{(i+1,k),(j,l)}^t] & \text{ if } (a,c)=(i,k), 
	\\
	0 & \text{ otherwise}. 
\end{cases}
\end{align*} 
Thus, we have \eqref{CI CE+} by applying \eqref{CX+ CE+}. 

We prove \eqref{CX- CE+} by the induction on $\ell =(j,l)-(i,k)$.  
In the case where $\ell=1,2$, we can show \eqref{CX- CE+} by direct calculations. 
Assume that $\ell >2$, 
we have 
\begin{align*}
[\CX_{(a,c),s}^-, \CE_{(i,k),(j,l)}^t] 
&= [\CX_{(a,c),s}^-, [\CX_{(i,k),0}^+, \CE_{(i+1,k),(j,l)}^t]] 
\\ 
&=  [ \CX_{(i,k),0}^+, [ \CX_{(a,c),s}^-, \CE_{(i+1,k),(j,l)}^t ]] 
	+ [ [\CX_{(a,c),s}^-, \CX_{(i,k),0}^+], \CE_{(i+1,k),(j,l)}^t]. 
\end{align*}
Applying the assumption of the induction, we have 
\begin{align*}
&[\CX_{(a,c),s}^-, \CE_{(i,k),(j,l)}^t] 
\\
&= \begin{cases}
	[\CX_{(i,k),0}^+, \CE_{(i+2,k),(j,l)}^{t+s}] & \text{ if } (a,c)=(i+1,k) \text{ and } i +1 \not=m_k, 
	\\
	[\CX_{(i,k),0}^+, -Q_k \CE_{(1,k+1),(j,l)}^{t+s} + \CE_{(1, k+1),(j,l)}^{t+s+1} ] 
		& \text{ if } (a,c)=(i+1,k) \text{ and } i+1 =m_k
	\\
	 [\CX_{(i,k),0}^+, - \CE_{(i+1,k),(j-1,l)}^{t+s}] 
		& \text{ if } (a,c)=(j-1,l) \text{ and } j-1 \not=m_l, 
	\\
	[\CX_{(i,k),0}^+, Q_l \CE_{(i+1,k),(m_l,l)}^{t+s} - \CE_{(i+1,k),(m_l,l)}^{t+s+1} ]
		& \text{ if } (a,c)=(j-1,l) \text{ and } j-1 =m_l,
	\\
	[ - \CI_{(i,k),s} + \CI_{(i+1,k),s}, \CE_{(i+1,k),(j,l)}^t] 
		& \text{ if } (a,c)=(i,k) \text{ and } i \not=m_k, 
	\\
	[ Q_k (\CI_{(m_k,k),s} - \CI_{(1,k+1),s}) - \CI_{(m_k,k),s+1} + \CI_{(1,k+1),s+1}, \CE_{(1,k+1),(j,l)}^t] 
		\hspace{-10em} 
		\\
		& \text{ if }(a,c)=(i,k) \text{ and } i =m_k, 
	\\ 0 & \text{ otherwise}. 
\end{cases}
\end{align*} 
Thus, we have \eqref{CX- CE+} by applying \eqref{CX+ CE+} and \eqref{CI CE+}. 

(\roii) is proven in a similar way. 
(\roiii) is just the relations (L1) and (L2). 
\end{proof} 

By Lemma \ref{Lemma spanning set g},  
we see that 
$\Fg$ is spanned by 
$\{ \CE_{(i,k)(j,l)}^t \,|\, (i,k), (j,l) \in \vG(\Bm), t \geq 0\}$ 
as a $\QQ(\BQ)$-vector space. 
In fact, we see that it is a basis of $\Fg$ as follows. 
  
\begin{prop}\ 
\label{Prop basis g}
$\{ \CE_{(i,k)(j,l)}^t \,|\, (i,k), (j,l) \in \vG(\Bm), t \geq 0\}$  
gives a basis of $\Fg=\Fg_{\BQ}(\Bm)$. 
\end{prop}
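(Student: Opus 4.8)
The plan is as follows. By Lemma \ref{Lemma spanning set g} the set $\{\CE_{(i,k),(j,l)}^t\}$ already spans $\Fg$, so it suffices to prove linear independence, and for this I would use the modules $V_\t$ of paragraph \ref{Def V tau}, one for every $\t \in \QQ(\BQ)$.

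First I would determine the action of each $\CE_{(i,k),(j,l)}^t$ on $V_\t$. The claim is that $\CE_{(i,k),(j,l)}^t$ acts as $c_{(i,k),(j,l)}(\t)\,\t^t\,E_{(i,k),(j,l)}$, where $E_{(i,k),(j,l)} \in \End_{\QQ(\BQ)}(V_\t)$ is the matrix unit sending $v_{(j,l)}$ to $v_{(i,k)}$ and all other basis vectors to $0$, and $c_{(i,k),(j,l)}(\t) \in \ZZ[\BQ][\t]$ is, up to sign, the product of the factors $\t - Q_{k'}$ over the part-boundaries $k'$ lying strictly between $(i,k)$ and $(j,l)$; in particular $c_{(i,k),(j,l)} = 1$ when no boundary is crossed, including the case $(i,k) = (j,l)$, where $\CE_{(i,k),(i,k)}^t = \CI_{(i,k),t}$ acts by $\t^t E_{(i,k),(i,k)}$. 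This would be proved by induction on $(j,l)-(i,k)$ (in both the positive and the negative range) using the recursions $\CE_{(i,k),(j,l)}^t = [\CX_{(i,k),0}^+,\CE_{(i+1,k),(j,l)}^t]$ and $\CE_{(i,k),(j,l)}^t = [\CX_{(i-1,k),0}^-,\CE_{(i-1,k),(j,l)}^t]$ together with the explicit formulas for the $\Fg$-action on $V_\t$; the points to check are that no off-diagonal term leaks in and that the boundary factor $-Q_k + \t$ is picked up exactly once per crossed boundary. The crucial consequence is that $c_{(i,k),(j,l)}(\t)$ is a \emph{nonzero} element of $\QQ(\BQ)[\t]$.

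Granting this, suppose $\sum a_{(i,k),(j,l)}^t\,\CE_{(i,k),(j,l)}^t = 0$ in $\Fg$, a finite sum with $a_{(i,k),(j,l)}^t \in \QQ(\BQ)$. Applying this operator identity to $V_\t$ and using that the matrix units $E_{(i,k),(j,l)}$ are linearly independent in $\End(V_\t)$, I obtain, for every pair $(i,k),(j,l)$, the identity $c_{(i,k),(j,l)}(\t)\sum_t a_{(i,k),(j,l)}^t\,\t^t = 0$. Since this holds for all $\t \in \QQ(\BQ)$ and $\QQ(\BQ)$ is infinite, the polynomial $c_{(i,k),(j,l)}(x)\sum_t a_{(i,k),(j,l)}^t\,x^t$ is identically zero in $\QQ(\BQ)[x]$; as $\QQ(\BQ)[x]$ is a domain and $c_{(i,k),(j,l)}(x) \neq 0$, this forces $\sum_t a_{(i,k),(j,l)}^t\,x^t = 0$, hence $a_{(i,k),(j,l)}^t = 0$ for all $t$. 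This gives linear independence and completes the proof.

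I expect the main obstacle to be the first step: the clean verification that each $\CE_{(i,k),(j,l)}^t$ acts on $V_\t$ as a scalar multiple of a single matrix unit with scalar $\t^t$ times a nonzero polynomial in $\t$. This is a bookkeeping-heavy induction, similar in spirit to the commutator computations in the proof of Lemma \ref{Lemma spanning set g}, and it is exactly there that one must track the boundary corrections $-Q_k + \t$ carefully. Everything after that step is formal.
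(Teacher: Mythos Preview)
Your proposal is correct and follows essentially the same approach as the paper: both reduce to linear independence via Lemma~\ref{Lemma spanning set g}, both use the family of modules $V_\t$, and both compute that $\CE_{(i,k),(j,l)}^t$ acts on $V_\t$ as $c_{(i,k),(j,l)}(\t)\,\t^t$ times the matrix unit $E_{(i,k),(j,l)}$ with $c_{(i,k),(j,l)}$ a nonzero polynomial in $\t$. One small correction to your description of $c$: the boundary factors $(\t - Q_{k'})$ appear only in the upper-triangular case $(j,l) > (i,k)$ (they come from the $\CX^+$ action), whereas for $(j,l) \leq (i,k)$ one simply has $c_{(i,k),(j,l)}(\t)=1$; this asymmetry is harmless since your argument uses only the nonvanishing of $c$.
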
 

\begin{proof}
It is enough to show that 
$\{ \CE_{(i,k),(j,l)}^t \,|\, (i,k), (j,l) \in \vG(\Bm), t \geq 0\}$ are linearly independent. 

For $\t \in \QQ(\BQ)$, let $V_\t = \bigoplus_{(j,l) \in \vG(\Bm)} \QQ(\BQ) v_{(j,l)}$ 
be the $\Fg$-module given in \ref{Def V tau}. 
Then, we see that 
\begin{align*}
\CE_{(i,k)(j,l)}^t \cdot v_{(a,c)} 
&= \d_{(a,c)(j,l)}\psi_{(i,k)(j,l)} \t^t v_{(i,k)}, 
\end{align*} 
where we put 
\begin{align*}
\psi_{(i,k)(j,l)} = \begin{cases} 
	\dis \prod_{p=0}^{l-k-1} (- Q_l + \t) & \text{ if } l-k >0, 
	\\
	1 & \text{ otherwise}. 
\end{cases}
\end{align*}
Thus, if 
$\sum_{(i,k),(j,l) \in \vG(\Bm), t \geq 0} r_{(i,k)(j,l)}^t \CE_{(i,k), (j,l)}^t=0$ ($r_{(i,k)(j,l)}^t \in \QQ(\BQ)$), 
we have 
\begin{align*}
\big(\sum_{(i,k),(j,l) \in \vG(\Bm), t \geq 0} r_{(i,k)(j,l)}^t \CE_{(i,k), (j,l)}^t \big) \cdot v_{(a,c)} 
&= \sum_{(i,k) \in \vG(\Bm)}  \psi_{(i,k)(j,l)} \big( \sum_{ t \geq 0} r_{(i,k)(a,c)}^t \t^t \big) v_{(i,k)} 
\\
&=0.
\end{align*} 
Thus, for any $(i,k), (j,l) \in \vG(\Bm)$ and any $\t \in \QQ(\BQ)$, we have 
\begin{align*}
\psi_{(i,k)(j,l)} \big( \sum_{ t \geq 0} r_{(i,k)(j,l)}^t \t^t \big) =0.
\end{align*}
This implies that 
$r_{(i,k)(j,l)}^t =0$ for any $(i,k),(j,l) \in \vG(\Bm)$ and any $t \geq 0$.  
\end{proof}

\para 
Let $\Fn^+$, $\Fn^-$ and $\Fn^0$ be the Lie subalgebras  of $\Fg$ generated by 
\begin{align*}
& \{ \CX_{(i,k),t}^+ \,|\, (i,k) \in \vG'(\Bm), t \geq 0 \},  
\,  
\{ \CX_{(i,k),t}^- \,|\, (i,k) \in \vG'(\Bm), t \geq 0 \} 
\text{ and }
\\
&\{ \CI_{(j,l),t} \,|\, (j,l) \in \vG(\Bm), t \geq 0\} 
\end{align*} 
respectively. 
Then, we have the following triangular decomposition as a corollary of Proposition \ref{Prop basis g}.

\begin{cor} 
\label{Cor tri decom of g}
We have the triangular decomposition 
\begin{align*}
\Fg = \Fn^- \oplus \Fn^0 \oplus \Fn^+ \quad (\text{as vector spaces}).
\end{align*}
\end{cor}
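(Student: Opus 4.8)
The plan is to read the decomposition directly off the basis $\{\CE_{(i,k)(j,l)}^t \mid (i,k),(j,l)\in\vG(\Bm),\ t\geq 0\}$ of $\Fg$ furnished by Proposition~\ref{Prop basis g}. I would split this basis into three pieces according to the sign of $(j,l)-(i,k)$: let $S^+$ be the $\QQ(\BQ)$-span of the $\CE_{(i,k)(j,l)}^t$ with $(j,l)>(i,k)$, let $S^0$ be the span of the $\CE_{(i,k)(i,k)}^t=\CI_{(i,k),t}$, and let $S^-$ be the span of the $\CE_{(i,k)(j,l)}^t$ with $(j,l)<(i,k)$. Since these three index sets are disjoint and their union is the whole basis, Proposition~\ref{Prop basis g} gives $\Fg=S^-\oplus S^0\oplus S^+$ as vector spaces, so it suffices to identify $S^+=\Fn^+$, $S^0=\Fn^0$ and $S^-=\Fn^-$.

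The equality $S^0=\Fn^0$ is immediate: by relation (L1) the elements $\CI_{(j,l),t}$ commute pairwise, so the subalgebra $\Fn^0$ they generate coincides with their linear span, which is exactly $S^0$. For $S^+=\Fn^+$, the inclusion $S^+\subseteq\Fn^+$ holds because, for $(j,l)>(i,k)$, the element $\CE_{(i,k)(j,l)}^t$ is by definition an iterated bracket of the raising generators $\CX_{(i,k),t}^+$ and hence lies in $\Fn^+$. For the reverse inclusion, I would use that $\Fn^+$ is the linear span of iterated brackets of the $\CX_{(i,k),t}^+$, which by the Jacobi identity may be taken right-normed, $[\CX_{a_1}^+,[\CX_{a_2}^+,[\dots,\CX_{a_n}^+]\dots]]$, and then induct on $n$. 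The case $n=1$ is $\CX_{(i,k),t}^+=\CE_{(i,k)(i+1,k)}^t\in S^+$; for the inductive step, write the bracket as $[\CX_{(a,c),s}^+, w]$ with $w\in S^+$ by induction and invoke formula~\eqref{CX+ CE+} of Lemma~\ref{Lemma spanning set g}, which shows that $[\CX_{(a,c),s}^+,\CE_{(i,k)(j,l)}^t]$ (with $(j,l)>(i,k)$) is $0$ or $\pm\CE_{(i',k')(j',l')}^{t+s}$ with $(j',l')>(i',k')$, hence again lies in $S^+$; by linearity $[\CX_{(a,c),s}^+, w]\in S^+$. Thus $\Fn^+\subseteq S^+$, so $S^+=\Fn^+$. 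The identification $S^-=\Fn^-$ is proved in exactly the same way, using the displayed formula for $[\CX_{(a,c),s}^-,\CE_{(i,k)(j,l)}^t]$ with $(j,l)<(i,k)$ in part~(\roii) of Lemma~\ref{Lemma spanning set g}.

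Combining the three identifications with the vector space decomposition $\Fg=S^-\oplus S^0\oplus S^+$ yields $\Fg=\Fn^-\oplus\Fn^0\oplus\Fn^+$. The only point that needs a moment's attention is the reverse inclusion $\Fn^\pm\subseteq S^\pm$, i.e.\ the fact that repeatedly bracketing the raising (resp.\ lowering) generators never leaves the span of the strictly-upper (resp.\ strictly-lower) triangular elements $\CE_{(i,k)(j,l)}^t$; but this is precisely the content of formula~\eqref{CX+ CE+} (resp.\ its analogue in Lemma~\ref{Lemma spanning set g}(\roii)), so once Proposition~\ref{Prop basis g} and Lemma~\ref{Lemma spanning set g} are available there is no real obstacle.
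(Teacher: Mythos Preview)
Your proof is correct and is exactly the argument the paper has in mind: the corollary is stated without proof, merely as a consequence of Proposition~\ref{Prop basis g}, and you have supplied the natural details, namely partitioning the basis $\{\CE_{(i,k)(j,l)}^t\}$ according to the sign of $(j,l)-(i,k)$ and using the bracket formulas of Lemma~\ref{Lemma spanning set g} to check that $\Fn^\pm$ coincides with the corresponding span.
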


\para 
\textbf{A current Lie algebra. } 
Let $\QQ[x]$ be the polynomial ring over $\QQ$, 
and 
let $\Fgl_m [x] = \QQ[x] \otimes \Fgl_m$ be the current Lie algebra associated with the general linear Lie algebra 
$\Fgl_m$ over $\QQ$. 
Namely, the Lie bracket on $\Fgl_m [x] $ is defined by 
\begin{align*}
[a\otimes g, b \otimes h] = ab \otimes [g,h] 
\quad (a,b \in \QQ[x], \, g,h \in \Fgl_m).
\end{align*}

Let $E_{i,j} \in \Fgl_m$ ($1 \leq i,j \leq m$) be the elementary matrix having 
$1$ at the $(i,j)$-entry and $0$ elsewhere. 
Put 
$e_i =E_{i,i+1}$, $f_i =E_{i+1,i}$ and  $K_j = E_{j,j}$. 
Then $\QQ[x] \otimes \Fgl_m$ is generated by 
\begin{align*}
x^t \otimes e_i, \, x^t \otimes f_i,   \, x^t \otimes K_j  
\quad (1 \leq i \leq m-1, \, 1 \leq j \leq m, \, t \geq 0). 
\end{align*}


\para 
In the case where $r=1$ ($\Bm=m$), 
the Lie algebra $\Fg(m)$ over $\QQ$ 
is generated by $\CX_{i,t}^{\pm}$ and $\CI_{j,t}$ ($1 \leq i \leq m-1$, $1 \leq j \leq m$, $t \geq 0$) 
with the defining relations (L1)-(L6) (for $(i,1) \in \vG(m)$, we denote $(i,1)$ by $i$ simply). 
In this case, the relation (L3) is just 
\begin{align*}
[\CX_{i,t}^+, \CX_{j,s}^-] = \d_{i,j} (\CI_{i,t} - \CI_{i+1,t}).  
\end{align*}
Then, we have the following lemma.  

\begin{lem}
\label{Lem iso g(m) to current}
There exists the isomorphism of Lie algebras 
\begin{align*} 
\Phi: \Fg(m) \ra \Fgl_m[x]
\quad 
( \CX_{i,t}^+ \mapsto x^t \otimes e_i, \, \CX_{i,t}^- \mapsto x^t \otimes f_i, \, \CI_{j,t} \mapsto x^t \otimes K_j). 
\end{align*}
In particular, 
the relations (L1)-(L6) (in the case where $r=1$) 
give a defining relations of $\Fgl_m[x]$ through the isomorphism $\Phi$. 
\end{lem}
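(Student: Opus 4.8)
The plan is to construct an explicit inverse to the candidate map $\Phi$, and the key point is that both sides are spanned by analogous PBW-type bases which match under $\Phi$. First I would check that $\Phi$ is a well-defined homomorphism of Lie algebras: since $\Fg(m)$ is given by generators and relations (L1)--(L6), it suffices to verify that the images $x^t \otimes e_i$, $x^t \otimes f_i$, $x^t \otimes K_j$ in $\Fgl_m[x]$ satisfy each of those relations. These are all routine computations with elementary matrices; for instance (L3) becomes $[x^t \otimes e_i, x^s \otimes f_j] = \delta_{i,j}\, x^{s+t} \otimes (E_{i,i} - E_{i+1,i+1})$, which is immediate from $[e_i,f_j] = \delta_{i,j}(K_i - K_{i+1})$, and the Serre-type relations (L4)--(L6) follow from the corresponding relations among the Chevalley generators of $\Fgl_m$ together with the fact that multiplication in $\QQ[x]$ is commutative (so the ``loop parameter'' $t$ just adds).

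Next I would show $\Phi$ is surjective: since $x^t \otimes e_i$, $x^t \otimes f_i$, $x^t \otimes K_j$ generate $\Fgl_m[x]$ (as noted in the paragraph just before the lemma), and these are in the image of $\Phi$, surjectivity is automatic.

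For injectivity I would use the basis of $\Fg = \Fg(m)$ from Proposition \ref{Prop basis g}, namely $\{\CE_{i,j}^t \,|\, 1 \leq i,j \leq m,\ t \geq 0\}$ (specializing the notation to $r=1$). I would compute $\Phi(\CE_{i,j}^t)$ directly from the definition of $\CE_{i,j}^t$ as an iterated bracket of the $\CX^{\pm}$'s: since in the $r=1$ case there are no $Q_k$-corrections, one gets $\Phi(\CE_{i,j}^t) = x^t \otimes E_{i,j}$ (for $i<j$ this is $[e_i,[e_{i+1},\dots,[e_{j-2},e_{j-1}]\cdots]]$ tensored with $x^t$, which equals $E_{i,j}$; similarly for $i>j$ with the $f$'s; and for $i=j$ it is $x^t \otimes K_i$ by definition). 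Thus $\Phi$ sends the basis $\{\CE_{i,j}^t\}$ of $\Fg(m)$ bijectively onto the obvious basis $\{x^t \otimes E_{i,j}\}$ of $\QQ[x] \otimes \Fgl_m$, so $\Phi$ is an isomorphism. The final sentence of the lemma is then just a reformulation: since $\Fg(m)$ is by definition the Lie algebra presented by (L1)--(L6), the isomorphism $\Phi$ transports this presentation to $\Fgl_m[x]$.

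The only mildly delicate point — really the one place requiring care rather than mechanical checking — is the identification $\Phi(\CE_{i,j}^t) = x^t \otimes E_{i,j}$: one must track the nested commutators and the signs in Lemma \ref{Lemma spanning set g} (in the $r=1$ specialization) to be sure no spurious scalar appears, and in particular that the two expressions for $\CE_{i,j}^t$ (``left-to-right'' and ``right-to-left'') agree after applying $\Phi$, which they do because the corresponding identity $[e_i, E_{i+1,j}] = [E_{i,j-1}, e_{j-1}] = E_{i,j}$ holds in $\Fgl_m$. I do not expect any genuine obstacle; the content of the lemma is essentially that the loop/current relations are exactly what is left of (L1)--(L6) once the $Q_k$'s are removed.
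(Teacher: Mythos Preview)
Your proposal is correct and follows essentially the same route as the paper: verify well-definedness by checking (L1)--(L6) on the images, then use Proposition~\ref{Prop basis g} to see that $\Phi$ carries the basis $\{\CE_{i,j}^t\}$ of $\Fg(m)$ to the basis $\{x^t \otimes E_{i,j}\}$ of $\Fgl_m[x]$, whence $\Phi$ is an isomorphism. The separate surjectivity paragraph is redundant once you have the basis-to-basis bijection, but otherwise your argument matches the paper's.
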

\begin{proof}
We can show the well-definedness of the homomorphism $\Phi$ by  checking the defining relations of $\Fg(m)$ directly.  

For $i,j \in \{1,\dots,m\}$ and $t \geq 0$, 
we see that 
$\Phi (\CE_{i,j}^t) = x^t \otimes E_{i,j}$. 
Clearly, $\{ x^t \otimes E_{i,j} \,|\, 1 \leq i,j \leq m, \, t \geq 0\}$ gives a basis of $\Fgl_m [x]$. 
Thus, Proposition \ref{Prop basis g} implies that $\Phi$ is  isomorphic.  
\end{proof}


\para 
In the case where $r \geq 2$, 
we can regard $\Fg=\Fg_{\BQ}(\Bm)$ as a deformation of the current Lie algebra 
$\QQ(\BQ) \otimes_{\QQ} \Fgl_m[x]$ as follows. 

For $t \geq 0$, 
put 
\begin{align*}
\CY_t = \{ \CX_{(i,k),t}^{\pm}, \CI_{(j,l), t} \,|\,  (i,k) \in \vG'(\Bm), (j,l) \in \vG(\Bm) \}.
\end{align*}

Let $\Fg_t$ be the $\QQ(\BQ)$-subspace of $\Fg$ spanned by 
\begin{align*}
\{ [Y_{t_1}, [Y_{t_2}, \dots,[ Y_{t_{p-1}}, Y_{t_p}] \dots ] 
\,|\, 
Y_{t_b} \in \CY_{t_b}, \, \sum_{b=1}^p t_b \geq t, \, p\geq1
\}.
\end{align*}
Then, we have  the sequence 
\begin{align*}
\Fg= \Fg_0 \supset \Fg_1 \supset \Fg_2 \supset \dots. 
\end{align*} 
By the defining relations (L1)-(L6), we see that 
\begin{align}
\label{filtered g}
[\Fg_s, \Fg_t] \subset \Fg_{s+t} \quad (s,t \geq 0).
\end{align} 

For $t \geq 0$, 
let $\s_t : \Fg_t \ra \Fg_t / \Fg_{t+1}$ be the natural surjection. 
By  \eqref{filtered g}, we can define the structure as a Lie algebra on 
$\gr \Fg = \bigoplus_{t \geq 0} \Fg_t / \Fg_{t+1}$ by 
\begin{align*}
[\s_s(g), \s_{t} (h)] =\s_{s+t} ([g,h]) \quad (g \in \Fg_s, h \in \Fg_t). 
\end{align*}
Then we see that, 
$\gr \Fg$ is generated by 
\begin{align*}
\s_t (\CX_{(i,k),t}^{\pm}),  \, \s_t (\CI_{(j,l),t}) 
\quad  
((i,k) \in \vG'(\Bm), \,  (j,l) \in \vG(\Bm), \, t \geq 0), 
\end{align*}  
and $\gr \Fg$ has a basis 
$\{ \s_t (\CE_{(i,k),(j,l)}^t) \,|\, (i,k), (j,l) \in \vG(\Bm), \, t \geq 0 \}$.

\begin{prop}
\label{Prop glm[x] iso gr g}
There exists the isomorphism of Lie algebras 
\begin{align*}
\Psi: \QQ(\BQ) \otimes_{\QQ} \Fgl_m[x] 
\ra \gr \Fg = \bigoplus_{t \geq 0} \Fg_t / \Fg_{t+1} 
\end{align*}
such that 
\begin{align*}
& x^t \otimes e_{(i,k)}
\mapsto 
\begin{cases} 
\s_t ( \CX_{(i,k),t}^+) & \text{ if } i \not= m_k, 
\\
- Q_k^{-1} \s_t ( \CX_{(m_k,k),t}^+) & \text{ if } i=m_k, 
\end{cases}
\\
& x^t \otimes f_{(i,k)} \mapsto \s_t ( \CX_{(i,k),t}^-), 
\\
& x^t \otimes K_{(j,l)} \mapsto \s_t (\CI_{(j,l),t}), 
\end{align*}
where we use the identification \eqref{identify vG 1 to m} for the indices of generators of $\Fgl_m [x]$.  
\end{prop}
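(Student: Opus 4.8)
The plan is to mimic the strategy of Lemma \ref{Lem iso g(m) to current}: first establish that $\Psi$ is a well-defined Lie algebra homomorphism by checking it respects the defining relations (L1)--(L6) of $\Fg(m)$ (in the $r=1$ case, via Lemma \ref{Lem iso g(m) to current}, these are the defining relations of $\Fgl_m[x]$), and then show it is bijective by exhibiting a basis going to a basis via Proposition \ref{Prop basis g}. The key point that makes this work is that passing to the associated graded $\gr\Fg$ kills precisely the \emph{lower-order} terms in $t$ that distinguish $\Fg_{\BQ}(\Bm)$ from $\Fgl_m[x]$. Concretely, the relation (L3) for $i=m_k$ reads $[\CX_{(m_k,k),t}^+,\CX_{(m_k,k),s}^-] = -Q_k\CJ_{(m_k,k),s+t} + \CJ_{(m_k,k),s+t+1}$; the term $\CJ_{(m_k,k),s+t+1}$ lies in $\Fg_{s+t+1}\subset\Fg_{s+t}$ but in \emph{strictly higher filtration degree}, so under $\s_{s+t}$ it dies and we are left with $[\s_t(\CX_{(m_k,k),t}^+),\s_s(\CX_{(m_k,k),s}^-)] = -Q_k\,\s_{s+t}(\CJ_{(m_k,k),s+t})$. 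After rescaling the $+$-generator at position $(m_k,k)$ by $-Q_k^{-1}$ (which is exactly the normalization in the statement), this becomes the genuine $\Fgl_m$ bracket $[e_{(m_k,k)},f_{(m_k,k)}] = K_{(m_k,k)} - K_{(m_k+1,k)}$ tensored with $x^{s+t}$, i.e. $[x^t\otimes e_{(m_k,k)}, x^s\otimes f_{(m_k,k)}] = x^{s+t}\otimes(K_{(m_k,k)}-K_{(1,k+1)})$. For $i\neq m_k$ no rescaling is needed and (L3) already gives the $\Fgl_m[x]$ relation in $\gr\Fg$.

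First I would verify well-definedness on generators: relations (L1), (L2), (L4) are homogeneous in $t$ in the obvious way and transport directly (L2 gives $[x^s\otimes K_{(j,l)}, x^t\otimes e_{(i,k)}] = a_{(i,k)(j,l)}\,x^{s+t}\otimes e_{(i,k)}$ after the rescaling, consistent since the rescaling constant cancels). Relations (L5), (L6) are the Serre-type relations; since each identity in (L5)--(L6) is an equality between brackets with the \emph{same} total $t$-degree, applying $\s$ gives the corresponding identity in $\gr\Fg$, and on the $\Fgl_m[x]$ side these are exactly the current-algebra Serre relations of Lemma \ref{Lem iso g(m) to current} (the rescaling by powers of $-Q_k^{-1}$ is a diagonal automorphism of the positive part and preserves Serre relations). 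So $\Psi$ is a well-defined homomorphism.

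Next I would show $\Psi$ is an isomorphism. By Lemma \ref{Lemma spanning set g} and Proposition \ref{Prop basis g}, $\gr\Fg$ has the $\QQ(\BQ)$-basis $\{\s_t(\CE_{(i,k),(j,l)}^t)\}$, and $\{x^t\otimes E_{(i,k),(j,l)}\}$ is a basis of $\QQ(\BQ)\otimes_\QQ\Fgl_m[x]$. I claim $\Psi(x^t\otimes E_{(i,k),(j,l)}) = c_{(i,k),(j,l)}\,\s_t(\CE_{(i,k),(j,l)}^t)$ for a nonzero scalar $c_{(i,k),(j,l)}\in\QQ(\BQ)$ (a product of factors $-Q_l^{-1}$ running over the positions of the form $(m_c,c)$ strictly between $(i,k)$ and $(j,l)$, analogous to the $\psi_{(i,k)(j,l)}$ of Proposition \ref{Prop basis g}). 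This follows by the same induction on $(j,l)-(i,k)$ used to define $\CE_{(i,k),(j,l)}^t$: $E_{(i,k),(j,l)}$ in $\Fgl_m$ is an iterated bracket of the $e$'s (resp. $f$'s), $\Psi$ is a homomorphism sending each $x^0\otimes e_{(a,c)}$ to a scalar multiple of $\s_0(\CX_{(a,c),0}^+)$, and the scalars accumulate exactly to $c_{(i,k),(j,l)}$; the $\gr$ operation ensures no lower-order correction terms survive. Since each $c_{(i,k),(j,l)}\neq 0$, $\Psi$ carries a basis to a basis, hence is an isomorphism.

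The main obstacle is the bookkeeping in the last step: one must check carefully that in $\gr\Fg$ the image $\Psi(x^t\otimes E_{(i,k),(j,l)})$ is genuinely a scalar multiple of $\s_t(\CE_{(i,k),(j,l)}^t)$ with \emph{no} admixture of other basis elements, i.e. that the rescaling of the $+$-generators does not introduce cross terms. This is where one uses that the defining expression for $E_{(i,k),(j,l)}$ as an iterated commutator of Chevalley generators in $\Fgl_m$ is \emph{parallel} to the defining expression for $\CE_{(i,k),(j,l)}^t$, so that $\Psi$ intertwines the two constructions up to the diagonal scalar; the computation is routine but must be run through all the cases of Lemma \ref{Lemma spanning set g} to be sure the normalization constants match. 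Everything else is a direct transport of relations.
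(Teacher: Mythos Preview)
Your proposal is correct and follows essentially the same approach as the paper: verify well-definedness of $\Psi$ by checking the defining relations of $\Fgl_m[x]$ (using that the associated graded kills the degree-raising term $\CJ_{(m_k,k),s+t+1}$ in (L3)), then prove bijectivity by showing $\Psi(x^t\otimes E_{(i,k),(j,l)}) = \psi_{(i,k)(j,l)}\,\s_t(\CE_{(i,k),(j,l)}^t)$ for the explicit nonzero scalar $\psi_{(i,k)(j,l)} = \prod_{p=0}^{l-k-1}(-Q_{k+p}^{-1})$ when $l>k$ and $1$ otherwise. Your description of the scalar and of the mechanism is exactly right; the paper's proof is terser but identical in substance.
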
 
\begin{proof}
We can show the well-definedness of the homomorphism 
$\Psi$ by checking the defining relations of $\Fgl_m[x]$ directly (see Lemma \ref{Lem iso g(m) to current}). 
We also see that 
\begin{align*}
\Psi (x^t \otimes E_{(i,k),(j,l)}) = \psi_{(i,k)(j,l)} \s_t (\CE_{(i,k),(j,l)}^t), 
\end{align*}
where we put 
\begin{align*}
\psi_{(i,k)(j,l)} = 
	\begin{cases} 
	\dis 	\prod_{p=0}^{l-k-1} (-Q_{k+p}^{-1}) & \text{ if } l-k >0, 
		\\
		1 & \text{ otherwise}.
	\end{cases}
\end{align*} 
Thus, we see that $\Psi$ is isomorphic. 
\end{proof}


As a corollary of the above proposition, we have the following isomorphism between 
$\QQ(\BQ) \otimes_{\QQ} \Fg(m)$ and $\gr \Fg_{\BQ}(\Bm)$. 
\begin{cor} 
\label{Cor iso g(m) gQ(m)}
There exists the isomorphism of Lie algebras 
\begin{align*} 
\wt{\Psi} : \QQ(\BQ) \otimes_{\QQ} \Fg(m)  \ra \gr \Fg_{\BQ}(\Bm) = \bigoplus_{t \geq 0} \Fg_t / \Fg_{t+1} 
\end{align*}
such that 
\begin{align*}
\CX_{(i,k),t}^+ \mapsto 
	\begin{cases} 
		\s_t (\CX_{(i,k),t}^+) & \text{ if } i \not= m_k, 
		\\ 
		- Q_k^{-1} \s_t (\CX_{(m_k,k),t}^+ & \text{ if } i=m_k, 
	\end{cases} 
\quad  
\CX_{(i,k),t}^- \mapsto \s_t (\CX_{(i,k),t}^-), 
\,\,
\CI_{(j,l),t}  \mapsto \s_t (\CI_{(j,l),t}), 
\end{align*}
where we use the identification \eqref{identify vG 1 to m} for the indices of generators of $\Fg(m)$.  
\end{cor}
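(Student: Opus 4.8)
The plan is to obtain $\wt{\Psi}$ simply as the composite of the two isomorphisms already at hand, so that no relations need to be rechecked. First I would take the isomorphism $\Phi : \Fg(m) \ra \Fgl_m[x]$ of Lemma \ref{Lem iso g(m) to current} and extend scalars along $\QQ \hra \QQ(\BQ)$. Since $\QQ(\BQ) \otimes_{\QQ} (-)$ is functorial and a flat base change, it carries a $\QQ$-linear bijection to a $\QQ(\BQ)$-linear bijection and preserves the Lie bracket, so we obtain an isomorphism of Lie algebras over $\QQ(\BQ)$
\begin{align*}
\id_{\QQ(\BQ)} \otimes \Phi : \QQ(\BQ) \otimes_{\QQ} \Fg(m) \ra \QQ(\BQ) \otimes_{\QQ} \Fgl_m[x].
\end{align*}
Composing with the isomorphism $\Psi$ of Proposition \ref{Prop glm[x] iso gr g} gives an isomorphism of Lie algebras
\begin{align*}
\wt{\Psi} := \Psi \circ (\id_{\QQ(\BQ)} \otimes \Phi) : \QQ(\BQ) \otimes_{\QQ} \Fg(m) \ra \gr \Fg_{\BQ}(\Bm).
\end{align*}

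Next I would verify that this composite acts on generators by the stated formulas. Using the identification \eqref{identify vG 1 to m} for the generators of $\Fg(m)$, the element $\CX_{(i,k),t}^+$ maps under $\Phi$ to $x^t \otimes e_{(i,k)}$, and then under $\Psi$ to $\s_t(\CX_{(i,k),t}^+)$ when $i \neq m_k$ and to $-Q_k^{-1}\s_t(\CX_{(m_k,k),t}^+)$ when $i = m_k$; similarly $\CX_{(i,k),t}^- \mapsto x^t \otimes f_{(i,k)} \mapsto \s_t(\CX_{(i,k),t}^-)$ and $\CI_{(j,l),t} \mapsto x^t \otimes K_{(j,l)} \mapsto \s_t(\CI_{(j,l),t})$. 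These coincide exactly with the formulas in the statement, so $\wt{\Psi}$ is the asserted map, and being a composite of Lie algebra isomorphisms it is itself one.

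I do not expect any real obstacle here: the whole argument rests on Lemma \ref{Lem iso g(m) to current} and Proposition \ref{Prop glm[x] iso gr g}, together with the elementary facts that extension of scalars preserves Lie algebra isomorphisms and that the generator-level formulas of $\Phi$ and $\Psi$ compose as displayed. If one preferred a self-contained argument, one could instead imitate the proof of Lemma \ref{Lem iso g(m) to current}: check directly that the displayed assignment respects relations (L1)--(L6) for $\Fg(m)$, note that it sends $\CX_{(i,k),t}^+$-type commutators to $\psi_{(i,k)(j,l)}\s_t(\CE_{(i,k),(j,l)}^t)$ (with $\psi$ as in Proposition \ref{Prop glm[x] iso gr g}), and invoke the basis statement of Proposition \ref{Prop basis g} for $\gr\Fg$; but the composite route is shorter and avoids repeating those computations.
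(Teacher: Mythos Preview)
Your proposal is correct and is exactly the argument the paper has in mind: the corollary is stated immediately after Proposition \ref{Prop glm[x] iso gr g} with no separate proof, and the intended justification is precisely to compose the base-changed isomorphism $\id_{\QQ(\BQ)}\otimes\Phi$ from Lemma \ref{Lem iso g(m) to current} with the isomorphism $\Psi$ of Proposition \ref{Prop glm[x] iso gr g}. Your verification that the composite sends generators to the displayed elements is also accurate.
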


\para 
We also 
have some relations between the Lie algebra $\Fg_{\BQ}(\Bm)$ and the general linear Lie algebra $\Fgl_m$
as follows. 
Let $\Fgl_{m_1}\oplus \dots \oplus \Fgl_{m_r}$ be a Levi subalgebra of 
$\Fgl_m$ associated with $\Bm=(m_1,\dots, m_r)$. 
Then generates of $\Fgl_{m_1} \oplus \dots \oplus \Fgl_{m_r}$ are given by 
$e_{(i,k)}$, $f_{(i,k)}$ ($1 \leq i \leq m_k-1, 1 \leq k \leq r$) and $K_{(j,l)}$ ($(j,l) \in \vG(\Bm)$), 
where we use the identification \eqref{identify vG 1 to m} for indices.   

\begin{prop}\
\begin{enumerate}
\item 
There exists a surjective homomorphism of Lie algebras 
\begin{align}
\label{surjection gQm to glm}
g : \Fg_{\BQ}(\Bm) \ra \Fgl_m
\end{align}
such that 
\begin{align*} 
&g(\CX_{(i,k),0}^+)
	= \begin{cases} e_{(i,k)} & \text{ if } i \not=m_k, \\ - Q_k e_{(m_k,k)} & \text{ if } i=m_k, \end{cases}
\, \,
g (\CX_{(i,k),0}^-)= f_{(i,k)}, 
\\
& g(\CI_{(j,l),0})= K_{(j,l)}
\text{ and }
g(\CX_{(i,k),t}^{\pm}) =g(\CI_{(j,l),t})=0 \text{ for } t \geq 1. 
\end{align*} 

\item 
There exists an injective homomorphism of Lie algebras 
\begin{align}
\label{injection g Levi to gQm}
\iota : \Fgl_{m_1} \oplus \dots \oplus \Fgl_{m_r} \ra \Fg_{\BQ}(\Bm) 
\end{align}
such that 
$\iota(e_{(i,k)}) =\CX_{(i,k),0}^+$, $\iota(f_{(i,k)}) = \CX_{(i,k),0}^-$ and $\iota(K_{(j,l)})=\CI_{(j,l),0}$. 
\end{enumerate}
\end{prop}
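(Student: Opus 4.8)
The statement has two parts. For part (i), I would first check that the assignment $g$ on generators is well-defined by verifying that it respects all the defining relations (L1)--(L6). The relations (L1), (L2), (L4)--(L6) are immediate, since the images of $\CX^{\pm}_{(i,k),t}$ and $\CI_{(j,l),t}$ for $t\geq 1$ are zero, and for $t=0$ one only has to observe that the images generate a Levi-type configuration inside $\Fgl_m$: up to the rescaling of the positive root vectors at the positions $(m_k,k)$ by $-Q_k$, these are exactly the Chevalley generators $e_i,f_i,K_j$ of $\Fgl_m$, whose relations are the Serre-type relations plus $[e_i,f_j]=\delta_{ij}(K_i-K_{i+1})$. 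The only subtle relation is (L3) for $i=m_k$: its right-hand side is $-Q_k\CJ_{(m_k,k),s+t}+\CJ_{(m_k,k),s+t+1}$, and applying $g$ kills the second term (it involves level $\geq 1$) and sends the first to $-Q_k(K_{(m_k,k)}-K_{(1,k+1)})$, which matches $[g(\CX^+_{(m_k,k),0}),g(\CX^-_{(m_k,k),0})]=[-Q_k e_{(m_k,k)},f_{(m_k,k)}]=-Q_k(K_{(m_k,k)}-K_{(m_k+1,k)})$ after recalling $(m_k+1,k)=(1,k+1)$. Surjectivity is then clear because the images of the level-$0$ generators are (rescalings of) the standard Chevalley generators of $\Fgl_m$, which generate $\Fgl_m$.

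For part (ii), I would again check well-definedness of $\iota$ on generators. Since the generators $e_{(i,k)},f_{(i,k)}$ for $1\le i\le m_k-1$ and $K_{(j,l)}$ of $\Fgl_{m_1}\oplus\cdots\oplus\Fgl_{m_r}$ satisfy exactly the defining relations of $\Fgl_m$ restricted to the indices not crossing a block boundary, and since in the relation (L3) within a single block we have $i\neq m_k$, the right-hand side is just $\CJ_{(i,k),0}=\CI_{(i,k),0}-\CI_{(i+1,k),0}$, which maps correctly; the Serre relations (L4)--(L6) at level $0$ are the ordinary Serre relations. So $\iota$ is a well-defined homomorphism. For injectivity I would use the basis from Proposition~\ref{Prop basis g}: the image of $\iota$ is spanned by the elements $\CE^{0}_{(i,k),(j,l)}$ with $(i,k)$ and $(j,l)$ in the same block $k=l$, and these are part of the given basis of $\Fg_{\BQ}(\Bm)$, hence linearly independent. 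Concretely, one checks by a short induction (using Lemma~\ref{Lemma spanning set g}) that $\iota(E_{(i,k),(j,l)})=\CE^{0}_{(i,k),(j,l)}$ for $(i,k),(j,l)$ in block $k$, and since $\{E_{(i,k),(j,l)}\mid k=l\}$ is a basis of the Levi subalgebra mapping to a subset of a basis of $\Fg_{\BQ}(\Bm)$, injectivity follows.

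The main obstacle, as usual with presentation-by-generators-and-relations arguments, is the careful bookkeeping in checking relation (L3) at the block boundary for part (i): one must track the shift $s+t\mapsto s+t+1$ and the identification $(m_k+1,k)=(1,k+1)$, and confirm that the $-Q_k$ rescaling of the top positive root vector in each block is exactly what makes the bracket come out right. Everything else is routine verification of Serre-type relations and a linear-independence argument that is handed to us by Proposition~\ref{Prop basis g}.
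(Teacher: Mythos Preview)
Your proposal is correct. Your verification of well-definedness and surjectivity for $g$ in part (i), and of well-definedness for $\iota$ in part (ii), matches the paper's ``direct calculations'' (which the paper does not spell out), and your attention to relation (L3) at the block boundary is exactly the right point to flag.

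The one genuine difference is in the injectivity argument for $\iota$. You prove it by invoking the basis of Proposition~\ref{Prop basis g}: you observe that $\iota(E_{(i,k),(j,l)})=\CE^{0}_{(i,k),(j,l)}$ for $(i,k),(j,l)$ in the same block, and these form part of a basis of $\Fg_{\BQ}(\Bm)$. This is correct. The paper instead argues more slickly by composition: if $\iota':\Fgl_{m_1}\oplus\cdots\oplus\Fgl_{m_r}\to\Fgl_m$ is the natural embedding, then checking on generators shows $g\circ\iota=\iota'$, and since $\iota'$ is injective, so is $\iota$. The paper's route is shorter and avoids appealing to the basis result (which in turn rests on the representations $V_\tau$); your route is more hands-on and makes the image of $\iota$ explicit. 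Both are perfectly valid.
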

\begin{proof}
We can check the well-definedness of $g$ and $\iota$ by direct calculations. 
Clearly $g$ is surjective. 
Let $ \iota' : \Fgl_{m_1} \oplus \dots \oplus \Fgl_{m_r} \ra \Fgl_m$ be the natural embedding. 
Then, by investigating the image of generators, 
we see  that $\iota' = g \circ \iota$. 
This implies that $\iota$ is injective. 
\end{proof}

\remark 
\label{Remark evaluation gQ}
The surjective homomorphism $g$ in \eqref{surjection gQm to glm} can be regarded as a special case 
of evaluation homomorphisms. 
However, we can not define evaluation homomorphisms for $\Fg_{\BQ}(\Bm)$ in general 
although we can consider $\Fg_{\BQ}(\Bm)$-modules corresponding to some evaluation modules. 
They will be studied in a subsequent paper. 



\section{Representations of $\Fg_{\BQ}(\Bm)$} 
\label{S Rep FgQ}
Thanks to the triangular decomposition in Corollary \ref{Cor tri decom of g}, 
we can develop the weight theory to study some representations of $\Fg_{\BQ}(\Bm)$ 
in the usual manner as follows.

\para 
Let $U(\Fg)=U(\Fg_{\BQ}(\Bm))$ be the universal enveloping algebra of the Lie algebra $\Fg_{\BQ}(\Bm)$. 
Then, by Corollary \ref{Cor tri decom of g} together with PBW theorem, 
we have the triangular decomposition 
\begin{align}
\label{tri decom U(g)}
U(\Fg) \cong U(\Fn^-) \otimes U(\Fn^0) \otimes U(\Fn^+). 
\end{align}
Thanks to the triangular decomposition, 
we can develop the weight theory for $U(\Fg)$-modules as follows. 

%
%

\para 
\textbf{Highest weight modules.} 
For $\la \in P$ and a multiset $\Bvf =(\vf_{(j,l),t} \,|\, (j,l) \in \vG(\Bm), t \geq 1)$ ($\vf_{(j,l),t} \in \QQ(\BQ)$), 
we say that a $U(\Fg)$-modules $M$ is a highest weight modules of highest weight $(\la, \Bvf)$ 
if there exists an element $v_0 \in M$ satisfying the following three conditions: 
\begin{enumerate} 
\item 
$M$ is generated by $v_0$ as a $U(\Fg)$-module, 

\item 
$\CX_{(i,k),t}^+ \cdot v =0$ for all $(i,k) \in \vG'(\Bm)$ and $t \geq 0$, 

\item 
$\CI_{(j,l),0} \cdot v_0 = \lan \la, h_{(j,l)} \ran v_0$  
and 
$\CI_{(j,l),t} \cdot v_0= \vf_{(j,l),t} v_0$ 
for $(j,l) \in \vG(\Bm)$ and $t \geq 1$. 
\end{enumerate} 
If an element $v_0 \in M$ satisfies the above conditions (\roii) and (\roiii), 
we say that $v_0$ is a maximal vector of weight $(\la, \Bvf)$. 
In this case, the submodule $U(\Fg) \cdot v_0$ of $M$ 
is a highest weight module of highest weight $(\la, \Bvf)$. 
If a maximal vector $v_0 \in M$ satisfies the above condition (\roi), 
we say that $v_0$ is a highest weight vector. 

For a highest weight $U(\Fg)$-module $M$ of highest weight $(\la, \Bvf)$ 
with a highest weight vector $v_0 \in M$, 
we have $M=U(\Fn^-) \cdot v_0$ by the triangular decomposition \eqref{tri decom U(g)}. 
Thus, the relation (L2) implies the weight space decomposition 
\begin{align}
\label{wt sp decom h.w. g module}
M = \bigoplus_{\mu \in P \atop \mu \leq \la} M_{\mu} 
\text{ such that } 
\dim_{\QQ(\BQ)} M_{\la} =1, 
\end{align}
where $M_{\mu} = \{ v \in M \,|\, \CI_{(j,l),0} \cdot v = \lan \mu, h_{(j,l)} \ran v \text{ for } (j,l) \in \vG(\Bm) \}$. 

%
%

\para
\textbf{Verma modules.} 
Let $U(\Fn^{\geq 0})$ be the subalgebra of $U(\Fg)$ generated by $U(\Fn^0)$ and $U(\Fn^+)$. 
Then, 
by Proposition \ref{Prop basis g} together with the proof of Lemma \ref{Lemma spanning set g}, 
we see that $U(\Fn^+)$ (resp. $U(\Fn^-)$) is isomorphic to the algebra generated by 
$\{\CX_{(i,k),t}^+ \,|\, (i,k) \in \vG'(\Bm), t \geq 0\}$ 
(resp. $\{\CX_{(i,k),t}^- \,|\, (i,k) \in \vG'(\Bm), t \geq 0\}$) 
with the defining relations (L4)-(L6), 
$U(\Fn^0)$ is isomorphic to the algebra generated by $\{\CI_{(j,l),t} \,|\, (j,l) \in \vG(\Bm), t \geq 0\}$ 
with the defining relations (L1), 
and that 
$U(\Fn^{\geq 0})$ is isomorphic to the algebra generated by 
$\{ \CX_{(i,k) t}^+, \CI_{(j,l) t} \,|\, (i,k) \in \vG'(\Bm), (j,l) \in \vG(\Bm), t \geq 0\}$ 
with the defining relations (L1)-(L6) except (L3). 
Then we have the surjective homomorphism of algebras 
\begin{align}
\label{surj n geq 0 to n 0}
U(\Fn^{\geq 0}) \ra U(\Fn^0) 
\text{ such that } 
\CX_{(i,k),t}^+ \mapsto 0, \CI_{(j,l),t} \mapsto \CI_{(j,l),t}. 
\end{align} 

For $\la \in P$ and a multiset $\Bvf=(\vf_{(j,l),t})$, 
we define a ($1$-dimensional) simple $U(\Fn^0)$-module $\Theta_{(\la, \Bvf)} = \QQ(\BQ) v_0$ by 
\begin{align*} 
\CI_{(j,l),0} \cdot v_0 = \lan \la, h_{(j,l)} \ran v_0, 
\quad 
\CI_{(j,l) t} \cdot v_0 = \vf_{(j,l),t} v_0 
\end{align*}
for $(j,l) \in \vG(\Bm)$ and $t \geq 1$. 
Then we define the Verma module $M(\la,\Bvf)$ as the induced module 
\begin{align*} 
M(\la,\Bvf) = U(\Fg) \otimes_{U(\Fn^{\geq 0})} \Theta_{(\la,\Bvf)}, 
\end{align*} 
where we regard $\Theta_{(\la,\Bvf)}$ as a left $U(\Fn^{\geq 0})$-module 
through the surjection \eqref{surj n geq 0 to n 0}. 

By definitions, the Verma module $M(\la, \Bvf)$ is a highest weight module of highest weight $(\la, \Bvf)$ 
with a highest weight vector $1 \otimes v_0$. 
Then we see that any highest weight module of highest weight $(\la, \Bvf)$ is a quotient of $M(\la, \Bvf)$ 
by the universality of tensor products. 
We also see that $M(\la, \Bvf)$ has the unique simple top 
$L(\la, \Bvf) = M(\la, \Bvf)/ \rad M(\la, \Bvf)$ 
from the weight space decomposition \eqref{wt sp decom h.w. g module}. 

By using the homomorphism 
$\iota : U(\Fgl_{m_1} \oplus \dots \oplus \Fgl_{m_r}) \ra U(\Fg)$ 
induced from \eqref{injection g Levi to gQm}, 
we have a necessary condition for $L(\la, \Bvf)$ to be finite dimensional as follows. 

\begin{prop} 
\label{Prop necessary condition fin. simpe}
For $\la \in P$ and a multiset $\Bvf=(\vf_{(j,l),t})$, 
if $L(\la, \Bvf)$ is finite dimensional, 
then we have $\la \in P^+_{\Bm}$. 
\end{prop}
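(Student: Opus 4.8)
The plan is to reduce the statement to the classical fact about $\Fgl_m$ (or rather its Levi subalgebra $\Fgl_{m_1}\oplus\dots\oplus\Fgl_{m_r}$), namely that a finite-dimensional simple module has dominant integral highest weight. First I would observe that $L(\la,\Bvf)$, being a highest weight module with highest weight vector $v_0$, restricts via the injection $\iota:\Fgl_{m_1}\oplus\dots\oplus\Fgl_{m_r}\hra\Fg_{\BQ}(\Bm)$ of \eqref{injection g Levi to gQm} to a $U(\Fgl_{m_1}\oplus\dots\oplus\Fgl_{m_r})$-module, which is finite-dimensional by hypothesis. Under $\iota$ the generators $e_{(i,k)},f_{(i,k)}$ (for $1\le i\le m_k-1$, $1\le k\le r$) map to $\CX^+_{(i,k),0},\CX^-_{(i,k),0}$, and $K_{(j,l)}$ maps to $\CI_{(j,l),0}$; so $v_0$ is killed by all the $e_{(i,k)}$ inside the Levi and is a weight vector of weight $\la$ with respect to the torus spanned by the $K_{(j,l)}=\CI_{(j,l),0}$. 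Hence $U(\Fgl_{m_1}\oplus\dots\oplus\Fgl_{m_r})\cdot v_0$ is a highest weight module for the Levi subalgebra of highest weight $\la$.

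The key step is then: a finite-dimensional $\Fgl_{m_k}$-module containing a highest weight vector of weight $\la$ forces $\lan\la,\a^\vee_{(i,k)}\ran\in\ZZ_{\ge0}$ for each $i\ne m_k$. This is the standard $\Fsl_2$-argument applied to the subalgebra spanned by $\CX^+_{(i,k),0}$, $\CX^-_{(i,k),0}$, $\CJ_{(i,k),0}=\CI_{(i,k),0}-\CI_{(i+1,k),0}$: from relation (L3) (the case $i\ne m_k$, $t=s=0$) one has $[\CX^+_{(i,k),0},\CX^-_{(i,k),0}]=\CJ_{(i,k),0}$, and (L2) gives the correct commutators with $\CJ_{(i,k),0}$, so this triple is a copy of $\Fsl_2$ in $\Fg_{\BQ}(\Bm)$ whose image under $\iota$ is the corresponding root $\Fsl_2$ in the Levi. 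Since $v_0$ is a highest weight vector for this $\Fsl_2$ with $\CJ_{(i,k),0}$-eigenvalue $\lan\la,\a^\vee_{(i,k)}\ran$, finite-dimensionality of the $\Fsl_2$-submodule $U(\Fsl_2)\cdot v_0\subseteq L(\la,\Bvf)$ forces that eigenvalue to be a non-negative integer. Running this over all $(i,k)$ with $i\ne m_k$ — equivalently all $(i,k)\in\vG(\Bm)\setminus\{(m_k,k)\mid 1\le k\le r\}$ — gives precisely $\la\in P^+_{\Bm}$ by the definition of $P^+_{\Bm}$ in \S\ref{Cartan data}.

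I do not expect a serious obstacle here; the only points requiring a little care are (i) checking that the relevant $\Fsl_2$-triple really sits inside $\Fg_{\BQ}(\Bm)$ and that its bracket relations are exactly (L2)–(L3) with no correction terms (which is immediate for $i\ne m_k$, and this is why the "boundary" indices $(m_k,k)$ are excluded from $P^+_{\Bm}$), and (ii) making sure that $L(\la,\Bvf)$ — which a priori is only known to have finite-dimensional weight spaces via \eqref{wt sp decom h.w. g module} — is genuinely finite-dimensional as assumed, so that any $U(\Fsl_2)$-submodule is too. Alternatively one can phrase everything through $\iota$ at once: $L(\la,\Bvf)$ restricted along $\iota$ is a finite-dimensional $\Fgl_{m_1}\oplus\dots\oplus\Fgl_{m_r}$-module generated over $U(\Fn^-\cap\iota(\text{Levi}))$ by a highest weight vector, hence its simple quotient, and therefore every simple constituent, has dominant integral highest weight for the Levi; since $v_0$ generates the top weight $\la$, this yields $\la\in P^+_{\Bm}$. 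Either way the argument is essentially a one-line invocation of $\Fsl_2$-representation theory once the inclusion $\iota$ is in hand.
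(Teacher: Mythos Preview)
Your proposal is correct and follows essentially the same approach as the paper: restrict $L(\la,\Bvf)$ along the injection $\iota : U(\Fgl_{m_1}\oplus\dots\oplus\Fgl_{m_r}) \to U(\Fg)$, observe that $U(\Fgl_{m_1}\oplus\dots\oplus\Fgl_{m_r})\cdot v_0$ is a finite-dimensional highest weight module of highest weight $\la$ for the Levi, and then invoke the classical dominance condition. The paper simply cites ``well-known facts'' for the Levi at that point, whereas you spell out the underlying $\Fsl_2$-argument explicitly; but the route is the same.
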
 

\begin{proof} 
Assume that $L(\la, \Bvf)$ is finite dimensional. 
Let $v_0 \in L(\la, \Bvf)$ be a highest weight vector. 
When we regard $L(\la, \Bvf)$ as a $U(\Fgl_{m_1} \oplus \dots \oplus \Fgl_{m_r})$-module 
through the injection $\iota : U(\Fgl_{m_1} \oplus \dots \oplus \Fgl_{m_r}) \ra U (\Fg)$, 
we see that 
$U(\Fgl_{m_1} \oplus \dots \oplus \Fgl_{m_r})$-submodule of $L(\la, \Bvf)$ 
generated by $v_0$ 
is a (finite dimensional) highest weight 
$U(\Fgl_{m_1} \oplus \dots \oplus \Fgl_{m_r})$-module 
of highest weight $\la$.  
Thus, the Lemma follows from the well-known facts for 
$U(\Fgl_{m_1} \oplus \dots \oplus \Fgl_{m_r})$-modules. 
\end{proof}

%
%

\para 
\textbf{Category $\ZC_{\BQ}(\Bm)$.} 
Let $\ZC_{\BQ}(\Bm)$ (resp. $\ZC_{\BQ}^{\geq 0}(\Bm)$) 
be the full subcategory of $U(\Fg) \cmod$ consisting of $U(\Fg)$-modules 
satisfying the following conditions: 
\begin{enumerate} 
\item 
If $M \in \ZC_{\BQ}(\Bm)$ (resp. $M \in \ZC_{\BQ}^{\geq 0}(\Bm)$),  then $M$ is finite dimensional, 

\item 
If $M \in \ZC_{\BQ}(\Bm)$ (resp. $M \in \ZC_{\BQ}^{\geq 0}(\Bm)$), 
then $M$ has the weight space decomposition 
\begin{align*} 
M = \bigoplus_{\la \in P} M_{\la} 
\quad 
(\text{resp. } M= \bigoplus_{\la \in P_{\geq 0}} M_{\la}), 
\end{align*}
where $M_{\la} = \{ v \in M \,|\, \CI_{(j,l),0} \cdot v = \lan \la, h_{(j,l)} \ran v \text{ for } (j,l) \in \vG(\Bm) \}$, 

\item 
If $M \in \ZC_{\BQ}(\Bm)$ (resp. $M \in \ZC_{\BQ}^{\geq 0}(\Bm)$), 
then all eigenvalues of the action of $\CI_{(j,l),t}$ ($(j,l) \in \vG(\Bm), t \geq 0$) on $M$ belong to $\QQ(\BQ)$. 
\end{enumerate}

By the usual argument, we have the following lemma. 

\begin{lem} 
Any simple object in $\ZC_{\BQ}(\Bm)$ is a highest weight module. 
\end{lem}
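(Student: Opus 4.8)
The plan is to mimic the classical argument for category $\mathcal{O}$: show that any simple object $S$ in $\ZC_{\BQ}(\Bm)$ contains a nonzero maximal vector, whence the submodule it generates is a highest weight module, and then invoke simplicity to conclude $S$ itself is that highest weight module. So the first step is to produce a maximal vector, i.e. a nonzero $v_0 \in S$ with $\CX_{(i,k),t}^+ \cdot v_0 = 0$ for all $(i,k) \in \vG'(\Bm)$ and all $t \geq 0$, and which lies in a single weight space $S_\la$ and is simultaneously an eigenvector for all the $\CI_{(j,l),t}$.

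To do this I would use the weight space decomposition $S = \bigoplus_{\la \in P} S_\la$ guaranteed by the definition of $\ZC_{\BQ}(\Bm)$. Since $S$ is finite dimensional, only finitely many weights $\la$ occur, so I can pick a weight $\la$ that is maximal with respect to the dominance order $\geq$ among those with $S_\la \neq 0$ (using finiteness to get maximality, and noting that by relation (L2) each $\CX_{(i,k),t}^+$ raises weight by $\a_{(i,k)}$, hence $\CX_{(i,k),t}^+ \cdot S_\la \subseteq S_{\la + \a_{(i,k)}} = 0$ by maximality). This already gives condition (ii) of the maximal-vector definition for every nonzero vector in $S_\la$. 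Next, because the operators $\CI_{(j,l),t}$ ($t \geq 1$) all commute with each other and with the $\CI_{(j,l),0}$ by (L1), preserve the weight space $S_\la$ (by (L2), since $\CI$'s act in weight $0$), and have all eigenvalues in $\QQ(\BQ)$ by condition (iii) of the category, a standard simultaneous-eigenvector argument — the $\QQ(\BQ)$-algebra they generate acts on the finite dimensional $\QQ(\BQ)$-space $S_\la$ by commuting operators with eigenvalues in the ground field — produces a nonzero common eigenvector $v_0 \in S_\la$. Setting $\vf_{(j,l),t}$ to be the eigenvalue of $\CI_{(j,l),t}$ on $v_0$, this $v_0$ is a maximal vector of weight $(\la, \Bvf)$.

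Finally, $U(\Fg) \cdot v_0$ is a nonzero submodule of $S$; since $S$ is simple, $U(\Fg) \cdot v_0 = S$, so $v_0$ is a highest weight vector and $S$ is a highest weight module of highest weight $(\la, \Bvf)$, as desired.

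The main obstacle — really the only nontrivial point — is the simultaneous eigenvector step: one must check that the infinitely many operators $\CI_{(j,l),t}$, $t \geq 1$, genuinely have a common eigenvector on $S_\la$. This follows since (a) they pairwise commute by (L1), (b) each stabilizes the finite dimensional space $S_\la$, and (c) the eigenvalues lie in $\QQ(\BQ)$ by hypothesis (iii); so one can iterate taking eigenspaces, and the process stabilizes after finitely many steps because $\dim_{\QQ(\BQ)} S_\la < \infty$ forces the descending chain of common eigenspaces to terminate at a nonzero space. Everything else is the standard category-$\mathcal{O}$ bookkeeping and is routine given Corollary \ref{Cor tri decom of g} and relations (L1), (L2).
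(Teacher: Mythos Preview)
Your argument is correct and is precisely the ``usual argument'' the paper invokes without spelling out: the paper gives no proof beyond that phrase, and your sketch --- choose a maximal weight via finite-dimensionality and (L2), then find a simultaneous eigenvector for the commuting family $\{\CI_{(j,l),t}\}$ on that weight space using (L1) and condition (iii), then apply simplicity --- is exactly what is intended.
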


By using the surjection $g : U(\Fg) \ra U(\Fgl_m)$ induced from \eqref{surjection gQm to glm},  
we have the following proposition. 

\begin{prop} 
\label{Prop glm fullsub ZCQ(m)}
Let $\ZC_{\Fgl_m}$ be the category of finite dimensional $U(\Fgl_m)$-modules 
which have the weight space decomposition. 
Then, we have the followings. 
\begin{enumerate} 
\item 
$\ZC_{\Fgl_m}$ is a full subcategory of $\ZC_{\BQ}(\Bm)$ through the surjection 
$g : U(\Fg) \ra U(\Fgl_m)$. 

\item 
For $\la \in P^+$, 
the simple highest weight $U(\Fgl_m)$-module $\D_{\Fgl_m}(\la)$ of highest weight $\la$ 
is the simple highest weight $U(\Fg)$-module of highest weight $(\la, \mathbf{0})$ 
through the surjection $g : U(\Fg) \ra U(\Fgl_m)$, 
where $\mathbf{0}$ means $\vf_{(j,l),t}=0$ for all $(j,l) \in \vG(\Bm)$ and $t \geq 1$. 
\end{enumerate}
\end{prop}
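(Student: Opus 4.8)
The plan is to verify the two assertions by pulling back structure along the surjection $g: U(\Fg) \ra U(\Fgl_m)$ induced from \eqref{surjection gQm to glm}, and to identify when a $U(\Fgl_m)$-module, so regarded as a $U(\Fg)$-module, lands in $\ZC_{\BQ}(\Bm)$. First I would fix notation: given $N \in \ZC_{\Fgl_m}$, write ${}^g N$ for the $U(\Fg)$-module obtained by restriction along $g$. Since $g(\CI_{(j,l),0}) = K_{(j,l)}$ and $g(\CI_{(j,l),t}) = 0$ for $t \geq 1$, the action of $\CI_{(j,l),0}$ on ${}^g N$ is exactly the action of the Cartan element $K_{(j,l)}$ on $N$, and $\CI_{(j,l),t}$ acts as $0$ for $t \geq 1$. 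Hence the $\Fg$-weight space $({}^g N)_\la$ coincides with the $\Fgl_m$-weight space $N_\la$, so the weight decomposition of $N$ (which exists by hypothesis, since $N \in \ZC_{\Fgl_m}$) gives the decomposition required in condition (ii) of the definition of $\ZC_{\BQ}(\Bm)$; condition (i) is immediate since $N$ is finite dimensional; and condition (iii) holds because the only eigenvalues to check are those of $\CI_{(j,l),0}$, which are integers $\lan\la,h_{(j,l)}\ran \in \ZZ \subset \QQ(\BQ)$, together with the eigenvalue $0$ of $\CI_{(j,l),t}$ for $t\ge 1$. Thus ${}^g(-)$ lands in $\ZC_{\BQ}(\Bm)$.

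For part (i), the remaining point is fullness: I must show $\Hom_{U(\Fg)}({}^g N, {}^g N') = \Hom_{U(\Fgl_m)}(N,N')$ for $N,N' \in \ZC_{\Fgl_m}$. The inclusion $\supseteq$ is clear (any $\Fgl_m$-map is an $\Fg$-map via $g$). For $\subseteq$, note that $g$ is surjective, so a $\QQ(\BQ)$-linear map $f: N \ra N'$ that commutes with the $U(\Fg)$-action commutes in particular with the images $g(\CX^\pm_{(i,k),0})$ and $g(\CI_{(j,l),0})$, i.e. with $e_{(i,k)}$ (up to the nonzero scalar $-Q_k$ when $i=m_k$, which does not affect commutation), $f_{(i,k)}$, and $K_{(j,l)}$ — and these generate $U(\Fgl_m)$. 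Hence $f$ is $U(\Fgl_m)$-linear, giving equality of Hom-spaces; combined with the fact that ${}^g(-)$ is fully faithful and the essential image is closed under the relevant structure, $\ZC_{\Fgl_m}$ is a full subcategory of $\ZC_{\BQ}(\Bm)$.

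For part (ii), let $\la \in P^+$ and let $\D_{\Fgl_m}(\la)$ be the simple highest weight $U(\Fgl_m)$-module of highest weight $\la$, with highest weight vector $v_0$. Viewing it as the $U(\Fg)$-module ${}^g\D_{\Fgl_m}(\la)$, I would check that $v_0$ is a maximal vector of weight $(\la,\mathbf 0)$ in the sense of the paragraph on highest weight modules: indeed $\CX^+_{(i,k),t}\cdot v_0 = g(\CX^+_{(i,k),t})\cdot v_0$, which for $t=0$ equals $e_{(i,k)}\cdot v_0$ (or $-Q_k e_{(m_k,k)}\cdot v_0$) $=0$ since $v_0$ is a highest weight vector in $\D_{\Fgl_m}(\la)$, and for $t\ge 1$ equals $0$ because $g(\CX^+_{(i,k),t})=0$; moreover $\CI_{(j,l),0}\cdot v_0 = K_{(j,l)}\cdot v_0 = \lan\la,h_{(j,l)}\ran v_0$ and $\CI_{(j,l),t}\cdot v_0 = 0$ for $t\ge 1$, so the multiset is $\mathbf 0$. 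Since $v_0$ generates $\D_{\Fgl_m}(\la)$ over $U(\Fgl_m)$ and $g$ is surjective, $v_0$ generates ${}^g\D_{\Fgl_m}(\la)$ over $U(\Fg)$, so ${}^g\D_{\Fgl_m}(\la)$ is a highest weight $U(\Fg)$-module of highest weight $(\la,\mathbf 0)$; being simple as a $U(\Fgl_m)$-module, it is a fortiori simple as a $U(\Fg)$-module via the surjection $g$, hence ${}^g\D_{\Fgl_m}(\la) \cong L(\la,\mathbf 0)$. The only genuine subtlety — and the step I would be most careful about — is the fullness argument in part (i): one must be sure that $U(\Fg)$-linearity really forces $U(\Fgl_m)$-linearity, which is exactly where surjectivity of $g$ is used, and that the twist by the scalars $-Q_k$ in $g(\CX^+_{(m_k,k),0})$ is harmless since $Q_k \neq 0$ in $\QQ(\BQ)$.
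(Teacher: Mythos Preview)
Your argument is correct and is exactly the standard verification the paper has in mind; the paper itself does not give a proof, only the one-line remark ``By using the surjection $g : U(\Fg) \ra U(\Fgl_m)$ induced from \eqref{surjection gQm to glm}, we have the following proposition.'' Your treatment of the three defining conditions of $\ZC_{\BQ}(\Bm)$, the fullness via surjectivity of $g$, and the transfer of simplicity along a surjection are all sound, including the observation that the nonzero scalar $-Q_k$ is harmless.
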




\section{Algebra $\CU_{q,\BQ}(\Bm)$} 
\label{CUqQ}
In this section, 
we introduce an algebra $\CU_{q,\BQ}(\Bm)$ 
with parameters $q$ and $\BQ=(Q_1,\dots, Q_{r-1})$ 
associated with the Cartan data in the paragraph \ref{Cartan data}. 
Then we study some basic structures of $\CU_{q,\BQ}(\Bm)$. 
In particular, we can regard $\CU_{q,\BQ}(\Bm)$ as a \lq\lq $q$-analogue" 
of the universal enveloping algebra $U(\Fg_{\BQ}(\Bm))$ of the Lie algebra $\Fg_{\BQ}(\Bm)$ 
introduced in the section \S \ref{Section FgQ(Bm)}.

\para 
Put $\AA =\ZZ[\BQ][q,q^{-1}] = \ZZ[q,q^{-1}, Q_1,\dots, Q_{r-1}]$, 
where $q, Q_1,\dots, Q_{r-1}$ are indeterminate elements over $\ZZ$, 
and let $\KK = \QQ(q, Q_1,\dots, Q_{r-1})$ be the quotient field of $\AA$.


\begin{definition}
We define the associative algebra 
$\CU=\CU_{q,\BQ}(\Bm)$ 
over  $\KK$ by the following generators and defining relations: 
\begin{description}
\item[Generators] 
$\CX_{(i,k),t}^{\pm}$,  $\CI_{(j,l),t}^{\pm}$, $\CK_{(j,l)}^{\pm}$ ($(i,k) \in \vG'(\Bm)$, $(j,l) \in \vG(\Bm)$, $t \geq 0$). 

\item[Relations] 
\begin{align*}
\\[-2em]
&\tag{R1} 
	\CK_{(j,l)}^+ \CK_{(j,l)}^- = \CK_{(j,l)}^- \CK_{(j,l)}^+ =1, \quad 
	(\CK_{(j,l)}^{\pm})^2 = 1 \pm (q-q^{-1}) \CI_{(j,l),0}^{\mp},
\\
& \tag{R2} 
	[\CK_{(i,k)}^+, \CK_{(j,l)}^+] = [\CK_{(i,k)}^+, \CI_{(j,l),t}^{\s}] = [\CI_{(i,k),s}^{\s}, \CI_{(j,l),t}^{\s'}] =0 \,  \, 
	(\s,\s' \in \{+, -\}), 
\\
& \tag{R3} 
	\CK_{(j,l)}^+ \CX_{(i,k),t}^{\pm} \CK_{(j,l)}^- = q^{\pm a _{(i,k)(j,l)}} \CX_{(i,k),t}^{\pm}, 
\\
\tag{R4} 
	\begin{split}
	 &q^{\pm a_{(i,k)(j,l)}} \CI_{(j,l),0}^{\pm} \CX_{(i,k),t}^{+} - q^{\mp a_{(i,k)(j,l)}} \CX_{(i,k),t}^{+} \CI_{(j,l),0}^{\pm}   
		=  a_{(i,k)(j,l)} \CX_{(i,k),t}^{+}, 
	\\ 
	 &q^{\mp a_{(i,k)(j,l)}} \CI_{(j,l),0}^{\pm} \CX_{(i,k),t}^{-} - q^{\pm a_{(i,k)(j,l)}} \CX_{(i,k),t}^{-} \CI_{(j,l),0}^{\pm}  
		= -  a_{(i,k)(j,l)} \CX_{(i,k),t}^{-},  
	\end{split}
\\
\tag{R5}
	\begin{split} 
	&[\CI_{(j,l),s+1}^{\pm}, \CX_{(i,k),t}^{+}] 
		= q^{ \pm a_{(i,k)(j,l)}} \CI_{(j,l),s}^{\pm} \CX_{(i,k),t+1}^{+} 
			- q^{ \mp a_{(i,k)(j,l)}} \CX_{(i,k),t+1}^{+} \CI_{(j,l),s}^{\pm}, 
	\\
	&[\CI_{(j,l),s+1}^{\pm}, \CX_{(i,k),t}^{-}] 
		= q^{\mp a_{(i,k)(j,l)}} \CI_{(j,l),s}^{\pm} \CX_{(i,k),t+1}^{-} 
			- q^{\pm a_{(i,k)(j,l)}} \CX_{(i,k),t+1}^{-} \CI_{(j,l),s}^{\pm},  
	\end{split}
\\
&\tag{R6} 
	[\CX_{(i,k),t}^+, \CX_{(j,l),s}^-] 
	\\
	& = \d_{(i,k),(j,l)} \begin{cases} 
		\dis 
		\wt{\CK}_{(i,k)}^+ \CJ_{(i,k),s+t} & \text{ if } i \not=m_k, 
		\\ 
		\dis 
		- Q_k \wt{\CK}_{(m_k,k)}^+ \CJ_{(m_k,k),s+t} + \wt{\CK}_{(m_k,k)}^+ \CJ_{(m_k,k),s+t+1} 
			& \text{ if } i=m_k, 
	\end{cases} 
\\
\tag{R7} 
	\begin{split} 
	&[\CX_{(i,k),t}^{\pm}, \CX_{(j,l),s}^{\pm}] =0 \quad \text{ if } (j,l) \not= (i,k), (i \pm 1, k), 
	\\ &
	\CX_{(i,k),t+1}^{\pm} \CX_{(i,k),s}^{\pm} - q^{\pm 2} \CX_{(i,k),s}^{\pm} \CX_{(i,k),t+1}^{\pm} 
		= q^{\pm 2} \CX_{(i,k),t}^{\pm} \CX_{(i,k),s+1}^{\pm} - \CX_{(i,k),s+1}^{\pm} \CX_{(i,k),t}^{\pm},  
	\\ & 
	\CX_{(i,k),t+1}^+ \CX_{(i+1,k),s}^+ - q^{-1} \CX_{(i+1,k),s}^+ \CX_{(i,k),t+1}^+ 
		= \CX_{(i,k),t}^+ \CX_{(i+1,k),s+1}^+ - q \CX_{(i+1,k),s+1}^+ \CX_{(i,k),t}^+, 
	\\ & 
	\CX_{(i+1,k),s}^- \CX_{(i,k),t+1}^- - q^{-1} \CX_{(i,k),t+1}^- \CX_{(i+1,k),s}^- 
		= \CX_{(i+1,k),s+1}^- \CX_{(i,k),t}^- - q \CX_{(i,k),t}^- \CX_{(i+1,k),s+1}^-, 
	\end{split} 
\\
\tag{R8} 
	\begin{split}
	& \CX_{(i \pm 1,k),u}^+ \big( \CX_{(i,k),s}^+ \CX_{(i,k),t}^+ + \CX_{(i,k),t}^+ \CX_{(i,k),s}^+ \big) 
		+ \big( \CX_{(i,k),s}^+ \CX_{(i,k),t}^+ + \CX_{(i,k),t}^+ \CX_{(i,k),s}^+ \big) \CX_{(i \pm 1,k),u}^+ 
	\\ 
	&= (q+q^{-1}) \big( \CX_{(i,k),s}^+ \CX_{(i \pm1,k),u}^+ \CX_{(i,k),t}^+ 
		+ \CX_{(i,k),t}^+ \CX_{(i \pm 1,k),u}^+ \CX_{(i,k),s}^+ \big), 
	\\[0.5em]
	& \CX_{(i \pm 1,k),u}^- \big( \CX_{(i,k),s}^- \CX_{(i,k),t}^- + \CX_{(i,k),t}^- \CX_{(i,k),s}^- \big) 
		+ \big( \CX_{(i,k),s}^- \CX_{(i,k),t}^- + \CX_{(i,k),t}^- \CX_{(i,k),s}^- \big) \CX_{(i \pm 1,k),u}^- 
	\\ 
	&= (q+q^{-1}) \big( \CX_{(i,k),s}^- \CX_{(i \pm1,k),u}^- \CX_{(i,k),t}^- 
		+ \CX_{(i,k),t}^- \CX_{(i \pm 1,k),u}^- \CX_{(i,k),s}^- \big), 
	\end{split}
\end{align*}
\end{description}
where we put $\wt{\CK}_{(i,k)}^+ = \CK_{(i,k)}^+ \CK_{(i+1,k)}^-$, 
	$\wt{\CK}_{(i,k)}^- = \CK_{(i,k)}^- \CK_{(i+1,k)}^+$ and 
\begin{align*}
\CJ_{(i,k),t} = 
\begin{cases}
	\CI_{(i,k),0}^+ - \CI_{(i+1,k),0}^-  + (q-q^{-1}) \CI_{(i,k),0}^+ \CI_{(i+1,k),0}^- 
	& \text{ if } t=0, 
	\\
	\dis q^{-t} \CI_{(i,k),t}^+ - q^t \CI_{(i+1,k),t}^- 
	- (q-q^{-1}) \sum_{b=1}^{t-1} q^{- t + 2 b} \CI_{(i,k),t-b}^+ \CI_{(i+1,k),b}^- 
	& \text{ if } t >0.
	\end{cases} 
\end{align*} 
\end{definition}

\remark 
The relations (R4) follows from the relations (R1) and (R3) in $\CU_{q,\BQ}(\Bm)$. 
Thus, we do not need the relations (R4) as a defining relations of $\CU_{q,\BQ}(\Bm)$. 
However, (R4) does not follows from (R1) and (R3) in the integral forms 
$\CU_{\AA,q,\BQ}^\star (\Bm)$ and $\CU_{\AA,q,\BQ}(\Bm)$ defined below. 
Then, we require the relations (R4) in a defining relations of $\CU_{q,\BQ}(\Bm)$.

\para 
By the relation (R1), for $(i,k) \in\vG'(\Bm)$, we have 
\begin{align}
\wt{\CK}_{(i,k)}^+ \CJ_{(i,k),0} = \frac{\wt{\CK}_{(i,k)}^+ - \wt{\CK}_{(i,k)}^-}{q-q^{-1}}. 
\end{align}
Thus, in the case where $s=t=0$, we can replace the relation (R6) by 
\begin{align}
[\CX_{(i,k),0}^+, \CX_{(j,l),0}^-] 
= 	\d_{(i,k),(j,l)}  
	\begin{cases} 
		\dis \frac{\wt{\CK}_{(i,k)}^+ - \wt{\CK}_{(i,k)}^-}{q-q^{-1}} & \text{ if } i \not= m_k, 
		\\
		\dis - Q_k \frac{\wt{\CK}_{(m_k,k)}^+ - \wt{\CK}_{(m_k,k)}^-}{q-q^{-1}} 
		+ \wt{\CK}_{(m_k,k)}^+ \CJ_{(m_k,k),1} 
		& \text{ if } i=m_k. 
	\end{cases}
\end{align} 
By (R8), if $s=t$, we have 
\begin{align}
\begin{split}
&\CX_{(i \pm 1,k),u}^+ (\CX_{(i,k),t}^+)^2 - (q+q^{-1}) \CX_{(i,k),t}^+ \CX_{(i \pm 1,k),u}^+ \CX_{(i,k),t}^+ 
	+ (\CX_{(i,k),t}^+)^2 \CX_{(i \pm 1,k),u}^+ =0, 
\\
&\CX_{(i \pm 1,k),u}^- (\CX_{(i,k),t}^-)^2 - (q+q^{-1}) \CX_{(i,k),t}^- \CX_{(i \pm 1,k),u}^- \CX_{(i,k),t}^- 
	+ (\CX_{(i,k),t}^-)^2 \CX_{(i \pm 1,k),u}^- =0.  
\end{split}
\end{align}
By (R4) and (R5), we have 
\begin{align}
& [\CI_{(j,l),1}^+, \CX_{(i,k),t}^{\pm}] = [\CI_{(j,l),1}^-, \CX_{(i,k),t}^{\pm}] =\pm a_{(i,k)(j,l)} \CX_{(i,k), t+1}^{\pm}. 
\end{align} 
By the induction on $s$ using the relation (R6), for $s \geq 1$, we can show that 
\begin{align} 
\label{CI CX+}
\begin{split}
&[\CI_{(j,l),s}^{\pm}, \CX_{(i,k),t}^{+}] 
\\&
= a_{(i,k)(j,l)} q^{\pm a_{(i,k)(j,l)} (s-1)} \CX_{(i,k),t+s}^{+} 
	\pm a_{(i,k)(j,l)} (q-q^{-1}) \sum_{p=1}^{s-1} q^{\pm a_{(i,k)(J,l)} (p-1)}\CX_{(i,k),t+p}^+ \CI_{(j,l),s-p}^{\pm} 
\\
&
= a_{(i,k)(j,l)} q^{\mp a_{(i,k)(j,l)} (s-1)} \CX_{(i,k), t+s}^+ 
	\pm a_{(i,k)(j,l)} (q-q^{-1}) \sum_{p=1}^{s-1} q^{\mp a_{(i,k)(j,l)} (p-1)} \CI_{(j,l),s-p}^{\pm} \CX_{(i,k),t+p}^+,  
\end{split} 
\end{align}
and 
\begin{align}
\label{CI CX-}
\begin{split}
&[\CI_{(j,l),s}^{\pm}, \CX_{(i,k),t}^-] 
\\
&= - a_{(i,k)(j,l)} q^{\mp a_{(i,k)(j,l)} (s-1)} \CX_{(i,k),t+s}^- 
	\mp a_{(i,k)(j,l)} (q-q^{-1}) \sum_{p=1}^{s-1} q^{\mp a_{(i,k)(j,l)} (p-1)} \CX_{(i,k),t+p}^- \CI_{(j,l),s-p}^{\pm} 
\\
&= - a_{(i,k)(j,l)} q^{\pm a_{(i,k)(j,l)} (s-1)} \CX_{(i,k),t+s}^- 
	\mp a_{(i,k)(j,l)} (q-q^{-1}) \sum_{p=1}^{s-1}  q^{\pm a_{(i,k)(j,l)} (p-1)} \CI_{(j,l),s-p}^{\pm} \CX_{(i,k),t+p}^-.
\end{split}
\end{align}


\para 
Let $\CU^+ = \CU_{q,\BQ}^+(\Bm)$, $\CU^- = \CU^-_{q,\BQ}(\Bm)$ and $\CU^0 = \CU^0_{q,\BQ}(\Bm)$ 
be the subalgebra of $\CU$ generated by 
\begin{align*}
& \{ \CX_{(i,k),t}^+ \,|\, (i,k) \in \vG' (\Bm), \, t \geq 0\}, 
\\
& \{ \CX_{(i,k),t}^- \,|\, (i,k) \in \vG' (\Bm), \, t \geq 0\} \text{ and } 
\\
& \{ \CI_{(j,l), t}^{\pm}, \, \CK_{(j,l)}^{\pm} \,|\, (j,l) \in \vG(\Bm), \, t \geq 0\}
\end{align*}
respectively. 
Then, we have the following triangular decomposition of $\CU$ from the relations 
(R1)-(R8), \eqref{CI CX+} and \eqref{CI CX-}. 


\begin{prop}
We have 
\begin{align}
\label{tri decor U}
\CU = \CU^- \CU^0 \CU^+.
\end{align} 
\end{prop}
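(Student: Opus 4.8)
The plan is to establish the triangular decomposition $\CU = \CU^- \CU^0 \CU^+$ by the standard strategy used for quantum affine algebras and current-type quantum groups: first show that the multiplication map $\CU^- \ot \CU^0 \ot \CU^+ \to \CU$ is surjective (i.e. every monomial in the generators can be rewritten so that all the $\CX^-$'s come first, the $\CX^0$-type generators $\CI^{\pm}, \CK^{\pm}$ in the middle, and the $\CX^+$'s last), and then remark that this is really the only content being claimed, since the statement of the Proposition is only the weak form $\CU = \CU^-\CU^0\CU^+$ (a factorization as a product of subspaces), not the strong PBW-type statement that the map is an isomorphism. The latter is explicitly flagged in the introduction as a conjecture.

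\textbf{First} I would set up the normal-ordering argument. Consider a word $w$ in the generators $\CX_{(i,k),t}^{\pm}, \CI_{(j,l),t}^{\pm}, \CK_{(j,l)}^{\pm}$. I want to define a suitable ordering/complexity measure on such words (for instance, counting inversions: occurrences of a $+$-generator to the left of a $-$-generator, plus occurrences of an $\CI$- or $\CK$-generator to the left of a $\CX^+$, etc., possibly refined by a degree in $t$ to make the measure well-founded) and show that any relation can be used to strictly decrease it. The relevant moves are: (R3) and (R4) move $\CK^{\pm}$ and $\CI^{\pm}_{\bullet,0}$ past $\CX^{\pm}$ at the cost of scalars and lower-inversion terms; (R5), together with its consequences \eqref{CI CX+} and \eqref{CI CX-}, move $\CI_{(j,l),s}^{\pm}$ ($s\ge 1$) past $\CX^{\pm}$, producing terms in which an $\CX^{\pm}$ has been pulled to the outside and the $\CI$ pushed inward — exactly the right direction; (R6) rewrites a product $\CX^+ \CX^-$ (wrong order) as either $0$ or an element of $\CU^0$ (a product of $\wt\CK^+$ and $\CJ$, both of which lie in $\CU^0$), strictly removing one $+/-$ inversion; (R1), (R2) keep us inside $\CU^0$; and (R7), (R8) are relations purely among the $\CX^+$'s (or purely among the $\CX^-$'s) and so do not affect the cross-orderings. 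One checks that each rewrite either lowers the number of $+/-$ inversions, or preserves it while lowering the number of $\CI$-past-$\CX^+$ inversions, or preserves both while lowering a $t$-degree; since the measure takes values in a well-ordered set, the process terminates, and the terminal words are exactly products from $\CU^-\CU^0\CU^+$. Summing over the terms of $w$ then shows $w \in \CU^-\CU^0\CU^+$, and since such words span $\CU$ and $\CU^-\CU^0\CU^+$ is visibly closed under nothing in particular but is the image of a linear map, we get $\CU = \CU^-\CU^0\CU^+$.

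\textbf{The main obstacle} will be bookkeeping the interaction terms so that the complexity measure genuinely decreases: in \eqref{CI CX+}--\eqref{CI CX-} the right-hand side contains sums $\sum_p \CX^+_{(i,k),t+p}\CI^{\pm}_{(j,l),s-p}$ in which an $\CX^+$ sits to the \emph{left} of an $\CI$ — i.e. still in the wrong order — and one must see that the $t$-index has strictly increased (here $t+p > t$ since $p\ge 1$, so a degree bounded above by the total $t$-degree of the original word forces termination), and similarly in (R4)--(R5) the scalar-shift terms must be tracked. A second, more delicate point is commuting $\CI^{\pm}_{(j,l),s}$ and $\CK^{\pm}$ with $\CX^-$ on the \emph{other} side (pushing an $\CX^-$ leftward past an $\CI$): relations \eqref{CI CX-} again do this but with an $\CX^-$ produced on the outside, which is correct, so the two-sided nature of the $\CI$-relations is actually what makes the argument work. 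I would verify carefully that no rewrite ever \emph{creates} a $+/-$ inversion (it cannot: the $\CX^+$'s and $\CX^-$'s only meet each other through (R6), which destroys an inversion, and through (R7)--(R8), which never mix $+$ with $-$), so the lexicographic measure (number of $+/-$ inversions, then number of $\CI$-or-$\CK$-past-$\CX^+$ inversions, then total $t$-degree) is strictly decreasing under every rewrite. Once termination is in hand the Proposition follows immediately.
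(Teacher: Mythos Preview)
Your approach is correct and is exactly what the paper has in mind: the paper gives no detailed proof, simply recording that the triangular decomposition follows from relations (R1)--(R8) together with \eqref{CI CX+} and \eqref{CI CX-}. Your normal-ordering argument is the standard way to make this precise.

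Two small remarks. First, you wrote ``$\CI$- or $\CK$-generator to the left of a $\CX^+$'' as an inversion, but in the target order $\CU^-\CU^0\CU^+$ that is the \emph{correct} position; you mean $\CX^+$ to the left of an $\CI$ or $\CK$ (your later paragraph confirms you had the right picture). Second, your termination worry about the tail $\sum_p \CX^+_{(i,k),t+p}\CI^{\pm}_{(j,l),s-p}$ in \eqref{CI CX+} disappears if you instead use the \emph{second} displayed form of that identity, whose tail is $\sum_p \CI^{\pm}_{(j,l),s-p}\CX^+_{(i,k),t+p}$, already in the correct order; likewise use the first form of \eqref{CI CX-} when pushing $\CX^-$ leftward. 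Then each single rewrite completely removes one $(0,+)$ or $(0,-)$ inversion and creates none, so a plain lexicographic count (first $+/-$ inversions, then $(0,\pm)$ inversions) suffices and no $t$-degree tiebreaker is needed.
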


\remark 
We conjecture that 
the multiplication map 
$\CU^- \otimes_{\KK} \CU^0 \otimes_{\KK} \CU^+ \ra \CU$ ($x \otimes y \otimes z \mapsto xyz$) 
gives an isomorphism as vector spaces. 
More precisely, we expect the existence of a PBW type basis of $\CU$ 
(cf. Proposition \ref{Prop basis g} and \eqref{Hom gQ(m) UqQ(m)} 
with Remark \ref{Remark Hom gQ(m) UqQ(m) iso}).

\para 
We have some relations between the algebra $\CU$ and a quantum group 
associated with the general linear Lie algebra as follows. 

Let $U_q(\Fgl_m)$ be the quantum group associated with the general linear Lie algebra $\Fgl_m$ over $\KK$. 
Namely, $U_q(\Fgl_m)$ is an associative algebra over $\KK$ generated by 
$e_i, f_i$ ($1 \leq i \leq m-1$) and $K_j^{\pm}$ ($1 \leq j \leq m$) with the following defining relations: 
\begin{align*}
& \tag{Q1} 
K_i^+ K_j^+ = K_j^+ K_i^+, \quad K_i^+ K_i^- = K_i^- K_i^+ =1, 
\\
& \tag{Q2} 
K_j^+ e_i K_j^- = q^{a_{ij}} e_i, \quad K_j^+ f_i K_j^- = q^{- a_{ij}} f_i, 
\text{ where } a_{ij} = \lan \a_i, h_j \ran, 
\\
& \tag{Q3} 
e_i f_j - f_j e_i = \d_{i,j} \frac{K_i^+ K_{i+1}^- - K_i^- K_{i+1}^+}{q-q^{-1}}, 
\\
& \tag{Q4} 
e_{i \pm 1} e_i^2 - (q+q^{-1}) e_i e_{i \pm 1} e_i + e_i^2 e_{i \pm 1} =0, 
\quad e_i e_j = e_j e_i \, (|i-j|\geq 2), 
\\
& \tag{Q5} 
f_{i \pm 1} f_i^2 - (q+q^{-1}) f_i f_{i \pm 1} f_i + f_i^2 f_{i \pm 1} =0, 
\quad f_i f_j = f_j f_i \, (|i-j|\geq 2). 
\end{align*}

Let 
$U_q(\Fgl_{m_1} \oplus \dots \oplus \Fgl_{m_r}) \cong U_q(\Fgl_{m_1}) \otimes \dots \otimes U_q (\Fgl_{m_r})$ 
be the Levi subalgebra of $U_q(\Fgl_m)$ 
associated with the Levi subalgebra $\Fgl_{m_1} \oplus \dots \oplus \Fgl_{m_r}$ of $\Fgl_m$. 
Then generators of $U_q(\Fgl_{m_1} \oplus \dots \oplus \Fgl_{m_r})$ are given by 
$e_{(i,k)}, f_{(i,k)}$ $(1 \leq i \leq m_k-1, 1 \leq k \leq r)$ and $K^{\pm}_{(j,l)}$ ($(j,l) \in \vG(\Bm)$), 
where we use the identification \eqref{identify vG 1 to m} for indices.   


\begin{prop}\
\label{Proposition relation with quantum groups}
\begin{enumerate}
\item 
There exits a surjective homomorphism of algebras 
\begin{align}
\label{surjection UqQm to Uqglm}
g : \CU_{q,\BQ}(\Bm)  \ra U_q(\Fgl_m)
\end{align} 
such that 
\begin{align*} 
&g (\CX_{(i,k),0}^+) = \begin{cases} e_{(i,k)} & \text{ if } i \not= m_k, \\ - Q_k e_{(m_k,k)} & \text{ if } i=m_k, \end{cases}  
\quad 
g (\CX_{(i,k),0}^-) = f_{(i,k)}, 
\\
&g (\CK_{(j,l)}^{\pm}) = K_{(j,l)}^{\pm} 
\text{ and } 
g(\CX_{(i,k),t}^{\pm})=g(\CI^{\pm}_{(j,l),t}) =0 \text{ for } t \geq 1.
\end{align*}

\item 
There exists an injective homomorphism of algebras 
\begin{align} 
\label{injection Levi to U}
\iota : U_q(\Fgl_{m_1} \oplus \dots \oplus \Fgl_{m_r}) \ra \CU_{q,\BQ}(\Bm)
\end{align} such that 
$\iota(e_{(i,k)})= \CX_{(i,k),0}^+$, $\iota (f_{(i,k)}) = \CX_{(i,k),0}^-$ and $\iota (K_{(j,l)}^{\pm}) = \CK_{(j,l)}^{\pm}$. 
\end{enumerate}
\end{prop}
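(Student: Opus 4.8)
The plan is to prove both parts by verifying, on generators, that the stated assignments respect the defining relations of the source algebra, and then to deduce injectivity of $\iota$ in (ii) by composing with $g$, exactly as in the proof of the analogous statement for the Lie algebra $\Fg_{\BQ}(\Bm)$.

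For (i), I would first note that the relation (R1) forces the images of the remaining generators $\CI_{(j,l),0}^{\pm}$: from $(\CK_{(j,l)}^+)^2 = 1 + (q-q^{-1})\CI_{(j,l),0}^-$ and $(\CK_{(j,l)}^-)^2 = 1 - (q-q^{-1})\CI_{(j,l),0}^+$ one is compelled to set $g(\CI_{(j,l),0}^-) = \bigl((K_{(j,l)}^+)^2-1\bigr)/(q-q^{-1})$ and $g(\CI_{(j,l),0}^+) = \bigl(1-(K_{(j,l)}^-)^2\bigr)/(q-q^{-1})$. Then I would run through (R1)--(R8) one at a time. The structural observation to isolate first, and then reuse, is that any instance of (R2)--(R8) that carries a generator $\CX_{(i,k),t}^{\pm}$ or $\CI_{(j,l),t}^{\pm}$ with $t \geq 1$ (equivalently, any instance whose right-hand side involves a $\CJ_{\bullet,t}$ with $t>0$, noting $g(\CJ_{(i,k),t})=0$ for $t>0$) goes to a trivial identity, since each such generator maps to $0$; so only the degree-zero part of each relation needs genuine checking. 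There, (R1)--(R3) map directly onto (Q1), (Q2) together with $\wt{\CK}_{(i,k)}^{\pm} \mapsto K_{(i,k)}^{\pm} K_{(i+1,k)}^{\mp}$; (R4) need not be treated separately, being a consequence of (R1) and (R3) as in the Remark; (R6) at $s=t=0$ reduces, via $\wt{\CK}_{(i,k)}^+ \CJ_{(i,k),0} = (\wt{\CK}_{(i,k)}^+ - \wt{\CK}_{(i,k)}^-)/(q-q^{-1})$, to (Q3) when $i \neq m_k$ and to $-Q_k$ times (Q3) at the index $\gamma((m_k,k))$ when $i=m_k$ (the term $\wt{\CK}_{(m_k,k)}^+ \CJ_{(m_k,k),1}$ dying and the identification $(m_k+1,k)=(1,k+1)$ matching indices); and (R7), (R8) at $s=t=u=0$, after the $s=t$ reductions recorded just after the definition, become the $|i-j| \geq 2$ commutativity and the quantum Serre relations (Q4), (Q5), with the invertible scalars $-Q_k$ harmless. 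Surjectivity of $g$ is immediate, $e_{(i,k)}$, $f_{(i,k)}$, $K_{(j,l)}^{\pm}$ all being in the image.

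For (ii), I would check (Q1)--(Q5) for the images $\CX_{(i,k),0}^+$, $\CX_{(i,k),0}^-$, $\CK_{(j,l)}^{\pm}$ in $\CU_{q,\BQ}(\Bm)$. The point that makes this clean is that the Levi generators occur only for $1 \leq i \leq m_k-1$, so under $\gamma$ they avoid the junction indices $m_1 + \dots + m_k$ ($1 \leq k \leq r-1$); hence (R6) is invoked only in its $i \neq m_k$ branch, reproducing (Q3) with no $Q_k$ present, while two indices $(i,k)$ and $(j,l)$ from distinct blocks are automatically non-adjacent, so (R7) yields the commutativity in (Q4), (Q5) and (R8) at $s=t=u=0$ yields the quantum Serre relations; (Q1), (Q2) come from (R1)--(R3). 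For injectivity I would use the well-known fact that the natural embedding $\iota' : U_q(\Fgl_{m_1} \oplus \dots \oplus \Fgl_{m_r}) \hra U_q(\Fgl_m)$ of a Levi subalgebra is injective: since $g(\CX_{(i,k),0}^+) = e_{(i,k)}$ for $i \leq m_k-1$, $g(\CX_{(i,k),0}^-)=f_{(i,k)}$ and $g(\CK_{(j,l)}^{\pm}) = K_{(j,l)}^{\pm}$, we get $g \circ \iota = \iota'$, whence $\iota$ is injective.

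I expect the only real obstacle to be the bookkeeping in part (i): arranging the verification of (R6) so that the $\CJ_{\bullet,t}$-expansions and the (R1)-forced values of $\CI_{\bullet,0}^{\pm}$ are handled uniformly, and keeping the index identifications ($(m_k+1,k)=(1,k+1)$ and the bijection $\gamma$) straight when matching against relations of $U_q(\Fgl_m)$; everything else is routine direct calculation.
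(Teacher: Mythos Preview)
Your proposal is correct and follows essentially the same approach as the paper: the paper's proof simply asserts that the well-definedness of $g$ and $\iota$ is checked by direct calculation, notes that $g$ is clearly surjective, and then derives the injectivity of $\iota$ from the identity $g \circ \iota = \iota'$ with $\iota'$ the natural Levi embedding --- exactly the argument you outline. Your write-up just supplies the details of the ``direct calculations'' that the paper omits (the forced values of $g(\CI_{(j,l),0}^{\pm})$ from (R1), the vanishing of all higher-degree instances, and the matching of the degree-zero relations with (Q1)--(Q5)).
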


\begin{proof}
We can check the well-definedness of $g$ and $\iota$ by direct calculations. 
Clearly $g$ is surjective. 
Let $ \iota' : U_q(\Fgl_{m_1} \oplus \dots \oplus \Fgl_{m_r}) \ra U_q(\Fgl_m)$ be the natural embedding. 
Then, by investigating the image of generators, 
we see  that $\iota' = g \circ \iota$. 
This implies that $\iota$ is injective. 
\end{proof}

\remark 
\label{Remark evaluation}
The surjective homomorphism $g$ in \eqref{surjection UqQm to Uqglm} can be regarded as a special case 
of evaluation homomorphisms. 
However, we can not define evaluation homomorphisms for $\CU_{q,\BQ}(\Bm)$ in general 
although we can consider $\CU_{q,\BQ}(\Bm)$-modules corresponding to some evaluation modules. 
They will be studied in a subsequent paper.

\para 
\label{CUAqQstar}
Let $\CU^\star_{\AA}= \CU^\star_{\AA, q,\BQ}(\Bm)$ 
be the $\AA$-subalgebra of $\CU_{q,\BQ}(\Bm)$ generated by 
\begin{align*} 
\{ \CX_{(i,k),t}^{\pm}, \, \CI_{(j,l),t}^{\pm}, \, \CK_{(j,l)}^{\pm} \,|\,  
(i,k) \in \vG'(\Bm), \, (j,l) \in \vG(\Bm), \, t \geq 0\}. 
\end{align*} 
Then, $\CU_{\AA}^\star$ is an associative algebra over $\AA$ 
generated by the same generators with the defining relations (R1)-(R8). 
We regard $\QQ(\BQ)$ as an $\AA$-module through the ring homomorphism 
$\AA \ra \QQ(\BQ)$ ($q \mapsto 1$), 
and we consider the specialization $\QQ(\BQ) \otimes_{\AA} \CU_{\AA}^\star$ 
using this ring homomorphism.  
Let $\mathfrak{J}$ be the ideal of $\QQ(\BQ) \otimes_{\AA} \CU^\star_{\AA}$ 
generated by 
\begin{align}
\label{ideal J}
\{\CK_{(j,l)}^+ -1, \,  \CI_{(j,l),t}^+ - \CI_{(j,l),t}^- \,|\, (i,l) \in \vG(\Bm), \,  t \geq 0 \}.
\end{align} 

Let  $U(\Fg_{\BQ}(\Bm))$ be the universal enveloping algebra of 
the Lie algebra $\Fg_{\BQ}(\Bm)$ 
defined in Definition \ref{Def gQ(m)}. 
Then we can check that 
there exists a surjective homomorphism of algebras 
\begin{align}
\label{Hom gQ(m) UqQ(m)}
U(\Fg_{\BQ}(\Bm)) \ra \QQ(\BQ) \otimes_{\AA} \CU^\star_{\AA, q, \BQ}(\Bm) / \mathfrak{J} 
\end{align} 
such that $\CX_{(i,k),t}^{\pm} \mapsto \CX_{(i,k),t}^{\pm}$ and 
$\CI_{(j,l),t} \mapsto \CI_{(j,l),t}^+ (=\CI_{(j,l),t}^-)$. 

\remark 
\label{Remark Hom gQ(m) UqQ(m) iso}
We conjecture that the homomorphism \eqref{Hom gQ(m) UqQ(m)} is isomorphic. 
Then we may regard $\CU_{q,\BQ}(\Bm)$ as a $q$-analogue of $U(\Fg_{\BQ}(\Bm))$. 

We also remark that 
we have $(\CK_{(j,l)}^+)^2 =1$ in $\CU^\star_{\AA}$ by the relation (R1). 
On the other hand, 
there exists an algebra automorphism of $\CU$ such that 
$\CK_{(j,l)}^{\pm} \mapsto - \CK_{(j,l)}^{\pm}$ and the other generators send to the same generators. 
Thus, the choice of  signs for $\CK_{(j,l)}^+$ in \eqref{ideal J}
will not cause any troubles. 


\para 
\label{A-form}
The final of this section, 
we define the $\AA$-form of $\CU$ taking the divided powers. 

For $(i,k) \in \vG'(\Bm)$ and $t, d \in \ZZ_{\geq 0}$, 
put 
\begin{align*}
\CX_{(i,k),t}^{\pm (d)} = \frac{(\CX_{(i,k),t}^{\pm})^d}{[d]!} \in \CU. 
\end{align*}
For $(j,l) \in \vG(\Bm)$ and $d \in \ZZ_{\geq 0}$, put 
\begin{align*}
\left[ \begin{matrix} \CK_{(j,l)} ; 0 \\ d \end{matrix} \right] 
= \prod_{b=1}^d \frac{\CK_{(j,l)}^+  q^{ - b +1} - \CK_{(j,l)}^{-} q^{b-1}}{q^b - q^{-b}} \in \CU. 
\end{align*}

Let 
$\CU_{\AA} = \CU_{\AA,q,\BQ}(\Bm)$ be the $\AA$-subalgebra of $\CU$ 
generated by all 
$\CX_{(i,k),t}^{\pm (d)}$, $\CI_{(j,l),t}^{\pm}$, $\CK_{(j,l)}^{\pm}$ 
and $\left[ \begin{smallmatrix} \CK_{(j,l)} ; 0 \\ d \end{smallmatrix} \right] $.


\section{Representations of $\CU_{q,\BQ}(\Bm)$} 
\label{Section Rep CUqQ}
Thanks to the triangular decomposition \eqref{tri decor U} of $\CU=\CU_{q,\BQ}(\Bm)$, 
we can develop the weight theory to study $\CU$-modules in the usual manner as follows. 

\para 
\textbf{Highest weight modules.} 
For $\la \in P$ and a multiset $\Bvf=(\vf_{(j,l),t}^{\pm}  \,|\, (j,l) \in \vG(\Bm), t \geq 1)$ 
($\vf_{(j,l),t}^{\pm} \in \KK$), 
we say that a $\CU$-module $M$ is a highest weight module 
of highest weight $(\la, \Bvf)$ 
if there exists an element $v_0 \in M$ satisfying the following three conditions: 
\begin{enumerate}
\item 
$M$ is generated by $v_0$ as a $\CU$-module,

\item 
$\CX_{(i,k),t}^+ \cdot v_0 =0$ for all $(i,k) \in \vG'(\Bm)$ and $t \geq 0$, 

\item 
$\CK^+_{(j,l)} \cdot v_0 = q^{\lan \la, h_{(j,l)} \ran} v_0$ 
and 
$\CI_{(j,l),t}^{\pm} \cdot v_0 = \vf_{(j,l),t}^{\pm} v_0$ 
for $(j,l) \in \vG(\Bm)$ and $t \geq 1$. 
\end{enumerate}
If an element $v_0 \in M$ satisfies the above conditions (\roii) and (\roiii), 
we say that $v_0$ is a maximal vector of weight $(\la, \Bvf)$. 
In this case, the submodule $\CU \cdot v_0$  of $ M$ is a highest weight module of highest weight $(\la, \Bvf)$. 
If a maximal vector $v_0 \in M$ satisfies also the above condition (\roi), 
we say that $v_0$ is a highest weight vector. 

If $v_0 \in M$ is a maximal vector of weight $(\la, \Bvf)$, 
for $(j,l) \in \vG(\Bm)$, 
we have 
\begin{align*}
\CI_{(j,l),0}^{\pm} \cdot v = q^{\mp \la_{(j,l)}} [\la_{(j,l)}] v , \text{ where } \la_{(j,l)} = \lan \la, h_{(j,l)} \ran
\end{align*}
by the relation (R1). 

For a highest weight $\CU$-module $M$ of highest weight $(\la, \Bvf)$ 
with a highest weight vector $v_0 \in M$, 
we have $M = \CU^- \cdot v_0$ by the triangular decomposition \eqref{tri decor U}. 
Thus, the relation (R3) implies the weight space decomposition 
\begin{align}
\label{weight sp decor}
M = \bigoplus_{\mu \in P \atop \mu \leq \la} M_{\mu} 
\text{ such that }  
\dim_{\KK} M_{\la} =1, 
\end{align} 
where 
$M_{\mu} = \{ v \in M \,|\, \CK_{(j,l)}^+ \cdot v = q^{\lan \mu, h_{(j,l)} \ran} v \text{ for } (j,l) \in \vG(\Bm) \}$. 


\para 
\textbf{Verma modules.} 
Let $\wt{\CU}^0$ be the associative algebra over $\KK$ generated by 
$\CI_{(j,l),t}^{\pm}$ and $\CK_{(j,l)}^{\pm}$ for all $(j,l) \in \vG(\Bm)$ and $t \geq 0$ 
with the defining relations (R1) and (R2). 
We also define the associative algebra $\wt{\CU}^{\geq 0}$ generated by 
$\CX_{(i,k),t}^+$, $\CI_{(j,l),t}^{\pm}$ and $\CK_{(j,l)}^{\pm}$ 
for all $(i,k) \in \vG'(\Bm)$, $(j,l) \in \vG(\Bm)$ and $t \geq 0$ 
with the defining relations (R1)-(R8) except (R6). 
Then we have the homomorphism of algebras 
\begin{align} 
\label{hom wt U geq 0 to U}
\wt{\CU}^{\geq 0} \ra \CU 
\text{ such that } \CX_{(i,k),t}^+ \mapsto \CX_{(i,k),t}^+, \CI_{(j,l),t}^{\pm} \mapsto \CI_{(j,l),t}^{\pm}, 
\end{align} 
and  the surjective homomorphism of algebras 
\begin{align} 
\label{surj wt U geq 0 to wt U0}
\wt{\CU}^{\geq 0} \ra \wt{\CU}^0 
\text{ such that }  \CX_{(i,k),t}^+ \mapsto 0, \CI_{(j,l)}^{\pm} \mapsto \CI_{(j,l),t}^{\pm}, 
\CK_{(j,l)}^{\pm} \mapsto \CK_{(j,l)}^{\pm}.
\end{align} 

For $\la \in P$ and a multiset $\Bvf =(\vf_{(j,l),t}^{\pm})$,  
we define a ($1$-dimensional) simple $\wt{\CU}^0$-module 
$\Theta_{(\la, \Bvf)} = \KK v_0$ by 
\begin{align*}
\CK_{(j,l)}^+ \cdot v_0 = q^{\lan \la, h_{(j,l)} \ran} v_0, \quad 
\CI_{(j,l),t}^{\pm} \cdot v_0 = \vf_{(j,l),t}^{\pm} v_0 
\end{align*}
for $(j,l) \in \vG(\Bm)$ and $t \geq 1$. 
Then we define the Verma module $M(\la, \Bvf)$ as the induced module 
\begin{align*}
M(\la, \Bvf) = \CU \otimes_{\wt{\CU}^{\geq 0}} \Theta_{(\la, \Bvf)}, 
\end{align*}
where we regard $\Theta_{(\la, \Bvf)}$ (resp. $\CU$) 
as a left (resp. right) $\wt{\CU}^{\geq 0}$-module through the homomorphism 
\eqref{surj wt U geq 0 to wt U0}  (resp. \eqref{hom wt U geq 0 to U}). 

By definitions, 
the Verma module $M(\la, \Bvf)$ is a highest weight module of highest weight $(\la, \Bvf)$ 
with a highest weight vector $1 \otimes v_0$.  
Then  we see that 
any highest weight module of highest weight $(\la, \Bvf)$ is a quotient of $M(\la, \Bvf)$ 
by the universality of tensor products. 
We also see that 
$M(\la, \Bvf)$ has the unique simple top $L(\la,\Bvf) = M(\la, \Bvf)/ \rad M(\la,\Bvf)$ 
from the weight space decomposition \eqref{weight sp decor}. 

By using the homomorphism $\iota : U_q(\Fgl_{m_1} \oplus \dots \oplus\Fgl_{m_r}) \ra \CU$ 
in \eqref{injection Levi to U}, 
we have the following necessary condition for $L(\la,\Bvf)$ to be finite dimensional  
in a similar way as in the proof of Proposition \ref{Prop necessary condition fin. simpe}. 

\begin{prop}
For $\la \in P$ and a multiset $\Bvf=(\vf_{(j,l),t}^{\pm})$, 
if $L(\la, \Bvf)$ is finite dimensional, 
then we have 
$\la \in P^+_{\Bm}$. 
\end{prop}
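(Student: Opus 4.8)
The plan is to repeat, in the quantum setting, the argument that proves Proposition~\ref{Prop necessary condition fin. simpe}: the role of the universal enveloping algebra of the Levi subalgebra is now played by the Levi quantum group $U_q(\Fgl_{m_1}\oplus\dots\oplus\Fgl_{m_r})$, which sits inside $\CU$ via the injection $\iota$ of \eqref{injection Levi to U}. So suppose $L(\la,\Bvf)$ is finite dimensional and fix a highest weight vector $v_0\in L(\la,\Bvf)$ of highest weight $(\la,\Bvf)$. Since $L(\la,\Bvf)$ is a highest weight module it carries the weight space decomposition \eqref{weight sp decor}; in particular each $\CK_{(j,l)}^+$ acts semisimply on $L(\la,\Bvf)$ with eigenvalues of the form $q^{\lan\mu,h_{(j,l)}\ran}$ with $\mu\leq\la$, and $\CK_{(j,l)}^+\cdot v_0 = q^{\lan\la,h_{(j,l)}\ran}v_0$.

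First I would restrict scalars along $\iota$ and consider the submodule $W=\iota(U_q(\Fgl_{m_1}\oplus\dots\oplus\Fgl_{m_r}))\cdot v_0$ of $L(\la,\Bvf)$ over the Levi quantum group. Because $\iota(e_{(i,k)})=\CX_{(i,k),0}^+$ and every index $(i,k)$ with $1\leq i\leq m_k-1$, $1\leq k\leq r$ lies in $\vG'(\Bm)$, the highest weight condition $\CX_{(i,k),0}^+\cdot v_0=0$ gives $e_{(i,k)}\cdot v_0=0$ for all simple roots $\a_{(i,k)}$ of the Levi, while $\iota(K_{(j,l)}^{\pm})=\CK_{(j,l)}^{\pm}$ gives $K_{(j,l)}^+\cdot v_0=q^{\lan\la,h_{(j,l)}\ran}v_0$. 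Hence $v_0$ is a highest weight vector of highest weight $\la$ for the Levi quantum group, with the Cartan generators acting by integral powers of $q$, and $W$ --- a submodule of the finite dimensional module $L(\la,\Bvf)$ --- is a finite dimensional highest weight module of highest weight $\la$ over the Levi quantum group.

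The conclusion then follows from the standard finite dimensional representation theory of quantum groups. For each simple root $\a_{(i,k)}$ of the Levi one restricts $W$ further to the $U_q(\Fsl_2)$-subalgebra generated by $\iota(e_{(i,k)})$, $\iota(f_{(i,k)})$ and $\wt{\CK}_{(i,k)}^{\pm}=\CK_{(i,k)}^{\pm}\CK_{(i+1,k)}^{\mp}$; since $\wt{\CK}_{(i,k)}^+$ acts on $v_0$ by $q^{\lan\la,\a_{(i,k)}^{\vee}\ran}$ and $\iota(e_{(i,k)})\cdot v_0=0$, the submodule $U_q(\Fsl_2)\cdot v_0$ is a finite dimensional highest weight $U_q(\Fsl_2)$-module whose Cartan acts on the highest weight vector by an integral power of $q$, and such a module exists only if $\lan\la,\a_{(i,k)}^{\vee}\ran\in\ZZ_{\geq0}$. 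Letting $(i,k)$ range over $\vG(\Bm)\setminus\{(m_k,k)\mid 1\leq k\leq r\}$ gives exactly $\la\in P^+_{\Bm}$, by the definition of $P^+_{\Bm}$.

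This is a routine transcription of the proof of Proposition~\ref{Prop necessary condition fin. simpe}, and I do not expect a real obstacle; the only point worth a moment of care is the phrase ``integral power of $q$'' in the last step --- that is, that $\CK_{(j,l)}^+$ acts on $W$ with eigenvalues in $\{q^n\mid n\in\ZZ\}$ rather than in $\{\pm q^n\mid n\in\ZZ\}$ --- but this is immediate from the weight space decomposition \eqref{weight sp decor} of $L(\la,\Bvf)$.
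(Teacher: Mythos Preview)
Your proof is correct and follows essentially the same approach as the paper: the paper simply remarks that the quantum case is proven ``in a similar way as in the proof of Proposition~\ref{Prop necessary condition fin. simpe}'', i.e.\ by restricting along the injection $\iota:U_q(\Fgl_{m_1}\oplus\dots\oplus\Fgl_{m_r})\hra\CU_{q,\BQ}(\Bm)$ of \eqref{injection Levi to U} and invoking the known classification of finite dimensional highest weight modules over the Levi quantum group. Your write-up supplies more detail (the $U_q(\Fsl_2)$ reduction and the remark on integral powers of $q$) than the paper does, but the strategy is identical.
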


\para 
\textbf{Category $\ZC_{q,\BQ}(\Bm)$.} 
Let $\ZC_{q,\BQ}(\Bm)$ 
(resp. $ \ZC_{q, \BQ}^{\geq 0}(\Bm)$) 
be the full subcategory of $\CU \cmod$ consisting of 
$\CU$-modules satisfying the following conditions: 
\begin{enumerate}
\item 
If $M \in \ZC_{q,\BQ}(\Bm)$ (resp. $M \in \ZC_{q,\BQ}^{\geq 0}(\Bm)$), then $M$ is finite dimensional, 
\item 
If $M \in \ZC_{q,\BQ}(\Bm)$ (resp. $M \in \ZC_{q,\BQ}^{\geq 0}(\Bm)$), 
then $M$ has the weight space decomposition 
\begin{align*}
M = \bigoplus_{\la \in P} M_{\la} 
\quad 
(\text{resp. } M= \bigoplus_{ \la \in P_{\geq 0}} M_{\la}), 
\end{align*}
where $M_{\la} =\{ v \in M \,|\, \CK_{(j,l)}^+ \cdot m = q^{\lan \la, h_{(j,l)} \ran} v$ for $(j,l) \in \vG(\Bm)\}$, 
\item 
If $M \in \ZC_{q,\BQ}(\Bm)$ (resp. $M \in \ZC_{q,\BQ}^{\geq 0}(\Bm)$), then 
all eigenvalues of the action of $\CI_{(j,l),t}^{\pm}$ ($(j,l) \in \vG(\Bm), t \geq 0$) on $M$ 
belong to $\KK$. 
\end{enumerate}

By the usual argument, we have the following lemma. 

\begin{lem} 
Any simple object in $\ZC_{q,\BQ}(\Bm)$ is a highest weight module. 
\end{lem}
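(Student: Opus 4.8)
The plan is to run the usual argument: pass to a top weight space of the simple module and diagonalize the commutative Cartan-type subalgebra there to manufacture a highest weight vector.

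Let $M$ be a nonzero simple object of $\ZC_{q,\BQ}(\Bm)$. By the axioms of the category $M=\bigoplus_{\la\in P}M_\la$ is finite-dimensional, so the set $\{\la\in P\mid M_\la\neq 0\}$ is finite and nonempty; I would fix $\la$ maximal in it for the dominance order $\geq$. Relation (R3), together with $a_{(i,k)(j,l)}=\lan\a_{(i,k)},h_{(j,l)}\ran$, shows that $\CX_{(i,k),t}^{+}\cdot M_\mu\subseteq M_{\mu+\a_{(i,k)}}$ for all $(i,k)\in\vG'(\Bm)$ and $t\geq 0$; since $\la+\a_{(i,k)}>\la$, maximality of $\la$ forces $M_{\la+\a_{(i,k)}}=0$, so $\CX_{(i,k),t}^{+}\cdot M_\la=0$. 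Hence every nonzero vector of $M_\la$ already satisfies condition (ii) in the definition of a maximal vector.

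Next I would produce a common eigenvector inside $M_\la$ for the operators $\CI_{(j,l),t}^{\pm}$. By (R1)--(R2) the subalgebra of $\CU$ generated by all $\CI_{(j,l),t}^{\pm}$ and $\CK_{(j,l)}^{\pm}$ is commutative, and by (R2) it commutes with every $\CK_{(j',l')}^{+}$, hence preserves each weight space; in particular it acts on the finite-dimensional space $M_\la$, on which each $\CK_{(j,l)}^{+}$ is already the scalar $q^{\lan\la,h_{(j,l)}\ran}$ and, via (R1), each $\CI_{(j,l),0}^{\pm}$ is a scalar as well. By axiom (iii) the commuting family $\{\CI_{(j,l),t}^{\pm}\mid (j,l)\in\vG(\Bm),\ t\geq 1\}$ acts on $M_\la$ with all its eigenvalues in $\KK$, so the finite-dimensional commutative algebra $A\subseteq\End_{\KK}(M_\la)$ it generates is split, i.e. $A/\rad A$ is a product of copies of $\KK$, because its generators have minimal polynomials splitting over $\KK$. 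Choosing a local direct factor of $A$ that acts nontrivially on $M_\la$ and a nonzero vector of $M_\la$ annihilated by its maximal ideal yields $0\neq v_0\in M_\la$ on which every $\CI_{(j,l),t}^{\pm}$ acts by a scalar in $\KK$. Then $v_0$ is a maximal vector of some weight $(\la,\Bvf)$; the submodule $\CU\cdot v_0$ is a highest weight module of highest weight $(\la,\Bvf)$, and by simplicity of $M$ it equals $M$, which is the claim.

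The only non-formal point is the last one: over the non-algebraically-closed field $\KK$ a commuting family of operators need not have a common eigenvector, and it is precisely axiom (iii) --- all eigenvalues in $\KK$ --- together with finite-dimensionality of the generated operator algebra that repairs this. I expect this elementary linear-algebra step to be the main (and essentially only) obstacle; the weight-space bookkeeping and the choice of a maximal weight are routine and parallel the proof of the analogous statement for $\ZC_{\BQ}(\Bm)$.
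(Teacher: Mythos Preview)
Your proof is correct and is precisely ``the usual argument'' that the paper invokes without further detail: pick a maximal weight, use (R3) to see that $\CX_{(i,k),t}^{+}$ kills that weight space, then use commutativity (R1)--(R2) together with the splitting hypothesis (iii) to extract a common eigenvector for the $\CI_{(j,l),t}^{\pm}$. The paper gives no proof beyond that phrase, so your write-up is in fact more detailed than the original.
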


By using the surjection $g : \CU_{q,\BQ}(\Bm) \ra U_q(\Fgl_m)$ in \eqref{surjection UqQm to Uqglm}, 
we have the following proposition. 
\begin{prop}
\label{Prop ZCUqgl subset ZCqQ}
Let $\ZC_{U_q(\Fgl_m)}$ be the category of finite dimensional $U_q(\Fgl_m)$-modules which have the weight space decomposition. 
Then we have the followings. 
\begin{enumerate}
\item 
$\ZC_{U_q(\Fgl_m)}$ is a full subcategory of $\ZC_{q,\BQ}(\Bm)$ through the surjection \eqref{surjection UqQm to Uqglm}. 

\item 
For $\la \in P^+$, 
the simple highest weight $U_q(\Fgl_m)$-module $\D_{U_q(\Fgl_m)}(\la)$ of highest weight $\la$ 
is the simple  highest weight $\CU$-module of highest weight $(\la,\mathbf{0})$ 
through the surjection \eqref{surjection UqQm to Uqglm}, 
where 
$\mathbf{0}$ means $\vf_{(j,l),t}^{\pm}=0$ for all $(j,l) \in \vG(\Bm)$ and $t \geq 1$. 
\end{enumerate}
\end{prop}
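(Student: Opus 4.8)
The plan is to argue exactly as for Proposition \ref{Prop glm fullsub ZCQ(m)}, with the surjective algebra homomorphism $g : \CU_{q,\BQ}(\Bm) \ra U_q(\Fgl_m)$ of \eqref{surjection UqQm to Uqglm} playing the role of the surjection $U(\Fg) \ra U(\Fgl_m)$. Since $g$ is surjective, restriction of scalars along $g$ identifies $U_q(\Fgl_m)\cmod$ with the full subcategory of $\CU\cmod$ consisting of those $\CU$-modules on which $\Ker g$ acts as $0$; indeed, on such a pullback any $\CU$-linear map is automatically $U_q(\Fgl_m)$-linear (lift each element of $U_q(\Fgl_m)$ along $g$), so fullness and faithfulness come for free. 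Hence for (i) it suffices to check that the pullback of an object $M$ of $\ZC_{U_q(\Fgl_m)}$ satisfies the three defining conditions of $\ZC_{q,\BQ}(\Bm)$.

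So let $M \in \ZC_{U_q(\Fgl_m)}$, viewed as a $\CU$-module through $g$. It is finite dimensional by hypothesis. Because $g(\CK_{(j,l)}^+) = K_{(j,l)}^+$ for every $(j,l) \in \vG(\Bm)$, the space $\{ v \in M \mid \CK_{(j,l)}^+ \cdot v = q^{\lan \la, h_{(j,l)} \ran} v \text{ for all } (j,l) \}$ is precisely the $\la$-weight space of $M$ as a $U_q(\Fgl_m)$-module, so $M$ retains its weight space decomposition. For $t \geq 1$ we have $g(\CI_{(j,l),t}^{\pm}) = 0$, so $\CI_{(j,l),t}^{\pm}$ acts as $0$ on $M$. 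For $t = 0$, relation (R1) expresses $\CI_{(j,l),0}^{\pm}$ as $\mp (q - q^{-1})^{-1}\big( (\CK_{(j,l)}^{\mp})^2 - 1 \big)$, and on the $\la$-weight space $\CK_{(j,l)}^{\mp}$ acts by the scalar $q^{\mp \lan \la, h_{(j,l)} \ran} \in \KK$, so $\CI_{(j,l),0}^{\pm}$ acts by a scalar in $\KK$ as well. Therefore all eigenvalues of the $\CI_{(j,l),t}^{\pm}$ on $M$ lie in $\KK$, so $M \in \ZC_{q,\BQ}(\Bm)$; this proves (i).

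For (ii), fix $\la \in P^+$ and let $v_0$ be a highest weight vector of $\D_{U_q(\Fgl_m)}(\la)$, so $e_{(i,k)} \cdot v_0 = 0$ for all $(i,k) \in \vG'(\Bm)$ and $K_{(j,l)}^+ \cdot v_0 = q^{\lan \la, h_{(j,l)} \ran} v_0$. Regard $\D_{U_q(\Fgl_m)}(\la)$ as a $\CU$-module via $g$. For $t \geq 1$ we get $\CX_{(i,k),t}^+ \cdot v_0 = g(\CX_{(i,k),t}^+) \cdot v_0 = 0$; and $\CX_{(i,k),0}^+ \cdot v_0$ equals $e_{(i,k)} \cdot v_0 = 0$ if $i \neq m_k$, and $- Q_k\, e_{(m_k,k)} \cdot v_0 = 0$ if $i = m_k$. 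Also $\CK_{(j,l)}^+ \cdot v_0 = q^{\lan \la, h_{(j,l)} \ran} v_0$ and $\CI_{(j,l),t}^{\pm} \cdot v_0 = 0$ for $t \geq 1$. Thus $v_0$ is a maximal vector of weight $(\la, \mathbf{0})$; since $\D_{U_q(\Fgl_m)}(\la) = U_q(\Fgl_m) \cdot v_0$ and $g$ is surjective, it equals $\CU \cdot v_0$, so it is a highest weight $\CU$-module of highest weight $(\la, \mathbf{0})$. Finally, surjectivity of $g$ forces the $\CU$-submodules of $\D_{U_q(\Fgl_m)}(\la)$ to be exactly its $U_q(\Fgl_m)$-submodules, so it is simple over $\CU$; being a simple highest weight $\CU$-module of highest weight $(\la, \mathbf{0})$, it is isomorphic to $L(\la, \mathbf{0})$, as claimed.

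The argument is essentially formal once the surjection $g$ and the presentation of $\CU$ are in hand, so I do not expect a genuine obstacle. The only step that is not pure bookkeeping is the $t = 0$ case of the eigenvalue condition in (i): there one must invoke (R1) to rewrite $\CI_{(j,l),0}^{\pm}$ through the invertible elements $\CK_{(j,l)}^{\pm}$ and observe that these act on weight spaces by powers of $q$, hence with eigenvalues in $\KK$.
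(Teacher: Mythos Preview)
Your proof is correct and is exactly the formal argument the paper has in mind: the proposition is stated without proof there, with only the remark that it follows ``by using the surjection $g$'' of \eqref{surjection UqQm to Uqglm}, in parallel with Proposition~\ref{Prop glm fullsub ZCQ(m)}. Your verification of the three conditions for $\ZC_{q,\BQ}(\Bm)$ (including the use of (R1) to handle $\CI_{(j,l),0}^{\pm}$, whose image under $g$ is not written down explicitly) and your treatment of the highest weight vector in (ii) fill in precisely the details the paper omits.
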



\section{Review of cyclotomic $q$-Schur algebras} 
\label{Review CSA}

In this section, we recall the definition and some fundamental properties of the cyclotomic $q$-Schur algebra 
$\Sc_{n,r}(\Bm)$ introduced in \cite{DJM98}. See \cite{DJM98} and \cite{Mat04} for details. 

\para 
Let $R$ be a commutative ring, 
and we take parameters $q, Q_0,Q_1, \dots, Q_{r-1} \in R$ such that $q$ is invertible in $R$. 
The Ariki-Koike algebra $\He_{n,r}$ associated with the complex reflection group 
$\FS_n \ltimes (\ZZ/ r \ZZ)^n$ 
is the associative algebra with $1$ over $R$ generated by 
$T_0, T_1, \dots, T_{n-1}$ with the following defining relations: 
\begin{align*}
& (T_0 - Q_0)(T_0-Q_1) \dots (T_0-Q_{r-1})=0, 
\quad (T_i -q )(T_i+q^{-1}) =0 \quad  ( 1 \leq i \leq n-1),
\\
& T_0 T_1 T_0 T_1=T_1 T_0 T_1 T_0, 
\quad T_i T_{i+1} T_i = T_{i+1} T_i T_{i+1} \quad  ( 1 \leq i \leq n-2), 
\\
& T_i T_j = T_j T_i  \quad  (|i-j| \geq 2). 
\end{align*}

The subalgebra of $\He_{n,r}$ generated by $T_1, \dots, T_{n-1}$ 
is isomorphic to the Iwahori-Hecke algebra $\He_{n}$ associated with the symmetric group $\FS_n$ of degree $n$. 
For $w \in \FS_n$, 
we denote by $\ell(w)$ the length of $w$, 
and denote by $T_w$ the standard basis of $\He_{n}$ corresponding to $w$. 


\para 
Put $L_1 =T_0$ and $L_i=T_{i-1} L_{i-1} T_{i-1}$ for $i=2, \dots, n$. 
These elements $L_1, \dots, L_n$ are called Jucys-Murphy elements of $\He_{n,r}$ 
(see \cite{Mat08} for properties of Jucys-Murphy elements). 
The following lemma is well-known, 
and one can easily check them from defining relations of $\He_{n,r}$. 

\begin{lem}
\label{Lemma commute L T}
We have the following. 
\begin{enumerate}
\item 
	$L_i$ and $L_j$ commute with each other for any $1 \leq i,j \leq n$. 

\item 
	$T_i$ and $L_j$ commute with each other if $j \not= i, i+1$. 

\item 
	$T_i$ commutes with both $L_i L_{i+1}$ and $L_i + L_{i+1}$ for any $ 1 \leq i \leq n-1$. 

\item 
	$L_{i+1}^t T_i =(q-q^{-1}) \sum_{s=0}^{t-1} L_{i+1}^{t-s} L_i^s + T_i L_i^t $ 
	for any $ 1 \leq i \leq n-1$ and $t \geq 1$. 

\item 
	$L_i^t T_i = - (q-q^{-1}) \sum_{s=1}^t   L_i^{t-s} L_{i+1}^s + T_i L_{i+1}^t $ 
	for any $1 \leq i \leq n-1$ and $t \geq 1$. 
\end{enumerate}
\end{lem}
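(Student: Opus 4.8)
The plan is to deduce all five identities directly from the defining relations of $\He_{n,r}$, using constantly the quadratic relation $T_i^2 = (q-q^{-1})T_i + 1$ (so that $T_i$ is invertible with $T_i^{-1} = T_i - (q-q^{-1})$) together with the inductive description $L_1 = T_0$, $L_{i+1} = T_iL_iT_i$. The structural fact I would isolate first is that $L_j$ lies in the subalgebra generated by $T_0, T_1, \dots, T_{j-1}$; combined with the relations $T_0T_i = T_iT_0$ for $i \ge 2$ and $T_iT_k = T_kT_i$ for $|i-k| \ge 2$, this already shows that $L_j$ commutes with every $T_i$ with $i \ge j+1$, which is (ii) in the range $j \le i-1$.

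For the remaining range of (ii), namely $i \le j-2$, I would induct on $j$ via $L_j = T_{j-1}L_{j-1}T_{j-1}$. If $i \le j-3$, then $T_i$ commutes with $T_{j-1}$ and, by induction, with $L_{j-1}$, hence with $L_j$. The borderline case $i = j-2$ is handled by rewriting $L_j = T_{j-1}T_{j-2}L_{j-2}T_{j-2}T_{j-1}$ and applying the braid relation $T_{j-2}T_{j-1}T_{j-2} = T_{j-1}T_{j-2}T_{j-1}$ together with the fact that $T_{j-1}$ commutes with $L_{j-2}$. The non-consecutive case of (i), i.e. $L_iL_j = L_jL_i$ for $|i-j| \ge 2$, then follows by the same pattern: taking $i < j-1$ and writing $L_j = T_{j-1}L_{j-1}T_{j-1}$, the factor $T_{j-1}$ commutes with $L_i$ by (ii), while $L_{j-1}$ commutes with $L_i$ by induction on $j$.

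The genuinely non-formal point, which I expect to be the main (though mild) obstacle, is the consecutive case $L_iL_{i+1} = L_{i+1}L_i$ of (i), which I would prove by induction on $i$. The base case is precisely the fourth-order braid relation: $L_1L_2 = T_0T_1T_0T_1 = T_1T_0T_1T_0 = L_2L_1$. For the inductive step I would substitute $L_i = T_{i-1}L_{i-1}T_{i-1}$ and $L_{i+1} = T_iT_{i-1}L_{i-1}T_{i-1}T_i$, and then apply the braid relation $T_{i-1}T_iT_{i-1} = T_iT_{i-1}T_i$ repeatedly, along with the commutation of $L_{i-1}$ with $T_i$ (valid since $L_{i-1}$ lies in the subalgebra generated by $T_0, \dots, T_{i-2}$), to bring $L_iL_{i+1}$ to $T_{i-1}T_i\,(L_{i-1}L_i)\,T_iT_{i-1}$ and $L_{i+1}L_i$ to $T_{i-1}T_i\,(L_iL_{i-1})\,T_iT_{i-1}$; the inductive hypothesis $L_{i-1}L_i = L_iL_{i-1}$ then closes the argument. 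The bookkeeping of these braid moves is the only place requiring attention.

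Once (i) is available, (iii), (iv) and (v) are short computations. From $L_{i+1} = T_iL_iT_i$ and $T_i^2 = (q-q^{-1})T_i+1$ one gets $T_i(L_i + L_{i+1}) = (L_i + L_{i+1})T_i$ at once, and, using $T_iL_i = L_{i+1}T_i^{-1}$ and $T_i^{-1}L_{i+1} = L_iT_i$, one computes $T_iL_iL_{i+1} = L_{i+1}T_i^{-1}L_{i+1} = L_{i+1}L_iT_i = L_iL_{i+1}T_i$ (the last equality by the consecutive case of (i)), proving (iii). For (iv) and (v), the case $t = 1$ reads $L_{i+1}T_i = (q-q^{-1})L_{i+1} + T_iL_i$ and $L_iT_i = -(q-q^{-1})L_{i+1} + T_iL_{i+1}$, each immediate from $T_i^2 = (q-q^{-1})T_i + 1$ and $L_{i+1} = T_iL_iT_i$; the general case then follows by induction on $t$, expanding $L_{i+1}^tT_i = L_{i+1}^{t-1}(L_{i+1}T_i)$ (resp. $L_i^tT_i = L_i^{t-1}(L_iT_i)$) and applying the inductive hypothesis.
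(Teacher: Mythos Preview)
Your proposal is correct and matches the paper's approach: the paper simply states that the lemma is well known and ``one can easily check them from defining relations of $\He_{n,r}$,'' giving no further details, and what you have written is precisely such a check. Your outline supplies the details the paper omits, and all the computations you indicate go through as described.
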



\para 
Let $\Bm=(m_1, \dots, m_r) \in \ZZ_{>0}^r$ 
be an $r$-tuple of  positive integers. 
Put 
\begin{align*}
\vL_{n,r}(\Bm) = 
	\left\{ \mu=(\mu^{(1)}, \mu^{(2)}, \dots, \mu^{(r)}) \Bigg| 
		\begin{matrix} 
			\mu^{(k)} = (\mu_1^{(k)}, \dots, \mu_{m_k}^{(k)}) \in \ZZ_{\geq 0}^{m_k} 
			\\
			\sum_{k=1}^r \sum_{i=1}^{m_k} \mu_i^{(k)} =n 
		\end{matrix} 
	\right\}. 
\end{align*}
We also put 
\begin{align*}
\vL_{n,r}^+(\Bm) = \{ \mu \in \vL_{n,r}(\Bm) \,|\, \mu_1^{(k)} \geq \mu_2^{(k)} \geq \dots \geq \mu_{m_k}^{(k)} \geq  0 \text{ for each }k=1,\dots,r \}.
\end{align*}
We regard $\vL_{n,r}(\Bm)$ as a subset of weight lattice $P= \bigoplus_{(i,k) \in \vG(\Bm)} \ZZ \ve_{(i,k)}$  
by the injection 
$\vL_{n,r}(\Bm) \ra P  $
such that 
$\mu \mapsto \sum_{(i,k)\in \vG(\Bm)} \mu_i^{(k)} \ve_{(i,k)}$.  
Then we see that 
$\vL^+_{n,r}(\Bm) = \vL_{n,r}(\Bm) \cap P^+_{\Bm}$. 

For $\mu \in \vL_{n,r}(\Bm)$, put 
\begin{align}
\label{Def m mu}
m_{\mu} = \Big( \sum_{w \in \FS_{\mu}} q^{\ell(w)} T_w \Big) 
		\Big( \prod_{k=1}^{r-1} \prod_{i=1}^{a_k} (L_i - Q_k) \Big), 
\end{align}
where $\FS_{\mu}$ is the Young subgroup of $\FS_n$ with respect to $\mu$,  
and $a_k= \sum_{j=1}^{k} |\mu^{(j)}|$. 
The following fact is well known: 
\begin{align}
\label{ m mu T}
	m_{\mu} T_w = q^{\ell(w)} m_{\mu} \text{ if } w \in \FS_{\mu}. 
\end{align}

The cyclotomic $q$-Schur algebra $\Sc_{n,r}(\Bm)$ associated with $\He_{n,r}$ is defined by 
\begin{align}
\Sc_{n,r}(\Bm) = \End_{\He_{n,r}} \Big( \bigoplus_{\mu \in \vL_{n,r}(\Bm)} m_{\mu} \He_{n,r} \Big). 
\end{align} 
For convenience in the later arguments, put 
$m_{\mu} =0 \text{ for } \mu \in P \setminus \vL_{n,r}(\Bm)$. 


\para 
Put 
$\wt{\vL}_{n,r}^+(\Bm) = \vL_{n,r}^+((n, \dots, n, m_r))$. 
It is clear that 
$\wt\vL_{n,r}^+(\Bm) = \vL_{n,r}^+(\Bm)$ 
if $m_k \geq n$ for all $k=1,\dots, r-1$. 
In the case where $m_k < n$ for some $k <r$, 
$\vL_{n,r}^+(\Bm)$ is a proper subset of $\wt\vL_{n,r}^+(\Bm)$. 

In \cite{DJM98} 
(see also \cite{Mat04} for the case where $m_k <n$ for some $k$), 
it is proven that 
$\Sc_{n,r}(\Bm)$ is a cellular algebra with respect to the poset $(\wt\vL_{n,r}^+, \geq)$. 
For $\la \in \wt\vL^+_{n,r} (\Bm)$, 
let $\D(\la)$ be the Weyl (cell) module corresponding to $\la$ constructed in \cite{DJM98} 
(see also \cite{Mat04} and \cite[Lemma 1.18]{W-3}). 
By the general theory of cellular algebras given in \cite{GL},  
$\{ \D(\la) \,|\, \la \in \wt{\vL}_{n,r}^+(\Bm)\}$ gives a complete set of isomorphism classes of simple $\Sc_{n,r}(\Bm)$-modules 
if $\Sc_{n,r}(\Bm)$ is semi-simple. 
It is also proven, in \cite{DJM98}, that 
$\Sc_{n,r}(\Bm)$ is a quasi-hereditary algebra 
such that $\{\D(\la) \,|\, \la \in \vL_{n,r}^+(\Bm)\}$ gives a complete set of standard modules 
if $R$ is a field and $m _k \geq n$ for all $k=1,\dots, r-1$. 

From the construction of $\D(\la)$ in \cite{DJM98}, 
$\D(\la)$ has a basis indexed by the set of  semi-standard tableaux. 
Since we use them in the later argument, 
we recall the definition of semi-standard tableaux 
from \cite{DJM98}. 

For $\la \in \wt\vL_{n,r}^+(\Bm)$, 
the diagram $[\la]$ of $\la$ is 
the set 
\[
[\la]= 
\{(i,j,k) \in \ZZ^3 \,|\, 1 \leq i \leq m_k, \, 1 \leq j \leq \la_i^{(k)}, \, 1 \leq k \leq r\}.
\] 
For $x=(i,j,k) \in [\la]$, put 
\[
\res (x) = q^{2(j-i)} Q_{k-1}. 
\]
For $\la \in \wt\vL_{n,r}^+(\Bm)$ and $\mu \in \vL_{n,r}(\Bm)$, 
a tableau of shape $\la$ with weight $\mu$ 
is a map 
\[
T :\nobreak [\la] \ra \{(a,c) \in \ZZ \times \ZZ \,|\, a \geq 1, 1 \leq c \leq r\}
\] 
such that 
$\mu_i^{(k)} = \sharp \{x \in [\la] \,|\, T(x) = (i,k)\}$. 
We define the order on $\ZZ \times \ZZ$ 
by 
$(a,c) \geq (a',c')$ if either $c > c'$, or $c=c'$ and $a \geq a'$.
For a tableau $T$ of shape $\la$ with weight $\mu$, 
we say that $T$ is  semi-standard if $T$ satisfies the following conditions: 
\begin{enumerate}
\item If $T((i,j,k)) =(a,c)$, then $k \leq c$,
\item $T((i,j,k)) \leq T((i,j+1,k))$ if $(i,j+1,k) \in [\la]$, 
\item $T((i,j,k)) < T((i+1,j,k))$ if $(i+1,j,k) \in [\la]$. 
\end{enumerate}
For $\la \in \wt\vL_{n,r}^+(\Bm)$, $\mu \in \vL_{n,r}(\Bm)$, 
we denote by 
$\CT_0(\la,\mu)$ 
the set of semi-standard tableaux of shape $\la$ with weight $\mu$. 
Then, from the cellular basis of $\Sc_{n,r}(\Bm)$ in \cite{DJM98}, 
we see that 
$\D(\la)$ has the basis 
\[
\{\vf_T \,|\, T \in \CT_0(\la,\mu) \text{ for some } \mu \in \vL_{n,r}(\Bm)\}. 
\] 
(See \cite{DJM98} for the definition of $\vf_T$.)



\section{Generators of cyclotomic $q$-Schur algebras} 

In this section, we define some generators of the cyclotomic $q$-Schur algebra, 
and we obtain some relations among  them 
which will be used  to obtain the homomorphism from $\CU_{q,\BQ}(\Bm)$ 
in the next section. 

\para 
A partition $\la$ is a non-increasing sequence $\la =(\la_1,\la_2,\dots )$ of non-negative integers. 
For a partition $\la=(\la_1, \la_2, \dots)$, we denote by $\ell(\la)$ the length of $\la$ 
which is the maximal integer $l$ such that $\la_l \not=0$. 
If $\sum_{i=1}^{\ell(\la)} \la_i =n$, we denote it by $\la \vdash n$. 
For a integer $k$ and a partition $\la \vdash n$ such that $\ell(\la)\leq k$, 
put 
\begin{align*} 
\FS_k \cdot \la
= \big\{ (\mu_1, \mu_2,\dots, \mu_k) \in \ZZ_{\geq 0}^k  \bigm| 
		\mu_i = \la_{\s(i)}, \, \s  \in \FS_k \big\}. 
\end{align*}

\para 
For integers $t,k >0$,  
we  define the symmetric polynomials 
$\Phi_t^{\pm}(x_1, \dots,s_k) \in R [x_1, \dots,x_k]^{\FS_k}$ 
of degree $t$ with variables $x_1, \dots, x_k$ as 
\begin{align}
\label{Def Phi}
\Phi_t^{\pm} (x_1, \dots,x_k) 
	= \sum_{\la \vdash t \atop \ell(\la) \leq k} (1 - q^{\mp 2})^{\ell (\la) -1} 
		\Fm_{\la} (x_1, \dots, x_k), 
\end{align}
where 
$\Fm_{\la}(x_1, \dots, x_k) 
	=\sum_{\mu =(\mu_1,\mu_2, \dots, \mu_k) \in \FS_k \cdot \la} x_1^{\mu_1} x_2^{\mu_2} \dots x_k^{\mu_k}$ 
is the monomial symmetric polynomial associated with the partition $\la$.
For convenience, 
we also define 
\begin{align}
\Phi_0^{\pm} (x_1, \dots, x_k) = q^{\mp  k  \pm 1}[k]. 
\end{align}

From the definition, we have 
\begin{align} 
\label{Phi t=1}
\Phi_1^{\pm} (x_1,\dots,x_k) = x_1 + x_2 + \dots + x_k 
\text{ and } 
\Phi_t^{\pm}(x_1)=x_1^t. 
\end{align}
The polynomials $\Phi_{t}^{\pm}(x_1,\dots,x_k)$ satisfy the following recursive relations 
which will be used for calculations of some relations between generators of $\Sc_{n,r}(\Bm)$ in later.  

\begin{lem} 
For $t \geq 0$, we have 
\begin{align}
\label{recursive relation of Phi}
\begin{split}
\Phi_{t+1}^{\pm} (x_1,\dots, x_k) 
&= \sum_{s=1}^k \Phi_t^{\pm}(x_1,\dots,x_s) x_s 
	- q^{\mp 2} \sum_{s=1}^{k-1} \Phi_t^{\pm} (x_1,\dots,x_s) x_{s+1} 
\\
&= x_1^{t+1} 
	+ \sum_{s=2}^k \big( \Phi_t^{\pm} (x_1,\dots,x_s) x_s - q^{\mp 2} \Phi_t^{\pm} (x_1,\dots, x_{s-1})x_{s}\big) 
\end{split}
\end{align}
and 
\begin{align}
\label{recursive relation of Phi 2}
\begin{split}
&\Phi_{t+1}^{\pm} (x_1,x_2, \dots, x_k) - \Phi_{t+1}^{\pm} (x_2,\dots,x_k) 
\\
&= x_1 \big( \Phi_t^{\pm} (x_1,x_2, \dots, x_k) - q^{\mp 2} \Phi_t^{\pm} (x_2,\dots,x_k) \big). 
\end{split}
\end{align}
\end{lem}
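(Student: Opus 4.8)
The plan is to re-express $\Phi_t^{\pm}$ as a sum over weak compositions, prove \eqref{recursive relation of Phi 2} by comparing coefficients of monomials, and then deduce \eqref{recursive relation of Phi} from \eqref{recursive relation of Phi 2} by symmetry and telescoping. Unfolding the definition \eqref{Def Phi} and that of $\Fm_{\la}$ shows that for $t\ge1$
\begin{align*}
\Phi_t^{\pm}(x_1,\dots,x_k)
= \sum_{\substack{c=(c_1,\dots,c_k)\in\ZZ_{\geq0}^k \\ c_1+\cdots+c_k=t}} (1-q^{\mp2})^{z(c)-1}\, x_1^{c_1}\cdots x_k^{c_k},
\end{align*}
where $z(c)=\#\{\,i\mid c_i\neq0\,\}$: a weak composition $c$ of $t$ with $k$ parts lies in a unique orbit $\FS_k\cdot\la$, the one whose $\la$ is the decreasing rearrangement of $c$, and then $\ell(\la)=z(c)$. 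This description fails at $t=0$, where $\Phi_0^{\pm}$ is the exceptional constant $q^{\mp k\pm1}[k]$; that case will be handled separately at the end.

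Assume $t\ge1$; I would prove \eqref{recursive relation of Phi 2} by comparing coefficients on the two sides. Since $\Phi_{t+1}^{\pm}(x_2,\dots,x_k)$ is precisely the sum of the monomials of $\Phi_{t+1}^{\pm}(x_1,\dots,x_k)$ not involving $x_1$, the left-hand side of \eqref{recursive relation of Phi 2} is $\sum(1-q^{\mp2})^{z(c)-1}x^c$ over weak compositions $c$ of $t+1$ with $c_1\ge1$. On the right, with $e_1=(1,0,\dots,0)$, the summand $x_1\Phi_t^{\pm}(x_1,\dots,x_k)$ gives $x^c$ (for $c_1\ge1$) the coefficient $(1-q^{\mp2})^{z(c-e_1)-1}$, while $-q^{\mp2}x_1\Phi_t^{\pm}(x_2,\dots,x_k)$ contributes only when $c_1=1$, giving $-q^{\mp2}(1-q^{\mp2})^{z(c-e_1)-1}$. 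If $c_1\ge2$ then $z(c-e_1)=z(c)$ and the coefficient is $(1-q^{\mp2})^{z(c)-1}$; if $c_1=1$ then $z(c)\ge2$ (as $|c|=t+1\ge2$) and $z(c-e_1)=z(c)-1$, so the coefficient is $(1-q^{\mp2})(1-q^{\mp2})^{z(c)-2}=(1-q^{\mp2})^{z(c)-1}$. Both match the left-hand side, proving \eqref{recursive relation of Phi 2} for $t\ge1$.

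Because $\Phi_t^{\pm}$ is a symmetric polynomial, applying \eqref{recursive relation of Phi 2} to the reversed list $x_s,x_{s-1},\dots,x_1$ of the first $s$ variables yields, for $2\le s\le k$,
\begin{align*}
\Phi_{t+1}^{\pm}(x_1,\dots,x_s)-\Phi_{t+1}^{\pm}(x_1,\dots,x_{s-1})
= x_s\big(\Phi_t^{\pm}(x_1,\dots,x_s)-q^{\mp2}\Phi_t^{\pm}(x_1,\dots,x_{s-1})\big).
\end{align*}
Summing over $s$ the left side telescopes; combined with $\Phi_{t+1}^{\pm}(x_1)=x_1^{t+1}$ (from \eqref{Phi t=1}) this gives the second displayed equality of \eqref{recursive relation of Phi}, and the first equality follows from it by the re-indexing $\sum_{s=1}^{k-1}\Phi_t^{\pm}(x_1,\dots,x_s)x_{s+1}=\sum_{s=2}^{k}\Phi_t^{\pm}(x_1,\dots,x_{s-1})x_s$ together with $\Phi_t^{\pm}(x_1)x_1=x_1^{t+1}$. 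Finally, for $t=0$ both identities collapse to $\Phi_1^{\pm}(x_1,\dots,x_k)=x_1+\cdots+x_k$, which is \eqref{Phi t=1}, once one checks from $\Phi_0^{\pm}(x_1,\dots,x_s)=q^{\mp s\pm1}[s]$ the elementary identity $\Phi_0^{\pm}(x_1,\dots,x_s)-q^{\mp2}\Phi_0^{\pm}(x_1,\dots,x_{s-1})=1$ for all $s\ge1$ (with $\Phi_0^{\pm}(x_1)=1$).

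The argument is entirely elementary; the only point needing care is tracking the exponent of $1-q^{\mp2}$ in the case split $c_1=1$ versus $c_1\ge2$, and remembering that the value $\Phi_0^{\pm}$ is exceptional and must be treated on its own rather than via the composition formula. A slicker but less transparent variant would be to first prove the generating-function identity $\sum_{t\ge0}\Phi_t^{\pm}(x_1,\dots,x_k)u^t=(1-q^{\mp2})^{-1}\big(\prod_{i=1}^k\frac{1-q^{\mp2}ux_i}{1-ux_i}-q^{\mp2k}\big)$ and then read both recursions off from it.
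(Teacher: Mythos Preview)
Your proof is correct, but it follows a genuinely different route from the paper's. The paper proves \eqref{recursive relation of Phi} first, by direct manipulation of the definition in terms of monomial symmetric polynomials (splitting the sum over $\mu\in\FS_k\cdot\la$ according to the largest index $s$ with $\mu_s\neq0$, and then according to whether $\mu_s=1$ or $\mu_s\ge2$), and only afterwards proves \eqref{recursive relation of Phi 2} by induction on $t$, using \eqref{recursive relation of Phi} in the inductive step. You reverse this order: the weak-composition reformulation lets you read off \eqref{recursive relation of Phi 2} immediately from a clean case split on $c_1$, and then \eqref{recursive relation of Phi} drops out by applying the symmetry of $\Phi_t^{\pm}$ and telescoping. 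Your argument avoids induction entirely for $t\ge1$ and is arguably more transparent, since it isolates exactly where the factor $1-q^{\mp2}$ enters (when removing a box from a row of length one). The paper's approach, on the other hand, stays closer to the partition-indexed definition and does not require the preliminary passage to compositions; it also makes the link between the two identities explicit via the induction. Both handle $t=0$ separately by the same direct computation. Your closing remark about the generating-function identity is a nice alternative viewpoint that the paper does not mention.
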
 
\begin{proof}
In the case where $t=0$, we can check the statements by direct calculations. 

Assume that $t \geq 1$. 
From the definition, 
we have 
\begin{align*}
\Phi_{t+1}^{\pm} (x_1,\dots, x_k) 
&= \sum_{\la \vdash t+1 \atop \ell(\la) \leq k} (1- q^{\mp 2})^{\ell (\la) -1} 
	\sum_{\mu \in \FS_k \la} x_1^{\mu_1} x_2^{\mu_2} \dots x_k^{\mu_k} 
\\
&= \sum_{s=1}^k \sum_{\la \vdash t+1 \atop \ell (\la) \leq s} (1- q^{\mp 2})^{\ell (\la) -1} 
	\sum_{\mu \in \FS_s \la \atop \mu_s \not= 0} x_1^{\mu_1} x_2^{\mu_2} \dots x_s^{\mu_s} 
\\
&= \sum_{s=1}^k \sum_{\la \vdash t+1 \atop \ell (\la) \leq s} (1- q^{\mp 2})^{\ell (\la) -1} 
	\sum_{\mu \in \FS_s \la \atop \mu_s =1} x_1^{\mu_1} x_2^{\mu_2} \dots x_s^{\mu_s} 
	\\ & \hspace{1em} 
	+ \sum_{s=1}^k \sum_{\la \vdash t+1 \atop \ell (\la) \leq s} (1- q^{\mp 2})^{\ell (\la) -1} 
	\sum_{\mu \in \FS_s \la \atop \mu_s \geq 2} x_1^{\mu_1} x_2^{\mu_2} \dots x_s^{\mu_s} 
\\
&= \sum_{s=1}^k \sum_{\la \vdash t \atop \ell (\la) \leq s} (1- q^{\mp 2})^{\ell (\la) } 
	\sum_{\mu \in \FS_s \la \atop \mu_s = 0} x_1^{\mu_1} x_2^{\mu_2} \dots x_{s-1}^{\mu_{s-1}} x_s  
	\\ & \hspace{1em} 
	+ \sum_{s=1}^k \sum_{\la \vdash t \atop \ell (\la) \leq s} (1- q^{\mp 2})^{\ell (\la) -1} 
	\sum_{\mu \in \FS_s \la \atop \mu_s \not=0} x_1^{\mu_1} x_2^{\mu_2} \dots x_s^{\mu_s} x_s 
\\
&= \sum_{s=1}^k 
	\big( \sum_{\la \vdash t \atop \ell (\la) \leq s} (1-q^{\mp 2})^{\ell (\la) -1} 
	\sum_{\mu \in \FS_s \la} x_1^{\mu_1} x_2^{\mu_2} \dots x_s^{\mu_s}
	\big)  x_s 
	\\
	& \hspace{1em} 
	- q^{\mp 2} \sum_{s=2}^k 
		\big( \sum_{\la \vdash t \atop \ell (\la) \leq s} (1-q^{\mp2})^{\ell (\la) -1} 
		\sum_{\mu \in \FS_s \la \atop \mu_s=0} x_1^{\mu_1} x_2^{\mu_2} \dots x_{s-1}^{\mu_{s-1}} 
		\big)  x_s 
\\
&= \sum_{s=1}^k \Phi_t^{\pm}(x_1,\dots,x_s) x_s 
	- q^{\mp 2} \sum_{s=1}^{k-1} \Phi_t^{\pm} (x_1,\dots,x_s) x_{s+1}.  
\end{align*}
We can easily check the second equality of \eqref{recursive relation of Phi}. 

We prove \eqref{recursive relation of Phi 2} by the induction on $t$. 
In the case where $t=1$, 
we can check \eqref{recursive relation of Phi 2} directly by using the relation \eqref{recursive relation of Phi} 
together with \eqref{Phi t=1}. 
Assume that $t >1$. 
By \eqref{recursive relation of Phi}, we have 
\begin{align*}
&\Phi_{t+1}^{\pm} (x_1,x_2, \dots, x_k) - \Phi_{t+1}^{\pm} (x_2, \dots, x_k)  
\\
&= 
	\big( \sum_{s=1}^k \Phi_t^{\pm} (x_1,\dots, x_s) x_s 
		- q^{\mp 2} \sum_{s=1}^{k-1} \Phi_t^{\pm} (x_1, \dots, x_s) x_{s+1} \big) 
	\\ & \hspace{1em} 
	- \big( \sum_{s=2}^k \Phi_t^{\pm} (x_2,\dots, x_s) x_s 
		- q^{\mp 2} \sum_{s=2}^{k-1} \Phi_t^{\pm} (x_2, \dots, x_s) x_{s+1} \big) 
\\
&= \Phi_t^{\pm}(x_1)x_1 - q^{\mp 2} \Phi_t^{\pm} (x_1) x_2 
	+ \sum_{s=2}^{k} \big( \Phi_t^{\pm} (x_1,\dots, x_s) - \Phi_t^{\pm} (x_2,\dots, x_s) \big) x_s 
	\\
	& \hspace{1em} 
	- q^{\mp 2} \sum_{s=2}^{k-1}  \big( \Phi_t^{\pm} (x_1,\dots, x_s) - \Phi_t^{\pm} (x_2,\dots, x_s) \big) x_{s+1}. 
\end{align*}
Applying the assumption of the induction, 
we have 
\begin{align*}
&\Phi_{t+1}^{\pm} (x_1,x_2, \dots, x_k) - \Phi_{t+1}^{\pm} (x_2, \dots, x_k)  
\\
&= x_1 \Phi_{t-1}^{\pm}(x_1) x_1 - q^{\mp 2} x_1 \Phi_{t-1}^{\pm}(x_1)x_2 
	\\ & \hspace{1em} 
	+ \sum_{s=2}^k x_1 \big( \Phi_{t-1}^{\pm}(x_1,x_2, \dots, x_s) - q^{\mp 2} \Phi_{t-1}^{\pm} (x_2, \dots, x_s) \big) x_s 
	\\ & \hspace{1em} 
	- q^{\mp 2} \sum_{s=2}^{k-1} 
		x_1 \big( \Phi_{t-1}^{\pm}(x_1,x_2, \dots, x_s) - q^{\mp 2} \Phi_{t-1}^{\pm} (x_2, \dots, x_s) \big) x_{s+1}
\\
&= x_1 \big\{ 
	\big( \sum_{s=1}^k \Phi_{t-1}^{\pm} (x_1,x_2, \dots, x_s) x_s 
		- q^{\mp 2} \sum_{s=1}^{k-1} \Phi_{t-1}^{\pm} (x_1,x_2, \dots, x_s) x_{s+1} \big) 
	\\ & \hspace{3em} 
	- q^{\mp 2} \big( \sum_{s=2}^k \Phi_{t-1}^{\pm} (x_2, \dots, x_s) x_s 
		- q^{\mp 2} \sum_{s=2}^{k-1} \Phi_{t-1}^{\pm} (x_2, \dots, x_s)x_{s+1} \big) \big\}. 
\end{align*}
Applying the relation \eqref{recursive relation of Phi},  
we obtain \eqref{recursive relation of Phi 2}. 
\end{proof}


\remark 
\label{Remark def Phi}
At first, 
the author defined the polynomials $\Phi_t^{\pm} (x_1,\dots,x_k)$ 
by using the relations  \eqref{recursive relation of Phi} inductively. 
The definition of $\Phi_t^{\pm}(x_1,\dots,x_k)$ as in \eqref{Def Phi} was suggested by Tatsuyuki Hikita. 


\para 
For $\mu \in \vL_{n,r}(\Bm)$ and $(j,l) \in \vG(\Bm)$, 
put 
\begin{align*}
N^{\mu}_{(j,l)} = \sum_{c=1}^{l-1} |\mu^{(c)}| + \sum_{p=1}^j \mu_p^{(l)}.
\end{align*}

For $(j,l) \in \vG(\Bm)$ and an integer $t \geq 0$, 
we define the elements $\CK_{(j,l)}^{\pm}$ and $ \CI_{(j,l),t}^{\pm}$   of $\Sc_{(n,r)}(\Bm)$ by 
\begin{align*}
& \CK_{(j,l)}^{\pm} (m_\mu) = q^{\pm \mu_j^{(l)}} m_{\mu},
\\
& \CI_{(j,l),t}^{+}(m_\mu) 
	=  
	\begin{cases} 
		q^{ t-1} m_{\mu} \Phi_t^{+} (L_{N^{\mu}_{(j,l)}}, L_{N^{\mu}_{(j,l)} -1}, \dots, L_{N^\mu_{(j,l)}- \mu_j^{(l)}+1}) 
		& \text{ if } \mu_j^{(l)} \not=0, 
		\\
		0 & \text{ if } \mu_j^{(l)}=0, 
	\end{cases}
\\
& \CI_{(j,l),t}^{-}(m_\mu) 
	= 
	\begin{cases} 
		 q^{-t+1} m_{\mu} \Phi_t^{-} (L_{N^{\mu}_{(j,l)}}, L_{N^{\mu}_{(j,l)}-1}, \dots, L_{N^{\mu}_{(j,l)}- \mu_j^{(l)}+1}) 
		& \text{ if } \mu_j^{(l)} \not=0, 
		\\
		0 & \text{ if } \mu_j^{(l)}=0, 
	\end{cases}
\end{align*}
for each  $\mu \in \vL_{n,r}(\Bm)$. 

It is clear that the definitions of $\CK_{(j,l)}^{\pm}$ are well-defined. 
For $\mu \in \vL_{n,r}(\Bm)$ and $(j,l) \in \vG(\Bm)$ such that $\mu_j^{(l)} \not=0$,  
we see that 
$\Phi_t^{\pm}(L_{N^{\mu}_{(j,l)}}, L_{N^{\mu}_{(j,l)} -1}, \dots, L_{N^\mu_{(j,l)}- \mu_j^{(l)}+1}) $  
commute with $T_w$ for any $w \in \FS_{\mu}$ by Lemma \ref{Lemma commute L T} 
since 
$\Phi_t^{\pm}(L_{N^{\mu}_{(j,l)}}, \dots, L_{N^\mu_{(j,l)}- \mu_j^{(l)}+1}) $ 
is a symmetric polynomials with variables 
$L_{N^{\mu}_{(j,l)}}, L_{N^{\mu}_{(j,l)} -1}, \dots, L_{N^\mu_{(j,l)}- \mu_j^{(l)}+1}$. 
Thus, 
$\Phi_t^{\pm}(L_{N^{\mu}_{(j,l)}}, \dots, L_{N^\mu_{(j,l)}- \mu_j^{(l)}+1}) $ 
commute with $m_\mu$, 
and the definitions of $\CI_{(j,l),t}^{\pm}$ are well-defined. 


The following lemma is immediate from definitions. 

\begin{lem}
\label{Lemma commute K H}
For $(i,k), (j,l) \in \vG(\Bm)$ and $s,t \geq 0$, 
we have the following relations. 
\begin{enumerate}
\item 
$\CK_{(j,l)}^+ \CK_{(j,l)}^- = \CK_{(j,l)}^- \CK_{(j,l)}^+ =1$. 

\item 
$[\CK_{(i,k)}^+, \CK_{(j,l)}^+] = [\CK_{(i,k)}^+, \CI_{(j,l),t}^{\s}] = [\CI_{(i,k),s}^{\s}, \CI_{(j,l),t}^{\s'}] =0$ 
($\s,\s' \in \{+, -\}$). 
\end{enumerate}
\end{lem}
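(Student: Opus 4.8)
The plan is to evaluate both sides of each identity on the module generators $m_\mu$ ($\mu \in \vL_{n,r}(\Bm)$), which determine an endomorphism of $\bigoplus_{\mu} m_\mu \He_{n,r}$ uniquely. The key observation I would record first is that each of the operators $\CK_{(i,k)}^{\pm}$ and $\CI_{(i,k),s}^{\s}$ is \emph{diagonal}, in the sense that it carries the summand $m_\mu \He_{n,r}$ into itself: indeed $\CK_{(i,k)}^{\pm}(m_\mu) = q^{\pm \mu_i^{(k)}} m_\mu$ and $\CI_{(i,k),s}^{\s}(m_\mu) = q^{\pm(s-1)} m_\mu \Phi_s^{\s}(L_{N_{(i,k)}^{\mu}}, \dots, L_{N_{(i,k)}^{\mu} - \mu_i^{(k)}+1})$ (or $0$ if $\mu_i^{(k)}=0$; here $\pm$ matches $\s$) both lie in $m_\mu \He_{n,r}$, and the operators are homomorphisms of right $\He_{n,r}$-modules. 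Consequently, if $\phi$ is one of these operators with $\phi(m_\mu) = m_\mu h$ ($h \in \He_{n,r}$) and $\psi$ is another, then $(\psi \circ \phi)(m_\mu) = \psi(m_\mu h) = \psi(m_\mu)\, h$, so the composition of two diagonal operators is computed simply by multiplying their values on $m_\mu$.

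With this composition rule in hand, (i) is immediate: $\CK_{(j,l)}^{+}\CK_{(j,l)}^{-}(m_\mu) = \CK_{(j,l)}^{+}(q^{-\mu_j^{(l)}} m_\mu) = q^{-\mu_j^{(l)}}q^{\mu_j^{(l)}} m_\mu = m_\mu$, and likewise in the other order, whence $\CK_{(j,l)}^{+}\CK_{(j,l)}^{-} = \CK_{(j,l)}^{-}\CK_{(j,l)}^{+} = \id$. For (ii): $[\CK_{(i,k)}^{+},\CK_{(j,l)}^{+}]=0$ because both act on $m_\mu$ by scalars; $[\CK_{(i,k)}^{+},\CI_{(j,l),t}^{\s}]=0$ because $\CK_{(i,k)}^{+}\big(\CI_{(j,l),t}^{\s}(m_\mu)\big) = q^{\mu_i^{(k)}}q^{\pm(t-1)} m_\mu \Phi_t^{\s}(L_{N_{(j,l)}^{\mu}},\dots) = \CI_{(j,l),t}^{\s}\big(\CK_{(i,k)}^{+}(m_\mu)\big)$ (both sides being $0$ if $\mu_j^{(l)}=0$); and $[\CI_{(i,k),s}^{\s},\CI_{(j,l),t}^{\s'}]=0$ because
\begin{align*}
\CI_{(i,k),s}^{\s}\big(\CI_{(j,l),t}^{\s'}(m_\mu)\big)
= q^{\pm(s-1)}q^{\pm(t-1)}\, m_\mu\, \Phi_s^{\s}\big(L_{N_{(i,k)}^{\mu}},\dots\big)\,\Phi_t^{\s'}\big(L_{N_{(j,l)}^{\mu}},\dots\big)
\end{align*}
(and $0$ if $\mu_i^{(k)}\mu_j^{(l)}=0$), while by Lemma \ref{Lemma commute L T}(i) all Jucys--Murphy elements $L_1,\dots,L_n$ commute, so the two polynomial factors $\Phi_s^{\s}(\dots)$ and $\Phi_t^{\s'}(\dots)$ commute and the expression is unchanged when the order of the operators is reversed.

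The only genuine point of care — which I would flag as the main obstacle, though it is really bookkeeping rather than a deep issue — is making sure the operators and these compositions are well-defined: one must confirm that $\CI_{(i,k),s}^{\s}$ really is an endomorphism of $\bigoplus_\mu m_\mu \He_{n,r}$ (established just before the lemma, using that a symmetric polynomial in the relevant consecutive $L$'s commutes with $m_\mu$), that the operators preserve each summand $m_\mu \He_{n,r}$ so that the rule $(\psi\circ\phi)(m_\mu)=\psi(m_\mu)h$ applies, and that the degenerate cases $\mu_i^{(k)}=0$ or $\mu_j^{(l)}=0$ are handled consistently on both sides. Once these are settled, every identity reduces to commutativity of scalars in $R$ and of the elements $L_1,\dots,L_n$ in $\He_{n,r}$, exactly as the phrase ``immediate from definitions'' suggests.
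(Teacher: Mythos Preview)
Your proof is correct and is exactly the argument the paper has in mind when it says ``immediate from definitions'': each operator is diagonal on the decomposition $\bigoplus_\mu m_\mu \He_{n,r}$, so commutativity reduces to commutativity of scalars and of the Jucys--Murphy elements (Lemma~\ref{Lemma commute L T}(i)). You have simply spelled out the details that the paper leaves implicit.
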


We also have the following lemma by direct calculations.  

\begin{lem} 
\label{Lemma CK+- 2}
For $(j,l) \in \vG(\Bm)$, we have 
\begin{align*}
(\CK_{(j,l)}^\pm)^2 = 1 \pm  (q-q^{-1}) \CI_{(j,l),0}^{\mp}. 
\end{align*}
\end{lem}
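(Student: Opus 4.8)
The plan is to verify the identity directly inside $\Sc_{n,r}(\Bm) = \End_{\He_{n,r}}\big(\bigoplus_{\mu \in \vL_{n,r}(\Bm)} m_\mu \He_{n,r}\big)$. Since both sides are right $\He_{n,r}$-module endomorphisms of $M = \bigoplus_{\mu} m_\mu \He_{n,r}$, and each summand $m_\mu \He_{n,r}$ is generated by $m_\mu$, it suffices to check that $(\CK_{(j,l)}^\pm)^2$ and $1 \pm (q-q^{-1})\CI_{(j,l),0}^\mp$ act in the same way on $m_\mu$ for every $\mu \in \vL_{n,r}(\Bm)$. Throughout I would abbreviate $d = \mu_j^{(l)}$.

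First, from $\CK_{(j,l)}^{\pm}(m_\mu) = q^{\pm d} m_\mu$ one gets $(\CK_{(j,l)}^{\pm})^2(m_\mu) = q^{\pm 2d} m_\mu$ immediately. Next I would evaluate $\CI_{(j,l),0}^{\mp}(m_\mu)$ using $\Phi_0^{\pm}(x_1,\dots,x_k) = q^{\mp k \pm 1}[k]$ with $k = d$, which is a scalar, together with the prefactors $q^{t-1}$ resp.\ $q^{-t+1}$ specialized at $t = 0$. This gives $\CI_{(j,l),0}^{+}(m_\mu) = q^{-1}\,q^{-d+1}[d]\, m_\mu = q^{-d}[d]\, m_\mu$ and $\CI_{(j,l),0}^{-}(m_\mu) = q\,q^{d-1}[d]\, m_\mu = q^{d}[d]\, m_\mu$; both formulas also hold in the excluded case $d = 0$, since then $[0] = 0$ and $\CI_{(j,l),0}^{\pm}(m_\mu) = 0$ by definition, so no case distinction is really needed at the end.

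Finally, using the elementary identity $(q-q^{-1})[d] = q^{d} - q^{-d}$, I would conclude
$(1 + (q-q^{-1})\CI_{(j,l),0}^{-})(m_\mu) = m_\mu + q^{d}(q^{d}-q^{-d})\, m_\mu = q^{2d} m_\mu = (\CK_{(j,l)}^{+})^2(m_\mu)$
and likewise
$(1 - (q-q^{-1})\CI_{(j,l),0}^{+})(m_\mu) = m_\mu - q^{-d}(q^{d}-q^{-d})\, m_\mu = q^{-2d} m_\mu = (\CK_{(j,l)}^{-})^2(m_\mu)$,
which proves the lemma. There is no real obstacle here; the only point demanding a little care is the bookkeeping of the $q$-power prefactors in the definition of $\CI_{(j,l),0}^{\pm}$ against the value of $\Phi_0^{\pm}$, and observing that the vanishing convention for $\mu_j^{(l)} = 0$ is consistent with the formula $q^{\mp d}[d]$ because $[0] = 0$.
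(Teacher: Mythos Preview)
Your proof is correct and is precisely the direct calculation the paper alludes to: evaluate both sides on each $m_\mu$ using the definitions of $\CK_{(j,l)}^{\pm}$ and $\CI_{(j,l),0}^{\mp}$ together with $\Phi_0^{\pm}(x_1,\dots,x_k)=q^{\mp k\pm1}[k]$ and the identity $(q-q^{-1})[d]=q^d-q^{-d}$.
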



\para 
For $(i,k) \in \vG'(\Bm)$ and an integer $t \geq 0$, 
we define the element $\wt{\CK}_{(i,k)}^{\pm}$ and $\CJ_{(j,l),t} $ of $\Sc_{n,r}(\Bm)$ by  
\begin{align*}
\wt{\CK}_{(i,k)}^{\pm} = \CK_{(i,k)}^{\pm} \CK_{(i+1,k)}^{\mp}
\end{align*}
and 
\begin{align*}
\CJ_{(i,k),t} = 
\begin{cases}
	\CI_{(i,k),0}^+ - \CI_{(i+1,k),0}^-  + (q-q^{-1}) \CI_{(i,k),0}^+ \CI_{(i+1,k),0}^- 
	& \text{ if } t=0, 
	\\
	\dis q^{-t} \CI_{(i,k),t}^+ - q^t \CI_{(i+1,k),t}^- 
	- (q-q^{-1}) \sum_{b=1}^{t-1} q^{- t + 2 b} \CI_{(i,k),t-b}^+ \CI_{(i+1,k),b}^- 
	& \text{ if } t >0.
	\end{cases} 
\end{align*} 

By Lemma \ref{Lemma CK+- 2}, we have the following corollary. 

\begin{cor}
\label{Cor CJ0}
For $(i,k) \in \vG'(\Bm)$, we have 
\begin{align*}
\CJ_{(i,k),0} = \CI_{(i,k),0}^+ - (\CK_{(i,k)}^-)^2 \CI_{(i+1,k),0}^-. 
\end{align*}
\end{cor}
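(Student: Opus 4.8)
The plan is to obtain the identity by a direct substitution, using only the definition of $\CJ_{(i,k),0}$ together with the quadratic relation for $\CK_{(i,k)}^-$ supplied by Lemma~\ref{Lemma CK+- 2}. Nothing deeper is needed: the statement is purely formal manipulation inside $\Sc_{n,r}(\Bm)$.

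First I would recall that, by the definition of $\CJ_{(i,k),t}$ in the case $t=0$,
\begin{align*}
\CJ_{(i,k),0} = \CI_{(i,k),0}^+ - \CI_{(i+1,k),0}^- + (q-q^{-1})\,\CI_{(i,k),0}^+\,\CI_{(i+1,k),0}^-.
\end{align*}
Then I would invoke Lemma~\ref{Lemma CK+- 2} with the lower sign, which gives $(\CK_{(i,k)}^-)^2 = 1 - (q-q^{-1})\,\CI_{(i,k),0}^+$ in $\Sc_{n,r}(\Bm)$. Multiplying this equation on the right by $\CI_{(i+1,k),0}^-$ yields
\begin{align*}
(\CK_{(i,k)}^-)^2\,\CI_{(i+1,k),0}^- = \CI_{(i+1,k),0}^- - (q-q^{-1})\,\CI_{(i,k),0}^+\,\CI_{(i+1,k),0}^-.
\end{align*}
Substituting this expression back into the formula for $\CJ_{(i,k),0}$ and cancelling, one gets $\CI_{(i,k),0}^+ - (\CK_{(i,k)}^-)^2\,\CI_{(i+1,k),0}^- = \CJ_{(i,k),0}$, which is exactly the asserted identity.

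There is no genuine obstacle here; the argument is a one-line computation once Lemma~\ref{Lemma CK+- 2} is in hand. The only point worth remarking is that $\CI_{(i,k),0}^+$ and $\CI_{(i+1,k),0}^-$ commute by Lemma~\ref{Lemma commute K H}~(\roii), so the order of the two factors in the cross term is immaterial, although this commutativity is not actually required to write down the identity as stated.
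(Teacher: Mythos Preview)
Your proof is correct and is exactly the argument the paper intends: the corollary is stated immediately after Lemma~\ref{Lemma CK+- 2} with no proof beyond the phrase ``By Lemma~\ref{Lemma CK+- 2}, we have the following corollary,'' and your substitution of $(\CK_{(i,k)}^-)^2 = 1 - (q-q^{-1})\CI_{(i,k),0}^+$ into the defining expression for $\CJ_{(i,k),0}$ is precisely that computation made explicit.
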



\para 
For $N \in \ZZ_{\geq 0}$ and $\mu \in \ZZ_{>0}$, 
put 
\begin{align*} 
&[T;N,\mu]^+ = 
	\begin{cases} 
		\dis 1 + \sum_{h=1}^{\mu -1} q^h T_{N+1} T_{N+2} \dots T_{N+h} 
		& \text{ if } N+\mu \leq n, 
		\\
		0 & \text{ otherwise,}
	\end{cases} 
\\
&[T;N,\mu]^- = 
	\begin{cases}
		\dis 1 + \sum_{h=1}^{\mu - 1} q^h T_{N-1} T_{N-2} \dots T_{N-h} 
		& \text{ if } n \geq N \geq \mu, 
		\\
		0 & \text{ otherwise.} 
	\end{cases} 
\end{align*} 
For convenience, we put $[ T; N,0]^{\pm}=0$ for any $N \in \ZZ_{\geq 0}$. 

For  $N, \mu \in \ZZ_{\geq 0}$ and $d \in \ZZ_{>0}$,  
put 
\begin{align*}
&\left[ \begin{smallmatrix} T; N, \mu \\ d \end{smallmatrix} \right]^+ 
= [T; N+ (d-1), \mu - (d -1)]^+ \dots [T; N+1, \mu -1]^+ [T; N, \mu]^+,  
\\
&\left[ \begin{smallmatrix} T; N, \mu \\ d \end{smallmatrix} \right]^- 
= [T; N-(d-1), \mu -(d-1)]^- \dots [T; N-1, \mu -1]^- [T; N, \mu]^-.
\end{align*}
We also put 
$\left[ \begin{smallmatrix} T; N, \mu \\ 0 \end{smallmatrix} \right]^+ 
= \left[ \begin{smallmatrix} T; N, \mu \\ 0 \end{smallmatrix} \right]^- 
=1$ 
for any $N, \mu \in \ZZ_{\geq 0}$. 

For $N \in \ZZ_{\geq 0}$ and $d \in \ZZ_{>0}$, put 
\begin{align*}
& (T; N,d)^+ = 
	\begin{cases} 
	\dis 1 + \sum_{h=1}^{d-1} q^h T_{N+d-h} T_{N+d- (h-1)} \dots T_{N+d-2} T_{N+d-1} 
	& \text{ if } N+d \leq n, 
	\\
	0 & \text{ otherwise,}
	\end{cases}
\\
& (T;N,d)^- = 
	\begin{cases} 
	\dis 1 + \sum_{h=1}^{d-1} q^h T_{N-d +h} T_{N-d +(h-1)} \dots T_{N-d+2} T_{N-d+1} 
	& \text{ if } n \geq N \geq d, 
	\\
	0 & \text{ otherwise.} 
	\end{cases} 
\end{align*}
We also put 
\begin{align*}
(T;N,d)^{\pm} ! =(T;N,d)^{\pm} (T;N,d-1)^{\pm} \dots (T;N,1)^{\pm}. 
\end{align*}
%
%

The following lemma follows from Lemma \ref{Lemma commute L T} immediately. 

\begin{lem}
\label{Lemma Li commute [T; N mu]}
For $N, \mu \in \ZZ_{\geq 0}$, we have the following. 
\begin{enumerate} 
\item 
$L_i$ commute with $[T;N,\mu]^+$ unless $ N + \mu \geq i \geq N+1$. 

\item 
$L_i$ commute with $[T; N,\mu]^-$ unless $N \geq i \geq N - \mu +1$. 
\end{enumerate} 
\end{lem}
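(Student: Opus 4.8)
The plan is to read off from the explicit formulas that each of $[T;N,\mu]^{+}$ and $[T;N,\mu]^{-}$ lies in the subalgebra of $\He_{n,r}$ generated by a consecutive block of the Hecke generators $T_1,\dots,T_{n-1}$, and then to invoke Lemma \ref{Lemma commute L T}(ii) one generator at a time.

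First I would dispose of the degenerate cases. If $N+\mu>n$ then $[T;N,\mu]^{+}=0$, and if $N<\mu$ or $N>n$ then $[T;N,\mu]^{-}=0$; in either case the element commutes with every $L_i$ trivially. Also $[T;N,0]^{\pm}=0$ and $[T;N,1]^{\pm}=1$, which again commute with everything. So in what follows I may assume $\mu\geq 2$ and that we are in the non-vanishing range.

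For part (i): the formula $[T;N,\mu]^{+}=1+\sum_{h=1}^{\mu-1}q^{h}T_{N+1}T_{N+2}\cdots T_{N+h}$ shows that $[T;N,\mu]^{+}$ is a polynomial in $T_{N+1},T_{N+2},\dots,T_{N+\mu-1}$. By Lemma \ref{Lemma commute L T}(ii), $L_i$ commutes with $T_j$ whenever $i\neq j$ and $i\neq j+1$, so $L_i$ commutes with all of $T_{N+1},\dots,T_{N+\mu-1}$ --- hence with $[T;N,\mu]^{+}$ --- as soon as $i$ avoids $\bigcup_{j=N+1}^{N+\mu-1}\{j,j+1\}=\{N+1,N+2,\dots,N+\mu\}$. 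This is exactly the assertion that $L_i$ commutes with $[T;N,\mu]^{+}$ unless $N+1\leq i\leq N+\mu$. Part (ii) is proved the same way: $[T;N,\mu]^{-}=1+\sum_{h=1}^{\mu-1}q^{h}T_{N-1}T_{N-2}\cdots T_{N-h}$ is a polynomial in $T_{N-1},T_{N-2},\dots,T_{N-\mu+1}$, and $\bigcup_{j=N-\mu+1}^{N-1}\{j,j+1\}=\{N-\mu+1,\dots,N\}$, so $L_i$ commutes with $[T;N,\mu]^{-}$ whenever $i\notin\{N-\mu+1,\dots,N\}$.

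I do not expect any genuine obstacle here: the proof uses only the single-generator commutation relation of Lemma \ref{Lemma commute L T}(ii), together with bookkeeping of which $T_j$ occur in the definitions. The subtler relations (iii)--(v) of that lemma are not needed for this statement, although they will be essential for the later computations involving the symmetric polynomials $\Phi_t^{\pm}$ evaluated at Jucys--Murphy elements.
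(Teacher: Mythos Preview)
Your proof is correct and follows exactly the approach the paper intends: the paper simply states that the lemma follows immediately from Lemma \ref{Lemma commute L T}, and your argument spells out precisely how, using only part (ii) of that lemma together with the observation of which $T_j$ occur in the explicit formulas for $[T;N,\mu]^{\pm}$.
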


%
%

\begin{lem}
\label{Lemma [T;N,c] com rel}
We have the following. 
\begin{enumerate} 
\item  
For $N, \mu \in \ZZ_{\geq 0}$ such that $ N+\mu \leq n$ and $\mu \geq 3$, 
we have 
\begin{align*}
&(q^{\mu -2} T_{N+2} T_{N+3} \dots T_{N+\mu-1}) (q^{\mu-1} T_{N+1} T_{N+2} \dots T_{N+\mu-1}) 
\\
&= (q^{\mu-1} T_{N+1} T_{N+2} \dots T_{N+\mu-1}) (q^{\mu -2} T_{N+1} T_{N+2} \dots T_{N+\mu-2}). 
\end{align*}

\item  
For $N, \mu \in \ZZ_{\geq 0}$  such that $N \geq \mu \geq 3$, 
we have 
\begin{align*} 
&(q^{\mu-2} T_{N-2} T_{N-3} \dots T_{N-\mu +1}) (q^{\mu -1} T_{N-1} T_{N-2} \dots T_{N-\mu+1}) 
\\
&= (q^{\mu -1} T_{N-1} T_{N-2} \dots T_{N-\mu+1}) (q^{\mu-2} T_{N-1} T_{N-2} \dots T_{N-\mu +2}). 
\end{align*}

\item For $N,\mu,c \in \ZZ_{\geq 0}$ such that $\mu \geq c \geq 1$, we have 
\begin{align*} 
&[T; N+1, c]^+ (q^\mu T_{N+1} T_{N+2} \dots T_{N+\mu}) 
=(q^\mu T_{N+1} T_{N+2} \dots T_{N+\mu})  [T; N,c]^+, 
\\
&[T; N-1,c]^- (q^{\mu} T_{N-1} T_{N-2} \dots T_{N-\mu}) 
= (q^{\mu} T_{N-1} T_{N-2} \dots T_{N-\mu})  [ T;N,c]^-. 
\end{align*}
\end{enumerate}
\end{lem}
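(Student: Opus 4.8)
The plan is as follows. First observe that in every case the displayed powers of $q$ are inert: in (i) and (ii) the scalar $q^{\mu-2}q^{\mu-1}$ occurs on both sides, and in (iii) the scalar $q^{\mu}$ is central and cancels. So it suffices to prove each identity with all $T$-coefficients replaced by $1$; the resulting identities live in the Iwahori--Hecke subalgebra $\He_n\subset\He_{n,r}$ and use only the braid relations $T_iT_{i+1}T_i=T_{i+1}T_iT_{i+1}$ together with the commutations $T_iT_j=T_jT_i$ for $|i-j|\ge 2$, the quadratic relations playing no role. Moreover $T_i\mapsto T_{n-i}$ is an algebra automorphism of $\He_n$ that turns increasing products of $T$'s into decreasing ones, and it carries part (i) to part (ii) and the first identity of (iii) to the second, after relabelling $N$ by $n-N$. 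Hence it is enough to treat (i) and the $(+)$-identity of (iii).

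Next I would isolate the one family of relations governing both. For $h\ge 0$ (and with all subscripts in $\{1,\dots,n-1\}$) set
\begin{align*}
A_h=T_{N+1}T_{N+2}\cdots T_{N+h},\qquad B_h=T_{N+2}T_{N+3}\cdots T_{N+1+h},
\end{align*}
with $A_0=B_0=1$, so that $B_h$ is $A_h$ shifted up by one. The key assertion is
\begin{align*}
B_h\,A_m=A_m\,A_h\qquad(0\le h\le m-1).
\end{align*}
Granting this, part (i) is exactly the case $h=m-1$ with $m=\mu-1$, and the $(+)$-identity of (iii) follows by expanding $[T;N+1,c]^{+}=1+\sum_{h=1}^{c-1}q^{h}B_h$ and $[T;N,c]^{+}=1+\sum_{h=1}^{c-1}q^{h}A_h$ and matching the two sides term by term with $m=\mu$; this is legitimate because $c\le\mu$ keeps every relevant $h$ within $0\le h\le m-1$.

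The identity $B_hA_m=A_mA_h$ I would prove by induction on $h$, the case $h=0$ being trivial. For the step, write $B_{h+1}=B_h\,T_{N+2+h}$ and establish the slide identity $T_{N+2+h}\,A_m=A_m\,T_{N+1+h}$: the generator $T_{N+2+h}$ commutes past $T_{N+1},\dots,T_{N+h}$, then one braid move $T_{N+2+h}T_{N+1+h}T_{N+2+h}=T_{N+1+h}T_{N+2+h}T_{N+1+h}$ (this is the point where one uses $h\le m-2$, which is precisely the range in which the $(h+1)$st instance is needed) produces a factor $T_{N+1+h}$ that then commutes past $T_{N+3+h},\dots,T_{N+m}$ to the far right. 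Combined with the inductive hypothesis and $A_{h+1}=A_hT_{N+1+h}$, this gives $B_{h+1}A_m=B_hT_{N+2+h}A_m=B_hA_mT_{N+1+h}=A_mA_hT_{N+1+h}=A_mA_{h+1}$.

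The main obstacle is not conceptual but bookkeeping: keeping track of which $T_i$ actually occur in each product, handling the degenerate cases where a product is empty or, under the conventions of the preceding paragraphs, zero (because $N+\mu$ exceeds $n$ or $N<\mu$), and carrying out the slide with the correct neighbours so that only honest braid and commutation moves are invoked. Once the index ranges are pinned down, everything is a routine verification.
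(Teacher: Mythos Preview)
Your argument is correct and essentially aligns with the paper's terse proof, which simply says that (i) and (ii) follow from the defining relations of $\He_{n,r}$ and that (iii) is proved by induction on $c$. Your version makes explicit what the paper leaves to the reader: the symmetry reduction via $T_i\mapsto T_{n-i}$, and the isolation of the single family $B_hA_m=A_mA_h$ (proved by induction on $h$ via the slide $T_{N+2+h}A_m=A_mT_{N+1+h}$) from which both (i) and the $(+)$-case of (iii) fall out term by term. The paper's induction on $c$ in (iii) amounts to adding one new pair $(B_{c-1},A_{c-1})$ at each step, which is exactly your induction on $h$, so the two arguments are the same in substance; your packaging is just cleaner and more uniform.
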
 
\begin{proof} 
(\roi) and (\roii) follows from the defining relations of $\He_{n,r}$. 
We can prove (\roiii) by the induction on $c$. 
\end{proof}

%
%

\begin{lem} 
\label{Lemma [T;N,mu ,, d]}
For $N,\mu \in \ZZ_{\geq 0}$ and $d \in \ZZ_{>0}$, we have 
\begin{align*} 
&\left[ \begin{smallmatrix} T;N,\mu \\ d \end{smallmatrix} \right]^+ 
= \begin{cases} 
	(T;N,d)^+ \Big( \left[ \begin{smallmatrix} T; N, d-1 \\ d-1 \end{smallmatrix} \right]^+ 
		\\ \dis  \quad 
		+ \sum_{h=1}^{\mu - d} (q^h T_{N+d} T_{N+d+1} \dots T_{N+d+h-1}) 
		\left[ \begin{smallmatrix} T; N, d +h -1 \\ d-1 \end{smallmatrix} \right]^+
		\Big) 
		 &\text{ if } \mu \geq d, 
		 \\
		 0 & \text{ if } \mu <d, 
	\end{cases} 
\\
& \left[ \begin{smallmatrix} T; N,\mu \\ d \end{smallmatrix} \right]^- 
= \begin{cases}  
	(T; N,d)^- \Big( \left[ \begin{smallmatrix} T; N, d-1 \\ d-1 \end{smallmatrix} \right]^- 
		\\ \dis \quad 
		+ \sum_{h=1}^{\mu -d} (q^h T_{N-d} T_{N-d-1} \dots T_{N-d -h +1}) 
		\left[ \begin{smallmatrix} T; N, d + h-1 \\ d-1 \end{smallmatrix} \right]^- \Big) 
		 & \text{ if } \mu \geq d, 
	\\
		0 & \text{ if } \mu <d.
	\end{cases} 
\end{align*}
\end{lem}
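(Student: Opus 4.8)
The plan is to prove the formula for $\left[ \begin{smallmatrix} T;N,\mu \\ d \end{smallmatrix} \right]^+$ by induction on $d$, and then to deduce the $\left[ \begin{smallmatrix} T;N,\mu \\ d \end{smallmatrix} \right]^-$ formula by a symmetry argument. Indeed, the subalgebra $\He_n = \lan T_1,\dots,T_{n-1}\ran$ carries the algebra automorphism $\s$ with $\s(T_i)=T_{n-i}$ (its defining relations are stable under $i\mapsto n-i$), and a direct check gives $\s([T;N,\mu]^+)=[T;n-N,\mu]^-$, $\s((T;N,d)^+)=(T;n-N,d)^-$ and $\s\big(\left[ \begin{smallmatrix} T;N,\mu \\ d \end{smallmatrix} \right]^+\big)=\left[ \begin{smallmatrix} T;n-N,\mu \\ d \end{smallmatrix} \right]^-$; likewise $\s$ carries the right-hand side of the $+$ identity onto that of the $-$ identity. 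So applying $\s$ to the $+$ formula for $N$ yields the $-$ formula for $n-N$, and it is enough to treat the $+$ case.

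For the $+$ case I would first settle the easy cases. If $\mu<d$, the product $\left[ \begin{smallmatrix} T;N,\mu \\ d \end{smallmatrix} \right]^+=[T;N+d-1,\mu-d+1]^+\cdots[T;N,\mu]^+$ contains the factor $[T;N+\mu,0]^+=0$, so both sides vanish. For $d=1$ one has $\left[ \begin{smallmatrix} T;N,\mu \\ 1 \end{smallmatrix} \right]^+=[T;N,\mu]^+$, while $(T;N,1)^+=1$ and every $\left[ \begin{smallmatrix} T;N,c \\ 0 \end{smallmatrix} \right]^+=1$, so the right-hand side is $1+\sum_{h=1}^{\mu-1}q^hT_{N+1}\cdots T_{N+h}=[T;N,\mu]^+$. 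For the inductive step, assume the formula for $d$ and take $\mu\geq d+1$. I would peel off the rightmost factor,
\[
\left[ \begin{matrix} T;N,\mu \\ d+1 \end{matrix} \right]^+ = \left[ \begin{matrix} T;N+1,\mu-1 \\ d \end{matrix} \right]^+ [T;N,\mu]^+ ,
\]
apply the induction hypothesis to $\left[ \begin{smallmatrix} T;N+1,\mu-1 \\ d \end{smallmatrix} \right]^+$, and then transport $[T;N,\mu]^+$ leftwards through the resulting expression. Two ingredients drive this: Lemma~\ref{Lemma [T;N,c] com rel}(iii), which is exactly the relation that converts a factor $[T;N+1,c]^+$ into $[T;N,c]^+$ when commuted past a monomial $q^{a}T_{N+1}\cdots T_{N+a}$, hence turns each shifted bracket $\left[ \begin{smallmatrix} T;N+1,\ast \\ d-1 \end{smallmatrix} \right]^+$ into the unshifted $\left[ \begin{smallmatrix} T;N,\ast \\ d-1 \end{smallmatrix} \right]^+$ required by the target; and the elementary identity $(T;N,d+1)^+=(T;N+1,d)^++q^{d}T_{N+1}T_{N+2}\cdots T_{N+d}$ together with Lemma~\ref{Lemma [T;N,c] com rel}(i),(ii) and the braid relations, which lets the prefactor $(T;N+1,d)^+$ absorb the monomial $q^{d}T_{N+1}\cdots T_{N+d}$ of $[T;N,\mu]^+$ and become $(T;N,d+1)^+$.

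The main obstacle is purely the bookkeeping in this last step. After the induction hypothesis and the commutations one is left with a double sum of products of the $T_i$, and the work is to regroup it correctly: deciding which monomial of $[T;N,\mu]^+$ pairs with which bracket $\left[ \begin{smallmatrix} T;N+1,\ast \\ d-1 \end{smallmatrix} \right]^+$, keeping the powers of $q$ consistent, and checking that the boundary contributions reassemble on one side into $(T;N,d+1)^+$ and on the other into the new summand of index $h=\mu-d$, so that one lands exactly on the claimed right-hand side. One must also carry along the vanishing conventions $[T;N,\mu]^{\pm}=0$ unless $N+\mu\leq n$ (resp.\ $N\geq\mu$) so the identity stays correct at the edge of the admissible range. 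A natural way to organize the regrouping is a secondary induction on $\mu$, using $[T;N,\mu+1]^+=[T;N,\mu]^++q^{\mu}T_{N+1}\cdots T_{N+\mu}$ to isolate one new monomial at a time; this is routine but is where the care is needed.
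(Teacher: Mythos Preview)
Your overall plan is sound, and the automorphism $\s:T_i\mapsto T_{n-i}$ of $\He_n$ is a clean way to deduce the $-$ identity from the $+$ one (the paper simply says the $-$ case is similar). But your organization of the inductive step differs from the paper's, and the part you flag as ``purely bookkeeping'' is really where the content lies.

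You peel off the \emph{rightmost} factor, $\left[\begin{smallmatrix}T;N,\mu\\d+1\end{smallmatrix}\right]^+=\left[\begin{smallmatrix}T;N+1,\mu-1\\d\end{smallmatrix}\right]^+[T;N,\mu]^+$, and then try to push $[T;N,\mu]^+$ to the left. After applying the induction hypothesis at $(N+1,\mu-1,d)$, however, you must match a single factor $[T;N,\mu]^+$ on the right against a target in which each summand carries a \emph{different} right factor $[T;N,d+h]^+$ (via $\left[\begin{smallmatrix}T;N,d+h\\d\end{smallmatrix}\right]^+=\left[\begin{smallmatrix}T;N+1,d+h-1\\d-1\end{smallmatrix}\right]^+[T;N,d+h]^+$). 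Lemma~\ref{Lemma [T;N,c] com rel}(iii) moves one bracket $[T;N+1,c]^+$ past one monomial $q^aT_{N+1}\cdots T_{N+a}$, but it does not by itself redistribute $[T;N,\mu]^+$ across the sum in the required way; the regrouping you describe is not routine.

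The paper instead peels from the \emph{left} and first isolates a one-step recursion in $\mu$: for $\mu>d$,
\[
\left[\begin{smallmatrix}T;N,\mu\\d\end{smallmatrix}\right]^+
=\left[\begin{smallmatrix}T;N,\mu-1\\d\end{smallmatrix}\right]^+
+(T;N,d)^+\,\bigl(q^{\mu-d}T_{N+d}\cdots T_{N+\mu-1}\bigr)\,\left[\begin{smallmatrix}T;N,\mu-1\\d-1\end{smallmatrix}\right]^+,
\]
proved by induction on $d$ using $\left[\begin{smallmatrix}T;N,\mu\\d\end{smallmatrix}\right]^+=[T;N{+}d{-}1,\mu{-}d{+}1]^+\left[\begin{smallmatrix}T;N,\mu\\d-1\end{smallmatrix}\right]^+$, Lemma~\ref{Lemma [T;N,c] com rel}, and the identity $1+(T;N,d{-}1)^+(qT_{N+d-1})=(T;N,d)^+$. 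Separately it shows $\left[\begin{smallmatrix}T;N,d\\d\end{smallmatrix}\right]^+=(T;N,d)^+\left[\begin{smallmatrix}T;N,d-1\\d-1\end{smallmatrix}\right]^+$. The claimed formula then follows by a short induction on $\mu-d$, since the recursion above contributes exactly the new $h=\mu-d$ term at each step. Your proposed ``secondary induction on $\mu$'' is essentially groping toward this recursion; if you stick with your scheme, I would suggest proving that intermediate identity explicitly rather than attempting the full redistribution in one pass.
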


\begin{proof} 
In the case where $\mu <d$,  
we see that 
$\left[ \begin{smallmatrix} T; N,\mu \\ d \end{smallmatrix} \right]^{\pm}=0$ 
from the definitions. 

First, we prove that, 
if $\mu >d$, 
\begin{align} 
\label{[T;N,mu d]+ induction}
\left[ \begin{smallmatrix} T; N, \mu \\ d \end{smallmatrix} \right]^+  
= \left[ \begin{smallmatrix} T; N, \mu -1 \\ d \end{smallmatrix} \right]^+ 
	+ (T; N,d)^+ (q^{\mu-d} T_{N+d} T_{N+d+1} \dots T_{N+\mu-1}) 
		\left[ \begin{smallmatrix} T; N, \mu -1 \\ d -1\end{smallmatrix} \right]^+ 
\end{align}
by the induction on $d$. 
In the case where $d=1$, 
it is clear by definitions. 
Assume that $d >1$, then we have 
\begin{align*} 
\left[ \begin{smallmatrix} T; N, \mu \\ d \end{smallmatrix} \right]^+ 
&= [T; N+(d-1), \mu - (d-1)]^+ \left[ \begin{smallmatrix} T; N, \mu \\ d -1 \end{smallmatrix} \right]^+. 
\end{align*} 
Applying the assumption of the induction, we have 
\begin{align*} 
&\left[ \begin{smallmatrix} T; N, \mu \\ d \end{smallmatrix} \right]^+ 
\\
&= \Big\{ [T; N+(d-1), \mu -d]^+ + (q^{\mu -d} T_{N+d} T_{N+d+1} \dots T_{N+\mu-1}) \Big\} 
	\\
	& \quad \times 
	\Big\{ \left[ \begin{smallmatrix} T; N, \mu -1 \\ d -1\end{smallmatrix} \right]^+ 
		+ (T;N,d-1)^+ (q^{\mu - d +1} T_{N+d-1}T_{N+d} \dots T_{N+\mu -1}) 
			\left[ \begin{smallmatrix} T; N, \mu -1 \\ d -2\end{smallmatrix} \right]^+ \Big\}. 
\end{align*} 
Then, by using Lemma \ref{Lemma Li commute [T; N mu]} and Lemma \ref{Lemma [T;N,c] com rel}, 
we see that 
\begin{align*} 
&\left[ \begin{smallmatrix} T; N, \mu \\ d \end{smallmatrix} \right]^+ 
\\&= [ T; N+d-1, \mu -d]^+  \left[ \begin{smallmatrix} T; N, \mu -1 \\ d -1\end{smallmatrix} \right]^+ 
	+ (q^{\mu-d} T_{N+d} T_{N+d+1} \dots T_{N+\mu-1}) 
		\left[ \begin{smallmatrix} T; N, \mu -1 \\ d -1 \end{smallmatrix} \right]^+ 
	\\ & \quad 
	+ (T; N,d-1)^+ (q^{\mu-d+1} T_{N+d-1} T_{N+d} \dots T_{N+\mu-1}) 
		[T; N + d-2, \mu-d]^+ \left[ \begin{smallmatrix} T; N, \mu -1 \\ d -2 \end{smallmatrix} \right]^+ 
	\\ & \quad 
	+ (T; N, d-1)^+ ( q^{\mu-d+1} T_{N+d-1} T_{N+d} \dots T_{N+\mu-1}) 
		(q^{\mu-d} T_{N+d-1} T_{N+d} \dots T_{N+\mu-2}) 
		\\ & \qquad \times
		\left[ \begin{smallmatrix} T; N, \mu -1 \\ d -2 \end{smallmatrix} \right]^+. 
\end{align*} 
Note that 
\begin{align*} 
[T; N+d-2, \mu-d]^+ + q^{\mu-d} T_{N+d-1} T_{N+d} \dots T_{N+\mu-2} 
	= [T;N+d-2, \mu-d+1]^+
\end{align*} 
and 
$[T;N+d-2,\mu-d+1]^+ \left[ \begin{smallmatrix} T; N, \mu -1 \\ d -2 \end{smallmatrix} \right]^+ 
=\left[ \begin{smallmatrix} T; N, \mu -1 \\ d -1 \end{smallmatrix} \right]^+$, 
we have 
\begin{align*} 
&\left[ \begin{smallmatrix} T; N, \mu \\ d \end{smallmatrix} \right]^+ 
\\
&= \left[ \begin{smallmatrix} T; N, \mu -1 \\ d \end{smallmatrix} \right]^+ 
	\\ & \quad 
	+\big( 1 + (T; N, d-1)^+ (q T_{N+d-1}) \big) 
	(q^{\mu-d} T_{N+d} T_{N+d+1} \dots T_{N+\mu-1}) 
	\left[ \begin{smallmatrix} T; N, \mu -1 \\ d-1 \end{smallmatrix} \right]^+. 
\end{align*} 
By definition, we see that 
$1 + (T;N,d-1)^+ (q T_{N+d-1})= (T;N,d)^+$. 
Thus, we have \eqref{[T;N,mu d]+ induction}. 


Next, we prove that 
\begin{align} 
\label{[T;N,d,,d]^+ induction}
\left[ \begin{smallmatrix} T; N,d \\ d \end{smallmatrix} \right]^+ 
= (T; N,d)^+ \left[ \begin{smallmatrix} T; N,d -1 \\ d -1\end{smallmatrix} \right]^+ 
\end{align} 
by the induction on $d$. 
In the case where $d=1$, it is clear from definitions. 
Assume that $d>1$. 
Note that 
$\left[ \begin{smallmatrix} T; N,d \\ d \end{smallmatrix} \right]^+ 
	= \left[ \begin{smallmatrix} T; N,d \\ d -1 \end{smallmatrix} \right]^+$, 
by \eqref{[T;N,mu d]+ induction}, we have 

\begin{align*} 
\left[ \begin{smallmatrix} T; N,d \\ d \end{smallmatrix} \right]^+ 
&= \left[ \begin{smallmatrix} T; N,d -1 \\ d -1 \end{smallmatrix} \right]^+ 
	+ (T;N,d-1)^+ (q T_{N+d-1} ) \left[ \begin{smallmatrix} T; N,d -1\\ d -2 \end{smallmatrix} \right]^+ 
\\
&= \big( 1 + (T; N, d-1)^+ (q T_{N+d-1}) \big) 
	\left[ \begin{smallmatrix} T; N,d -1 \\ d -1\end{smallmatrix} \right]^+ 
\\
&= (T;N,d)^+ \left[ \begin{smallmatrix} T; N,d -1\\ d -1\end{smallmatrix} \right]^+. 
\end{align*}


Next we prove that, if $\mu \geq d$, 
\begin{align} 
\label{[T;N,mu,,d]^+ (T;N,d)^+ **} 
\begin{split} 
&\left[ \begin{smallmatrix} T; N,\mu \\ d \end{smallmatrix} \right]^+
\\
&=(T;N,d)^+ \Big( \left[ \begin{smallmatrix} T; N, d-1 \\ d-1 \end{smallmatrix} \right]^+ 
		+ \sum_{h=1}^{\mu - d} (q^h T_{N+d} T_{N+d+1} \dots T_{N+d+h-1}) 
		\left[ \begin{smallmatrix} T; N, d +h -1 \\ d-1 \end{smallmatrix} \right]^+\Big)
\end{split}
\end{align} 
by the induction on $\mu -d$. 
In the case where $\mu=d$, it is just \eqref{[T;N,d,,d]^+ induction}. 
Assume that $\mu >d$. 
By applying the assumption of the induction to the right-hand side of 
\eqref{[T;N,mu d]+ induction}, we have \eqref{[T;N,mu,,d]^+ (T;N,d)^+ **}. 

It is similar for $\left[ \begin{smallmatrix} T; N,\mu \\ d \end{smallmatrix} \right]^-$. 
\end{proof}

We have the following corollary which will be used in Theorem \ref{Thm UqQ to Sc}
to consider the divided powers in cyclotomic $q$-Schur algebras.

\begin{cor} 
\label{Cor [T;N,mu ,, d]}
For $N, \mu \in \ZZ_{\geq 0}$ and $d \in \ZZ_{>0}$, 
there exist  the elements 
$\mathfrak{H}^{\pm} (N,\mu,d) \in \He_{n,r}$ 
such that 
\begin{align*}
\left[ \begin{smallmatrix} T; N,\mu \\ d \end{smallmatrix} \right]^{\pm} 
= (T;N,d)^{\pm}! \mathfrak{H}^{\pm}(N,\mu,d).  
\end{align*}
\end{cor}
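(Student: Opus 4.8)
The plan is to induct on $d$, using Lemma~\ref{Lemma [T;N,mu ,, d]} to strip off the leading factor $(T;N,d)^{\pm}$ and then feed the induction hypothesis into the remaining terms, all of which have ``depth'' $d-1$. Throughout, the only substantive point is a commutation of braid generators made possible by the fact that the relevant index sets are separated by at least $2$, so that the relation $T_iT_j=T_jT_i$ ($|i-j|\ge 2$) of $\He_{n,r}$ applies.

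The base case $d=1$ is immediate: $(T;N,1)^{\pm}! = (T;N,1)^{\pm}$ and $\bigl[\begin{smallmatrix} T;N,\mu \\ 1\end{smallmatrix}\bigr]^{\pm} = [T;N,\mu]^{\pm}$ by definition, so $\mathfrak{H}^{\pm}(N,\mu,1):=[T;N,\mu]^{\pm}$ works (when $(T;N,1)^{\pm}=0$ both sides vanish). Now let $d>1$. If $\mu<d$ then $\bigl[\begin{smallmatrix} T;N,\mu \\ d\end{smallmatrix}\bigr]^{\pm}=0$ and we take $\mathfrak{H}^{\pm}(N,\mu,d)=0$. If $\mu\geq d$, Lemma~\ref{Lemma [T;N,mu ,, d]} writes $\bigl[\begin{smallmatrix} T;N,\mu \\ d\end{smallmatrix}\bigr]^{+}$ as $(T;N,d)^{+}$ times the sum of $\bigl[\begin{smallmatrix} T;N,d-1 \\ d-1\end{smallmatrix}\bigr]^{+}$ and the terms $\bigl(q^{h}T_{N+d}T_{N+d+1}\cdots T_{N+d+h-1}\bigr)\bigl[\begin{smallmatrix} T;N,d+h-1 \\ d-1\end{smallmatrix}\bigr]^{+}$ for $1\le h\le\mu-d$. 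By the induction hypothesis each bracket of depth $d-1$ equals $(T;N,d-1)^{+}!$ times an element of $\He_{n,r}$, and the key step is then to commute each monomial $q^{h}T_{N+d}\cdots T_{N+d+h-1}$ leftward past $(T;N,d-1)^{+}!$. This is legitimate because every $T_i$ occurring in $(T;N,j)^{+}$ for $1\le j\le d-1$ has $i\in\{N+1,\dots,N+d-2\}$, while the monomial involves only $T_{N+d},\dots,T_{N+d+h-1}$, so $|i-i'|\ge 2$ for all relevant pairs. Collecting terms, $\bigl[\begin{smallmatrix} T;N,\mu \\ d\end{smallmatrix}\bigr]^{+}=(T;N,d)^{+}\,(T;N,d-1)^{+}!\cdot\mathfrak{H}^{+}(N,\mu,d)=(T;N,d)^{+}!\cdot\mathfrak{H}^{+}(N,\mu,d)$, where $\mathfrak{H}^{+}(N,\mu,d):=\mathfrak{H}^{+}(N,d-1,d-1)+\sum_{h=1}^{\mu-d}\bigl(q^{h}T_{N+d}\cdots T_{N+d+h-1}\bigr)\mathfrak{H}^{+}(N,d+h-1,d-1)\in\He_{n,r}$. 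The $-$ case is the mirror image: the $T_i$ occurring in $(T;N,j)^{-}$ have $i\in\{N-d+2,\dots,N-1\}$, while the relevant monomials involve only $T_{N-d},\dots,T_{N-d-h+1}$, again separated by at least $2$, so the same argument applies with $(T;N,d)^{-}$ factored out.

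I expect the main (indeed only) obstacle to be the bookkeeping of these index ranges, so that the $|i-j|\ge 2$ relation of $\He_{n,r}$ genuinely applies to the product $\bigl(q^{h}T_{N+d}\cdots T_{N+d+h-1}\bigr)(T;N,d-1)^{\pm}!$; once that commutation is justified, the statement falls out of Lemma~\ref{Lemma [T;N,mu ,, d]} together with the induction hypothesis and the identity $(T;N,d)^{\pm}(T;N,d-1)^{\pm}!=(T;N,d)^{\pm}!$, which holds by definition of $(T;N,d)^{\pm}!$.
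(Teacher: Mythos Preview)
Your proof is correct and follows essentially the same approach as the paper: induct on $d$, invoke Lemma~\ref{Lemma [T;N,mu ,, d]} to factor out $(T;N,d)^{\pm}$, and use the commutation of $T_{N+d}\cdots T_{N+d+h-1}$ (resp.\ $T_{N-d}\cdots T_{N-d-h+1}$) with $(T;N,d-1)^{\pm}!$ to pull the latter to the left. The paper's proof is the one-line version of your argument; your explicit verification of the index ranges $\{N+1,\dots,N+d-2\}$ versus $\{N+d,\dots\}$ (and the mirror ranges in the $-$ case) is exactly the content of the paper's ``Note that\dots commute'' remark.
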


\begin{proof} 
Note that 
$T_{N+d} T_{N+d+1} \dots T_{N+d+h-1}$ (resp. $T_{N-d} T_{N-d-1} \dots T_{N-d-h+1}$) 
commute with $(T;N,d-1)^+!$ (resp. $(T;N,d-1)^-!$), 
then we can prove the corollary 
by the induction on $d$ using Lemma \ref{Lemma [T;N,mu ,, d]}. 
\end{proof}

%
%

\para 
For $(i,k) \in \vG'(\Bm)$, we define the elements $\CX_{(i,k),0}^+$ and $\CX_{(i,k),0}^-$ of $\Sc_{n,r}(\Bm)$ by 
\begin{align*}
&\CX_{(i,k),0}^+ (m_{\mu}) 
	= q^{- \mu_{i+1}^{(k)}+1} m_{\mu+\a_{(i,k)}} 
		[T;N_{(i,k)}^{\mu}, \mu_{i+1}^{(k)}]^+, 
\\
&\CX_{(i,k),0}^- (m_{\mu})  
	= q^{ - \mu_i^{(k)} +1} m_{\mu - \a_{(i,k)}} h^{\mu}_{-(i,k)} [T; N_{(i,k)}^\mu, \mu_i^{(k)}]^- 
\end{align*} 
for each $\mu \in \vL_{n,r}(\Bm)$, 
where 
we put $\mu_{m_k+1}^{(k)} = \mu_1^{(k+1)}$ if $i=m_k$, 
and 
\begin{align*}
h^{\mu}_{-(i,k)} 
	= \begin{cases} 1 & \text{ if } i \not= m_k ,\\ L_{N^{\mu}_{(m_k,k)}} - Q_k & \text{ if } i=m_k. \end{cases} 
\end{align*}
Note that $m_{\mu \pm \a_{(i,k)}}=0$  if $\mu \pm \a_{(i,k)} \not \in \vL_{n,r}(\Bm)$. 

By \cite[Lemma 6.10]{W}, 
the definitions of $\CX_{(i,k),0}^{\pm}$ are well-defined. 
(The elements $\CX_{(i,k),0}^{\pm}$ are denoted by $\vf_{(i,k)}^{\pm}$ in \cite{W}.)  

For $(i,k) \in \vG'(\Bm)$ and an integer $t>0$, 
we define the elements $\CX_{(i,k),t}^{\pm}$ of $\Sc_{n,r}(\Bm)$ inductively by 
\begin{align}
\label{Def CX t}
\begin{split}
&\CX_{(i,k),t}^{+} 
	=  \CI_{(i,k),1}^{+} \CX_{(i,k),t-1}^{+} - \CX_{(i,k),t-1}^{+} \CI_{(i,k),1}^{+}, 
\\
&\CX_{(i,k),t}^{-} 
	= - \big( \CI_{(i,k),1}^{-} \CX_{(i,k),t-1}^{-} - \CX_{(i,k),t-1}^{-} \CI_{(i,k),1}^{-}\big). 
\end{split}
\end{align}


\begin{lem}
\label{Lemma K+ X K-}
For $(i,k) \in \vG'(\Bm)$, $(j,l) \in \vG(\Bm)$ and $t \geq 0$, we have 
\begin{align*}
\CK_{(j,l)}^+ \CX_{(i,k),t}^{\pm} \CK_{(j,l)}^- 
	= q^{ \pm a _{(i,k)(j,l)}} \CX_{(i,k),t}^{\pm}. 
\end{align*}
\end{lem}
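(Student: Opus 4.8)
The plan is to induct on $t$. The base case $t=0$ is a direct computation on the faithful module $\bigoplus_{\mu \in \vL_{n,r}(\Bm)} m_\mu \He_{n,r}$, and the inductive step reduces immediately to the commutation $[\CK_{(j,l)}^+, \CI_{(i,k),1}^\pm]=0$ recorded in Lemma \ref{Lemma commute K H}.

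For $t=0$ I would argue as follows. Recall that $\CK_{(j,l)}^\pm$ is the $\He_{n,r}$-linear endomorphism sending $m_\nu \mapsto q^{\pm \nu_j^{(l)}} m_\nu$ for every $\nu$, while $\CX_{(i,k),0}^{\pm}$ sends $m_\mu$ into $m_{\mu \pm \a_{(i,k)}}\He_{n,r}$ (both sides of the asserted identity being $0$ when $\mu \pm \a_{(i,k)}\notin\vL_{n,r}(\Bm)$, so we may assume it lies in $\vL_{n,r}(\Bm)$). Composing the three endomorphisms on $m_\mu$, the scalar $q^{-\mu_j^{(l)}}$ is produced by the first application of $\CK_{(j,l)}^-$, and the scalar $q^{\pm(\mu\pm\a_{(i,k)})_j^{(l)}}$ by the last application of $\CK_{(j,l)}^+$; since $\CK_{(j,l)}^\pm$ is $\He_{n,r}$-linear, the Hecke-algebra factors $[T;N_{(i,k)}^\mu,\,\cdot\,]^{\pm}$ and the Jucys--Murphy factor $h^{\mu}_{-(i,k)}$ occurring in $\CX_{(i,k),0}^{-}$ are unaffected. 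Thus $\CK_{(j,l)}^+ \CX_{(i,k),0}^{\pm} \CK_{(j,l)}^- (m_\mu) = q^{\pm (\a_{(i,k)})_j^{(l)}}\, \CX_{(i,k),0}^{\pm}(m_\mu)$. Finally $(\a_{(i,k)})_j^{(l)} = \d_{(j,l),(i,k)} - \d_{(j,l),(i+1,k)} = a_{(i,k)(j,l)}$ by the description of $a_{(i,k)(j,l)}$ in \S\ref{Cartan data} (where, for $i=m_k$, the index $(i+1,k)$ is read as $(1,k+1)$ as in the definition of $\CX_{(i,k),0}^{+}$), so the case $t=0$ holds.

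For the inductive step, suppose the identity holds for $t-1$. By \eqref{Def CX t}, $\CX_{(i,k),t}^{+} = \CI_{(i,k),1}^{+}\CX_{(i,k),t-1}^{+} - \CX_{(i,k),t-1}^{+}\CI_{(i,k),1}^{+}$; conjugating by $\CK_{(j,l)}^+$ and inserting $\CK_{(j,l)}^-\CK_{(j,l)}^+ = 1$ between the two factors of each term gives $\CK_{(j,l)}^+\CX_{(i,k),t}^{+}\CK_{(j,l)}^- = \big(\CK_{(j,l)}^+\CI_{(i,k),1}^{+}\CK_{(j,l)}^-\big)\big(\CK_{(j,l)}^+\CX_{(i,k),t-1}^{+}\CK_{(j,l)}^-\big) - \big(\CK_{(j,l)}^+\CX_{(i,k),t-1}^{+}\CK_{(j,l)}^-\big)\big(\CK_{(j,l)}^+\CI_{(i,k),1}^{+}\CK_{(j,l)}^-\big)$. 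By Lemma \ref{Lemma commute K H} the outer factors equal $\CI_{(i,k),1}^{+}$, and by the induction hypothesis the inner factors equal $q^{a_{(i,k)(j,l)}}\CX_{(i,k),t-1}^{+}$, so $\CK_{(j,l)}^+\CX_{(i,k),t}^{+}\CK_{(j,l)}^- = q^{a_{(i,k)(j,l)}}\big(\CI_{(i,k),1}^{+}\CX_{(i,k),t-1}^{+} - \CX_{(i,k),t-1}^{+}\CI_{(i,k),1}^{+}\big) = q^{a_{(i,k)(j,l)}}\CX_{(i,k),t}^{+}$. The case of $\CX_{(i,k),t}^{-}$ is identical, the global sign in \eqref{Def CX t} playing no role and the induction hypothesis now reading $\CK_{(j,l)}^+\CX_{(i,k),t-1}^{-}\CK_{(j,l)}^- = q^{-a_{(i,k)(j,l)}}\CX_{(i,k),t-1}^{-}$.

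I do not expect a genuine obstacle here: the inductive step is formal once Lemma \ref{Lemma commute K H} is available, and the only point requiring care is the bookkeeping of the weight shift $\mu \mapsto \mu \pm \a_{(i,k)}$ in the $t=0$ computation, namely verifying that the resulting power of $q$ is exactly $a_{(i,k)(j,l)}$ — including the boundary convention $\mu^{(k)}_{m_k+1} = \mu^{(k+1)}_1$ when $i = m_k$ — which is immediate from the tables in \S\ref{Cartan data}.
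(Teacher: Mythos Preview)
Your proof is correct and follows essentially the same approach as the paper's: the base case $t=0$ is checked directly from the definitions (the weight shift $\mu\mapsto\mu\pm\a_{(i,k)}$ producing exactly the exponent $\pm a_{(i,k)(j,l)}$), and the inductive step follows from the recursion \eqref{Def CX t} together with Lemma~\ref{Lemma commute K H}. You have simply spelled out the details that the paper leaves implicit.
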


\begin{proof}
We see the statement in the case where $t=0$ from the definitions directly. 
Then we can prove the statement by the induction on $t$ 
using \eqref{Def CX t} together with Lemma \ref{Lemma commute K H}. 
\end{proof}


We can describe the elements $\CX_{(i,k),t}^{\pm}$ of $\Sc_{n,r}(\Bm)$ precisely as follows. 

\begin{lem}
\label{Lemma Xpmt} 
For $(i,k) \in \vG'(\Bm)$, $t \geq 0$ and $\mu \in \vL_{n,r}(\Bm)$, 
we have the followings. 
\begin{enumerate}
\item 
$\CX_{(i,k),t}^+(m_\mu)  
= q^{ - \mu_{i+1}^{(k)}+1} m_{\mu + \a_{(i,k)}} L_{N^{\mu}_{(i,k)} +1}^t [T; N_{(i,k)}^\mu, \mu_{i+1}^{(k)}]^+. $

\item 
$\CX_{(i,k),t}^- (m_{\mu}) 
=  q^{- \mu_i^{(k)}+1}  m_{\mu - \a_{(i,k)}} L_{N^{\mu}_{(i,k)}}^t h^{\mu}_{-(i,k)} [T; N_{(i,k)}^\mu, \mu_i^{(k)}]^-.  $
\end{enumerate}
\end{lem}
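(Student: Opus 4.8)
The plan is to prove both identities simultaneously by induction on $t$. For $t=0$ the two formulas are literally the definitions of $\CX_{(i,k),0}^{+}$ and $\CX_{(i,k),0}^{-}$, reading $L_j^{0}=1$. For the inductive step I would substitute the inductive hypothesis into the recursion \eqref{Def CX t}, i.e. $\CX_{(i,k),t}^{+}=\CI_{(i,k),1}^{+}\CX_{(i,k),t-1}^{+}-\CX_{(i,k),t-1}^{+}\CI_{(i,k),1}^{+}$ and $\CX_{(i,k),t}^{-}=-\big(\CI_{(i,k),1}^{-}\CX_{(i,k),t-1}^{-}-\CX_{(i,k),t-1}^{-}\CI_{(i,k),1}^{-}\big)$, using throughout that every element of $\Sc_{n,r}(\Bm)$ is a right $\He_{n,r}$-module endomorphism of $\bigoplus_{\mu}m_\mu\He_{n,r}$, so for instance $\CX_{(i,k),t-1}^{\pm}(m_\nu h)=\CX_{(i,k),t-1}^{\pm}(m_\nu)\,h$.

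The input about $\CI_{(i,k),1}^{\pm}$ is explicit: by \eqref{Phi t=1} we have $\Phi_1^{\pm}(x_1,\dots,x_k)=x_1+\dots+x_k$, so the definitions give $\CI_{(i,k),1}^{\pm}(m_\nu)=m_\nu\,P_\nu$ for all $\nu\in\vL_{n,r}(\Bm)$, where $P_\nu:=L_{N^{\nu}_{(i,k)}}+L_{N^{\nu}_{(i,k)}-1}+\dots+L_{N^{\nu}_{(i,k)}-\nu_i^{(k)}+1}$ is the sum of the Jucys--Murphy elements over the block of consecutive indices attached to the $(i,k)$-part of $\nu$. A short count from the definition of $N^{\mu}_{(j,l)}$ gives the index shifts $N^{\mu\pm\a_{(i,k)}}_{(i,k)}=N^{\mu}_{(i,k)}\pm1$, uniformly in the two cases $i\neq m_k$ and $i=m_k$. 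Writing $N=N^{\mu}_{(i,k)}$ and, say for the $+$ case, using the inductive hypothesis, one finds that $\CI_{(i,k),1}^{+}\CX_{(i,k),t-1}^{+}(m_\mu)$ equals $q^{-\mu_{i+1}^{(k)}+1}\,m_{\mu+\a_{(i,k)}}\,P_{\mu+\a_{(i,k)}}\,L_{N+1}^{t-1}\,[T;N,\mu_{i+1}^{(k)}]^{+}$ while $\CX_{(i,k),t-1}^{+}\CI_{(i,k),1}^{+}(m_\mu)$ equals $q^{-\mu_{i+1}^{(k)}+1}\,m_{\mu+\a_{(i,k)}}\,L_{N+1}^{t-1}\,[T;N,\mu_{i+1}^{(k)}]^{+}\,P_{\mu}$. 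The $-$ case is entirely parallel, with $L_{N}^{t-1}$ and an additional factor $h^{\mu}_{-(i,k)}$ in place of $L_{N+1}^{t-1}$, and with $[T;N,\mu_i^{(k)}]^{-}$ in place of $[T;N,\mu_{i+1}^{(k)}]^{+}$.

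The heart of the matter is then a cancellation. Since $P_\mu$ is a sum of Jucys--Murphy elements over a block of consecutive indices, Lemma~\ref{Lemma commute L T} shows that it commutes with $[T;N,\mu_{i+1}^{(k)}]^{+}$ and with $[T;N,\mu_i^{(k)}]^{-}$, and it trivially commutes with any power of a single $L_j$ and with $h^{\mu}_{-(i,k)}$ (which is either $1$ or $L_{N}-Q_k$). Moreover $P_{\mu+\a_{(i,k)}}=L_{N+1}+P_\mu$ and $P_{\mu-\a_{(i,k)}}=P_\mu-L_{N}$, so $P_{\mu\pm\a_{(i,k)}}$ differs from $P_\mu$ by exactly the single boundary Jucys--Murphy element. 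Subtracting the two products displayed above, the $P_\mu$-parts cancel and what remains is $L_{N+1}\cdot L_{N+1}^{t-1}=L_{N+1}^{t}$ (resp. $-L_{N}\cdot L_{N}^{t-1}=-L_{N}^{t}$) times the other factors, which, after the sign in \eqref{Def CX t}, is precisely the claimed formula. The step that needs care is the index bookkeeping: checking that each operator genuinely acts only on the block of consecutive Jucys--Murphy indices assigned to it, so that the commutations of Lemma~\ref{Lemma commute L T} really apply, and treating the boundary case $i=m_k$, where $\a_{(m_k,k)}$ straddles the $k$-th and $(k+1)$-st components and the factor $h^{\mu}_{-(m_k,k)}=L_{N}-Q_k$ appears but, being a polynomial in the single element $L_{N}$, produces no new terms. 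The degenerate cases $\mu_i^{(k)}=0$ and $\mu_{i\pm1}^{(k)}=0$ are absorbed automatically by the conventions $[T;N,0]^{\pm}=0$ and $m_{\mu\pm\a_{(i,k)}}=0$.
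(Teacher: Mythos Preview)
Your proposal is correct and follows essentially the same induction-on-$t$ argument as the paper's proof: both substitute the inductive hypothesis into the recursion \eqref{Def CX t}, identify $\CI_{(i,k),1}^{\pm}(m_\nu)$ with $m_\nu$ times the block-sum $P_\nu$ of Jucys--Murphy elements, and use the commutation of $P_\mu$ with the brackets $[T;N,\cdot]^{\pm}$ so that the difference $P_{\mu\pm\a_{(i,k)}}-P_\mu$ collapses to a single $L$-factor. The only stylistic difference is that the paper proves (\roi) first and says (\roii) is similar, while you run both cases in parallel; in particular your remark that the commutation of $P_\mu$ with $[T;N,\mu_i^{(k)}]^{-}$ uses the symmetry of $P_\mu$ (via Lemma~\ref{Lemma commute L T}\,(iii)), rather than mere disjointness of index ranges, is the key point for the minus case and is handled correctly.
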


\begin{proof}
We prove (\roi). 
We can easily show that 
$\CX_{(i,k),t}^+ (m_\mu)=0$ 
if $\mu_{i+1}^{(k)} =0$ by the induction on $t$ using \eqref{Def CX t}. 
Assume that $\mu_{i+1}^{(k)} \not=0$. 
If $t=0$, then it is just the definition of $\CX_{(i,k),0}^+$. 
We prove the equation for $t >0$ by the induction on $t$. 
Note that $(\mu+\a_{(i,k)})_i^{(k)} = \mu_i^{(k)}+1$ 
and $N^{\mu+\a_{(i,k)}}_{(i,k)} = N^{\mu}_{(i,k)} +1$, 
by the assumption of the induction, we have 
\begin{align*}
\CI_{(i,k),1}^+ \CX_{(i,k),t-1}^+ (m_{\mu}) 
= & q^{- \mu_{i+1}^{(k)}+1} m_{\mu + \a_{(i,k)}} 
	\\ & \times 
	( L_{N^{\mu}_{(i,k)}+1} + L_{N^{\mu}_{(i,k)}} + L_{N^{\mu}_{(i,k)} -1} + \dots + L_{N^{\mu}_{(i,k)} - \mu_i^{(k)} +1})
	\\ & \times
	L^{t-1}_{N^{\mu}_{(i,k)}+1} 
	[T; N_{(i,k)}^\mu, \mu_{i+1}^{(k)}]^+.  
\end{align*} 
On the other hand, we have 
\begin{align*}
\CX_{(i,k),t-1}^+ \CI_{(i,k),1}^+ (m_\mu) 
= & \d_{(\mu_i^{(k)} \not=0)} q^{- \mu_{i+1}^{(k)}+1}  m_{\mu+\a_{(i,k)}} 
	L^{t-1}_{N^{\mu}_{(i,k)}+1} [T;N_{(i,k)}^\mu, \mu_{i+1}^{(k)}]^+ 
	\\ &  \times 
	( L_{N^{\mu}_{(i,k)}} + L_{N^{\mu}_{(i,k)} -1} + \dots + L_{N^{\mu}_{(i,k)} - \mu_i^{(k)} +1}). 
\end{align*}
Thus, by \eqref{Def CX t} and Lemma \ref{Lemma Li commute [T; N mu]}, 
we have (\roi). 
(\roii) is similar. 
\end{proof}


\begin{prop}
\label{Proposition CX CX}
For $(i,k),(j,l) \in \vG'(\Bm)$ and $s,t \geq 0$, 
we have the following relations. 

\begin{enumerate} 
\item 
$[\CX_{(i,k),t}^{\pm}, \CX_{(j,l),s}^{\pm}] =0$ 
if $(j,l) \not= (i,k), (i \pm 1,k)$.

\item 
$\CX_{(i,k),t+1}^{\pm} \CX_{(i,k),s}^{\pm} - q^{\pm 2} \CX_{(i,k),s}^{\pm} \CX_{(i,k),t+1}^{\pm} 
	= q^{\pm 2} \CX_{(i,k),t}^{\pm} \CX_{(i,k),s+1}^{\pm} - \CX_{(i,k),s+1}^{\pm} \CX_{(i,k),t}^{\pm}$. 
\item 
\begin{align*} 
	& \CX_{(i,k),t+1}^+ \CX_{(i+1,k),s}^+ - q^{-1} \CX_{(i+1,k),s}^+ \CX_{(i,k),t+1}^+ 
		= \CX_{(i,k),t}^+ \CX_{(i+1,k),s+1}^+ - q \CX_{(i+1,k),s+1}^+ \CX_{(i,k),t}^+, 
	\\ & 
	\CX_{(i+1,k),s}^- \CX_{(i,k),t+1}^- - q^{-1} \CX_{(i,k),t+1}^- \CX_{(i+1,k),s}^- 
		= \CX_{(i+1,k),s+1}^- \CX_{(i,k),t}^- - q \CX_{(i,k),t}^- \CX_{(i+1,k),s+1}^-. 
\end{align*} 
\end{enumerate} 
\end{prop}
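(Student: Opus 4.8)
The plan is to verify all three families of relations by direct computation using the explicit formulas for the action of $\CX_{(i,k),t}^{\pm}$ on the generators $m_\mu$ given in Lemma \ref{Lemma Xpmt}, together with the commutation relations among Jucys--Murphy elements $L_i$ and Hecke generators $T_j$ recorded in Lemma \ref{Lemma commute L T} and Lemma \ref{Lemma Li commute [T; N mu]}. Since $\Sc_{n,r}(\Bm)$ acts faithfully on $\bigoplus_{\mu} m_\mu \He_{n,r}$, and in fact each endomorphism is determined by its values on the elements $m_\mu$, it suffices to evaluate both sides of each asserted identity on an arbitrary $m_\mu$ and compare. I would organize the computation by the relative position of $(i,k)$ and $(j,l)$, exactly as the statement is stratified.

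For part (i), when $(j,l)\neq (i,k),(i\pm1,k)$ the two operators $\CX_{(i,k),t}^{\pm}$ and $\CX_{(j,l),s}^{\pm}$ move mass in $m_\mu$ in disjoint ``coordinate blocks'' $\{N^\mu_{(i,k)}\}$ and $\{N^\mu_{(j,l)}\}$, so the relevant $L$- and $T$-factors commute by Lemma \ref{Lemma commute L T}(i),(ii) and Lemma \ref{Lemma Li commute [T; N mu]}; the weight shifts $\a_{(i,k)}$ and $\a_{(j,l)}$ are independent, so after evaluating on $m_\mu$ both composites give the same element of $m_{\mu\pm\a_{(i,k)}\pm\a_{(j,l)}}\He_{n,r}$. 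For part (ii), the ``$a_{(i,k)(i,k)}=2$'' case, I would fix the block attached to $(i,k)$, apply Lemma \ref{Lemma Xpmt} twice, and reduce the identity to a purely Hecke-algebra identity among powers of a single $L_{N+1}$ (or $L_N$) and the factors $[T;N,\mu_{i+1}^{(k)}]^{\pm}$; here the point is that $L_{N+1}^{t+1}L_{N+1}^s - q^{\pm 2} L_{N+1}^{s}L_{N+1}^{t+1}$ type expressions collapse because the $L$'s commute, and the remaining discrepancy is matched by the quadratic relation $(T_i-q)(T_i+q^{-1})=0$ hidden inside $[T;N,\mu]^{\pm}$. For part (iii), the ``adjacent'' case $(j,l)=(i\pm1,k)$, the two shifts $\a_{(i,k)}$ and $\a_{(i+1,k)}$ overlap in one coordinate, so the composites genuinely interact; I would again invoke Lemma \ref{Lemma Xpmt} to write each side as $m_{\mu+\a_{(i,k)}+\a_{(i+1,k)}}$ times an expression in $L$'s and $T$'s, then use parts (iii),(iv),(v) of Lemma \ref{Lemma commute L T} — the relations $L_{i+1}^tT_i = (q-q^{-1})\sum L_{i+1}^{t-s}L_i^s + T_iL_i^t$ and its mirror — to move the $L$-powers past the single $T$ that separates the two blocks, producing exactly the $q^{\pm1}$-twisted cross terms on the right-hand side.

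The main obstacle I anticipate is the bookkeeping in part (iii): the index $N^\mu_{(i+1,k)}$ depends on $\mu^{(k)}_{i+1}$, which is shifted by the action of $\CX_{(i,k)}^\pm$, so one must carefully track how $N^{\mu+\a_{(i,k)}}_{(i+1,k)}$ and the truncated products $[T;N,\mu]^{\pm}$ transform under the first operator before applying the second, and the $\pm$ and left/right asymmetry between the $e$-side and $f$-side formulas in Lemma \ref{Lemma Xpmt} means the ``$+$'' and ``$-$'' relations in (iii) are not formally symmetric and must be done separately. A secondary, lighter obstacle is that the inductive definition \eqref{Def CX t} of $\CX_{(i,k),t}^{\pm}$ for $t>0$ means some of these identities should really be proved by induction on $s+t$, using the already-established $t=0,1$ cases as base; but Lemma \ref{Lemma Xpmt} bypasses this by giving a closed form, so I would simply cite it and reduce everything to the Hecke-algebra side. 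All steps are routine once the closed forms are in hand; the write-up is essentially a careful case analysis with no conceptual surprises.
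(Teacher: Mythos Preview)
Your overall plan---evaluate both sides on each $m_\mu$ via Lemma~\ref{Lemma Xpmt} and reduce to identities in $\He_{n,r}$---is exactly the paper's approach, and your treatment of (\roi) and (\roiii) matches what the paper does.

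There is, however, a concrete misconception in your sketch of (\roii). You write that the identity reduces to one ``among powers of a single $L_{N+1}$'' and that ``$L_{N+1}^{t+1}L_{N+1}^s - q^{\pm 2} L_{N+1}^{s}L_{N+1}^{t+1}$ type expressions collapse because the $L$'s commute''. This is not what happens. Applying $\CX_{(i,k),\,\cdot}^+$ shifts $\mu$ to $\mu+\a_{(i,k)}$, hence shifts $N=N_{(i,k)}^\mu$ to $N+1$; so the second application produces $L_{N+2}^{t+1}$, not $L_{N+1}^{t+1}$. Concretely (as in the paper's \eqref{CX+CX+}),
\[
\CX_{(i,k),t+1}^+\CX_{(i,k),s}^+(m_\mu)
= q^{-2\mu_{i+1}^{(k)}+3}\, m_{\mu+2\a_{(i,k)}}\, L_{N+1}^{s} L_{N+2}^{t+1}\,
[T;N+1,\mu_{i+1}^{(k)}-1]^+[T;N,\mu_{i+1}^{(k)}]^+,
\]
and the whole content of (\roii) is the \emph{exchange} of $L_{N+1}$ and $L_{N+2}$. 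The paper carries this out via $L_{N+2}=T_{N+1}L_{N+1}T_{N+1}$ together with Lemma~\ref{Lemma commute L T}(\roiv),(\rov), obtaining
\[
L_{N+1}^sL_{N+2}^{t+1}
= T_{N+1}L_{N+1}^{t+1}L_{N+2}^{s}T_{N+1}
+ (q-q^{-1})\,T_{N+1}\sum_{p=1}^{t-s}L_{N+1}^{t-p+1}L_{N+2}^{s+p},
\]
and then uses two further facts you did not mention: (a) the identity \eqref{T_{N+1} [T]+[T]+}, namely that $T_{N+1}$ acts by $q$ on the product $[T;N+1,\mu_{i+1}^{(k)}-1]^+[T;N,\mu_{i+1}^{(k)}]^+$ (this is where the quadratic relation enters, but it must be proved by induction on $\mu_{i+1}^{(k)}$, not just invoked), and (b) $m_{\mu+2\a_{(i,k)}}T_{N+1}=q\,m_{\mu+2\a_{(i,k)}}$ from \eqref{ m mu T}. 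These two $q$'s are what produce the $q^{\pm2}$ in the statement, and the cross-term sums cancel between the two sides. Your ``single $L$'' picture would make (\roii) trivial and would not explain the $q^{\pm2}$; once you correct this, your plan goes through just as in the paper.
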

\begin{proof}
(\roi) follows from Lemma \ref{Lemma Xpmt} using Lemma \ref{Lemma commute L T}.

We prove (\roii). 
We may assume that $t \geq s$ by multiplying $-1$ to both sides if necessary. 
We prove 
\begin{align}
\label{CXt+1+ CXs+ - CXs+ CXt+1+}
\CX_{(i,k),t+1}^+ \CX_{(i,k),s}^+ - q^2 \CX_{(i,k),s}^+ \CX_{(i,k),t+1}^+ 
= q^2 \CX_{(i,k),t}^+ \CX_{(i,k),s+1}^+ - \CX_{(i,k),s+1}^+ \CX_{(i,k),t}^+. 
\end{align} 
Put $N=N_{(i,k)}^\mu$. 
By Lemma \ref{Lemma Xpmt} together with Lemma \ref{Lemma Li commute [T; N mu]}, 
for $\mu \in \vL_{n,r}(\Bm)$, 
we have 
\begin{align}
\label{CX+CX+}
\begin{split}  
&\CX_{(i,k),t+1}^+ \CX_{(i,k),s}^+ (m_{\mu}) 
\\
&= q^{- 2 \mu_{i+1}^{(k)}+3} m_{\mu + 2 \a_{(i,k)}} L_{N+1}^s L_{N+2}^{t+1} 
	[T; N+1, \mu_{i+1}^{(k)}-1]^+ [T; N, \mu_{i+1}^{(k)}]^+. 
\end{split} 
\end{align}
Thus, we may assume that $\mu_{i+1}^{(k)} \geq 2$ since $m_{\mu + 2 \a_{(i,k)}}=0$ if $\mu_{i+1}^{(k)}<2$. 
By the induction on $\mu_{i+1}^{(k)}$, 
we can show that 
\begin{align}
\label{T_{N+1} [T]+[T]+}  
T_{N+1} [T; N+1, \mu_{i+1}^{(k)}-1]^+ [T; N, \mu_{i+1}^{(k)}]^+ 
= q [T; N+1, \mu_{i+1}^{(k)}-1]^+ [T; N, \mu_{i+1}^{(k)}]^+. 
\end{align}
We also have, 
by Lemma \ref{Lemma commute L T}, 
\begin{align} 
\label{LN+1s LN+2t+1}
\begin{split} 
L_{N+1}^s L_{N+2}^{t+1} 
&= (L_{N+1} L_{N+2})^s (T_{N+1} L_{N+1} T_{N+1}) L_{N+2}^{t-s} 
\\
&= T_{N+1} (L_{N+1} L_{N+2})^s L_{N+1} \big\{ L_{N+1}^{t-s} T_{N+1} 
	+ (q-q^{-1}) \sum_{p=1}^{t-s} L_{N+1}^{t-s-p} L_{N+2}^p \big\} 
\\
&= T_{N+1} L_{N+1}^{t+1} L_{N+2}^s T_{N+1} 
 + (q-q^{-1}) T_{N+1} \sum_{p=1}^{t-s} L_{N+1}^{t-p+1} L_{N+2}^{s +p}. 
\end{split}
\end{align}
Then, 
\eqref{CX+CX+} by using \eqref{ m mu T}, \eqref{T_{N+1} [T]+[T]+} and \eqref{LN+1s LN+2t+1}, 
we have 
\begin{align*} 
&\CX_{(i,k),t+1}^+ \CX_{(i,k),s}^+ (m_{\mu}) 
\\
&= q^2 q^{- 2 \mu_{i+1}^{(k)}+3} m_{\mu + 2 \a_{(i,k)}} L_{N+1}^{t+1} L_{N+2}^{s} 
	[T; N+1, \mu_{i+1}^{(k)}-1]^+ [T; N, \mu_{i+1}^{(k)}]^+ 
	\\ & \quad 
	+ q (q-q^{-1}) q^{- 2 \mu_{i+1}^{(k)}+3} m_{\mu + 2 \a_{(i,k)}} 
	\sum_{p=1}^{t-s}  L_{N+1}^{t-p+1} L_{N+2}^{s+p} 
	[T; N+1, \mu_{i+1}^{(k)}-1]^+ [T; N, \mu_{i+1}^{(k)}]^+ 
\\
&= q^2 \CX_{(i,k),s}^+ \CX_{(i,k),t+1}^+ (m_{\mu}) 
	\\ & \quad 
	+ q (q-q^{-1}) q^{- 2 \mu_{i+1}^{(k)}+3} m_{\mu + 2 \a_{(i,k)}} 
	\sum_{p=1}^{t-s}  L_{N+1}^{t-p+1} L_{N+2}^{s+p} 
	[T; N+1, \mu_{i+1}^{(k)}-1]^+ [T; N, \mu_{i+1}^{(k)}]^+. 
\end{align*}
Similarly, we have 
\begin{align*} 
&q^2 \CX_{(i,k),t}^+ \CX_{(i,k),s+1}^+ (m_{\mu}) 
\\
&= q^{- 2 \mu_{i+1}^{(k)}+3} m_{\mu + 2 \a_{(i,k)}} T_{N+1} L_{N+1}^{s+1} L_{N+2}^{t}  T_{N+1} 
	[T; N+1, \mu_{i+1}^{(k)} -1]^+ [T; N, \mu_{i+1}^{(k)}]^+ 
\\
&=  q^{- 2 \mu_{i+1}^{(k)}+3} m_{\mu + 2 \a_{(i,k)}} L_{N+1}^t L_{N+2}^{s+1} 
		[T; N+1, \mu_{i+1}^{(k)}-1]^+ [T; N, \mu_{i+1}^{(k)}]^+ 
	\\ & \quad 
	+ q (q-q^{-1})  q^{- 2 \mu_{i+1}^{(k)}+3} m_{\mu + 2 \a_{(i,k)}} 
		\sum_{p=1}^{t-s} L_{N+1}^{t-p +1} L_{N+2}^{s+p} 
		 [T; N+1, \mu_{i+1}^{(k)}-1]^+ [T; N, \mu_{i+1}^{(k)}]^+ 
\\
&= \CX_{(i,k),s+1}^+ \CX_{(i,k),t}^+ (m_{\mu}) 
	\\ & \quad 
	+ q (q-q^{-1})  q^{- 2 \mu_{i+1}^{(k)}+3} m_{\mu + 2 \a_{(i,k)}} 
		\sum_{p=1}^{t-s} L_{N+1}^{t-p +1} L_{N+2}^{s+p} 
		 [T; N+1, \mu_{i+1}^{(k)}-1]^+ [T; N, \mu_{i+1}^{(k)}]^+. 
\end{align*}
Thus, we have \eqref{CXt+1+ CXs+ - CXs+ CXt+1+}. 
Another case of (\roii) is proven in a similar way. 


We prove (\roiii). 
Put $N= N_{(i,k)}^\mu$. 
In the case where $\mu_{i+1}^{(k)}=0$, 
by Lemma \ref{Lemma Xpmt} together with  Lemma \ref{Lemma Li commute [T; N mu]}, 
we see that  
\begin{align} 
\label{CXi+t+1 CXi+1+s mu i+1 =1}
\begin{split}
&(\CX_{(i,k),t+1}^+ \CX_{(i+1,k),s}^+ - q^{-1} \CX_{(i+1,k),s}^+ \CX_{(i,k),t+1}^+) (m_{\mu}) 
\\
&= (\CX_{(i,k),t}^+ \CX_{(i+1,k),s+1}^+ - q \CX_{(i+1,k),s+1}^+ \CX_{(i,k),t}^+) (m_{\mu}) 
\\
&= q^{- \mu_{i+2}^{(k)}+1} m_{\mu + \a_{(i,k)} + \a_{(i+1,k)}} L_{N+1}^{s+t+1} 
	[T; N, \mu_{i+2}^{(k)}]^+. 
\end{split} 
\end{align}
Assume that $\mu_{i+1}^{(k)} \not=0$. 
By Lemma \ref{Lemma Xpmt} together with  Lemma \ref{Lemma Li commute [T; N mu]}, 
we have 
\begin{align}
\label{CXi+t+1 CXi+1+s}
\begin{split}
&(\CX_{(i,k),t+1}^+ \CX_{(i+1,k),s}^+ - q^{-1} \CX_{(i+1,k),s}^+ \CX_{(i,k),t+1}^+) (m_{\mu}) 
\\
&= 
q^{- \mu_{i+1}^{(k)} - \mu_{i+2}^{(k)} +1} m_{\mu + \a_{(i,k)} + \a_{(i+1,k)}} 
	 L_{N+1}^{t+1} (q^{\mu_{i+1}^{(k)}} T_{N+1} T_{N+2} \dots T_{N+\mu_{i+1}^{(k)}}) 
	 L_{N+ \mu_{i+1}^{(k)}+1}^s 
	 \\ & \qquad \times 
	[T; N+\mu_{i+1}^{(k)}, \mu_{i+2}^{(k)}]^+ 
\end{split}
\end{align} 
and 
\begin{align} 
\label{CXi+t CXi+1+s+1}
\begin{split} 
&(\CX_{(i,k),t}^+ \CX_{(i+1,k),s+1}^+ - q \CX_{(i+1,k),s+1}^+ \CX_{(i,k),t}^+) (m_{\mu})  
\\
&= - (q-q^{-1}) q^{ - \mu_{i+1}^{(k)} - \mu_{i+2}^{(k)}+2} m_{\mu+\a_{(i,k)} + \a_{(i+1,k)}} 
	L_{N+1}^t L_{N+\mu_{i+1}^{(k)}+1}^{s+1} 
	\\ & \qquad \times 
	[T; N,\mu_{i+1}^{(k)}]^+ 
	[T; N+\mu_{i+1}^{(k)}, \mu_{i+2}^{(k)}]^+ 
	\\ & \quad 
	+ q^{- \mu_{i+1}^{(k)} - \mu_{i+2}^{(k)} +1} m_{\mu + \a_{(i,k)} + \a_{(i+1,k)}} 
	 L_{N+1}^{t} (q^{\mu_{i+1}^{(k)}} T_{N+1} T_{N+2} \dots T_{N+\mu_{i+1}^{(k)}}) 
	 L_{N+ \mu_{i+1}^{(k)}+1}^{s+1} 
	 \\ & \qquad \times 
	[T; N+\mu_{i+1}^{(k)}, \mu_{i+2}^{(k)}]^+ .  
\end{split} 
\end{align}
By the induction on $\mu_{i+1}^{(k)}$ using Lemma \ref{Lemma commute L T}, we can prove that  
\begin{align*} 
&(T_{N+1} T_{N+2} \dots T_{N+\mu_{i+1}^{(k)}}) L_{N+\mu_{i+1}^{(k)}+1} 
\\
&= L_{N+1} (T_{N+1} T_{N+2} \dots T_{N+\mu_{i+1}^{(k)}})
	+ \d_{(\mu_{i+1}^{(k)} \geq 2)} (q-q^{-1}) L_{N+2} (T_{N+2} T_{N+3} \dots T_{N+\mu_{i+1}^{(k)}}) 
	\\ & \quad 
	+ (q-q^{-1}) \sum_{p=1}^{\mu_{i+1}^{(k)}-2} (T_{N+1} T_{N+2} \dots T_{N+p}) 
		L_{N+p+2} (T_{N+p+2} T_{N+p+3} \dots  T_{N+\mu_{i+1}^{(k)}} ) 
	\\ & \quad 
	+ (q-q^{-1}) (T_{N+1} T_{N+2} \dots T_{N+\mu_{i+1}^{(k)}-1}) L_{N+\mu_{i+1}^{(k)} +1}. 
\end{align*}
By using Lemma \ref{Lemma commute L T} and \eqref{ m mu T}, 
this equation implies 
\begin{align} 
\label{Lt T T L}
\begin{split} 
&m_{\mu + \a_{(i,k)} + \a_{(i+1,k)}} L_{N+1}^t (q^{\mu_{i+1}^{(k)}} T_{N+1} T_{N+2} \dots T_{N+\mu_{i+1}^{(k)}}) 
	L_{N+\mu_{i+1}^{(k)}+1}  
\\
&= m_{\mu + \a_{(i,k)} + \a_{(i+1,k)}} L_{N+1}^{t+1} 
	(q^{\mu_{i+1}^{(k)}} T_{N+1} T_{N+2} \dots T_{N+\mu_{i+1}^{(k)}}) 
	\\ & \quad 
	+q (q-q^{-1}) m_{\mu + \a_{(i,k)} + \a_{(i+1,k)}} L_{N+1}^t 
	[T; N, \mu_{i+1}^{(k)}]^+  L_{N+\mu_{i+1}^{(k)}+1}. 
\end{split} 
\end{align}
Thus, \eqref{CXi+t CXi+1+s+1} and \eqref{Lt T T L} imply 
\begin{align} 
\label{CXi+t CXi+1+s+1 (2)}
\begin{split} 
&(\CX_{(i,k),t}^+ \CX_{(i+1,k),s+1}^+ - q \CX_{(i+1,k),s+1}^+ \CX_{(i,k),t}^+) (m_{\mu})  
\\
&= q^{- \mu_{i+1}^{(k)} - \mu_{i+2}^{(k)}+1} m_{\mu + \a_{(i,k)} + \a_{(i+1,k)}} L_{N+1}^{t+1} 
	(q^{\mu_{i+1}^{(k)}} T_{N+1} T_{N+2} \dots T_{N+\mu_{i+1}^{(k)}})
	 L_{N+\mu_{i+1}^{(k)}+1}^s 
	 \\ & \qquad \times 
	[T; N+ \mu_{i+1}^{(k)}, \mu_{i+2}^{(k)}]^+.  
\end{split} 
\end{align}
By \eqref{CXi+t+1 CXi+1+s mu i+1 =1}, \eqref{CXi+t+1 CXi+1+s} and \eqref{CXi+t CXi+1+s+1 (2)}, 
we have 
\begin{align*}
\CX_{(i,k),t+1}^+ \CX_{(i+1,k),s}^+ - q^{-1} \CX_{(i+1,k),s}^+ \CX_{(i,k),t+1}^+
= \CX_{(i,k),t}^+ \CX_{(i+1,k),s+1}^+ - q \CX_{(i+1,k),s+1}^+ \CX_{(i,k),t}^+. 
\end{align*} 
Another case of (\roiii) is proven in a similar way. 
\end{proof}


\begin{prop}
\label{Prop q-Serre relations}
For $(i,k) \in \vG'(\Bm)$ and $s,t,u \geq 0$, we have the followings. 
\begin{enumerate} 
\item 
\begin{align*}
&\CX_{(i \pm 1,k),u}^+ \big( \CX_{(i,k),s}^+ \CX_{(i,k),t}^+ + \CX_{(i,k),t}^+ \CX_{(i,k),s}^+ \big) 
		+ \big( \CX_{(i,k),s}^+ \CX_{(i,k),t}^+ + \CX_{(i,k),t}^+ \CX_{(i,k),s}^+ \big) \CX_{(i \pm 1,k),u}^+ 
	\\ 
	&= (q+q^{-1}) \big( \CX_{(i,k),s}^+ \CX_{(i \pm1,k),u}^+ \CX_{(i,k),t}^+ 
		+ \CX_{(i,k),t}^+ \CX_{(i \pm 1,k),u}^+ \CX_{(i,k),s}^+ \big). 
\end{align*} 

\item 
\begin{align*} 
	& \CX_{(i \pm 1,k),u}^- \big( \CX_{(i,k),s}^- \CX_{(i,k),t}^- + \CX_{(i,k),t}^- \CX_{(i,k),s}^- \big) 
		+ \big( \CX_{(i,k),s}^- \CX_{(i,k),t}^- + \CX_{(i,k),t}^- \CX_{(i,k),s}^- \big) \CX_{(i \pm 1,k),u}^- 
	\\ 
	&= (q+q^{-1}) \big( \CX_{(i,k),s}^- \CX_{(i \pm1,k),u}^- \CX_{(i,k),t}^- 
		+ \CX_{(i,k),t}^- \CX_{(i \pm 1,k),u}^- \CX_{(i,k),s}^- \big). 
\end{align*}
\end{enumerate} 
\end{prop}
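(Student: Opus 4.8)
The plan is to establish both families of identities by evaluating each side on the generators $m_\mu$ ($\mu\in\vL_{n,r}(\Bm)$) of the defining module $\bigoplus_\mu m_\mu\He_{n,r}$ of $\Sc_{n,r}(\Bm)$, and reducing to an identity inside the Ariki--Koike algebra $\He_{n,r}$, exactly in the spirit of the proof of Proposition~\ref{Proposition CX CX}. Since an endomorphism is right $\He_{n,r}$-linear and therefore determined by its values on the $m_\mu$, it suffices to check equality after applying both sides to each $m_\mu$. It is enough to treat (\roi): statement (\roii) comes out of the same computation with Lemma~\ref{Lemma Xpmt}(\roii) used in place of Lemma~\ref{Lemma Xpmt}(\roi), the only new feature being the extra factor $h^\mu_{-(i,k)}$ (which equals $L_{N^\mu_{(m_k,k)}}-Q_k$ when $i=m_k$ and is $1$ otherwise), carried along with one further commutation. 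By the symmetry of the left side of (\roi) in $s$ and $t$ we may assume $t\geq s$, and whenever one of the parts $\mu^{(k)}_i,\mu^{(k)}_{i+1},\mu^{(k)}_{i+2}$ (resp.\ $\mu^{(k)}_{i-1}$) is so small that the target weight $\mu+2\a_{(i,k)}+\a_{(i\pm1,k)}$ leaves $\vL_{n,r}(\Bm)$, every term of (\roi) annihilates $m_\mu$; so we may assume all the relevant parts positive.

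The first step is, for a fixed $\mu$ and with $N:=N^\mu_{(i,k)}$, to apply Lemma~\ref{Lemma Xpmt}(\roi) three times to each of the six monomials occurring in (\roi), keeping precise track of how the base indices $N^\nu_{(i,k)}$ and $N^\nu_{(i\pm1,k)}$ shift under the successive weight changes (each application of $\CX^+_{(i,k),\bullet}$ moves its base index by one, and likewise for $\CX^+_{(i\pm1,k),\bullet}$). After normalising via $m_\nu T_w=q^{\ell(w)}m_\nu$ for $w\in\FS_\nu$, all six terms take the shape $c_\nu\, m_{\mu+2\a_{(i,k)}+\a_{(i\pm1,k)}}\, Y$ with $c_\nu\in R$ a scalar and $Y\in\He_{n,r}$ an explicit product of Jucys--Murphy elements $L_j$, strings $[T;\cdot,\cdot]^+$ and Hecke generators $T_j$ supported in a bounded window of indices around $N$; hence (\roi) reduces to one polynomial identity in $\He_{n,r}$. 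To prove that identity I would pull the crossing string $q^{\mu^{(k)}_{i+1}}T_{N+1}T_{N+2}\cdots T_{N+\mu^{(k)}_{i+1}}$ past the Jucys--Murphy powers using Lemma~\ref{Lemma commute L T} and Lemma~\ref{Lemma [T;N,c] com rel}, generalising the manipulation that produced \eqref{Lt T T L}, and then induct on $\mu^{(k)}_{i+1}$ (the base cases $\mu^{(k)}_{i+1}=0,1$ being short direct computations of the shape \eqref{CXi+t+1 CXi+1+s mu i+1 =1}); the inductive step reduces the whole identity to the ordinary quantum Serre relation $XY^2-(q+q^{-1})YXY+Y^2X=0$ for the Hecke-theoretic images of $\CX^+_{(i\pm1,k),u}$ and $\CX^+_{(i,k),t}$, together with the relations already recorded in Proposition~\ref{Proposition CX CX}. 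The case $s=t=u=0$ anchors the argument: there $\CX^{\pm}_{(\cdot),0}=\vf^{\pm}_{(\cdot)}$ and (\roi), (\roii) are the usual quantum Serre relations, which hold in $\Sc_{n,r}(\Bm)$ either by the presentation of \cite{W} or, directly, by the same $m_\mu$-computation.

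The main obstacle is the combinatorial bookkeeping, not any conceptual point: one must simultaneously control the shifted base indices through the successive weight changes and establish the small package of auxiliary $\He_{n,r}$-identities describing how the strings $[T;N,\mu]^{+}$ and the products $q^{\mu}T_{N+1}\cdots T_{N+\mu}$ commute with powers of $L_{N+1}$ and $L_{N+2}$ (these generalise \eqref{Lt T T L} and Lemmas~\ref{Lemma [T;N,c] com rel} and~\ref{Lemma [T;N,mu ,, d]}). Two unavoidable case splits appear: the $i+1$ direction (indices above the $\a_{(i,k)}$-block) versus the $i-1$ direction (indices below it), which are mirror images of each other; and, for (\roii), the twist by $h^\mu_{-(i,k)}=L_{N^\mu_{(m_k,k)}}-Q_k$ occurring precisely when $i=m_k$, which forces one to pull $L_{N^\mu_{(m_k,k)}}$ through the neighbouring Hecke generators. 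One cannot shortcut the proof by using the relations of Proposition~\ref{Proposition CX CX} to rewrite the degree-$(2\a_{(i,k)}+\a_{(i\pm1,k)})$ component and then quote the $s=t=u=0$ case, since that would require confluence of the associated rewriting, i.e.\ a PBW-type basis of the algebra, which is not available here. Once the auxiliary identities are in place, (\roi) and (\roii) follow.
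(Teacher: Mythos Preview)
Your setup is exactly the one the paper uses: evaluate both sides on $m_\mu$, expand each of the six monomials via Lemma~\ref{Lemma Xpmt} and Lemma~\ref{Lemma Li commute [T; N mu]}, and reduce to an identity in $\He_{n,r}$. Where you diverge is in how to finish. You propose an induction on $\mu_{i+1}^{(k)}$ that ultimately reduces to the classical $s=t=u=0$ Serre relation; that route may be workable but you have not made the inductive step precise, and in particular it is not clear how the $(s,t,u)=(0,0,0)$ case helps with general $(s,t,u)$ once the $L$-powers are present.

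The paper's argument is considerably more direct and avoids any induction on $\mu_{i+1}^{(k)}$. It groups the six terms into the four ``$q$-commutator'' combinations
\[
\CX_{(i+1,k),u}^+\CX_{(i,k),s}^+\CX_{(i,k),t}^+ - q\,\CX_{(i,k),s}^+\CX_{(i+1,k),u}^+\CX_{(i,k),t}^+
\quad\text{and}\quad
\CX_{(i,k),s}^+\CX_{(i,k),t}^+\CX_{(i+1,k),u}^+ - q^{-1}\CX_{(i,k),s}^+\CX_{(i+1,k),u}^+\CX_{(i,k),t}^+,
\]
together with the same two with $s\leftrightarrow t$. Each such combination, after expanding with Lemma~\ref{Lemma Xpmt} and applying Lemma~\ref{Lemma [T;N,c] com rel}(\roiii), collapses to a single explicit term of the shape
\[
c\cdot m_{\mu+2\a_{(i,k)}+\a_{(i+1,k)}}\; L_{N+1}^t L_{N+2}^s\;\big(\cdots\big),
\]
where the tail $(\cdots)$ is the same in the two cases, except that the second carries an extra $T_{N+1}$ on the left and a different scalar. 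The entire identity then follows from one observation you did not isolate: the polynomial $L_{N+1}^t L_{N+2}^s + L_{N+1}^s L_{N+2}^t$ is symmetric in $L_{N+1},L_{N+2}$, so by Lemma~\ref{Lemma commute L T}(\roiii) it commutes with $T_{N+1}$, and hence by \eqref{ m mu T}
\[
m_{\mu+2\a_{(i,k)}+\a_{(i+1,k)}}\,(L_{N+1}^t L_{N+2}^s + L_{N+1}^s L_{N+2}^t)\,T_{N+1}
= q\, m_{\mu+2\a_{(i,k)}+\a_{(i+1,k)}}\,(L_{N+1}^t L_{N+2}^s + L_{N+1}^s L_{N+2}^t).
\]
This is precisely why the identity only holds after symmetrising in $(s,t)$, and it gives the cancellation in one step, with no induction and no appeal to the $s=t=u=0$ case. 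The $i-1$ direction and part~(\roii) are handled identically. So your framework is right, but the finishing move you want is this symmetrisation trick rather than the induction you outlined.
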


\begin{proof}
By Lemma \ref{Lemma Xpmt} together with Lemma \ref{Lemma Li commute [T; N mu]}, 
we have 
\begin{align}
\label{CX+u CX+s CX+t -CX+s CX+u CX+t}
\begin{split} 
&(\CX_{(i+1,k),u}^+ \CX_{(i,k),s}^+ \CX_{(i,k),t}^+ - q \CX_{(i,k),s}^+ \CX_{(i+1,k),u}^+ \CX_{(i,k),t}^+) (m_{\mu}) 
\\
&= - \d_{(\mu_{i+1}^{(k)}=1)} q^{- \mu_{i+2}^{(k)}+2} m_{\mu+2 \a_{(i,k)} + \a_{(i+1,k)}} 
	L_{N+1}^t L_{N+2}^{s+u} [T; N+1, \mu_{i+2}^{(k)}]^+ 
	\\
	&\quad  - \d_{(\mu_{i+1}^{(k)} \geq 2)} q^{- 2 \mu_{i+1}^{(k)} - \mu_{i+2}^{(k)}+4} 
		m_{\mu+2 \a_{(i,k)} + \a_{(i+1,k)}} L_{N+1}^t L_{N+2}^s 
		\\ & \qquad \times 
		( q^{\mu_{i+1}^{(k)}-1} T_{N+2} T_{N+3} \dots T_{N+\mu_{i+1}^{(k)}}) [T; N, \mu_{i+1}^{(k)}]^+ 
		L_{N+ \mu_{i+1}^{(k)}+1}^u  [T; N+\mu_{i+1}^{(k)}, \mu_{i+2}^{(k)}]^+ 
\end{split}
\end{align}
and 
\begin{align*} 
&(\CX_{(i,k),s}^+ \CX_{(i,k),t}^+ \CX_{(i+1,k),u}^+ - q^{-1} \CX_{(i,k),s}^+ \CX_{(i+1,k),u}^+ \CX_{(i,k),t}^+)(m_{\mu}) 
\\
&=q^{-2 \mu_{i+1}^{(k)} - \mu_{i+2}^{(k)} +2} m_{\mu+2 \a_{(i,k)} + \a_{(i+1,k)}} L_{N+1}^t L_{N+2}^s 
	\\ & \quad \times 
	[T; N+1, \mu_{i+1}^{(k)}]^+ 
		(q^{\mu_{i+1}^{(k)}} T_{N+1} T_{N+2} \dots T_{N+ \mu_{i+1}^{(k)}} )
		L_{N+ \mu_{i+1}^{(k)}+1}^u [T; N+\mu_{i+1}^{(k)}, \mu_{i+2}^{(k)}]^+. 
\end{align*}
Applying Lemma \ref{Lemma [T;N,c] com rel} (\roiii), 
we have 
\begin{align} 
\label{CX+s CX+t CX+u -CX+s CX+u CX+t}
\begin{split}
&(\CX_{(i,k),s}^+ \CX_{(i,k),t}^+ \CX_{(i+1,k),u}^+ - q^{-1} \CX_{(i,k),s}^+ \CX_{(i+1,k),u}^+ \CX_{(i,k),t}^+)(m_{\mu}) 
\\
&= \d_{(\mu_{i+1}^{(k)}=1)} q^{- \mu_{i+2}^{(k)}+1} m_{\mu+ 2 \a_{(i,k)} + \a_{i+1,k)}} L_{N+1}^t L_{N+2}^s 
	T_{N+1} L_{N+2}^u  [T; N+\mu_{i+1}^{(k)}, \mu_{i+2}^{(k)}]^+ 
	\\ & \quad 
	+ \d_{\mu_{i+1}^{(k)} \geq 2} 
	q^{- 2 \mu_{i+1}^{(k)} - \mu_{i+2}^{(k)} +3} m_{\mu + 2 \a_{(i,k)} + \a_{(i+1,k)}} 
	L_{N+1}^t L_{N+2}^s T_{N+1} 
	\\ & \qquad \times 
	( q^{\mu_{i+1}^{(k)}-1} T_{N+2} T_{N+3} \dots T_{N+\mu_{i+1}^{(k)}})
		[T; N; \mu_{i+1}^{(k)}]^+ 
		L_{N+ \mu_{i+1}^{(k)}+1}^u [T; N+\mu_{i+1}^{(k)}, \mu_{i+2}^{(k)}]^+. 
\end{split}
\end{align}
We see that 
\begin{align*} 
&m_{\mu+2 \a_{(i,k)} + \a_{(i+1,k)}}(L_{N+1}^t L_{N+2}^s + L_{N+1}^s L_{N+2}^t)T_{N+1} 
\\
&= m_{\mu+2 \a_{(i,k)} + \a_{(i+1,k)}} T_{N+1} (L_{N+1}^t L_{N+2}^s + L_{N+1}^s L_{N+2}^t)
\\
&= q m_{\mu+2 \a_{(i,k)} + \a_{(i+1,k)}} (L_{N+1}^t L_{N+2}^s + L_{N+1}^s L_{N+2}^t)
\end{align*}
by Lemma \ref{Lemma commute L T} and \eqref{ m mu T}. 
Then 
\eqref{CX+u CX+s CX+t -CX+s CX+u CX+t} and \eqref{CX+s CX+t CX+u -CX+s CX+u CX+t} imply 
\begin{align*} 
&\CX_{(i+1,k),u}^+ (\CX_{(i,k),s}^+ \CX_{(i,k),t}^+ + \CX_{(i,k),t}^+ \CX_{(i,k),s}^+)  
	+ (\CX_{(i,k),s}^+ \CX_{(i,k),t}^+ + \CX_{(i,k),t}^+ \CX_{(i,k),s}^+) \CX_{(i+1,k),u}^+ 
\\
&= (q+q^{-1}) (\CX_{(i,k),s}^+ \CX_{(i+1,k),u}^+ \CX_{(i,k),t}^+ + \CX_{(i,k),t}^+ \CX_{(i+1,k),u}^+ \CX_{(i,k),s}^+). 
\end{align*}
The other cases of the proposition are proven in a similar way. 
\end{proof}


By direct calculations, we have the following lemma. 

\begin{lem}
\label{Lemma CI 0 CX+-}
For $(i,k) \in \vG'(\Bm)$, $(j,l) \in \vG(\Bm)$, $t \geq 0$, we have the followings. 
\begin{enumerate}
\item 
$ q^{\pm a_{(i,k)(j,l)}} \CI_{(j,l),0}^{\pm} \CX_{(i,k),t}^{+} - q^{\mp a_{(i,k)(j,l)}} \CX_{(i,k),t}^{+} \CI_{(j,l),0}^{\pm}  
	=  a_{(i,k)(j,l)} \CX_{(i,k),t}^{+}$. 

\item 
$ q^{\mp a_{(i,k)(j,l)}} \CI_{(j,l),0}^{\pm} \CX_{(i,k),t}^{-} - q^{\pm a_{(i,k)(j,l)}} \CX_{(i,k),t}^{-} \CI_{(j,l),0}^{\pm}   
	=  - a_{(i,k)(j,l)} \CX_{(i,k),t}^{-}$. 
\end{enumerate}
\end{lem}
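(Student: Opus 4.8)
The cleanest route is to deduce both identities from relations already established in the excerpt, rather than from scratch. By Lemma~\ref{Lemma CK+- 2} we have, in $\Sc_{n,r}(\Bm)$,
\[
(q-q^{-1})\,\CI^{+}_{(j,l),0}=1-(\CK^{-}_{(j,l)})^{2},\qquad (q-q^{-1})\,\CI^{-}_{(j,l),0}=(\CK^{+}_{(j,l)})^{2}-1 ,
\]
while Lemma~\ref{Lemma K+ X K-}, after inverting and squaring the relation $\CK^{+}_{(j,l)}\CX^{\pm}_{(i,k),t}\CK^{-}_{(j,l)}=q^{\pm a_{(i,k)(j,l)}}\CX^{\pm}_{(i,k),t}$, gives $(\CK^{-}_{(j,l)})^{2}\CX^{\pm}_{(i,k),t}=q^{\mp 2a_{(i,k)(j,l)}}\CX^{\pm}_{(i,k),t}(\CK^{-}_{(j,l)})^{2}$ and the analogue with the signs on $\CK$ switched. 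Substituting the first display into the left-hand side of (i), and of (ii), and pushing every $\CK$-square to the right, the term carrying a $\CK$-square cancels exactly against its partner, leaving $(q-q^{-1})$ times the stated left-hand side equal to $\pm\bigl(q^{a_{(i,k)(j,l)}}-q^{-a_{(i,k)(j,l)}}\bigr)\CX^{\pm}_{(i,k),t}$. Since $a_{(i,k)(j,l)}\in\{0,\pm1\}$, one has the polynomial identity $q^{a}-q^{-a}=a\,(q-q^{-1})$, so cancelling $q-q^{-1}$ yields exactly $\pm a_{(i,k)(j,l)}\CX^{\pm}_{(i,k),t}$, which is the claim.

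So the steps, in order, are: (1) record the four $\CK$-square commutation rules as an immediate consequence of Lemma~\ref{Lemma K+ X K-}; (2) carry out the short cancellation in each of the four sign cases (superscript on $\CI$ against sign on $\CX$); (3) invoke $q^{a}-q^{-a}=a(q-q^{-1})$ for $a\in\{0,\pm1\}$. The one genuine subtlety is that $q-q^{-1}$ need not be a non-zero-divisor for a general base ring $R$ (e.g. when $q=\pm1$), so the cancellation in (3) must be justified by first proving the identity over the generic ring $\ZZ[q^{\pm1},Q_{0},\dots,Q_{r-1}]$ — equivalently over its fraction field, where $q-q^{-1}$ is invertible — and then specializing; this is legitimate because $\Sc_{n,r}(\Bm)$ is free over this ring with a base-change-compatible basis and the elements $\CX^{\pm}_{(i,k),t}$, $\CI^{\pm}_{(j,l),0}$, $\CK^{\pm}_{(j,l)}$ are defined integrally.

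Alternatively — and this is presumably the ``direct calculation'' intended — one evaluates both sides on each basis element $m_{\mu}$, $\mu\in\vL_{n,r}(\Bm)$: the $t=0$ case of the defining formula gives $\CI^{\pm}_{(j,l),0}(m_{\mu})=q^{\mp\mu_{j}^{(l)}}[\mu_{j}^{(l)}]\,m_{\mu}$ (via $\Phi_{0}^{\pm}$), Lemma~\ref{Lemma Xpmt} presents $\CX^{\pm}_{(i,k),t}(m_{\mu})$ as a right-$\He_{n,r}$-multiple of $m_{\mu\pm\a_{(i,k)}}$, and the only bookkeeping fact needed is $(\mu\pm\a_{(i,k)})_{j}^{(l)}=\mu_{j}^{(l)}\pm a_{(i,k)(j,l)}$; with this, the whole identity collapses to elementary $q$-integer identities such as $[d+1]=q[d]+q^{-d}=q^{-1}[d]+q^{d}$, and this version holds verbatim over any $R$ (no specialization needed, and, because $[0]=0$, the cases $\mu_{j}^{(l)}=0$ require no separate treatment). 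In either approach the only real work is the sign- and exponent-bookkeeping; I do not expect any substantive obstacle.
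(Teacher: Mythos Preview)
Your proposal is correct. The paper records no argument beyond ``by direct calculations,'' which almost certainly means your second approach: evaluate both sides on $m_{\mu}$, use $\CI_{(j,l),0}^{\pm}(m_{\mu})=q^{\mp\mu_{j}^{(l)}}[\mu_{j}^{(l)}]\,m_{\mu}$ together with Lemma~\ref{Lemma Xpmt} and the bookkeeping $(\mu\pm\a_{(i,k)})_{j}^{(l)}=\mu_{j}^{(l)}\pm a_{(i,k)(j,l)}$, and reduce to the $q$-integer identities $[d\pm1]=q^{\pm1}[d]+q^{\mp d}$.

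Your first approach---expressing $(q-q^{-1})\CI_{(j,l),0}^{\pm}$ via $(\CK_{(j,l)}^{\mp})^{2}$ from Lemma~\ref{Lemma CK+- 2} and then commuting the squares past $\CX_{(i,k),t}^{\pm}$ via Lemma~\ref{Lemma K+ X K-}---is a genuine and cleaner alternative. It is exactly the mechanism by which (R4) follows from (R1) and (R3) inside $\CU_{q,\BQ}(\Bm)$ (see the remark after the definition of $\CU_{q,\BQ}(\Bm)$), so you are effectively showing that the same implication already holds at the level of the Schur algebra. The price is the cancellation of $q-q^{-1}$, which you handle correctly by passing through the generic ring $\wt{\AA}$ and specializing (the cellular basis of \cite{DJM98} makes $\Sc_{n,r}^{\wt{\AA}}(\Bm)$ free, so base change is harmless). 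The direct approach avoids this detour and works over any $R$ without qualification, which is why the paper presumably prefers it; your first approach, on the other hand, makes the structural reason for the identity transparent and requires no new computation at all.
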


We also have the following proposition. 

\begin{prop}
\label{Proposition IX - XI}
For $(i,k) \in \vG'(\Bm)$, $(j,l) \in \vG(\Bm)$, $s, t \geq0$, we have the followings. 
\begin{enumerate}
\item 
$[\CI_{(j,l),s+1}^{\pm}, \CX_{(i,k),t}^{+}] 
	= q^{ \pm a_{(i,k)(j,l)}} \CI_{(j,l),s}^{\pm} \CX_{(i,k),t+1}^{+} 
		- q^{ \mp a_{(i,k)(j,l)}} \CX_{(i,k),t+1}^{+} \CI_{(j,l),s}^{\pm}.$ 

\item 
$[\CI_{(j,l),s+1}^{\pm}, \CX_{(i,k),t}^{-}] 
	= q^{\mp a_{(i,k)(j,l)}} \CI_{(j,l),s}^{\pm} \CX_{(i,k),t+1}^{-} 
		- q^{\pm a_{(i,k)(j,l)}} \CX_{(i,k),t+1}^{-} \CI_{(j,l),s}^{\pm}$. 
\end{enumerate}
\end{prop}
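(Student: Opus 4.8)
The plan is to verify both families of identities by evaluating each side on an arbitrary generator $m_\mu$, $\mu\in\vL_{n,r}(\Bm)$, and reducing to a computation inside $\He_{n,r}$. Substituting the explicit formulas of Lemma~\ref{Lemma Xpmt} for $\CX_{(i,k),t}^{\pm}(m_\mu)$ and $\CX_{(i,k),t+1}^{\pm}(m_\mu)$, and using that $\CI_{(j,l),s}^{\pm}$ acts on $m_\mu$ as multiplication by $q^{\pm(s-1)}$ times $\Phi_s^{\pm}$ evaluated at the Jucys--Murphy elements $L_{N^\mu_{(j,l)}},L_{N^\mu_{(j,l)}-1},\dots$, both sides become expressions built from a single $\Phi^{\pm}_{\bullet}$-factor, one power of a Jucys--Murphy element, and a $T$-factor $[T;N,c]^{+}$. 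I would then distinguish cases according to the value of $a_{(i,k)(j,l)}$.

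If $a_{(i,k)(j,l)}=0$, i.e.\ $(j,l)\notin\{(i,k),(i+1,k)\}$, then $N^\mu_{(j,l)}$ and $\mu_j^{(l)}$ are unchanged under $\mu\mapsto\mu\pm\a_{(i,k)}$, and the Jucys--Murphy index range occurring in $\CI_{(j,l),\bullet}^{\pm}$ is disjoint from $\{\,N^\mu_{(i,k)}+1,\dots,N^\mu_{(i,k)}+\mu_{i+1}^{(k)}\,\}$; hence by Lemma~\ref{Lemma commute L T} and Lemma~\ref{Lemma Li commute [T; N mu]} the element $\CI_{(j,l),s}^{\pm}$ commutes with $\CX_{(i,k),t}^{\pm}$ for all $s,t$, and both sides of the asserted relation vanish.

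If $(j,l)=(i,k)$ (so $a=1$) or $(j,l)=(i+1,k)$ (so $a=-1$), then the two occurrences of $\CI$ in the commutator differ only in that one of them has the single Jucys--Murphy variable $L_{N^\mu_{(i,k)}+1}$ adjoined to (resp.\ removed from) the arguments of $\Phi^{\pm}$, the rest of the expression being multiplied by a power $L_{N^\mu_{(i,k)}+1}^{t}$ or $L_{N^\mu_{(i,k)}+1}^{t+1}$; this rests on the fact that $N^\mu_{(i,k)}$ (resp.\ $N^\mu_{(i+1,k)}=N^\mu_{(i,k)}+\mu_{i+1}^{(k)}$) is a common anchor, unchanged by $\mu\mapsto\mu+\a_{(i,k)}$. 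After cancelling the common terms, the identity to be proved is precisely the recursion \eqref{recursive relation of Phi 2} for $\Phi^{\pm}$ governing the adjoining/removal of that one variable; the factor $q^{\mp2}$ appearing there, multiplied by the ratio $q^{\pm1}$ of the scalar prefactors of $\CI_{(j,l),s+1}^{\pm}$ and $\CI_{(j,l),s}^{\pm}$, produces exactly the weights $q^{\pm a_{(i,k)(j,l)}}$ and $q^{\mp a_{(i,k)(j,l)}}$ on the two right-hand terms. For $(j,l)=(i,k)$ the remaining $\Phi^{\pm}$-factor involves only Jucys--Murphy elements of index $\le N^\mu_{(i,k)}$, which commute with $[T;N^\mu_{(i,k)},\mu_{i+1}^{(k)}]^{+}$ by Lemma~\ref{Lemma commute L T}(ii), so the computation closes at once. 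For $(j,l)=(i+1,k)$ the Jucys--Murphy elements entering $\CI_{(i+1,k),\bullet}^{\pm}$ genuinely fail to commute with the $T$-factor of $\CX^{+}$, and one must transport powers of $L$ through the products $T_{N+1}T_{N+2}\cdots$ using Lemma~\ref{Lemma commute L T}(iv),(v), the relation $m_\mu T_w=q^{\ell(w)}m_\mu$ for $w\in\FS_\mu$, and the symmetry of $\Phi^{\pm}$ in exactly those Jucys--Murphy elements that get permuted; this is the main technical obstacle, but it is of the same nature as the manipulations already carried out in the proofs of Proposition~\ref{Proposition CX CX} and Proposition~\ref{Prop q-Serre relations}, and I would model it on those. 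Part~(ii) is then obtained by the same argument with the second formula of Lemma~\ref{Lemma Xpmt}, the roles of left and right interchanged, and $+\leftrightarrow-$ (the extra factor $h^\mu_{-(m_k,k)}=L_{N^\mu_{(m_k,k)}}-Q_k$ occurring when $i=m_k$ is a polynomial in a single Jucys--Murphy element and causes no difficulty).
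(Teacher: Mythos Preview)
Your plan matches the paper's proof: evaluate on $m_\mu$, observe that both sides vanish unless $(j,l)\in\{(i,k),(i+1,k)\}$, and in those two cases combine Lemma~\ref{Lemma Xpmt} with the recursion \eqref{recursive relation of Phi 2}. The paper carries out only the case $(j,l)=(i,k)$ for $\CI^+$ and $\CX^+$ in detail and declares the remaining cases similar.

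One remark on your anticipated obstacle: the case $(j,l)=(i+1,k)$ is in fact no harder than $(j,l)=(i,k)$. Writing $N=N^\mu_{(i,k)}$, the polynomial $\Phi_s^{\pm}(L_{N+1},\dots,L_{N+\mu_{i+1}^{(k)}})$ occurring in $\CX_{(i,k),t}^+\CI_{(i+1,k),s}^{\pm}(m_\mu)$ is symmetric in \emph{all} of $L_{N+1},\dots,L_{N+\mu_{i+1}^{(k)}}$ and therefore commutes with $[T;N,\mu_{i+1}^{(k)}]^+$ by Lemma~\ref{Lemma commute L T}~(iii); the shorter polynomial $\Phi_s^{\pm}(L_{N+2},\dots,L_{N+\mu_{i+1}^{(k)}})$ occurring in $\CI_{(i+1,k),s}^{\pm}\CX_{(i,k),t}^+(m_\mu)$ already sits to the left of that $T$-factor. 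Thus no transport of individual Jucys--Murphy elements through products of $T$'s is required, and the argument closes via \eqref{recursive relation of Phi 2} exactly as for $(j,l)=(i,k)$.
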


\begin{proof}
By Lemma \ref{Lemma Xpmt} together with Lemma \ref{Lemma commute L T}, 
we see that 
\begin{align*}
[\CI_{(j,l),s}^{\s}, \CX_{(i,k),t}^{\s'}] =0 
\text{ if } (j,l) \not=(i,k), (i+1,k), 
\end{align*}
where $\s,\s' \in \{ +,-\}$. 
Thus, it is enough to prove the cases where $(j,l)=(i,k) $ or $(j,l)=(i+1,k)$.  
We prove 
\begin{align}
\label{CI+ CX+ - CX+ CI+}
[\CI_{(i,k),s+1}^+, \CX_{(i,k),t}^+] = q \CI_{(i,k),s}^+ \CX_{(i,k),t+1}^+ - q^{-1} \CX_{(i,k),t+1}^+ \CI_{(i,k),s}^+. 
\end{align}  
For $\mu \in \vL_{n,r}(\Bm)$, put $N=N^{\mu}_{(i,k)}$. 
Then, by Lemma \ref{Lemma Xpmt} together with Lemma \ref{Lemma Li commute [T; N mu]}, 
we have 
\begin{align*}
&(\CI_{(i,k),s+1}^+ \CX_{(i,k),t}^+ - \CX_{(i,k),t}^+ \CI_{(i,k),s+1}^+) (m_{\mu}) 
\\
&= q^{ s - \mu_{i+1}^{(k)} +1} m_{\mu+\a_{(i,k)}} 
	\\ & \quad \times 
	\big( \Phi_{s+1}^+ (L_{N+1}, L_N, L_{N-1}, \dots, L_{N- \mu_i^{(k)}+1} ) 
		- \Phi_{s+1}^+ (L_N, L_{N-1}, \dots, L_{N- \mu_i^{(k)}+1}) \big) 
	\\ & \quad \times 
	L_{N+1}^t [T; N, \mu_{i+1}^{(k)}]^+. 
\end{align*}
By \eqref{recursive relation of Phi 2}, we have 
\begin{align*}
&(\CI_{(i,k),s+1}^+ \CX_{(i,k),t}^+ - \CX_{(i,k),t}^+ \CI_{(i,k),s+1}^+)(m_{\mu}) 
\\
&=   q^{ s - \mu_{i+1}^{(k)} +1} m_{\mu+\a_{(i,k)}} 
	\\ & \quad \times 
	L_{N+1} \big( \Phi_s^+ (L_{N+1}, L_N, \dots, L_{N-\mu_i^{(k)}+1}) 
				- q^{-2} \Phi_s^+ (L_N, L_{N-1}, \dots, L_{N- \mu_i^{(k)}+1}) \big) 
	\\ & \quad \times 
	L_{N+1}^t [T; N, \mu_{i+1}^{(k)}]^+
\\
&=  q^{ (s-1) - \mu_{i+1}^{(k)} +1} m_{\mu+\a_{(i,k)}} 
	\\ & \quad \times \Big\{ 
	q  \Phi_s^+ (L_{N+1}, L_N, \dots, L_{N-\mu_i^{(k)}+1}) 
	 L_{N+1}^{t+1} [T; N, \mu_{i+1}^{(k)}]^+
	 \\ & \hspace{3em} 
	 - q^{-1} L_{N+1}^{t+1} [T; N, \mu_{i+1}^{(k)}]^+
	 \Phi_s^+ (L_N, L_{N-1}, \dots, L_{N- \mu_i^{(k)}+1})\Big\} 
\\
&=( q \CI_{(i,k),s}^+ \CX_{(i,k),t+1}^+ - q^{-1} \CX_{(i,k),t+1}^+ \CI_{(i,k),s}^+ )(m_{\mu}).
\end{align*}
Now we proved \eqref{CI+ CX+ - CX+ CI+}. 
Other cases are proven in a similar way.
\end{proof}


\begin{prop}
\label{Proposition CX+ CX- - CX- CX+ (i,k) (j,l)}
For $(i,k),(j,l) \in \vG'(\Bm)$ such that $(i,k) \not= (j,l)$ and $s,t \geq 0$, 
we have 
\begin{align*}
[ \CX_{(i,k),t}^+, \CX_{(j,l),s}^-]=0.
\end{align*}
\end{prop}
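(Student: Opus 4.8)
The plan is to evaluate both composites $\CX^+_{(i,k),t}\CX^-_{(j,l),s}$ and $\CX^-_{(j,l),s}\CX^+_{(i,k),t}$ on an arbitrary generator $m_\mu$ ($\mu\in\vL_{n,r}(\Bm)$), using the explicit action in Lemma~\ref{Lemma Xpmt} together with the fact that every $\CX^\pm_{(i,k),t}\in\Sc_{n,r}(\Bm)=\End_{\He_{n,r}}(\bigoplus_\mu m_\mu\He_{n,r})$ is $\He_{n,r}$-linear, so it may be applied to the right $\He_{n,r}$-factor of the image of the first one. Both composites land in $m_\nu\He_{n,r}$ with $\nu=\mu+\a_{(i,k)}-\a_{(j,l)}$. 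First I would check the scalar prefactors agree: since $(i,k)\neq(j,l)$ one has $N^{\mu-\a_{(j,l)}}_{(i,k)}=N^\mu_{(i,k)}$ and $N^{\mu+\a_{(i,k)}}_{(j,l)}=N^\mu_{(j,l)}$ (changing $\mu$ by $\pm\a$ moves only the single boundary $N^{\cdot}$ indexed by that root), and likewise $(\mu-\a_{(j,l)})^{(k)}_{i+1}=\mu^{(k)}_{i+1}$ and $(\mu+\a_{(i,k)})^{(l)}_{j}=\mu^{(l)}_{j}$ unless $(j,l)=(i+1,k)$. After this bookkeeping the claim reduces to interchanging, inside $m_\nu\He_{n,r}$, the $\He_{n,r}$-factor produced by $\CX^+_{(i,k),t}$ — namely $L^{\,t}_{N^\mu_{(i,k)}+1}\,[T;N^\mu_{(i,k)},\mu^{(k)}_{i+1}]^+$, whose support is the block of positions forming row $(i+1,k)$ — with the factor produced by $\CX^-_{(j,l),s}$ — namely $L^{\,s}_{N^\mu_{(j,l)}}\,h^\mu_{-(j,l)}\,[T;N^\mu_{(j,l)},\mu^{(l)}_{j}]^-$, supported on row $(j,l)$.

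If $(j,l)\neq(i+1,k)$, these two rows are disjoint blocks of consecutive positions, and by Lemma~\ref{Lemma commute L T} every Jucys--Murphy element and every $T_i$ occurring in one factor commutes with those occurring in the other — this stays true even when the two rows abut, because $L_p$ commutes with all $T_i$ except $T_{p-1}$ and $T_p$. Hence the two composites coincide. (If any relevant multiplicity is $0$, both composites are $0$, so one may assume they are all positive.)

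The remaining case $(j,l)=(i+1,k)$ is the crux. Writing $N=N^\mu_{(i,k)}$, $c=\mu^{(k)}_{i+1}$ and $h=h^\mu_{-(i+1,k)}$ (a polynomial in $L_{N+c}$), the computations above give
\[
\CX^+_{(i,k),t}\CX^-_{(i+1,k),s}(m_\mu)=q^{-2c+3}\,m_\nu\,L^{\,t}_{N+1}L^{\,s}_{N+c}\,h\cdot[T;N,c-1]^+\,[T;N+c,c]^-
\]
and
\[
\CX^-_{(i+1,k),s}\CX^+_{(i,k),t}(m_\mu)=q^{-2c+3}\,m_\nu\,L^{\,t}_{N+1}L^{\,s}_{N+c}\,h\cdot[T;N+c,c-1]^-\,[T;N,c]^+ ,
\]
where I have moved $L^{\,t}_{N+1}$, $L^{\,s}_{N+c}$ and $h$ to the left by Lemma~\ref{Lemma commute L T} (legitimate: $[T;N,c-1]^+$ involves only $T_{N+1},\dots,T_{N+c-2}$, which commute with $L_{N+c}$, and $[T;N+c,c-1]^-$ involves only $T_{N+2},\dots,T_{N+c-1}$, which commute with $L_{N+1}$). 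Both sides vanish unless $c\geq2$, so the proof comes down to the purely Hecke-theoretic identity
\[
[T;N,c-1]^+\,[T;N+c,c]^- = [T;N+c,c-1]^-\,[T;N,c]^+ ,
\]
an identity among $T_{N+1},\dots,T_{N+c-1}$ (trivial for $c=2$, the braid relation $T_{N+1}T_{N+2}T_{N+1}=T_{N+2}T_{N+1}T_{N+2}$ for $c=3$), which I would prove by induction on $c$ from the defining relations of $\He_{n,r}$ in the style of Lemma~\ref{Lemma [T;N,c] com rel} and Lemma~\ref{Lemma [T;N,mu ,, d]}; should it only hold after left-multiplication by $m_\nu$, the relation $m_\nu T_w=q^{\ell(w)}m_\nu$ for $w\in\FS_\nu$ provides the slack.

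The main obstacle is exactly this last identity in the case $(j,l)=(i+1,k)$: there the $+$-operator at $(i,k)$ and the $-$-operator at $(i+1,k)$ both act on the same row $(i+1,k)$ — the former carrying a Jucys--Murphy element at the left end together with a $[T;\cdot]^+$-tail, the latter a Jucys--Murphy element at the right end together with an overlapping $[T;\cdot]^-$-tail — so, unlike every other case, this one cannot be settled by a disjoint-support commutation argument and genuinely requires the combinatorial Hecke-algebra lemma above.
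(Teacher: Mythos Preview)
Your proposal is correct and follows essentially the same route as the paper: both proofs evaluate each composite on $m_\mu$ via Lemma~\ref{Lemma Xpmt}, do the index bookkeeping (noting $N^{\mu-\a_{(j,l)}}_{(i,k)}=N^\mu_{(i,k)}$, $N^{\mu+\a_{(i,k)}}_{(j,l)}=N^\mu_{(j,l)}$, and that the multiplicities only shift when $(j,l)=(i+1,k)$), dispose of the case $(j,l)\neq(i+1,k)$ by a disjoint-support commutation, and reduce the remaining case to the Hecke-algebra identity $[T;N,c-1]^+[T;N+c,c]^-=[T;N+c,c-1]^-[T;N,c]^+$ proved by induction on $c=\mu_{i+1}^{(k)}$. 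The paper does not need the left-multiplication by $m_\nu$ you allow for---the identity holds in $\He_{n,r}$ itself---but otherwise your argument and the paper's coincide.
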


\begin{proof} 
By Lemma \ref{Lemma Xpmt}, 
for $\mu \in \vL_{n,r}(\Bm)$, 
we have 
\begin{align*}
&\CX_{(i,k),t}^+ \CX_{(j,l),s}^- (m_\mu) 
\\
&=  q^{- \mu_j^{(l)} - (\mu - \a_{(j,l)})_{i+1}^{(k)} +2}   m_{\mu + \a_{(i,k)} - \a_{(j,l)} }  
	\\  & \quad \times 
	L^t_{N_{(i,k)}^{\mu- \a_{(j,l)}} +1} [T; N_{(i,k)}^{\mu-\a_{(j,l)}}, (\mu-\a_{(j,l)})_{i+1}^{(k)}]^+ 
	L^s_{N^\mu_{(j,l)}} h^\mu_{-(j,l)} [T; N_{(j,l)}^\mu, \mu_j^{(l)}]^- 
\end{align*}
and 
\begin{align*}
&\CX_{(j,l),s}^- \CX_{(i,k),t}^+ (m_{\mu}) 
\\
&= q^{ - \mu_{i+1}^{(k)} - (\mu + \a_{(i,k)})_{j}^{(l)} +2}  m_{\mu + \a_{(i,k)} - \a_{(j,l)}} 
	\\ & \quad \times 
	L^s_{N^{\mu +\a_{(i,k)}}_{(j,l)}} h^{\mu + \a_{(i,k)}}_{-(j,l)}  [T; N_{(j,l)}^{\mu + \a_{(i,k)}}, (\mu+\a_{(i,k)})_j^{(l)}]^-  
	L^t_{N^\mu_{(i,k)}+1} [T; N_{(i,k)}^\mu, \mu_{i+1}^{(k)}]^+.
\end{align*}
Since $(i,k) \not= (j,l)$, we have 
\begin{align*}
& N^\mu_{(i,k)} = N^{\mu - \a_{(j,l)}}_{(i,k)}, \quad N^\mu_{(j,l)} = N^{\mu + \a_{(i,k)}}_{(j,l)}, 
\\
& (\mu - \a_{(j,l)})_{i+1}^{(k)} = 
	\begin{cases} \mu_{i+1}^{(k)} & \text{ if } (j,l) \not=(i+1,k), \\ \mu_{i+1}^{(k)} -1 & \text{ if } (j,l)=(i+1,k), \end{cases} 
\\
& (\mu + \a_{(i,k)})_{j}^{(l)} = 
	\begin{cases} \mu_{j}^{(l)} & \text{ if } (j,l) \not=(i+1,k), \\ \mu_{j}^{(l)} -1 & \text{ if } (j,l)=(i+1,k).  \end{cases} 
\\
& h^{\mu}_{-(j,l)} = h^{\mu + \a_{(i,k)}}_{-(j,l)} = 
	\begin{cases} 
	1 & \text{ if } j \not= m_j, 
	\\
	L_{N^{\mu}_{(m_l,l)}} - Q_l & \text{ if } j = m_l. 
	\end{cases}
\end{align*}
Then, by Lemma \ref{Lemma Li commute [T; N mu]}, 
we have  
\begin{align*}
[T; N_{(i,k)}^{\mu-\a_{(j,l)}}, (\mu-\a_{(j,l)})_{i+1}^{(k)}]^+  L^s_{N^\mu_{(j,l)}} h^\mu_{-(j,l)}
= L^s_{N^\mu_{(j,l)}} h^\mu_{-(j,l)} [T; N_{(i,k)}^{\mu-\a_{(j,l)}}, (\mu-\a_{(j,l)})_{i+1}^{(k)}]^+ 
\end{align*}
and 
\begin{align*}
[T; N_{(j,l)}^{\mu + \a_{(i,k)}}, (\mu+\a_{(i,k)})_j^{(l)}]^-  L^t_{N^\mu_{(i,k)}+1} 
= L^t_{N^\mu_{(i,k)}+1} [T; N_{(j,l)}^{\mu + \a_{(i,k)}}, (\mu+\a_{(i,k)})_j^{(l)}]^-. 
\end{align*}
Thus, in order to prove the proposition, 
it is enough to show that 
\begin{align}
\label{TT = TT}
\begin{split} 
&[T; N_{(i,k)}^{\mu-\a_{(j,l)}}, (\mu-\a_{(j,l)})_{i+1}^{(k)}]^+ [T; N_{(j,l)}^\mu, \mu_j^{(l)}]^- 
\\
&= [T; N_{(j,l)}^{\mu + \a_{(i,k)}}, (\mu+\a_{(i,k)})_j^{(l)}]^-  [T; N_{(i,k)}^\mu, \mu_{i+1}^{(k)}]^+. 
\end{split}
\end{align}
If $(j,l) \not=(i+1,k)$, we see easily that \eqref{TT = TT} holds since the product is cummutative in each side. 
In the case where $(j,l)=(i+1,k)$, we can prove that \eqref{TT = TT} by the induction on $\mu_{i+1}^{(k)}$. 
Now we proved the proposition. 
\end{proof}

\remark 
There is an error in the proof of \cite[Proposition 6.11 (\roi)]{W} (see the case where $(j,l) = (i+1,k)$). 
The above proof also gives a fixed proof of \cite[Proposition 6.11 (\roi)]{W} as a special case. 
\\

We prepare some technical lemmas. 


\begin{lem}
\label{Lemma m mu L T}
For $\mu \in \vL_{n,r}(\Bm)$ and $(i,k) \in \vG(\Bm)$, we have the followings. 
\begin{enumerate}
\item 
For $t \geq 0$ and $1 \leq p \leq \mu_i^{(k)}$, we have 
\begin{align*}
m_{\mu} L_{N^\mu_{(i,k)}}^t [T; N_{(i,k)}^\mu, p]^- 
= q^{ 2 p -2} m_{\mu} \Phi^+_t (L_{N^\mu_{(i,k)}}, L_{N^\mu_{(i,k)}-1}, \dots, L_{N^\mu_{(i,k)}-p+1}). 
\end{align*}

\item 
For $t \geq 0$ and $1 \leq p \leq \mu_{i+1}^{(k)}$, we have  
\begin{align*}
m_{\mu} L_{N^\mu_{(i,k)}+1}^t [T; N_{(i,k)}^\mu, p]^+ 
= m_{\mu} \Phi^-_t  (L_{N^\mu_{(i,k)}+1}, L_{N^\mu_{(i,k)}+2}, \dots, L_{N^\mu_{(i,k)}+p}). 
\end{align*}
\end{enumerate}
\end{lem}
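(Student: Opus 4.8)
Both identities will be proved by induction, and the two parts are parallel, so I plan to give the argument for (i) in detail and indicate the (minor) modifications for (ii). Fix $\mu\in\vL_{n,r}(\Bm)$ and $(i,k)\in\vG(\Bm)$, and set $N=N^\mu_{(i,k)}$ and $c=N-\mu_i^{(k)}+1$, so that the positions $c,c+1,\dots,N$ constitute the block of $\FS_\mu$ attached to $(i,k)$. In particular each of $s_{N-1},s_{N-2},\dots,s_{c}$ lies in $\FS_\mu$, and for $0\le h\le p-1$ the word $s_{N-1}s_{N-2}\cdots s_{N-h}$ is reduced of length $h$, so that $m_\mu T_{N-1}T_{N-2}\cdots T_{N-h}=q^h m_\mu$ by \eqref{ m mu T}. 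I will induct on $t\ge 0$. For the base case $t=0$ one has $\Phi_0^+(x_1,\dots,x_p)=q^{-p+1}[p]$, and, expanding $[T;N,p]^-=1+\sum_{h=1}^{p-1}q^hT_{N-1}\cdots T_{N-h}$ and using the absorption property above, $m_\mu[T;N,p]^-=\big(\sum_{h=0}^{p-1}q^{2h}\big)m_\mu=q^{p-1}[p]\,m_\mu=q^{2p-2}\,m_\mu\Phi_0^+(L_N,\dots,L_{N-p+1})$, as wanted.

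For the inductive step $t\mapsto t+1$, I would expand $m_\mu L_N^{t+1}[T;N,p]^-=m_\mu L_N^{t+1}+\sum_{h=1}^{p-1}q^h m_\mu L_N^{t+1}T_{N-1}T_{N-2}\cdots T_{N-h}$ and, in each summand, push the powers of $L_N$ to the right through the transpositions $T_{N-1},T_{N-2},\dots$ one at a time by means of the Jucys--Murphy relations of Lemma \ref{Lemma commute L T}(iv),(v). Each such commutation through a factor $T_j$ produces, besides the straightened term $T_j(\cdots)L_j(\cdots)$, a correction proportional to $q-q^{-1}$ in which that $T_j$ has disappeared; iterating, every surviving product of transpositions is a reduced word in $\FS_\mu$ and is absorbed by \eqref{ m mu T}, so that the whole expression becomes a sum of terms $m_\mu\cdot(\text{monomial in }L_N,\dots,L_{N-p+1})$ with coefficients built from $q$ and $q-q^{-1}$. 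Reassembling these monomials by repeated use of the induction hypothesis (applied only at powers $\le t$) and comparing with the recursion \eqref{recursive relation of Phi} (second form) for $\Phi_{t+1}^+$ — and, where it shortens the computation, with \eqref{recursive relation of Phi 2} — will give the claimed formula. (An alternative, organised instead as an induction on $p$ based on the identity $[T;N,p]^-=1+qT_{N-1}[T;N-1,p-1]^-$ and on \eqref{recursive relation of Phi 2}, is also available, but then one must prove simultaneously the version of (i) in which $N$ is replaced by an arbitrary position of the block $\{c,\dots,N\}$, so that the hypothesis can be fed back in at $m_\mu L_{N-1}^{t}[T;N-1,p-1]^-$.)

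For part (ii) the same scheme applies with $[T;N,p]^-$ replaced by $[T;N,p]^+=1+\sum_{h=1}^{p-1}q^hT_{N+1}\cdots T_{N+h}$, with $L_N$ replaced by $L_{N+1}$, and with $L_{N+1}$ pushed to the right through $T_{N+1},T_{N+2},\dots$ via Lemma \ref{Lemma commute L T}(iv),(v); here the $q$-powers coming from \eqref{ m mu T} cancel rather than accumulate, which accounts for the absence of the factor $q^{2p-2}$ in (ii), and one matches the straightened sum against the recursion \eqref{recursive relation of Phi} for $\Phi_{t+1}^-$ (i.e. the lower sign throughout).

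The genuinely delicate point is the bookkeeping in the inductive step: one must verify that, once the $T$-factors produced by the cascade of Jucys--Murphy relations have been absorbed into $m_\mu$, the many $(q-q^{-1})$-corrections reorganise exactly into the monomial-symmetric-function structure of $\Phi_t^{\pm}$ encoded in \eqref{recursive relation of Phi}. Everything else — the base case, the commutations, the identification of reduced words in $\FS_\mu$ — is routine.
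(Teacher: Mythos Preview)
Your approach is essentially the same as the paper's: induction on $t$ using the Jucys--Murphy commutation rules and the recursion \eqref{recursive relation of Phi} for $\Phi_t^{\pm}$, with the base case handled by \eqref{ m mu T}. The difference is organisational. The paper does not attack $m_\mu L_N^{t+1}[T;N,p]^-$ directly; instead it first establishes, for each fixed $h$, the single-chain identity
\[
m_\mu L_N^t(T_{N-1}\cdots T_{N-h})
= m_\mu\big(q^{h}\Phi_t^+(L_N,\dots,L_{N-h})-q^{h-2}\Phi_t^+(L_N,\dots,L_{N-h+1})\big),
\]
proven by peeling off \emph{one} factor of $L_N$ (not all $t+1$) and pushing only that factor through the chain $T_{N-1}\cdots T_{N-h}$, which yields a one-step recurrence in $t$ that matches \eqref{recursive relation of Phi}. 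Part (i) then follows by summing this over $h=0,\dots,p-1$: the sum telescopes to $q^{2p-2}\Phi_t^+(L_N,\dots,L_{N-p+1})$. Part (ii) is handled the same way via the analogous identity for $m_\mu L_{N+1}^t(T_{N+1}\cdots T_{N+h})$. This intermediate step is exactly the ``delicate bookkeeping'' you flag, and isolating it makes the reassembly a two-line telescoping rather than a large simultaneous cancellation; you may want to adopt this organisation.
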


\begin{proof} 
In the case where $t=0$, 
we have (\roi) and (\roii) from \eqref{ m mu T}. 

We prove (\roi) for $t>0$.  
Put $N= N_{(i,k)}^{\mu}$. 
For $1 \leq h \leq \mu_i^{(k)}-1$, 
by the induction on $h$ together with Lemma \ref{Lemma commute L T} and \eqref{ m mu T}, 
we can show that 
\begin{align}
\label{m_mu L_N^t (T_N-1 dots T_N-h)}
\begin{split}
&m_\mu L_N^t (T_{N-1} T_{N-2} \dots T_{N-h}) 
\\
&= m_\mu \big\{ 
	(q-q^{-1}) q^{h-1} L_N^t 
	+ \sum_{s=2}^h (q-q^{-1}) q^{h-s} L_N^{t-1} (T_{N-1} T_{N-2} \dots T_{N-s+1}) L_{N-s+1} 
	\\
	& \hspace{3em} 
	+ L_N^{t-1} (T_{N-1} T_{N-2} \dots T_{N-h}) L_{N-h} \big\}. 
\end{split}
\end{align}
We prove that 
\begin{align}
\label{m_mu L_N^t ( T_N-1 dots T_N-h) by Phi}
\begin{split}
&m_{\mu} L_N^t (T_{N-1} T_{N-2} \dots T_{N-h}) 
\\
&= m_{\mu} \big( q^h \Phi_t^+ (L_N, L_{N-1}, \dots, L_{N-h}) 
				- q^{h-2} \Phi_t^+ (L_N,L_{N-1}, \dots, L_{N-h+1}) \big) 
\end{split}
\end{align}
by the induction on $t$. 
In the case where $t=1$, 
by \eqref{m_mu L_N^t (T_N-1 dots T_N-h)} together with \eqref{ m mu T}, we have 
\begin{align*}
&m_{\mu} L_N (T_{N-1} T_{N-2} \dots T_{N-h}) 
\\
&= m_{\mu} \big\{ (q-q^{-1})q^{h-1} L_{N} + \sum_{s=2}^h (q-q^{-1}) q^{h-s} q^{s-1} L_{N-s+1} + q^h L_{N-h} \big\} 
\\
&= m_\mu \big( q^h \Phi_1^+(L_N,L_{N-1}, \dots, L_{N-h}) - q^{h-2} \Phi_1^+ (L_N, L_{N-1}, \dots, L_{N-h+1}) \big). 
\end{align*}
Assume that $t >1$. 
Applying the assumption of the induction to \eqref{m_mu L_N^t (T_N-1 dots T_N-h)}, 
we have 
\begin{align*}
&m_\mu L_N^t (T_{N-1}T_{N-2} \dots T_{N-h}) 
\\
&= m_{\mu} \big\{ (q-q^{-1}) q^{h-1} L_N^t 
	\\ & \hspace{3em} 
	+ \sum_{s=2}^h (q-q^{-1}) q^{h-s} 
		\big( q^{s-1} \Phi_{t-1}^+(L_N,L_{N-1}, \dots, L_{N-s+1})  
		\\ & \hspace{13em} - q^{s-3} \Phi_{t-1}^+ (L_N,L_{N-1}, \dots, L_{N-s+2}) \big) L_{N-s+1} 
	\\ & \hspace{3em} 
	+ \big( q^h \Phi_{t-1}^+ (L_N, L_{N-1}, \dots, L_{N-h}) - q^{h-2} \Phi_{t-1}^+ (L_N, L_{N-1}, \dots, L_{N-h+1}) \big) L_{N-h} 
	\big\} 
\end{align*}
Put $s'=s-1$, we have 
\begin{align*}
&m_\mu L_N^t (T_{N-1}T_{N-2} \dots T_{N-h}) 
\\
&= m_{\mu} \Big\{ 
	q^h \Big( L_N^t + \sum_{s=1}^h \big( \Phi_{t-1}^+ (L_N,L_{N-1}, \dots, L_{N-s'}) L_{N-s'} 
		\\ & \hspace{10em} 
			- q^{-2} \Phi_{t-1}(L_N, L_{N-1}, \dots, L_{N-s'+1}) L_{N-s'} \big) \Big) 
	\\ 
	& \hspace{4em} 
	- q^{h-2} \Big( L_N^t + \sum_{s=1}^{h-1} \big( \Phi_{t-1}^+ (L_N,L_{N-1}, \dots, L_{N-s'}) L_{N-s'} 
		\\ & \hspace{13em} 
			- q^{-2} \Phi_{t-1}(L_N, L_{N-1}, \dots, L_{N-s'+1}) L_{N-s'} \big) \Big) \Big\}. 
\end{align*}
Applying \eqref{recursive relation of Phi} to the right-hand side, we have \eqref{m_mu L_N^t ( T_N-1 dots T_N-h) by Phi}. 
Thanks to \eqref{m_mu L_N^t ( T_N-1 dots T_N-h) by Phi}, we have 
\begin{align*}
&m_{\mu} L_{N}^t [T; N, p]^- 
\\
&=m_\mu L_N^t (1 + \sum_{h=1}^{p-1} q^h T_{N-1}T_{N-2} \dots T_{N-h}) 
\\
&= m_\mu \big\{ 
	\Phi_t^+(L_N) 
	+ \sum_{h=1}^{p-1} \big( q^{2 h} \Phi_{t}^+(L_{N},L_{N-1},\dots, L_{N-h}) 	
		\\ & \hspace{12em} 					
		- q^{ 2 h -2} \Phi_t^+ (L_N,L_{N-1},\dots, L_{N-h+1}) \big) 	\big\} 
\\
&= q^{2p-2} m_\mu \Phi_t^+ (L_N,L_{N-1}, \dots, L_{N-p}). 
\end{align*}
Now we obtained (\roi). 


For $t>0$ and $1 \leq h \leq \mu_{i+1}^{(k)} -1$, 
by the induction on $h$ using Lemma \ref{Lemma commute L T} and \eqref{ m mu T},  
we can show that 
\begin{align}
\label{m_mu L_n+1^t T_N+1 dots T_N+h} 
\begin{split}
&m_{\mu} L_{N+1}^t (T_{N+1} T_{N+2} \dots T_{N+h}) 
\\
&= q^{-h} m_{\mu} L_{N+1}^{t-1} 
	\big\{ (1-q^2) \big( 1 + \sum_{s=1}^{h-1} q^s T_{N+1} T_{N+2} \dots T_{N+s} \big) 
		\\ & \hspace{8em} 
			+ q^h T_{N+1} T_{N+2} \dots T_{N+h} 
	\big\} L_{N+h+1}. 
\end{split}
\end{align}

We prove (\roii) by the induction on $t$. 
We have already proved (\roii) in the case where $t=0$. 

Assume that $t > 0$. 
By \eqref{m_mu L_n+1^t T_N+1 dots T_N+h}, we have 
\begin{align*}
&m_{\mu} L_{N +1}^t [T; N, p]^+ 
\\
&=m_{\mu} L_{N+1}^t \big( 1 + \sum_{h=1}^{p-1} q^h T_{N+1} T_{N+2} \dots T_{N+h} \big) 
\\
&= m_{\mu} L_{N+1}^{t-1} 
	\Big\{ L_{N+1} 
		+ \sum_{h=1}^{p-1} \big\{ (1-q^2) (1+ \sum_{s=1}^{h-1} q^s T_{N+1} T_{N+2} \dots T_{N+s}) 
			\\ & \hspace{12em} 
			+ q^h T_{N+1} T_{N+2} \dots T_{N+h} \big\} L_{N+h+1} \Big\} 
\\
&= m_{\mu} L_{N+1}^{t-1} \Big\{ 
	\sum_{h=1}^{p} [T;N,h]^+  L_{N+h} 
	- q^2 \sum_{h=1}^{p-1} [T;N,h]^+  L_{N+h+1} \Big\}. 
\end{align*}
Applying the assumption of the induction, we have 
\begin{align*}
&m_{\mu} L_{N +1}^t [T; N, p]^+ 
\\
&=  m_{\mu} \Big\{ \sum_{h=1}^{p} \Phi_{t-1}^- (L_{N+1}, L_{N+2}, \dots , L_{N+h}) L_{N+h} 
	\\ & \hspace{5em} 
	- q^2 \sum_{h=1}^{p-1} \Phi_{t-1}^- (L_{N+1}, L_{N+2}, \dots, L_{N+h}) L_{N+h+1} \Big\}. 
\end{align*}
Applying \eqref{recursive relation of Phi}, we have 
\begin{align*}
m_{\mu} L_{N +1}^t [T; N, p]^+ 
=m_{\mu} \Phi_t^- (L_{N+1}, L_{N+2}, \dots, L_{N+p}). 
\end{align*}
\end{proof}

\begin{lem}
\label{Lemma m mu L T etc}
For $\mu \in \vL_{n,r}(\Bm)$ and $(i,k) \in \vG'(\Bm)$, put $N=N_{(i,k)}^{\mu}$. Then we have the followings. 
\begin{enumerate}
\item  
If $\mu_i^{(k)} \not=0$, we have 
\begin{align*}
& m_{\mu } L_N^t [T; N-1, \mu_{i+1}^{(k)}+1]^+ [T; N, \mu_i^{(k)}]^- 
\\
&= q^{2 \mu_i^{(k)} - 2} m_{\mu} \Phi_t^+ (L_N, L_{N-1}, \dots, L_{N- \mu_i^{(k)} +1}) 
	 \\ & \quad 
	+ \d_{(\mu_{i+1}^{(k)} \not=0)} m_{\mu} L_N^t \big( [T; N+1, \mu_{i}^{(k)}+1]^- -1 \big) [T; N, \mu_{i+1}^{(k)}]^+ 
\end{align*}

\item 
If $\mu_i^{(k)} \not=0$, we have 
\begin{align*}
&m_{\mu} L_N^t [T; N-1, \mu_{i+1}^{(k)}+1]^+  L_N [T; N, \mu_i^{(k)}]^- 
\\
&= q^{2 \mu_{i}^{(k)} -2} m_{\mu} \Phi_{t+1}^+ (L_{N}, L_{N-1}, \dots, L_{N-\mu_i^{(k)} +1})
	\\ & \hspace{1em} 
	- \d_{(\mu_{i+1}^{(k)} \not=0)} (q-q^{-1}) q^{2 \mu_i^{(k)} -1}  
		m_{\mu} \Phi_{t}^+ (L_{N}, L_{N-1}, \dots, L_{N-\mu_i^{(k)} +1}) 
		\\ & \hspace{3em} \times 
			\Phi_{1}^- (L_{N+1}, L_{N+2}, \dots, L_{N+\mu_{i+1}^{(k)}})
	\\ & \hspace{1em} 
	+ m_\mu  L_N^t L_{N+1} \big( [T; N-1, \mu_{i+1}^{(k)}+1]^+ -1 \big) [T; N, \mu_i^{(k)}]^- 
\end{align*}

\item 
If $\mu_{i+1}^{(k)} \not=0$, we have 
\begin{align*}
&m_{\mu} [T; N+1, \mu_i^{(k)}+1]^- L_{N+1}^t [T; N, \mu_{i+1}^{(k)}]^+ 
\\
&= (1 + \d_{(t \not=0)} (q^{ 2 \mu_i^{(k)}} -1) )  m_{\mu} \Phi_t^- (L_{N+1}, L_{N+2}, \dots, L_{N+\mu_{i+1}^{(k)}}) 
	\\ & \hspace{1em} 
	+ \d_{(\mu_i^{(k)} \not=0)} (q-q^{-1}) \sum_{b=1}^{t-1} m_{\mu}  q^{2 \mu_i^{(k)}-1} 
		\Phi_{t-b}^+ (L_N, L_{N-1}, \dots, L_{N- \mu_i^{(k)}+1}) 
		\\ & \hspace{3em} \times 
		\Phi_b^- (L_{N+1}, L_{N+2}, \dots, L_{N+\mu_{i+1}^{(k)}})
	\\ & \hspace{1em} 
	+ m_{\mu} L_N^t \big( [T; N+1, \mu_i^{(k)}+1]^- -1 \big) [T; N, \mu_{i+1}^{(k)}]^+ 
\end{align*}

\item 
If $\mu_{i+1}^{(k)} \not=0$, we have 
\begin{align*}
&m_{\mu} L_{N+1} [T; N+1, \mu_i^{(k)}+1]^-  L_{N+1}^t [T; N, \mu_{i+1}^{(k)}]^+  
\\
&= (1 + \d_{(t \not=0)} (q^{ 2 \mu_i^{(k)}} -1) ) 
	 m_{\mu} \Phi_{t+1}^- (L_{N+1}, L_{N+2}, \dots, L_{N+\mu_{i+1}^{(k)}}) 
	\\ & \hspace{1em} 
	+ \d_{(\mu_i^{(k)} \not=0)} (q-q^{-1}) \sum_{b=1}^{t-1} m_{\mu}  q^{2 \mu_i^{(k)}-1} 
		\Phi_{t-b}^+ (L_N, L_{N-1}, \dots, L_{N- \mu_i^{(k)}+1}) 
		\\ & \hspace{3em} \times 
		\Phi_{b+1}^- (L_{N+1}, L_{N+2}, \dots, L_{N+\mu_{i+1}^{(k)}})
	\\ & \hspace{1em} 
	+ m_{\mu} L_N^t L_{N+1} \big( [T; N+1, \mu_i^{(k)}+1]^- -1 \big) [ T; N, \mu_{i+1}^{(k)}]^+ 
\end{align*}
\end{enumerate} 
\end{lem}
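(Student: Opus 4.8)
The plan is to prove all four identities by direct computation, building on Lemma \ref{Lemma m mu L T}; the essential tools are the commutation relations of Lemma \ref{Lemma commute L T} together with \eqref{ m mu T}, and the recursive relations \eqref{recursive relation of Phi}, \eqref{recursive relation of Phi 2} for the polynomials $\Phi_{t}^{\pm}$. Throughout I write $N=N_{(i,k)}^{\mu}$ as in the statement. First I would record the elementary ``peeling'' identities for the elements $[T;N,\mu]^{\pm}$,
\begin{align*}
[T;N-1,\mu+1]^{+} = 1 + q\,T_{N}\,[T;N,\mu]^{+}, \qquad [T;N+1,\mu+1]^{-} = 1 + q\,T_{N}\,[T;N,\mu]^{-},
\end{align*}
valid whenever all indices remain in the admissible range; both follow at once from the definitions. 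In each of (i)--(iv) I would apply the relevant identity to the leading factor --- $[T;N-1,\mu_{i+1}^{(k)}+1]^{+}$ in (i), (ii) and $[T;N+1,\mu_{i}^{(k)}+1]^{-}$ in (iii), (iv) --- which splits the left-hand side into a ``diagonal'' summand, obtained by replacing the peeled factor by $1$, and a ``cross'' summand carrying the extra generator $T_{N}$.

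For (i) this essentially finishes the argument: the diagonal summand $m_{\mu}L_{N}^{t}[T;N,\mu_{i}^{(k)}]^{-}$ equals $q^{2\mu_{i}^{(k)}-2}\,m_{\mu}\Phi_{t}^{+}(L_{N},\dots,L_{N-\mu_{i}^{(k)}+1})$ by Lemma \ref{Lemma m mu L T}(i), while in the cross summand the factors $[T;N,\mu_{i+1}^{(k)}]^{+}$ and $[T;N,\mu_{i}^{(k)}]^{-}$ involve disjoint generators and hence commute, after which the second peeling identity, read in reverse, turns $q\,T_{N}\,[T;N,\mu_{i}^{(k)}]^{-}$ into $[T;N+1,\mu_{i}^{(k)}+1]^{-}-1$ and produces exactly the tail term of (i); the symbol $\delta_{(\mu_{i+1}^{(k)}\ne0)}$ records the case $\mu_{i+1}^{(k)}=0$, in which the peeled factor is already $1$. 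Statement (ii) would follow by the same route, the extra middle Jucys-Murphy element $L_{N}$ raising the weight in the diagonal summand from $t$ to $t+1$ and, through one further use of Lemma \ref{Lemma commute L T}(iv), (v), contributing the additional term $-\delta_{(\mu_{i+1}^{(k)}\ne0)}(q-q^{-1})q^{2\mu_{i}^{(k)}-1}m_{\mu}\Phi_{t}^{+}(\cdots)\Phi_{1}^{-}(\cdots)$ and the modified tail term, once the remaining Jucys-Murphy powers are absorbed by parts (i) and (ii) of Lemma \ref{Lemma m mu L T}.

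For (iii) and (iv) the cross summand is the substantial part. The diagonal summand $m_{\mu}L_{N+1}^{t}[T;N,\mu_{i+1}^{(k)}]^{+}$ gives the $\Phi_{t}^{-}$-term by Lemma \ref{Lemma m mu L T}(ii). In the cross summand $L_{N+1}^{t}$ sits between $T_{N}$ and the remaining $[T;\,\cdot\,]$-factors, so I would commute it past $T_{N}$ using Lemma \ref{Lemma commute L T}(iv), which replaces it by $T_{N}L_{N}^{t}$ plus a sum $(q-q^{-1})\sum_{s}L_{N+1}^{t-s}L_{N}^{s}$; then, after collecting $q$-powers via \eqref{ m mu T}, applying Lemma \ref{Lemma m mu L T}(i) to the pieces $m_{\mu}L_{N}^{s}[T;N,\mu_{i}^{(k)}]^{-}$ and Lemma \ref{Lemma m mu L T}(ii) to the pieces $m_{\mu}L_{N+1}^{t-s}[T;N,\mu_{i+1}^{(k)}]^{+}$, and resumming over $s$ by means of \eqref{recursive relation of Phi}, \eqref{recursive relation of Phi 2}, one obtains the sum $\sum_{b}\Phi_{t-b}^{+}\Phi_{b}^{-}$, the tail term, and the correction $\delta_{(t\ne0)}(q^{2\mu_{i}^{(k)}}-1)$ to the $\Phi_{t}^{-}$-term (the last coming from the $T_{N}L_{N}^{t}$-part via \eqref{ m mu T}). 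Statement (iv) would follow from the same computation with the extra middle Jucys-Murphy element $L_{N+1}$ carried through, one more application of Lemma \ref{Lemma commute L T}(iv), (v) turning $\Phi_{t}^{-}$ into $\Phi_{t+1}^{-}$ and each $\Phi_{b}^{-}$ into $\Phi_{b+1}^{-}$. The symbols $\delta_{(\mu_{i}^{(k)}\ne0)}$, $\delta_{(\mu_{i+1}^{(k)}\ne0)}$, $\delta_{(t\ne0)}$ merely flag degenerate cases (an empty chain, an empty commutator, or $t=0$).

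The main obstacle will be the bookkeeping in (iii) and (iv): tracking the powers of $q$ accumulated when commuting Jucys-Murphy elements through $T$-chains and when applying \eqref{ m mu T}, and then identifying the resulting double sums with precisely the combinations of $\Phi_{t}^{\pm}$ on the right-hand sides; getting the $\delta$-conditions and the boundary cases ($\mu_{i}^{(k)}=0$, $\mu_{i+1}^{(k)}=0$, $t=0$, and $N\pm\mu$ out of admissible range) to match the statement will also need care, but requires no idea beyond the recursive relations for $\Phi$ already established.
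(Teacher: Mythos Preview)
Your proposal is correct and follows essentially the same route as the paper. The paper also reduces everything to Lemma~\ref{Lemma m mu L T} via Lemma~\ref{Lemma commute L T} and \eqref{ m mu T}; for (i) it first proves the ``braid'' identity
\[
[T;N-1,\mu_{i+1}^{(k)}+1]^{+}[T;N,\mu_{i}^{(k)}]^{-}
=[T;N,\mu_{i}^{(k)}]^{-}+\d_{(\mu_{i+1}^{(k)}\neq0)}\bigl([T;N+1,\mu_{i}^{(k)}+1]^{-}-1\bigr)[T;N,\mu_{i+1}^{(k)}]^{+}
\]
by induction on $\mu_{i+1}^{(k)}$, which is exactly what your two peeling identities give in one step, and then applies Lemma~\ref{Lemma m mu L T}(i). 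For (ii)--(iv) the paper commutes the full Jucys--Murphy power past the whole $[T;\cdot]^{\pm}$-chain at once (e.g.\ writing $[T;N+1,\mu_{i}^{(k)}+1]^{-}L_{N+1}^{t}$ as $L_{N+1}^{t}+L_{N}^{t}([T;\cdot]^{-}-1)+\d_{(\mu_{i}^{(k)}\neq0)}q(q-q^{-1})\sum_{b}L_{N}^{t-b}L_{N+1}^{b}[T;N,\mu_{i}^{(k)}]^{-}$), whereas you peel a single $T_{N}$ and commute past that; the two computations are reorganizations of each other. One small overstatement: in (iii) and (iv) you will not actually need the recursive relations \eqref{recursive relation of Phi}, \eqref{recursive relation of Phi 2} for the resummation---the sum $\sum_{s=1}^{t}\Phi_{t-s}^{+}\Phi_{s}^{-}$ already has the right shape, and the $\d_{(t\neq0)}(q^{2\mu_{i}^{(k)}}-1)$ correction comes simply from isolating the $s=t$ term and evaluating $\Phi_{0}^{+}=q^{-\mu_{i}^{(k)}+1}[\mu_{i}^{(k)}]$.
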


\begin{proof} 
By the induction on $\mu_{i+1}^{(k)}$, we can prove that 
\begin{align}
\label{[T;N-1,mu]+[T;N,mu]-}
\begin{split} 
&[T; N-1, \mu_{i+1}^{(k)}+1]^+ [T; N, \mu_i^{(k)}]^-  
\\
&=  [T; N, \mu_i^{(k)}]^- 
	+ \d_{(\mu_{i+1}^{(k)} \not=0)} \big( [T; N+1, \mu_i^{(k)}+1]^- -1 \big) [T; N, \mu_{i+1}^{(k)}]^+
\end{split} 
\end{align}

Thus we have 
\begin{align*}
& m_{\mu } L_N^t [T; N-1, \mu_{i+1}^{(k)}+1]^+ [T; N, \mu_i^{(k)}]^- 
\\	
& =  m_{\mu } L_N^t \Big\{ [T; N, \mu_i^{(k)}]^- 
	+ \d_{(\mu_{i+1}^{(k)} \not=0)} \big( [T; N+1, \mu_i^{(k)}+1]^- -1 \big) [T; N, \mu_{i+1}^{(k)}]^+ \Big\}. 
\end{align*}
Applying Lemma \ref{Lemma m mu L T} (\roi), we have (\roi). 


We prove (\roii). 
By Lemma \ref{Lemma commute L T}, we have 
\begin{align*}
&[T; N-1, \mu_{i+1}^{(k)}+1]^+  L_N 
\\
&= L_N + L_{N+1} \big( [T; N-1, \mu_{i+1}^{(k)} +1]^+ -1 \big) 
	- \d_{(\mu_{i+1}^{(k)} \not=0)} q (q-q^{-1}) L_{N+1} [T; N, \mu_{i+1}^{(k)}]^+. 
\end{align*}
Thus, we have 
\begin{align*}
&m_{\mu} L_N^t  [T; N-1, \mu_{i+1}^{(k)}+1]^+  L_N [T; N, \mu_i^{(k)}]^- 
\\
&= m_{\mu}  L_N^{t+1} [T; N, \mu_i^{(k)}]^- 
	+ m_\mu  L_N^t L_{N+1} \big( [T; N-1, \mu_{i+1}^{(k)} +1]^+ -1 \big)  [T; N, \mu_i^{(k)}]^- 
	\\ & \hspace{1em} 
	- \d_{(\mu_{i+1}^{(k)} \not=0)} q (q-q^{-1}) m_{\mu} L_N^tL_{N+1} 
	[T; N, \mu_{i+1}^{(k)}]^+ [T; N, \mu_i^{(k)}]^-. 
\end{align*}
Applying \eqref{ m mu T}, Lemma \ref{Lemma Li commute [T; N mu]}, 
Lemma \ref{Lemma m mu L T} and \eqref{[T;N-1,mu]+[T;N,mu]-}, 
we have (\roii).

We prove (\roiii). 
By Lemma \ref{Lemma commute L T}, we have 
\begin{align*}
 [T; N+1, \mu_i^{(k)}+1]^-  L_{N+1}^t 
&=  L_{N+1}^t + L_N^t \big( [T; N+1, \mu_i^{(k)}+1]^- -1 \big) 
	\\ & \quad 
	+ \d_{(\mu_i^{(k)} \not=0)} q (q-q^{-1}) \sum_{b=1}^t L_N^{t-b} L_{N+1}^b [T; N,\mu_i^{(k)}]^-. 
\end{align*}

Thus, we have 
\begin{align*}
&m_{\mu} [T; N+1, \mu_i^{(k)}+1]^- L_{N+1}^t [T; N, \mu_{i+1}^{(k)}]^+ 
\\
&= m_{\mu} L_{N+1}^t  [T; N, \mu_{i+1}^{(k)}]^+ 
	+ m_{\mu} L_N^t  \big( [T; N+1, \mu_i^{(k)}+1]^- -1 \big) [T; N, \mu_{i+1}^{(k)}]^+ 
	\\ & \hspace{1em} 
	+ \d_{(\mu_i^{(k)} \not=0)} q (q-q^{-1}) \sum_{b=1}^t m_{\mu}  L_N^{t-b} L_{N+1}^b 
		[T; N,\mu_i^{(k)}]^-  [T; N, \mu_{i+1}^{(k)}]^+ 
\\
&= m_{\mu} L_{N+1}^t [T; N, \mu_{i+1}^{(k)}]^+ 
	\\ & \hspace{1em} 
	+ \d_{(\mu_{i}^{(k)} \not=0)} \d_{(t \not=0)} q (q-q^{-1}) m_{\mu} L_{N+1}^t 
		[T; N, \mu_i^{(k)}]^- [T; N, \mu_{i+1}^{(k)}]^+ 
	\\ & \hspace{1em} 
	+ \d_{(\mu_i^{(k)} \not=0)} q (q-q^{-1}) \sum_{b=1}^{t-1} m_{\mu}  L_N^{t-b} L_{N+1}^b 
		[T; N, \mu_i^{(k)}]^- [T; N, \mu_{i+1}^{(k)}]^+ 
	\\ & \hspace{1em} 
	+ m_{\mu} L_N^t  \big( [T; N+1, \mu_i^{(k)}+1]^- -1 \big) [T; N, \mu_{i+1}^{(k)}]^+ 
\end{align*}
Applying \eqref{ m mu T}, Lemma \ref{Lemma Li commute [T; N mu]} 
and Lemma \ref{Lemma m mu L T}, 
we have (\roiii).

We prove (\roiv). 
By Lemma \ref{Lemma commute L T}, we have 
\begin{align*}
&m_{\mu} L_{N+1} [T; N+1, \mu_i^{(k)}+1]^- L_{N+1}^t  [T; N, \mu_{i+1}^{(k)}]^+ 
\\
&= m_{\mu} L_{N+1}^{t+1} [T; N, \mu_{i+1}^{(k)}]^+ 
	+ m_{\mu} L_N^t L_{N+1}\big( [T; N+1, \mu_i^{(k)}+1]^- -1 \big) [T; N, \mu_{i+1}^{(k)}]^+ 
	\\ & \hspace{1em} 
	+ \d_{(\mu_i^{(k)} \not=0)} q (q-q^{-1}) \sum_{b=1}^t m_{\mu}  L_N^{t-b} L_{N+1}^{b+1} 
		[T; N, \mu_i^{(k)}]^- [T; N, \mu_{i+1}^{(k)}]^+ 
\end{align*}
Applying \eqref{ m mu T}, Lemma \ref{Lemma Li commute [T; N mu]} 
and Lemma \ref{Lemma m mu L T}, 
we have (\roiv).
\end{proof}


\begin{prop}
\label{Proposition CX+ CX- - CX- CX+}
For $(i,k) \in \vG'(\Bm)$ and $s,t \geq 0$, we have 
\begin{align*}
[\CX_{(i,k),t}^+, \CX_{(i,k),s}^-] 
= \begin{cases} 
	\dis 
	\wt{\CK}_{(i,k)}^+ \CJ_{(i,k),s+t} 
		& \text{ if } i \not=m_k, 
	\\ \dis 
	- Q_k \wt{\CK}_{(m_k,k)}^+ \CJ_{(m_k,k),s+t} + \wt{\CK}_{(m_k,k)}^+ \CJ_{(m_k,k),s+t+1} 
		& \text{ if } i=m_k. 
	\end{cases} 
\end{align*}
\end{prop}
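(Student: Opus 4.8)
The plan is to use that $\Sc_{n,r}(\Bm) = \End_{\He_{n,r}}\big(\bigoplus_{\mu}m_{\mu}\He_{n,r}\big)$, so an element is determined by its action on the generators $m_{\mu}$, and two elements agree as soon as they send each $m_{\mu}$ ($\mu\in\vL_{n,r}(\Bm)$) to the same element of $m_{\mu}\He_{n,r}$. Both composites $\CX_{(i,k),t}^+\CX_{(i,k),s}^-$ and $\CX_{(i,k),s}^-\CX_{(i,k),t}^+$ preserve the weight $\mu$, as do $\wt\CK_{(i,k)}^+\CJ_{(i,k),s+t}$ and the $i=m_k$ right-hand side, so each side applied to $m_\mu$ has the form $c_\mu\, m_\mu$ for an explicit $c_\mu\in\He_{n,r}$. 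Writing $N=N^\mu_{(i,k)}$, the Jucys--Murphy variables that will appear are $L_{N-\mu_i^{(k)}+1},\dots,L_N$ on the $(i,k)$-side and $L_{N+1},\dots,L_{N+\mu_{i+1}^{(k)}}$ on the $(i+1,k)$-side (with the convention $\mu_{m_k+1}^{(k)}=\mu_1^{(k+1)}$ when $i=m_k$), and these are exactly the variables occurring in $\CI^+_{(i,k),\bullet}(m_\mu)$ and in $\CI^-_{(i+1,k),\bullet}(m_\mu)$, which is what will let us recognize $\CJ_{(i,k),s+t}$ at the end.

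First I would dispose of the degenerate cases $\mu_i^{(k)}=0$ and $\mu_{i+1}^{(k)}=0$: there one composite kills $m_\mu$ while the other, via Lemma \ref{Lemma Xpmt}, produces a single term, and the asserted identity reduces to matching this term with the corresponding single $\CI$-term of $\CJ_{(i,k),s+t}$ (using $\Phi^{\pm}_0(x_1,\dots,x_k)=q^{\mp k\pm1}[k]$ and, when $s=t=0$, Lemma \ref{Lemma CK+- 2} and Corollary \ref{Cor CJ0}). For the generic case $\mu_i^{(k)},\mu_{i+1}^{(k)}\ne0$, applying Lemma \ref{Lemma Xpmt} to the two composites and using $N^{\mu\mp\a_{(i,k)}}_{(i,k)}=N\mp1$ leaves expressions of precisely the shape treated in Lemma \ref{Lemma m mu L T etc}(i)--(iv) — in $\CX^+\CX^-$ the $[T;\cdot]^+$-factor of $\CX^+$ starts one box earlier, giving $m_\mu L_N^t[T;N-1,\mu_{i+1}^{(k)}+1]^+\cdots[T;N,\mu_i^{(k)}]^-$ (and $\CX^-$ contributes the factor $h^\mu_{-(i,k)}$, equal to $1$ for $i\ne m_k$ and to $L_N-Q_k$ for $i=m_k$), and symmetrically for $\CX^-\CX^+$. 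Applying Lemma \ref{Lemma m mu L T etc} together with Lemma \ref{Lemma m mu L T} and the recursive identities \eqref{recursive relation of Phi}, \eqref{recursive relation of Phi 2} for $\Phi_t^{\pm}$ rewrites everything as a combination of $\Phi^+_a(L_N,\dots,L_{N-\mu_i^{(k)}+1})$, $\Phi^-_b(L_{N+1},\dots,L_{N+\mu_{i+1}^{(k)}})$, and products of the two, acting on $m_\mu$, with explicit powers of $q$ (and of $Q_k$ when $i=m_k$).

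It then remains to collect terms and recognize the right-hand side. By the definitions of $\CI^{\pm}_{(j,l),\bullet}$ these $\Phi$-terms are exactly $q$-power multiples of $\CI^+_{(i,k),a}(m_\mu)$ and $\CI^-_{(i+1,k),b}(m_\mu)$; the overall factor $q^{\mu_i^{(k)}-\mu_{i+1}^{(k)}}$ that falls out is the eigenvalue of $\wt\CK^+_{(i,k)}$ on $m_\mu$; and the mixed products organize into the sum $(q-q^{-1})\sum_{b=1}^{s+t-1}q^{-(s+t)+2b}\CI^+_{(i,k),s+t-b}\CI^-_{(i+1,k),b}$ appearing in $\CJ_{(i,k),s+t}$, so that the difference of the composites equals $\wt\CK_{(i,k)}^+\CJ_{(i,k),s+t}$ when $i\ne m_k$. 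When $i=m_k$, expanding the extra factor $L_N-Q_k$ (resp. $L_{N+1}-Q_k$) splits the computation into a "$-Q_k$" part, which reproduces $-Q_k\wt\CK^+_{(m_k,k)}\CJ_{(m_k,k),s+t}$, and an "$L$" part, whose extra variable raises all degrees by one and yields $\wt\CK^+_{(m_k,k)}\CJ_{(m_k,k),s+t+1}$. The main obstacle is exactly this last bookkeeping step: tracking the numerous $q$-powers and, above all, the boundary contributions (the $\delta$-terms and the "$[T;\cdot]^{\pm}-1$" remainders of Lemma \ref{Lemma m mu L T etc}, as well as the terms coming from moving $L$'s past $T$'s via Lemma \ref{Lemma commute L T}) carefully enough to see that they cancel between the two composites and leave precisely the combination defining $\CJ$; the recursions for $\Phi_t^{\pm}$ are what make these cancellations transparent.
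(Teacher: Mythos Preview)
Your overall strategy---evaluate both sides on each $m_\mu$ and match the resulting $\He_{n,r}$-elements---is correct, and for the case $s=0$ your outline matches the paper's argument exactly: the composites $\CX_{(i,k),t}^+\CX_{(i,k),0}^-(m_\mu)$ and $\CX_{(i,k),0}^-\CX_{(i,k),t}^+(m_\mu)$ are precisely the expressions treated in Lemma~\ref{Lemma m mu L T etc}, and the identification with $\wt\CK^+_{(i,k)}\CJ_{(i,k),t}$ (resp.\ the $i=m_k$ variant) goes through as you describe.

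The gap is in your treatment of general $s>0$. Applying Lemma~\ref{Lemma Xpmt} to $\CX_{(i,k),t}^+\CX_{(i,k),s}^-(m_\mu)$ yields
\[
q^{-\mu_i^{(k)}-\mu_{i+1}^{(k)}+1}\, m_\mu\, L_N^{t}\,[T;N-1,\mu_{i+1}^{(k)}+1]^+\, L_N^{s}\, h^\mu_{-(i,k)}\,[T;N,\mu_i^{(k)}]^-,
\]
with an $L_N^{s}$ sandwiched between the two bracket factors, and similarly an $L_{N+1}^{s}$ appears in the other composite. Lemma~\ref{Lemma m mu L T etc} is \emph{not} stated in this generality: parts (i) and (iii) have nothing between the brackets, and parts (ii) and (iv) allow only a single $L_N$ (resp.\ $L_{N+1}$), exactly the factor coming from $h^\mu_{-(i,k)}$ when $i=m_k$. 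Handling an arbitrary power $L_N^{s}$ there would require commuting $L_N^{s}$ past $T_N$ inside $[T;N-1,\mu_{i+1}^{(k)}+1]^+$, which by Lemma~\ref{Lemma commute L T} generates a cascade of mixed $L_N,L_{N+1}$ terms; the recursions \eqref{recursive relation of Phi}, \eqref{recursive relation of Phi 2} alone do not organize this into $\CJ_{(i,k),s+t}$ without substantial extra work that you have not supplied.

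The paper avoids this entirely: it proves only the base case $s=0$ by the direct computation you outline, and then runs an induction on $s$ using the recursive definition \eqref{Def CX t}, namely $\CX_{(i,k),s}^-=-[\CI_{(i,k),1}^-,\CX_{(i,k),s-1}^-]$. Expanding $[\CX_{(i,k),t}^+,\CX_{(i,k),s}^-]$ via this and applying Proposition~\ref{Proposition IX - XI} (with Lemma~\ref{Lemma CI 0 CX+-}) gives
\[
[\CX_{(i,k),t}^+,\CX_{(i,k),s}^-]
=[\CX_{(i,k),t+1}^+,\CX_{(i,k),s-1}^-]
-\CI_{(i,k),1}^-\,[\CX_{(i,k),t}^+,\CX_{(i,k),s-1}^-]
+[\CX_{(i,k),t}^+,\CX_{(i,k),s-1}^-]\,\CI_{(i,k),1}^-.
\]
By the induction hypothesis the inner commutators lie in the subalgebra generated by the $\CK$'s and $\CI$'s, which commutes with $\CI_{(i,k),1}^-$ (Lemma~\ref{Lemma commute K H}); the last two terms cancel and one is reduced to $[\CX_{(i,k),t+1}^+,\CX_{(i,k),s-1}^-]$, hence ultimately to the base case $s=0$. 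This induction is the missing ingredient in your proposal.
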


\begin{proof} 
Assume that $s=0$ and $t \geq 0$. 
For $\mu \in \vL_{n,r}(\Bm)$, put $N=N^{\mu}_{(i,k)}$.  
By Lemma \ref{Lemma Xpmt}, we have 
\begin{align}
\label{CX+t CX-0}
\begin{split} 
&\CX_{(i,k),t}^+ \CX_{(i,k),0}^- (m_{\mu}) 
\\
&=  \d_{(\mu_i^{(k)}\not=0)} q^{- \mu_i^{(k)} - \mu_{i+1}^{(k)} +1} m_{\mu } 
	L_N^t [T; N-1,\mu_{i+1}^{(k)}+1]^+  h_{-(i,k)}^{\mu} [T; N, \mu_i^{(k)}]^- 
\end{split}
\end{align}
and 
\begin{align}
\label{CX-0 CX+t} 
\begin{split}
&\CX_{(i,k),0}^- \CX_{(i,k),t}^+ (m_\mu) 
\\
&= \d_{(\mu_{i+1}^{(k)} \not=0)} q^{- \mu_i^{(k)} - \mu_{i+1}^{(k)}+1} 
		m_{\mu} h_{-(i,k)}^{\mu + \a_{(i,k)}} [T; N+1, \mu_i^{(k)}+1]^-  L_{N+1}^t [T; N, \mu_{i+1}^{(k)}]^+. 
\end{split}
\end{align}

Assume that $i\not=m_k$. 
By \eqref{CX+t CX-0}  and \eqref{CX-0 CX+t} together with Lemma \ref{Lemma m mu L T etc}, we have 
\begin{align*}
& (\CX_{(i,k),t}^+ \CX_{(i,k),0}^-  - \CX_{(i,k),0}^- \CX_{(i,k),t}^+)(m_{\mu}) 
\\
&= q^{-\mu_i^{(k)} - \mu_{i+1}^{(k)}+1} m_{\mu} \Big\{
		\d_{(\mu_i^{(k)} \not=0)} q^{ 2 \mu_i^{(k)}-2} \Phi_t^+ (L_N, L_{N-1}, \dots, L_{N-\mu_i^{(k)}+1}) 
	\\ & \hspace{2em} 
	- \d_{(\mu_{i+1}^{(k)} \not=0)} 
		(1 + \d_{(t\not=0)}( q^{2 \mu_i^{(k)}} -1)) \Phi_t^- (L_{N+1}, L_{N+2}, \dots, L_{N+\mu_{i+1}^{(k)}}) 
	\\ & \hspace{2em} 
	- \d_{(\mu_i^{(k)} \not=0)} \d_{(\mu_{i+1}^{(k)} \not=0)} (q-q^{-1}) \sum_{b=1}^{t-1} q^{ 2 \mu_i^{(k)}-1}
		 \Phi_{t-b}^+ (L_N, L_{N-1}, \dots, L_{N- \mu_i^{(k)}+1}) 
		 \\ & \hspace{3em} \times 
		 \Phi_b^- (L_{N+1}, L_{N+2}, \dots, L_{N+\mu_{i+1}^{(k)}}) \Big\} 
\\
&= q^{\mu_i^{(k)} - \mu_{i+1}^{(k)}} m_{\mu} 
	\Big\{ \d_{(\mu_i^{(k)} \not=0)} q^{-t} q^{t-1} \Phi_t^+ (L_N, L_{N-1}, \dots, L_{N- \mu_i^{(k)}+1}) 
	\\ & \hspace{2em} 
	 - \d_{(\mu_{i+1}^{(k)} \not=0)} 
	 	( q^{- 2 \mu_i^{(k)}} + \d_{(t \not=0)} ( 1 - q^{- 2 \mu_i^{(k)}}))
	 	 q^t q^{ -t +1}\Phi_t^- (L_{N+1}, L_{N+2}, \dots, L_{N+\mu_{i+1}^{(k)}}) 
	 \\ & \hspace{2em} 
	 - \d_{(\mu_i^{(k)} \not=0)} \d_{(\mu_{i+1}^{(k)} \not=0)} (q-q^{-1}) \sum_{b=1}^{t-1} q^{-t + 2 b}  
		 q^{t-b-1}\Phi_{t-b}^+ (L_N, L_{N-1}, \dots, L_{N- \mu_i^{(k)}+1}) 
		 \\ & \hspace{3em} \times 
		 q^{- b +1} \Phi_b^- (L_{N+1}, L_{N+2}, \dots, L_{N+\mu_{i+1}^{(k)}}) \Big\} 
\\
&=  \wt{\CK}_{(i,k)}^+ \CJ_{(i,k),t} (m_{\mu}). 
\end{align*}
Thus, we have 
$[\CX_{(i,k),t}^+, \CX_{(i,k),0}^-]  = \wt{\CK}_{(i,k)}^+ \CJ_{(i,k),t} $ 
if $i \not=m_k$. 
(Note Corollary \ref{Cor CJ0} in the case where $t=0$.) 

In a similar way, by \eqref{CX+t CX-0}  and \eqref{CX-0 CX+t} together with Lemma \ref{Lemma m mu L T etc},
we also have 
$[\CX_{(m_k,k),t}^+, \CX_{(m_k,k),0}^-] 
= - Q_k \wt{\CK}_{(m_k,k)}^+ \CJ_{(m_k,k),s+t} + \wt{\CK}_{(m_k,k)}^+ \CJ_{(m_k,k),s+t+1} $ 
if $i=m_k$. 
Now we proved the proposition in the case where $s=0$ and $t \geq 0$. 

Finally, we prove the proposition by the induction on $s$. 
In the case where $s=0$, 
we have already proved. 
Assume that $s >0$, 
by \eqref{Def CX t}, 
we have 
\begin{align*}
[\CX_{(i,k),t}^+, \CX_{(i,k),s}^-] 
= & \CX_{(i,k),t}^+   (-  \CI_{(i,k),1}^- \CX_{(i,k),s-1}^- + \CX_{(i,k),s-1}^- \CI_{(i,k),1}^-)   
	\\ & -   (-  \CI_{(i,k),1}^- \CX_{(i,k),s-1}^- + \CX_{(i,k),s-1}^- \CI_{(i,k),1}^-)    \CX_{(i,k),t}^+. 
\end{align*}
Applying Proposition \ref{Proposition IX - XI} together with Lemma \ref{Lemma CI 0 CX+-}, 
we have 
\begin{align*}
[\CX_{(i,k),t}^+, \CX_{(i,k),s}^-] 
&=  - \CI_{(i,k),1}^- \CX_{(i,k),t}^+ \CX_{(i,k),s-1}^- + \CX_{(i,k),t+1}^+ \CX_{(i,k),s-1}^- 
		+ \CX_{(i,k),t}^+ \CX_{(i,k),s-1}^- \CI_{(i,k),1}^-  
	\\ 
	& \quad + \CI_{(i,k),1}^- \CX_{(i,k),s-1}^- \CX_{(i,k),t}^+ 
		- \CX_{(i,k),s-1}^- \CX_{(i,k),t}^+ \CI_{(i,k),1}^- - \CX_{(i,k),s-1}^- \CX_{(i,k),t+1}^+ 
\\
&= [\CX_{(i,k),t+1}^+, \CX_{(i,k),s-1}^-] 
	\\ & \quad 
	- \CI_{(i,k),1}^- [ \CX_{(i,k),t}^+, \CX_{(i,k),s-1}^-] 
	+ [ \CX_{(i,k),t}^+, \CX_{(i,k),s-1}^-]  \CI_{(i,k),1}^-. 
\end{align*}
Then, by the assumption of the induction together with Lemma \ref{Lemma commute K H}, 
we have the proposition. 
\end{proof}


\begin{lem}
\label{Lemma wtKJ0}
For $(i,k) \in \vG'(\Bm)$, we have the followings. 
\begin{enumerate}
\item 
If $(q-q^{-1})$ is invertible in $R$, we have 
\begin{align*}
\wt{\CK}_{(i,k)}^+ \CJ_{(i,k),0} 
= \frac{ \wt{\CK}_{(i,k)}^+ - \wt{\CK}_{(i,k)}^-}{q-q^{-1}}. 
\end{align*}

\item 
If $q=1$, we have 
\begin{align*}
\wt{\CK}_{(i,k)}^+ \CJ_{(i,k),0} 
	= \CI_{(i,k),0}^+ - \CI_{(i+1,k),0}^-. 
\end{align*}
\end{enumerate}
\end{lem}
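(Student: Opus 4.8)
The plan is to compute $\wt{\CK}_{(i,k)}^+ \CJ_{(i,k),0}$ directly from the definitions of $\wt{\CK}_{(i,k)}^\pm$, $\CJ_{(i,k),0}$, and of the elements $\CI_{(j,l),0}^\pm$, $\CK_{(j,l)}^\pm$ acting on the basis $\{m_\mu \,|\, \mu \in \vL_{n,r}(\Bm)\}$ of $\bigoplus_\mu m_\mu \He_{n,r}$. Since all the operators involved are diagonalized by this basis, it suffices to evaluate both sides on a fixed $m_\mu$ and compare scalars, splitting according to whether $\mu_i^{(k)}$ and $\mu_{i+1}^{(k)}$ vanish.

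First I would record the relevant scalars. From the definitions, $\CK_{(j,l)}^\pm(m_\mu) = q^{\pm\mu_j^{(l)}} m_\mu$, and using \eqref{Phi t=1} together with $\Phi_0^\pm(x_1,\dots,x_k) = q^{\mp k \pm 1}[k]$ one has, when $\mu_j^{(l)}\neq 0$,
\begin{align*}
\CI_{(j,l),0}^+(m_\mu) = q^{-1} q^{-\mu_j^{(l)}+1}[\mu_j^{(l)}] m_\mu = \frac{q^{-\mu_j^{(l)}}(q^{\mu_j^{(l)}}-q^{-\mu_j^{(l)}})}{q-q^{-1}} m_\mu,
\end{align*}
and similarly $\CI_{(j,l),0}^-(m_\mu) = q^{\mu_j^{(l)}}(q^{\mu_j^{(l)}}-q^{-\mu_j^{(l)}})/(q-q^{-1})\, m_\mu$, while both act as $0$ if $\mu_j^{(l)}=0$; note these formulas are consistent with Lemma \ref{Lemma CK+- 2}, i.e. $(\CK_{(j,l)}^\pm)^2 = 1 \pm (q-q^{-1})\CI_{(j,l),0}^\mp$. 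Then $\wt{\CK}_{(i,k)}^+(m_\mu) = q^{\mu_i^{(k)}-\mu_{i+1}^{(k)}} m_\mu$, $\wt{\CK}_{(i,k)}^-(m_\mu) = q^{-\mu_i^{(k)}+\mu_{i+1}^{(k)}} m_\mu$, and
\begin{align*}
\CJ_{(i,k),0}(m_\mu) = \Big(\CI_{(i,k),0}^+ - \CI_{(i+1,k),0}^- + (q-q^{-1})\CI_{(i,k),0}^+ \CI_{(i+1,k),0}^-\Big)(m_\mu).
\end{align*}

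For part (i), I would substitute the scalars above into $\wt{\CK}_{(i,k)}^+\CJ_{(i,k),0}(m_\mu)$ and check that the result equals $q^{\mu_i^{(k)}-\mu_{i+1}^{(k)}} - q^{-\mu_i^{(k)}+\mu_{i+1}^{(k)}}$ divided by $q-q^{-1}$; the cross term $(q-q^{-1})\CI_{(i,k),0}^+\CI_{(i+1,k),0}^-$ is exactly what is needed to turn the sum $\CI_{(i,k),0}^+ - \CI_{(i+1,k),0}^-$ into the telescoping difference of the two $q$-powers, and one verifies the boundary cases $\mu_i^{(k)}=0$ and/or $\mu_{i+1}^{(k)}=0$ separately (they are immediate since the corresponding $\CI$ vanishes). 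Alternatively, this is just the $t=0$ identity $\wt{\CK}_{(i,k)}^+\CJ_{(i,k),0} = (\wt{\CK}_{(i,k)}^+ - \wt{\CK}_{(i,k)}^-)/(q-q^{-1})$ already noted in the excerpt after the definition of $\CU_{q,\BQ}(\Bm)$, transported to $\Sc_{n,r}(\Bm)$ via the formula $\wt{\CK}_{(i,k)}^+\CJ_{(i,k),0}= \CI_{(i,k),0}^+ - (\CK_{(i,k)}^-)^2\CI_{(i+1,k),0}^-$ of Corollary \ref{Cor CJ0} and Lemma \ref{Lemma CK+- 2}. For part (ii), set $q=1$: then $(q-q^{-1})=0$, so the cross term drops and $\CJ_{(i,k),0}$ collapses to $\CI_{(i,k),0}^+ - \CI_{(i+1,k),0}^-$, while $\wt{\CK}_{(i,k)}^+$ specializes to $1$ (all $q^{\pm\mu_j^{(l)}}\mapsto 1$); hence $\wt{\CK}_{(i,k)}^+\CJ_{(i,k),0} = \CI_{(i,k),0}^+ - \CI_{(i+1,k),0}^-$, again starting from Corollary \ref{Cor CJ0} since $(\CK_{(i,k)}^-)^2\mapsto 1$. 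There is no real obstacle here; the only point requiring care is bookkeeping the boundary cases where $\mu_i^{(k)}$ or $\mu_{i+1}^{(k)}$ is zero, and checking that the specialization $q=1$ is legitimate on the $\AA$-form (the elements $[d]$, $\Phi_t^\pm$ all lie in $\ZZ[q,q^{-1}]$-coefficients, so specialization is harmless).
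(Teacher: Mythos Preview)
Your proposal is correct and follows essentially the same approach as the paper: evaluate both sides on $m_\mu$ and compare scalars, using Corollary \ref{Cor CJ0} to rewrite $\wt{\CK}_{(i,k)}^+\CJ_{(i,k),0}$ as $\wt{\CK}_{(i,k)}^+(\CI_{(i,k),0}^+ - (\CK_{(i,k)}^-)^2\CI_{(i+1,k),0}^-)$. The paper streamlines the computation by showing in one line that this operator acts on $m_\mu$ as $[\mu_i^{(k)}-\mu_{i+1}^{(k)}]$ (which handles the boundary cases $\mu_i^{(k)}=0$ or $\mu_{i+1}^{(k)}=0$ uniformly since $[0]=0$), and then specializes this single identity to get both (i) and (ii); your version splits into cases a bit more but arrives at the same thing.
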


\begin{proof}
For $\mu \in \vL_{n,r}(\Bm)$, by the definitions together with Corollary \ref{Cor CJ0}, 
we have 
\begin{align*}
\wt{\CK}_{(i,k)}^+ \CJ_{(i,k),0} (m_{\mu}) 
&= \wt{\CK}_{(i,k)}^+ \big( \CI_{(i,k),0}^+ - (\CK_{(i,k)}^-)^2 \CI_{(i+1,k),0}^- \big) (m_{\mu}) 
\\
&= q^{\mu_i^{(k)} - \mu_{i+1}^{(k)}} 
	( q^{- \mu_i^{(k)}} [ \mu_i^{(k)}] - q^{ - 2 \mu_i^{(k)}} q^{\mu_{i+1}^{(k)}} [\mu_{i+1}^{(k)}])  m_{\mu} 
\\
&= [ \mu_i^{(k)} - \mu_{i+1}^{(k)}] m_{\mu}.
\end{align*}
If $(q-q^{-1})$ is invertible in $R$, we have 
\begin{align*}
[\mu_{i}^{(k)} - \mu_{i+1}^{(k)}] m_{\mu} 
&= \frac{q^{\mu_{i}^{(k)} - \mu_{i+1}^{(k)}} - q^{ - \mu_{i}^{(k)} + \mu_{i+1}^{(k)}}}{q-q^{-1}} m_{\mu} 
\\
&= \frac{\wt{\CK}_{(i,k)}^+ - \wt{\CK}_{(i,k)}^-}{q-q^{-1}} (m_{\mu}).
\end{align*}
Thus, we have (\roi). 

If $q=1$, we have 
\begin{align*}
[\mu_{i}^{(k)} - \mu_{i+1}^{(k)}] m_{\mu}  
&= (\mu_i^{(k)} - \mu_{i+1}^{(k)})  m_{\mu} 
\\
&= (\CI_{(i,k),0}^+ - \CI_{(i+1),0}^-)(m_{\mu}). 
\end{align*}
Thus, we have (\roii). 
\end{proof}


In the case where  $q=1$, we have the following lemma. 
\begin{lem}
\label{Lemma K I at q=1}
Assume that $q=1$. Then, for $(j,l) \in \vG(\Bm)$ and $t \geq 0$,   we have the followings. 
\begin{enumerate}
\item 
$\CK_{(j,l)}^{\pm} =1$. 

\item 
$\CI_{(j,l),t}^+ = \CI_{(j,l),t}^-$. 
\end{enumerate}
\end{lem}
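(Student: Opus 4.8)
The plan is to specialize all the explicit formulas for $\CK_{(j,l)}^{\pm}$ and $\CI_{(j,l),t}^{\pm}$ to $q=1$ and read off the equalities directly from the action on the basis $\{m_\mu \mid \mu\in\vL_{n,r}(\Bm)\}$ of $\bigoplus_\mu m_\mu\He_{n,r}$. Since $\Sc_{n,r}(\Bm)=\End_{\He_{n,r}}(\bigoplus_\mu m_\mu\He_{n,r})$, two endomorphisms agree as soon as they agree on every $m_\mu$, so it suffices to check the identities there.

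For (i), recall $\CK_{(j,l)}^{\pm}(m_\mu)=q^{\pm\mu_j^{(l)}}m_\mu$. Setting $q=1$ gives $\CK_{(j,l)}^{\pm}(m_\mu)=m_\mu$ for every $\mu$, so $\CK_{(j,l)}^{\pm}=\id=1$ in $\Sc_{n,r}(\Bm)$. (Equivalently one could invoke Lemma \ref{Lemma CK+- 2}: at $q=1$ the relation $(\CK_{(j,l)}^\pm)^2 = 1\pm(q-q^{-1})\CI_{(j,l),0}^{\mp}$ becomes $(\CK_{(j,l)}^\pm)^2=1$, but this alone does not fix the sign; the direct computation on $m_\mu$ pins it down.)

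For (ii), I would compare $\CI_{(j,l),t}^+(m_\mu)$ and $\CI_{(j,l),t}^-(m_\mu)$. If $\mu_j^{(l)}=0$ both are $0$ by definition, so assume $\mu_j^{(l)}\ne0$ and write $p=\mu_j^{(l)}$, $N=N^\mu_{(j,l)}$. By definition
\begin{align*}
\CI_{(j,l),t}^+(m_\mu)&=q^{t-1}m_\mu\,\Phi_t^+(L_N,L_{N-1},\dots,L_{N-p+1}),\\
\CI_{(j,l),t}^-(m_\mu)&=q^{-t+1}m_\mu\,\Phi_t^-(L_N,L_{N-1},\dots,L_{N-p+1}).
\end{align*}
So the claim reduces to the polynomial identity $\Phi_t^+(x_1,\dots,x_p)\big|_{q=1}=\Phi_t^-(x_1,\dots,x_p)\big|_{q=1}$ together with $q^{t-1}|_{q=1}=q^{-t+1}|_{q=1}=1$. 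From the defining formula \eqref{Def Phi}, $\Phi_t^\pm(x_1,\dots,x_k)=\sum_{\la\vdash t,\ \ell(\la)\le k}(1-q^{\mp2})^{\ell(\la)-1}\Fm_\la(x_1,\dots,x_k)$; at $q=1$ the coefficient $(1-q^{\mp2})^{\ell(\la)-1}$ equals $0^{\ell(\la)-1}$, which is $1$ when $\ell(\la)=1$ and $0$ otherwise, independently of the sign. Hence $\Phi_t^+|_{q=1}=\Phi_t^-|_{q=1}=\sum_{s=1}^{k}x_s^t=\Fm_{(t)}(x_1,\dots,x_k)$ for $t\ge1$, and for $t=0$ one checks $\Phi_0^\pm=q^{\mp k\pm1}[k]\to k$ likewise agrees. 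Therefore $\CI_{(j,l),t}^+(m_\mu)=\CI_{(j,l),t}^-(m_\mu)$ for all $\mu$, giving (ii).

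There is no real obstacle here — everything is a specialization of already-established explicit formulas; the only point requiring a moment's care is observing that the sign dependence in $(1-q^{\mp2})^{\ell(\la)-1}$ disappears at $q=1$ because the base of the power vanishes for both signs, and that $\ell(\la)=1$ is exactly the surviving stratum.
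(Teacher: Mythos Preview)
Your proof is correct and follows essentially the same approach as the paper: both reduce (ii) to the observation that at $q=1$ the polynomials $\Phi_t^{\pm}(x_1,\dots,x_k)$ collapse to the power sum $x_1^t+\dots+x_k^t$ (since only the $\ell(\la)=1$ term survives in \eqref{Def Phi}), and (i) is immediate from the definition of $\CK_{(j,l)}^{\pm}$ on $m_\mu$. Your write-up is more explicit about the $\mu_j^{(l)}=0$ and $t=0$ corner cases, but the argument is the same.
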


\begin{proof}
If $q=1$, we see that 
\begin{align}
\Phi_t^{\pm} (x_1, \dots, x_k)  =	 x_1^t + x_2^t + \dots + x_k^t, 
\end{align}
in particular we have $\Phi_t^+(x_1,\dots, x_k)= \Phi_t^-(x_1,\dots,x_k)$. 
Thus, we have the lemma from the definitions. 
\end{proof}



\section{The cyclotomic $q$-Schur algebra as a quotient of $\CU_{q,\BQ}(\Bm)$}

Let $\wt{\BQ}=(Q_0,Q_1,\dots,Q_{r-1})$ be an $r$-tuple of indeterminate elements over $\ZZ$, 
and $\QQ(\wt{\BQ})=\QQ(Q_0,Q_1, \dots,Q_{r-1})$ be the quotient field of 
$\ZZ[\wt{\BQ}]= \ZZ[Q_0,Q_1,\dots,Q_{r-1}]$. 
Put $\wt{\AA}=\ZZ[q,q^{-1}, Q_0,Q_1,\dots,Q_{r-1}]$, 
and let $\wt{\KK} =\QQ(q,Q_0,Q_1,\dots,Q_{r-1})$ be the quotient field of $\wt{\AA}$, 
where $q$ is indeterminate over $\ZZ$. 
Put 
\begin{align*}
&\Fg_{\wt{\BQ}}(\Bm) = \QQ(\wt{\BQ}) \otimes_{\QQ(\BQ)} \Fg_{\BQ}(\Bm), 
\\
&\CU_{q,\wt{\BQ}}(\Bm) = \wt{\KK} \otimes_{\KK} \CU_{q,\BQ}(\Bm) 
\text{ and }
\CU_{\wt{\AA}, q , \wt{\BQ}}(\Bm) = \wt{\AA} \otimes_{\AA} \CU_{\AA, q, \BQ}(\Bm).
\end{align*} 
We define a full subcategory $\ZC_{\wt{\BQ}}(\Bm)$ and $\ZC_{\wt{\BQ}}^{\geq 0} (\Bm)$ 
(resp. $\ZC_{q,\wt{\BQ}}(\Bm)$ and $\ZC_{q,\wt{\BQ}}^{\geq 0}(\Bm)$) 
of $U(\Fg_{\wt{\BQ}}(\Bm)) \cmod$ (resp. $\CU_{q, \wt{\BQ}}(\Bm) \cmod$) 
in a similar manner as $\ZC_{\BQ}(\Bm)$ and $\ZC_{\BQ}^{\geq 0}(\Bm)$ 
(resp. $\ZC_{q, \BQ}(\Bm)$ and $\ZC_{q, \BQ}^{\geq 0}(\Bm)$). 

Let $\He_{n,r}^{\wt{\KK}}$ (resp. $\He_{n,r}^{\wt{\AA}}$) 
be the Ariki-Koike algebra over $\wt{\KK}$ (resp. over $\wt{\AA}$) 
with parameters $q,Q_0,Q_1, \dots, Q_{r-1}$, 
and  
$\Sc_{n,r}^{\wt{\KK}}(\Bm)$ (resp. $\Sc_{n,r}^{\wt{\AA}} (\Bm)$) 
be the cyclotomic $q$-Schur algebra 
associated with 
$\He_{n,r}^{\wt{\KK}}$ (resp. $\He_{n,r}^{\wt{\AA}}$). 
Then, we have the following theorem.
\begin{thm}
\label{Thm UqQ to Sc}
We have a homomorphism of algebras 
\begin{align}
\label{surjection U to S}
\Psi : \CU_{q, \wt{\BQ}}(\Bm) \ra \Sc_{n,r}^{\wt{\KK}}(\Bm)
\end{align}
by taking 
$\Psi (\CX_{(i,k),t}^{\pm})=\CX_{(i,k),t}^{\pm}$,  
$\Psi(\CI_{(j,l),t}^{\pm})=\CI_{(j,l),t}^{\pm}$ 
and 
$\Psi(\CK_{(j,l)}^{\pm}) = \CK_{(j,l)}^{\pm}$. 

The restriction of $\Psi$ to $\CU_{\wt{\AA}, q, \wt{\BQ}}(\Bm)$ 
gives a homomorphism of algebras 
\begin{align*}
\Psi_{\wt{\AA}} : \CU_{\wt{\AA}, q, \wt{\BQ}}(\Bm) \ra \Sc_{n,r}^{\wt\AA}(\Bm). 
\end{align*}

Moreover, if $m_k \geq n$ for all $k=1,2,\dots, r-1$, 
the homomorphism $\Psi$ (resp. $\Psi_{\wt\AA}$) is surjective. 
\end{thm}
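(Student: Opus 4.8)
The plan is to establish Theorem \ref{Thm UqQ to Sc} in three parts, mirroring its statement: first the existence of the algebra homomorphism $\Psi$, then the integral refinement $\Psi_{\wt{\AA}}$, and finally the surjectivity under the hypothesis $m_k \geq n$ for $k = 1, \dots, r-1$. For the existence of $\Psi$, I would define a map on generators of $\CU_{q,\wt{\BQ}}(\Bm)$ by the prescribed assignments $\CX_{(i,k),t}^{\pm} \mapsto \CX_{(i,k),t}^{\pm}$, $\CI_{(j,l),t}^{\pm} \mapsto \CI_{(j,l),t}^{\pm}$, $\CK_{(j,l)}^{\pm} \mapsto \CK_{(j,l)}^{\pm}$, where the targets are the specific elements of $\Sc_{n,r}^{\wt{\KK}}(\Bm)$ constructed in Section~8 via their action on the $m_\mu$. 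Well-definedness then amounts to checking that these target elements satisfy each of the defining relations (R1)--(R8). The bulk of this verification has in fact already been assembled in Section~8: relations (R1) and (R2) are Lemma~\ref{Lemma commute K H}, the second equality of (R1) is Lemma~\ref{Lemma CK+- 2}, relation (R3) is Lemma~\ref{Lemma K+ X K-}, relation (R4) is Lemma~\ref{Lemma CI 0 CX+-}, relation (R5) is Proposition~\ref{Proposition IX - XI}, relation (R6) is the combination of Proposition~\ref{Proposition CX+ CX- - CX- CX+ (i,k) (j,l)} (the off-diagonal vanishing) and Proposition~\ref{Proposition CX+ CX- - CX- CX+} (the diagonal case), relation (R7) is Proposition~\ref{Proposition CX CX}, and relation (R8) is Proposition~\ref{Prop q-Serre relations}. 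So the first part is essentially a bookkeeping assembly: cite each relation against the matching statement and note that the image elements are exactly those appearing there.

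For the integral statement, I would observe that $\CU_{\wt{\AA},q,\wt{\BQ}}(\Bm)$ is generated over $\wt{\AA}$ by the divided powers $\CX_{(i,k),t}^{\pm(d)}$, the $\CI_{(j,l),t}^{\pm}$, the $\CK_{(j,l)}^{\pm}$, and the binomial elements $\left[\begin{smallmatrix}\CK_{(j,l)};0\\d\end{smallmatrix}\right]$, so it suffices to check that $\Psi$ carries each of these into $\Sc_{n,r}^{\wt{\AA}}(\Bm)$. For the Cartan-type generators this is immediate from the explicit formulas: $\CK_{(j,l)}^{\pm}$ acts by $q^{\pm\mu_j^{(l)}}$, the $\CI^{\pm}$ act via the polynomials $\Phi_t^{\pm}$ which have coefficients in $\ZZ[q,q^{-1}]$ by \eqref{Def Phi}, and the binomials in $\CK$ go to elements of $\Sc_{n,r}^{\wt{\AA}}(\Bm)$ by the standard $q$-binomial integrality recalled in the Notation section. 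The point requiring a little care is the image of $\CX_{(i,k),t}^{\pm(d)} = (\CX_{(i,k),t}^{\pm})^d/[d]!$: here I would use Lemma~\ref{Lemma Xpmt} to write $\Psi((\CX_{(i,k),t}^{+})^d)(m_\mu)$ explicitly as $m_{\mu+d\a_{(i,k)}}$ times a product of $L$-powers and terms of the form $\left[\begin{smallmatrix}T;N,\mu_{i+1}^{(k)}\\d\end{smallmatrix}\right]^+$, and then invoke Corollary~\ref{Cor [T;N,mu,,d]} to factor out $(T;N,d)^+!$; combined with the fact that $(T;N,d)^+!$ absorbs the $[d]!$ (the standard identity that this $T$-product has the $q$-factorial as a ``norm'', as in the $r=1$ theory), one sees the divided power lands in the $\wt{\AA}$-form. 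This is the one genuinely computational lemma, and I expect it to be the technical heart of the integral claim.

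For surjectivity when $m_k \geq n$ for all $k < r$, I would argue that in this regime $\Sc_{n,r}^{\wt{\KK}}(\Bm)$ coincides with the cyclotomic $q$-Schur algebra treated in \cite{W}, and by \cite[Theorem~?]{W} it is generated as an algebra by exactly the elements $\CX_{(i,k),0}^{\pm}$ (there denoted $\vf_{(i,k)}^{\pm}$), together with the $\CK_{(j,l)}^{\pm}$ and the elements $\CI_{(j,l),t}^{\pm}$ (or equivalently the images of the $L_i$ via the symmetric polynomials $\Phi_t^{\pm}$). Since all of these lie in the image of $\Psi$ by construction, $\Psi$ is surjective; the same generation statement holds over $\wt{\AA}$ by the integral version of the presentation in \cite{W}, giving surjectivity of $\Psi_{\wt{\AA}}$. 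The condition $m_k \geq n$ enters precisely because the generation-by-$\vf^{\pm}$ result of \cite{W} is proved under that hypothesis (cf.\ Remark~\ref{Remark subjectivity of Psi}); without it one would need a separate argument, which is why the theorem is stated with the restriction.

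I expect the main obstacle to be the divided-power integrality computation described in the second paragraph: extracting the precise factorization of $\Psi((\CX_{(i,k),t}^\pm)^d)$ through $(T;N,d)^{\pm}!$ and verifying that dividing by $[d]!$ keeps everything in $\Sc_{n,r}^{\wt{\AA}}(\Bm)$ requires combining Lemma~\ref{Lemma Xpmt}, Corollary~\ref{Cor [T;N,mu,,d]}, and the interaction formulas of Lemma~\ref{Lemma [T;N,c] com rel} carefully, and is the place where the analogy with the $r=1$ quantum $\Fgl_m$ case must actually be made rigorous rather than invoked. The well-definedness of $\Psi$ over $\wt{\KK}$, by contrast, should be a routine matter of citing the Section~8 propositions in sequence.
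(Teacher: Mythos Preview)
Your proposal is correct and follows essentially the same approach as the paper's own proof: well-definedness of $\Psi$ by citing the Section~7 results relation-by-relation, integrality of $\Psi_{\wt{\AA}}$ via the factorization of $(\CX_{(i,k),t}^{\pm})^d(m_\mu)$ through $(T;N,d)^{\pm}!$ using Corollary~\ref{Cor [T;N,mu ,, d]}, and surjectivity by appealing to the generation result of \cite{W} (specifically \cite[Proposition 6.4]{W}). The only point to make slightly more explicit in the divided-power computation is the identity $m_{\mu+d\a_{(i,k)}}(T;N_{(i,k)}^\mu,d)^+! = q^{d(d-1)/2}[d]!\, m_{\mu+d\a_{(i,k)}}$, which follows from \eqref{ m mu T} and is exactly how $[d]!$ is absorbed; the commutation of $(T;N,d)^+!$ with the symmetric $L$-product comes from Lemma~\ref{Lemma commute L T}~(iii) rather than Lemma~\ref{Lemma [T;N,c] com rel}.
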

\begin{proof}
The well-definedness of $\Psi$ follows from 
Lemma \ref{Lemma commute K H}, 
Lemma \ref{Lemma CK+- 2}, 
Lemma \ref{Lemma K+ X K-}, 
Proposition \ref{Proposition CX CX}, 
Proposition \ref{Prop q-Serre relations}, 
Lemma \ref{Lemma CI 0 CX+-}, 
Proposition \ref{Proposition IX - XI}, 
Proposition \ref{Proposition CX+ CX- - CX- CX+ (i,k) (j,l)}, 
and 
Proposition \ref{Proposition CX+ CX- - CX- CX+}. 

Note that $\He_{n,r}^{\AA}$ (resp. $\Sc_{n,r}^{\wt{\AA}}(\Bm)$) 
is an $\AA$-subalgebra of $\He_{n,r}^{\wt{\KK}}$ (resp. $\Sc_{n,r}^{\wt{\KK}}(\Bm)$) 
by definitions. 
In particular, 
in order to see that $\vf \in \Sc_{n,r}^{\wt{\KK}}(\Bm)$ belong to $\Sc_{n,r}^{\wt{\AA}}(\Bm)$, 
it is enough to show that 
$\vf (m_{\mu}) \in \He_{n,r}^{\AA}$ for any $\mu \in \vL_{n,r}(\Bm)$.

For $\mu \in \vL_{n,r}(\Bm)$ and $d \in \ZZ_{\geq 0}$, 
we see  that, 
\begin{align} 
\left[ \begin{matrix} \CK_{(j,l)} ; 0 \\ d \end{matrix} \right] (m_{\mu}) 
= \begin{cases} 
	\left[ \begin{matrix} \mu_j^{(l)} \\ d \end{matrix} \right] m_{\mu} 
	& \text{ if } d \leq \mu_j^{(l)},  
	\\
	0 & \text{ if } d  > \mu_j^{(l)} 
\end{cases}
\end{align} 
in  $\Sc_{n,r}^{\wt{\KK}}(\Bm)$. 
This implies that 
$\Psi (\left[ \begin{smallmatrix} \CK_{(j,l)} ;0 \\ d \end{smallmatrix} \right] ) \in \Sc_{n,r}^{\wt{\AA}}(\Bm)$. 

For $(i,k) \in \vG'(\Bm)$ and $t,d \in \ZZ_{\geq 0}$, 
we see that 
\begin{align*} 
&(\CX_{(i,k),t}^{+})^d (m_{\mu}) 
\\
&= q^{- d \mu_{i+1}^{(k)} + d (d+1)/2} m_{\mu+ d \a_{(i,k)}} 
	(L_{N^{\mu}_{(i,k)}+1} L_{N^{\mu}_{(i,k)}+2} \dots L_{N^{\mu}_{(i,k)} +d})^t  
	\left[ \begin{smallmatrix} T; N^{\mu}_{(i,k)}, \mu_{i+1}^{(k)} \\ d \end{smallmatrix} \right] 
\\
&= q^{- d \mu_{i+1}^{(k)} + d (d+1)/2} m_{\mu+ d \a_{(i,k)}} 
	(L_{N^{\mu}_{(i,k)}+1} L_{N^{\mu}_{(i,k)}+2} \dots L_{N^{\mu}_{(i,k)} +d})^t  
	\\ & \quad \times 
	(T;N_{(i,k)}^{\mu}, d)^{+}! \mathfrak{H}^+ (N_{(i,k)}^{\mu}, \mu_{i+1}^{(k)},d)
\end{align*} 
by Lemma \ref{Lemma Xpmt} together with Lemma \ref{Lemma Li commute [T; N mu]} 
and Corollary \ref{Cor [T;N,mu ,, d]}. 
We also see that 
$(T; N_{(i,k)}^{\mu},d)^+!$
commute with 
$(L_{N^{\mu}_{(i,k)}+1} L_{N^{\mu}_{(i,k)}+2} \dots L_{N^{\mu}_{(i,k)} +d})^t$ 
by Lemma \ref{Lemma commute L T} (\roiii),  
and  
see that 
$m_{\mu + d \a_{(i,k)}} (T; N_{(i,k)}^\mu,d)^+! = q^{d(d-1)/2}[d] !  m_{\mu + d \a_{(i,k)}}$ 
by \eqref{ m mu T}. 
Thus we have 
\begin{align*} 
&(\CX_{(i,k),t}^+)^d (m_{\mu}) 
\\
&= [d]! q^{- d \mu_{i+1}^{(k)} + d^2} m_{\mu +  d \a_{(i,k)}} 
	(L_{N^{\mu}_{(i,k)}+1} L_{N^{\mu}_{(i,k)}+2} \dots L_{N^{\mu}_{(i,k)} +d})^t  
	\mathfrak{H}^+  (N_{(i,k)}^{\mu}, \mu_{i+1}^{(k)},d) 
\end{align*}
in $\Sc_{n,r}^{\wt{\KK}}(\Bm)$.  
This implies that $\Psi (\CX_{(i,k)t}^{+(d)}) \in \Sc_{n,r}^{\wt{\AA}}(\Bm)$ 
since $\mathfrak{H}^+(N^{\mu}_{(i,k)}, \mu_{i+1}^{(k)},d) \in \He_{n,r}^{\AA}$ 
by the argument in the proof of Corollary \ref{Cor [T;N,mu ,, d]}. 
Similarly, we see that 
$\Psi(\CX_{(i,k),t}^{-(d)}) \in \Sc_{n,r}^{\wt{\AA}}(\Bm)$. 
Thus,  
the restriction of $\Psi$ to $\CU_{\wt{\AA},q,\wt{\BQ}}(\Bm)$ 
gives a homomorphism $\Psi_{\wt{\AA}}$. 

The last assertion follows from \cite[Proposition 6.4]{W}. 
\end{proof} 

\remark 
\label{Remark subjectivity of Psi}
In order to prove the surjectivity of $\Psi$ (resp. $\Psi_{\wt{\AA}}$), 
we use the result of \cite[Proposition 6.4]{W}.  
In fact, we considered only the case where $m_k=n$ for all $k=1,2,\dots,r$ in \cite{W}. 
However, we can apply the result to the case where 
$m_k \geq n$ for all $k=1,2, \dots, r-1$ without any change 
since the surjectivity in \cite[Proposition 6.4]{W} 
follows from the result in \cite{DR}. 
The reason why we assume the condition $m_k \geq n$ for all $k=1,2,\dots,r-1$ 
to state the surjectivity of $\Psi$ 
is just the using results of \cite{DR}. 
We expect that $\Psi$ is also surjective without this condition.


\begin{thm}
\label{Thm Sc Rep}
Assume that $m_k \geq n$ for all $k=1,2,\dots, r-1$. 
Then we have the followings. 
\begin{enumerate}
\item 
$\Sc_{n,r}^{\wt{\KK}}(\Bm) \cmod$ is a full subcategory of $\ZC_{q,\wt{\BQ}}^{\geq 0}(\Bm)$ 
through the surjection $\Psi$ in \eqref{surjection U to S}.

\item 
The Weyl module $\D(\la) \in \Sc_{n,r}^{\wt{\KK}}(\Bm)\cmod$ ($\la \in \vL_{n,r}^+ (\Bm)$) 
is the simple highest weight $\CU_{q,\wt{\BQ}}(\Bm)$-module of highest weight $(\la, \Bvf)$ 
through the surjection $\Psi$, 
where 
the multiset $\Bvf=(\vf_{(j,l),t}^{\pm} \in \wt{\KK} \,|\, (j,l) \in \vG(\Bm), t \geq 1)$ 
is given by 
\begin{align*}
\vf_{(j,l),t}^+ = Q_{l-1}^t  q^{(2 t -1) \la_j^{(l)}-t (2 j-1)} [\la_j^{(l)}] 
\text{ and }
\vf_{(j,l),t}^- = Q_{l-1}^t q^{\la_j^{(l)} - t (2 j -1)} [\la_j^{(l)}].
\end{align*}
\end{enumerate}
\end{thm}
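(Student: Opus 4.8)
The plan is to analyze the action of the generators of $\CU_{q,\wt\BQ}(\Bm)$ on the Weyl module $\D(\la)$ via the homomorphism $\Psi$ of Theorem \ref{Thm UqQ to Sc}, and to identify a highest weight vector together with its weight.

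First I would recall from \S\ref{Review CSA} that, under the assumption $m_k \geq n$ for all $k=1,\dots,r-1$, we have $\wt\vL_{n,r}^+(\Bm) = \vL_{n,r}^+(\Bm)$, so $\Sc_{n,r}^{\wt\KK}(\Bm)$ is a quasi-hereditary algebra whose standard modules are exactly the Weyl modules $\D(\la)$, $\la \in \vL_{n,r}^+(\Bm)$, and each $\D(\la)$ has a semistandard-tableau basis $\{\vf_T\}$. For $\la \in \vL^+_{n,r}(\Bm)$ itself there is a canonical "semistandard tableau" $T^\la$ of shape $\la$ and weight $\la$ (each box of row $(i,k)$ filled with $(i,k)$), and I would take $v_0 = \vf_{T^\la}$ as the candidate highest weight vector. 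The key computations are: (a) $\CK^+_{(j,l)} \cdot v_0 = q^{\la^{(l)}_j} v_0$, which is immediate from the definition of $\CK^\pm_{(j,l)}$ on $m_\mu$ since $v_0$ lies in the $\mu = \la$ weight space; (b) $\CX^+_{(i,k),t}\cdot v_0 = 0$ for all $(i,k)\in\vG'(\Bm)$ and $t\geq 0$, because raising any entry of $T^\la$ violates semistandardness (column-strictness), so $\Psi(\CX^+_{(i,k),t})$ kills the top basis element — this needs to be read off from Lemma \ref{Lemma Xpmt} applied at $\mu=\la$, noting $m_{\la+\a_{(i,k)}}$ contributes nothing in $\D(\la)$; and (c) the eigenvalues $\CI^{\pm}_{(j,l),t}\cdot v_0 = \vf^{\pm}_{(j,l),t} v_0$. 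For (c) I would use the explicit formula $\CI^+_{(j,l),t}(m_\mu) = q^{t-1} m_\mu \Phi_t^+(L_{N^\mu_{(j,l)}}, \dots, L_{N^\mu_{(j,l)}-\mu_j^{(l)}+1})$ together with the fact that on $\vf_{T^\la}$ the Jucys–Murphy elements $L_p$ act (up to lower terms that vanish in the relevant quotient) by the residues $\res(x)$ of the boxes: for the block of boxes assigned to $(j,l)$, the boxes are precisely the $\la^{(l)}_j$ boxes of row $j$ of the $l$-th component, with residues $q^{2(c-j)}Q_{l-1}$, $c=1,\dots,\la^{(l)}_j$. Plugging these into $\Phi_t^{\pm}$ and using that $\Phi_t^{\pm}$ specializes on a geometric-type progression $x_c = q^{2(c-j)}Q_{l-1}$ to a power-sum–like expression, the sum telescopes to $q^{\mp(\la^{(l)}_j-1)\mp\cdots}$ times $Q_{l-1}^t q^{-t(2j-1)}[\la^{(l)}_j]$; matching the overall $q^{\pm(t-1)}$ prefactors then yields exactly $\vf^+_{(j,l),t} = Q_{l-1}^t q^{(2t-1)\la^{(l)}_j - t(2j-1)}[\la^{(l)}_j]$ and $\vf^-_{(j,l),t} = Q_{l-1}^t q^{\la^{(l)}_j - t(2j-1)}[\la^{(l)}_j]$.

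Having shown $v_0$ is a maximal vector of weight $(\la,\Bvf)$, I would argue that $\CU_{q,\wt\BQ}(\Bm)\cdot v_0 = \D(\la)$: since $\D(\la)$ is a standard module over the quasi-hereditary algebra $\Sc_{n,r}^{\wt\KK}(\Bm)$ and $v_0 = \vf_{T^\la}$ is (up to scalar) the unique basis vector of the top weight $\la$, it generates $\D(\la)$ as an $\Sc_{n,r}^{\wt\KK}(\Bm)$-module, hence as a $\CU_{q,\wt\BQ}(\Bm)$-module through the surjection $\Psi$ (which is surjective by Theorem \ref{Thm UqQ to Sc} under the present hypothesis on $\Bm$). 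Therefore $\D(\la)$ is a highest weight $\CU_{q,\wt\BQ}(\Bm)$-module of highest weight $(\la,\Bvf)$, so it is a quotient of the Verma module $M(\la,\Bvf)$ and in particular $L(\la,\Bvf)$ is a quotient of $\D(\la)$. Finally, since $\Sc_{n,r}^{\wt\KK}(\Bm)$ is semisimple over the field $\wt\KK$ (the parameters $q,Q_0,\dots,Q_{r-1}$ being generic) and $\D(\la)$ is then a simple $\Sc_{n,r}^{\wt\KK}(\Bm)$-module, it is simple as a $\CU_{q,\wt\BQ}(\Bm)$-module through $\Psi$; combined with the previous sentence this forces $\D(\la) \cong L(\la,\Bvf)$. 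Part (i) then follows: any $M \in \Sc_{n,r}^{\wt\KK}(\Bm)\cmod$, viewed through $\Psi$, decomposes into copies of the $\D(\la)$ by semisimplicity, each of which lies in $\ZC^{\geq 0}_{q,\wt\BQ}(\Bm)$ by the weight-space decomposition \eqref{weight sp decor} and the fact that the weights $\la$ appearing are in $P_{\geq 0}$; fullness of the subcategory follows because $\Psi$ is surjective, so $\CU_{q,\wt\BQ}(\Bm)$-module homomorphisms between such modules coincide with $\Sc_{n,r}^{\wt\KK}(\Bm)$-module homomorphisms.

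The main obstacle I anticipate is step (c): carrying out the specialization of the symmetric polynomials $\Phi_t^{\pm}(x_1,\dots,x_k)$ at the residue sequence $x_c = q^{2(c-j)}Q_{l-1}$ and checking that the messy $q$-power bookkeeping collapses to the clean closed forms for $\vf^{\pm}_{(j,l),t}$. This is where the recursive relations \eqref{recursive relation of Phi} and \eqref{recursive relation of Phi 2} for $\Phi_t^\pm$, together with Lemma \ref{Lemma m mu L T} (which already rewrites $m_\mu L_N^t[T;N,p]^{\pm}$ in terms of $\Phi_t^{\pm}$ of consecutive $L$'s), should be used: rather than literally evaluating $\Phi_t^\pm$ on residues, I would compute the action of $\CI^{\pm}_{(j,l),t}$ on $\vf_{T^\la}$ directly inside $\D(\la)$ using the description of the $\Sc_{n,r}(\Bm)$-action on the cellular basis in \cite{DJM98}, where the diagonal action of $\CI^{\pm}_{(j,l),t}$ on $\vf_{T^\la}$ is governed exactly by the content of the relevant boxes; this converts the problem into a finite geometric-sum identity in $q$ and $Q_{l-1}$ that can be verified by induction on $t$ using \eqref{recursive relation of Phi}. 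A secondary subtlety is justifying that lower-order Jucys–Murphy contributions (the off-diagonal terms in Lemma \ref{Lemma commute L T}(iv),(v)) genuinely drop out when acting on the top basis vector $\vf_{T^\la}$ of $\D(\la)$ — this follows from the triangularity of the cellular basis with respect to the dominance order, since any such correction term lands in the span of $\vf_T$ with $T$ of strictly smaller "shape data", which vanish in the top weight space of $\D(\la)$.
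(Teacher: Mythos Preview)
Your proposal is correct and follows essentially the same approach as the paper. Both arguments identify $\vf_{T^\la}$ as the highest weight vector, compute the $\CI_{(j,l),t}^{\pm}$-eigenvalues as $q^{\pm(t-1)}\Phi_t^{\pm}$ evaluated at the residue sequence of the row-$(j,l)$ boxes (the paper invokes the triangular formula from \cite[Theorem 3.10]{JM} for this, which is exactly what you describe as ``the diagonal action \dots governed by the content of the relevant boxes'' plus triangularity), and then verify the closed formulas for $\vf_{(j,l),t}^{\pm}$ by induction on $t$ using the recursion \eqref{recursive relation of Phi}; semisimplicity of $\Sc_{n,r}^{\wt\KK}(\Bm)$ is used in both to reduce part (i) to the Weyl modules and to conclude simplicity of $\D(\la)$. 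One small cosmetic point: your justification for $\CX_{(i,k),t}^+\cdot v_0=0$ should be phrased as ``the weight space $\D(\la)_{\la+\a_{(i,k)}}$ vanishes since no semistandard tableau of shape $\la$ has weight strictly above $\la$'' rather than ``$m_{\la+\a_{(i,k)}}$ contributes nothing'', but this is the same argument the paper leaves implicit.
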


\begin{proof}
For $\la \in \vL_{n,r}(\Bm)$, 
let $1_{\la}$ be an element of $\Sc_{n,r}^{\wt{\KK}} (\Bm)$ such that the identity on $M^{\la}$ 
and $1_{\la}(M^{\mu})=0$ for any $\mu \not=\la$. 
Then we have 
$1_{\la} 1_{\mu}= \d_{\la\mu} 1_{\la}$ and $\sum_{\la \in \vL_{n,r}(\Bm)} 1_{\la} =1$. 
Thus, for $M \in \Sc_{n,r}^{\wt{\KK}} \cmod$, we have the decomposition 
\begin{align}
\label{wt sp decom M} 
M = \bigoplus_{\mu \in \vL_{n,r}(\Bm)} 1_{\mu} M. 
\end{align}
Moreover, we see that 
\begin{align*} 
1_{\mu} M = \{ m \in M \,|\, \CK_{(j,l)}^+ \cdot m = q^{\mu_j^{(l)}} m \text{ for } (j,l) \in \vG(\Bm)\}
\end{align*} 
from the definition of $\Psi$. 
Thus, any object $M$ of $\Sc_{n,r}^{\wt{\KK}} \cmod$ has the weight space decomposition 
\eqref{wt sp decom M} 
as a $\CU_{q,\wt{\BQ}}(\Bm)$-module, 
where we remark that $\vL_{n,r}(\Bm) \subset P_{\geq 0}$. 

For $M \in \Sc_{n,r}^{\wt{\KK}}(\Bm) \cmod$, 
in order to see that all eigenvalues of the action of $\CI_{(j,l),t}^{\pm}$ 
($(j,l) \in \vG(\Bm), t \geq 0$) on $M$ belong to $\wt{\KK}$, 
it is enough to show them for $\D(\la)$ ($\la \in \vL_{n,r}^+(\Bm)$) 
since $\Sc_{n,r}^{\wt{\KK}}(\Bm)$ is semi-simple and 
$\{\D(\la)\,|\, \la \in \vL_{n,r}^+(\Bm)\}$ gives a complete set of isomorphism classes of 
simple $\Sc_{n,r}^{\wt{\KK}}$-modules. 
Recall that $\{\vf_T \,|\, T \in \CT_0 (\la,\mu) \text{ for some } \mu \in \vL_{n,r}(\Bm)\}$ 
gives a basis of $\D(\la)$. 

Note that 
$\Phi^{\pm}_t (L_{N_{(j,l)}^\mu}, L_{N_{(j,l)}^\mu -1}, \dots, L_{N_{(j,l)}^\mu - \mu_j^{(l)}+1})$ 
commute with $T_w$ for any $w \in \FS_{\mu}$ by Lemma \ref{Lemma commute L T},  
for $T \in \CT_0(\la,\mu)$, we have 
\begin{align} 
\label{CI on vfT}
\CI_{(j,l),t}^{\pm} \cdot \vf_T = 
\begin{cases} \dis 
q^{\pm (t-1)} \Phi_t^{\pm} (\res_{(j,l);T}) \vf_T + \sum_{S \vartriangleright T} r_S \vf_S \quad (r_S \in \wt{\KK}) 
	& \text{ if } \mu_j^{(l)} \not=0, 
	\\
	0 & \text{ if } \mu_j^{(l)}=0 
\end{cases}
\end{align} 
in a similar argument as in the proof of \cite[Theorem 3.10]{JM}, 
where 
\begin{align*}
\Phi_t^{\pm}(\res_{(j,l);T}) = \Phi_t^{\pm}( \res(x_1), \res(x_2), \dots, \res(x_{\mu_j^{(l)}}))
\end{align*} 
with $\{x_1,x_2, \dots, x_{\mu_j^{(l)}}\} = \{x \in [\la] \,|\, T(x) =(j,l)\}$, 
and $\vartriangleright$ is a partial order on $\CT_{0}(\la,\mu)$ defined in \cite[Definition 3.6]{JM}. 
This implies that all eigenvalues of the action of $\CI_{(j,l),t}^{\pm}$ on $\D(\la)$ belong to $\wt{\KK}$. 
Now we proved (\roi). 

We prove (\roii). 
For $\la \in \vL_{n,r}^+(\Bm)$, 
let $T^{\la}$ be the unique semi-standard tableau of shape $\la$ with weight $\la$. 
Then, we see easily that $\vf_{T^{\la}}$ is a highest weight vector of $\D(\la)$. 
Note that there is no tableau such that $S \vartriangleright T^{\la}$, 
then we have 
\begin{align}
\label{vfpm}
\vf_{(j,l),t}^{\pm} = q^{\pm (t-1)} \Phi_t^{\pm} (Q_k q^{2(1-j)}, Q_k q^{2 (2-j)}, \dots, Q_k q^{2 (\la_j^{(l)}-j)}) 
\end{align}
by \eqref{CI on vfT}. 
Then we can prove (\roii) by the induction on $t$ 
using \eqref{vfpm} and \eqref{recursive relation of Phi}. 
\end{proof}


Let $\Sc_{n,r}^\mathbf{1}(\Bm)$ be the cyclotomic $q$-Schur algebra over $\QQ(\wt{\BQ})$ 
with parameters $q=1$, $Q_0,Q_1,\dots,Q_{r-1}$. 
Then we have the following theorem. 

\begin{thm}\
\label{Theorem Sc1 in ZC}
\begin{enumerate}
\item 
We have a homomorphism of algebras 
\begin{align}
\label{hom U(gm) to Sc1}
\Psi_{\mathbf{1}} : U(\Fg_{\wt{\BQ}}(\Bm)) \ra \Sc_{n,r}^{\mathbf{1}}(\Bm) 
\end{align}
by taking $\Psi_{\mathbf{1}}(\CX_{(i,k),t}^{\pm})= \CX_{(i,k),t}^{\pm}$ 
and $\Psi_{\mathbf{1}}(\CI_{(j,l),t})=\CI_{(j,l),t}^+ (=\CI_{(j,l),t}^-)$. 

Moreover, if $m_k \geq n$ for all $k=1,2, \dots, r-1$, 
the homomorphism $\Psi_{\mathbf{1}}$ is surjective. 

\item 
Assume that $m_k \geq n$ for all $k=1,2, \dots, r-1$. 
Then 
$\Sc_{n,r}^{\mathbf{1}}(\Bm) \cmod$ is a full subcategory of $\ZC_{\wt{\BQ}}^{\geq 0} (\Bm)$ 
through the surjection $\Psi_{\mathbf{1}}$. 

Moreover, 
the Weyl module $\D(\la) \in \Sc_{n,r}^{\mathbf{1}}(\Bm)\cmod$ ($\la \in \vL_{n,r}^+$) 
is the simple highest weight $U(\Fg_{\wt{\BQ}}(\Bm))$-module of highest weight $(\la, \Bvf)$ 
through the surjection $\Psi_{\mathbf{1}}$, 
where 
the multiset $\Bvf=(\vf_{(j,l),t} \in \QQ(\wt{\BQ}) \,|\, (j,l) \in \vG(\Bm), t \geq 1)$ 
is given by 
\begin{align*}
\vf_{(j,l),t} = Q_{l-1}^t   \la_j^{(l)}. 
\end{align*}

\end{enumerate}

\end{thm}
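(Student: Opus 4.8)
The plan is to obtain everything as a $q=1$ specialization of the results already proved for $\CU_{q,\wt\BQ}(\Bm)$. First I would establish part (i). By Lemma \ref{Lemma K I at q=1}, specializing $q\mapsto 1$ in $\CU_{\wt\AA,q,\wt\BQ}(\Bm)$ (more precisely in the subalgebra $\CU^\star_{\wt\AA,q,\wt\BQ}(\Bm)$ generated by the non-divided generators) forces $\CK_{(j,l)}^\pm=1$ and $\CI_{(j,l),t}^+=\CI_{(j,l),t}^-$, so the image of $\CU^\star$ under $q\mapsto1$ together with the relation killing $\mathfrak J$ is exactly $\QQ(\wt\BQ)\otimes_{\wt\AA}\CU^\star_{\wt\AA}/\mathfrak J$, which by \eqref{Hom gQ(m) UqQ(m)} receives a surjection from $U(\Fg_{\wt\BQ}(\Bm))$. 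Composing that surjection with the specialization at $q=1$ of $\Psi_{\wt\AA}$ from Theorem \ref{Thm UqQ to Sc} (the target $\Sc_{n,r}^{\wt\AA}(\Bm)$ specializes to $\Sc_{n,r}^{\mathbf 1}(\Bm)$ when $q\mapsto 1$), and noting that the generators match up under the assignments $\CX_{(i,k),t}^\pm\mapsto\CX_{(i,k),t}^\pm$, $\CI_{(j,l),t}\mapsto\CI_{(j,l),t}^+$, yields the desired homomorphism $\Psi_{\mathbf 1}$. For well-definedness directly one may also just check the relations (L1)--(L6) against the elements of $\Sc_{n,r}^{\mathbf 1}(\Bm)$ built in \S8--\S9: (L1),(L2) come from Lemma \ref{Lemma commute K H} and Lemma \ref{Lemma CI 0 CX+-}/Proposition \ref{Proposition IX - XI} evaluated at $q=1$; (L3) from Proposition \ref{Proposition CX+ CX- - CX- CX+} together with Lemma \ref{Lemma wtKJ0}(\roii) (which gives $\wt\CK_{(i,k)}^+\CJ_{(i,k),0}=\CI_{(i,k),0}^+-\CI_{(i+1,k),0}^-$ at $q=1$); (L4),(L5) from Proposition \ref{Proposition CX CX} and Proposition \ref{Proposition CX+ CX- - CX- CX+ (i,k) (j,l)}; (L6) from Proposition \ref{Prop q-Serre relations}, all specialized at $q=1$. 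Surjectivity when $m_k\geq n$ for all $k=1,\dots,r-1$ follows from \cite[Proposition 6.4]{W} (as in the proof of Theorem \ref{Thm UqQ to Sc} and Remark \ref{Remark subjectivity of Psi}), since that proposition holds over any base ring and in particular at $q=1$.

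Next, part (ii). To see that $\Sc_{n,r}^{\mathbf 1}(\Bm)\cmod$ sits inside $\ZC_{\wt\BQ}^{\geq 0}(\Bm)$ I would mimic the proof of Theorem \ref{Thm Sc Rep}: using the idempotents $1_\mu$ ($\mu\in\vL_{n,r}(\Bm)$) of $\Sc_{n,r}^{\mathbf 1}(\Bm)$ one gets the weight space decomposition $M=\bigoplus_{\mu}1_\mu M$ with $1_\mu M=\{m\mid \CI_{(j,l),0}\cdot m=\mu_j^{(l)} m\}$ (here $\CK_{(j,l)}^+=1$ at $q=1$, so the grading is read off from $\CI_{(j,l),0}$ instead), and $\vL_{n,r}(\Bm)\subset P_{\geq 0}$ gives membership in the $\geq 0$ subcategory. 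Finiteness is automatic. That all eigenvalues of $\CI_{(j,l),t}$ on any $M$ lie in $\QQ(\wt\BQ)$ again reduces, by semisimplicity of $\Sc_{n,r}^{\mathbf 1}(\Bm)$, to the Weyl modules $\D(\la)$, $\la\in\vL_{n,r}^+(\Bm)$; on the JM-type basis $\{\vf_T\}$ the element $\CI_{(j,l),t}$ acts upper-triangularly with diagonal entry $\Phi_t^+(\res_{(j,l);T})=\sum_h \res(x_h)^t$ (the $q=1$ form of \eqref{CI on vfT}, using $\Phi_t^+=\Phi_t^-=$ power-sum at $q=1$ from Lemma \ref{Lemma K I at q=1}), which lies in $\QQ(\wt\BQ)$. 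Fullness of the embedding follows from surjectivity of $\Psi_{\mathbf 1}$ exactly as fullness in Theorem \ref{Thm Sc Rep} follows from surjectivity of $\Psi$.

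Finally, the identification of $\D(\la)$ as a simple highest weight $U(\Fg_{\wt\BQ}(\Bm))$-module. Take $\la\in\vL_{n,r}^+(\Bm)$ and the unique semistandard tableau $T^\la$ of shape and weight $\la$; then $\vf_{T^\la}$ is a highest weight vector (annihilated by all $\CX_{(i,k),t}^+$ — this is the $q=1$ case of the computation in Theorem \ref{Thm Sc Rep}(\roii)), generating the simple module $\D(\la)$ (semisimplicity). Since there is no $S\vartriangleright T^\la$, \eqref{CI on vfT} at $q=1$ gives $\CI_{(j,l),t}\cdot\vf_{T^\la}=\Phi_t^+(Q_{l-1}q^{2(1-j)},\dots,Q_{l-1}q^{2(\la_j^{(l)}-j)})\vf_{T^\la}$, and at $q=1$ this is $\Phi_t^+(Q_{l-1},\dots,Q_{l-1})=\la_j^{(l)}\,Q_{l-1}^t$ by \eqref{Phi t=1}-type evaluation (the power sum of $\la_j^{(l)}$ copies of $Q_{l-1}$), i.e. $\vf_{(j,l),t}=Q_{l-1}^t\la_j^{(l)}$, which is the claimed highest weight. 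Alternatively one may simply set $q=1$ in the formula $\vf_{(j,l),t}^+=Q_{l-1}^t q^{(2t-1)\la_j^{(l)}-t(2j-1)}[\la_j^{(l)}]$ of Theorem \ref{Thm Sc Rep}(\roii), which collapses to $Q_{l-1}^t\la_j^{(l)}$, and note that the $\CU_{q,\wt\BQ}(\Bm)$-module structure on $\D(\la)$ specializes compatibly through $\Psi_{\mathbf 1}=\lim_{q\to1}\Psi$. I expect the main obstacle to be bookkeeping rather than mathematical: making precise that the divided-power integral form $\CU_{\wt\AA,q,\wt\BQ}(\Bm)$ specializes cleanly at $q=1$ onto $U(\Fg_{\wt\BQ}(\Bm))$ modulo $\mathfrak J$ so that $\Psi_{\mathbf 1}$ really is the specialization of $\Psi_{\wt\AA}$, and checking that no denominators in $q-q^{-1}$ obstruct this (which is exactly why the integral form with divided powers, and the relations (R4), were set up in \S\ref{CUqQ}). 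Everything else is a direct transcription of the $q$-case arguments with $q$ set to $1$.
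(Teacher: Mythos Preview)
Your proposal is correct and takes essentially the same approach as the paper: the paper's proof is the single sentence ``Note Lemma~\ref{Lemma wtKJ0} and Lemma~\ref{Lemma K I at q=1}, then we can prove the theorem in a similar way as in the proof of Theorem~\ref{Thm UqQ to Sc} and Theorem~\ref{Thm Sc Rep}'', and your direct verification of (L1)--(L6) together with the $q=1$ rerun of the argument for Theorem~\ref{Thm Sc Rep} is exactly that. One small bookkeeping slip: Proposition~\ref{Proposition CX+ CX- - CX- CX+ (i,k) (j,l)} belongs under (L3) (it supplies the $\delta_{(i,k),(j,l)}$ vanishing), not under (L4)--(L5); relations (L4)--(L5) come from Proposition~\ref{Proposition CX CX} alone, with the same-index case of (L4) obtained by iterating the $q=1$ form of (ii), $[\CX_{t+1},\CX_s]=[\CX_t,\CX_{s+1}]$, down to $[\CX_0,\CX_{s+t}]=-[\CX_0,\CX_{s+t}]$.
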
 
\begin{proof}
Note Lemma \ref{Lemma wtKJ0} and Lemma  \ref{Lemma K I at q=1}, 
then we can prove the theorem in a similar way 
as in the proof of Theorem \ref{Thm UqQ to Sc} and Theorem \ref{Thm Sc Rep}. 
\end{proof}


\section{Characters of Weyl modules of cyclotomic $q$-Schur algebras} 
\label{Ch of Weyl}
In this section, we study the characters of Weyl modules of cyclotomic $q$-Schur algebras 
as symmetric polynomials. 
In particular, we prove the conjecture given in \cite{W-2} (the formula \eqref{Conjecture ch W} below)  
which will be understood as the decomposition of the tensor product of Weyl modules 
in the case where $q=1$.

\para \textbf{Characters.} 
For $k=1,\dots,r$, 
let $\bx_{\Bm}^{(k)} =(x_{(1,k)}, x_{(2,k)}, \dots, x_{(m_k,k)})$ 
be the set of $m_k$ independent variables, 
and put 
$\bx_{\Bm} = \cup_{k=1}^r \bx_{\Bm}^{(k)}$. 
Let $\ZZ[\bx_{\Bm}^{\pm}]$ (resp. $\ZZ[ \bx_{\Bm}]$) be the ring of Laurent polynomials 
(resp. the ring of polynomials) 
with variables $\bx_{\Bm}$. 
For $\la \in P$, we define the monomial $x^\la \in \ZZ[\bx_{\Bm}^{\pm}]$ 
by $x^\la =\prod_{k=1}^r \prod_{i=1}^{m_k} x_{(i,k)}^{\lan \la, h_{(i,k)} \ran} $. 

For $M \in \ZC_{\wt{\BQ}}(\Bm)$ (resp. $M \in \ZC_{q, \wt{\BQ}}(\Bm)$), we define the character of $M$ by 
\begin{align}
\label{Def character}
\ch M = \sum_{\la \in P} \dim M_{\la} x^\la \in \ZZ[\bx_{\Bm}^{\pm}].
\end{align} 
It is clear that 
$\ch M \in \ZZ[\bx_{\Bm}]$ if $M \in \ZC_{\wt{\BQ}}^{\geq 0} (\Bm)$ 
(resp. $M \in \ZC_{q,\wt{\BQ}}^{\geq 0}(\Bm)$). 

When we regard $M \in \ZC_{\wt{\BQ}}(\Bm)$ 
as a $U (\Fgl_{m_1} \oplus \dots \oplus \Fgl_{m_r})$-module 
through the injection \eqref{injection g Levi to gQm}, 
$\ch M$ defined by \eqref{Def character} 
coincides with the character of $M$ as a $U (\Fgl_{m_1} \oplus \dots \oplus \Fgl_{m_r})$-module 
since $M_{\la}$ is also the weight space of weight $\la$ 
as a $U (\Fgl_{m_1} \oplus \dots \oplus \Fgl_{m_r})$-module . 
Thus, by the known results for $U (\Fgl_{m_1} \oplus \dots \oplus \Fgl_{m_r})$-modules, 
we see that 
\begin{align*} 
\ch M \in \bigotimes_{k=1}^r \ZZ[\bx_{\Bm}^{(k)}]^{\FS_{m_k}}  
\text{ if } 
M \in \ZC_{\wt{\BQ}}^{\geq 0} (\Bm),
\end{align*} 
where $\ZZ[\bx_{\Bm}^{(k)}]^{\FS_{m_k}}$ is the ring of symmetric polynomials with variables $\bx_{\Bm}^{(k)}$, 
and we regard $\bigotimes_{k=1}^r \ZZ[\bx_{\Bm}^{(k)}]^{\FS_{m_k}}$ as a subring of $\ZZ [\bx_{\Bm}]$ 
through the multiplication map 
$\bigotimes_{k=1}^r \ZZ[\bx_{\Bm}^{(k)}]^{\FS_{m_k}} \ra \ZZ [\bx_{\Bm}]$ 
($\otimes_{k=1}^r f(\bx_{\Bm}^{(k)}) \mapsto \prod_{k=1}^r f(\bx_{\Bm}^{(k)})$). 
It is similar for $M \in \ZC_{q, \wt{\BQ}}(\Bm)$ through the injection \eqref{injection Levi to U}. 


\para 
The character of the Weyl module $\D(\la) \in \Sc_{n,r}(\Bm)$ ($\la \in \wt{\vL}_{n,r}^+(\Bm)$) 
is studied in \cite{W-2}. 
Note that $\ch \D(\la)$ ($\la \in \wt{\vL}_{n,r}^+(\Bm)$) 
does not depend on the choice of the base field and parameters.  
Put  $\wt{\vL}^+_{\geq 0,r}(\Bm)= \cup_{n \geq 0} \wt{\vL}_{n,r}^+ (\Bm)$. 
For $\la,\mu \in \wt{\vL}^+_{\geq 0,r}(\Bm)$, 
the following formula was conjectured in \cite[Conjecture 2]{W-2}:
\begin{align}
\label{Conjecture ch W}
\ch \D(\la) \ch \D(\mu)
= \sum_{\nu \in \wt{\vL}^+_{\geq 0,r}(\Bm)} 
	\LR_{\la\mu}^\nu \ch \D(\nu) 
	\, 
	\text{ for } \la, \mu \in  \wt{\vL}^+_{\geq 0,r}(\Bm), 
\end{align}
where 
$\LR_{\la\mu}^\nu = \prod_{k=1}^r \LR_{\la^{(k)}\mu^{(k)}}^{\nu^{(k)}}$, 
and $\LR_{\la^{(k)}\mu^{(k)}}^{\nu^{(k)}}$ is the Littlewood-Richardson coefficient for the partitions 
$\la^{(k)}$, $\mu^{(k)}$ and $\nu^{(k)}$.   
We prove this conjecture as follows. 


\para 
For $\la =(\la^{(1)}, \dots, \la^{(r)}) \in \wt\vL_{n,r}^+(\Bm)$, 
we denote 
\begin{align*}
(\underbrace{0, \dots, 0}_{k-1}, \la^{(k)}, 0, \dots, 0) \in \wt\vL_{n_k,r}^+(\Bm)
\end{align*} 
by $(0, \dots, \la^{(k)}, \dots,0)$ simply, 
where $n_k= \sum_{i=1}^{m_k} \la_i^{(k)}$ 
(i.e. $\la^{(k)}$ appears in the $k$-th component in $(0, \dots, \la^{(k)}, \dots,0)$). 
Let 
\[
S_{\la^{(k)}}(\bx_{\Bm}^{(k)} \cup \dots \cup \bx_{\Bm}^{(r)}) 
\in \ZZ[ \bx_\Bm^{(k)} \cup \dots \cup \bx_\Bm^{(r)}]^{\FS(\bx_\Bm^{(k)} \cup \dots \cup \bx_\Bm^{(r)})} 
\]
be the Schur polynomial for the partition $\la^{(k)}$ 
with variables $\bx_\Bm^{(k)} \cup \dots \cup \bx_\Bm^{(r)}$, 
where we regard 
$ \ZZ[ \bx_\Bm^{(k)} \cup \dots \cup \bx_\Bm^{(r)}]^{\FS(\bx_\Bm^{(k)} \cup \dots \cup \bx_\Bm^{(r)})} $ 
as a subring of $\bigotimes_{k=1}^r \ZZ[\bx_{\Bm}^{(k)}]^{\FS_{m_k}} \subset \ZZ[\bx_{\Bm}]$ 
in the natural way.
Put $\wt{S}_{\la}(\bx_{\Bm})= \ch \D(\la)$ ($\la \in \wt{\vL}_{\geq 0,r}^+(\Bm)$). 
Then we have the following proposition.  

\begin{prop}
\label{Prop ch}
For $\la, \mu \in \wt\vL_{\geq 0,r}^+(\Bm)$, 
we have the following formulas. 
\begin{enumerate}
\item 
$\dis \wt{S}_{(0, \dots, \la^{(k)},\dots,0)} (\bx_\Bm) = 
S_{\la^{(k)}}(\bx_{\Bm}^{(k)} \cup \dots \cup \bx_{\Bm}^{(r)})$. 

\item 
$\dis \wt{S}_{\la}(\bx_{\Bm}) = \prod_{k=1}^r \wt{S}_{(0,\dots,\la^{(k)},\dots,0)}(\bx_{\Bm})$.

\item 
$\dis \wt{S}_\la (\bx_{\Bm}) \wt{S}_{\mu} (\bx_{\Bm}) 
	= \sum_{\nu \in \wt\vL_{\geq 0,r}^+(\Bm)} \LR_{\la\mu}^\nu
			\wt{S}_\nu(\bx_{\Bm})$.
\end{enumerate}
\end{prop}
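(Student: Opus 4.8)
The plan is to reduce all three formulas to classical facts about Schur polynomials, using the semi-standard tableau basis of the Weyl modules recalled in \S\ref{Review CSA}. From that basis, for any $\la\in\wt\vL_{n,r}^+(\Bm)$ the Weyl module $\D(\la)$ decomposes along weights as $\D(\la)=\bigoplus_{\mu\in\vL_{n,r}(\Bm)}1_\mu\D(\la)$, with $1_\mu\D(\la)$ spanned by $\{\vf_T\mid T\in\CT_0(\la,\mu)\}$; since this combinatorial description of $\D(\la)$ does not depend on the base ring or parameters, the first thing I would record is
\begin{align*}
\wt S_\la(\bx_\Bm)=\ch\D(\la)=\sum_{\mu\in\vL_{n,r}(\Bm)}|\CT_0(\la,\mu)|\,x^\mu ,
\qquad x^\mu=\prod_{(i,k)\in\vG(\Bm)}x_{(i,k)}^{\mu_i^{(k)}},
\end{align*}
so that $|\CT_0(\la,\mu)|$ counts the semi-standard tableaux of shape $\la$ taking the value $(i,k)$ exactly $\mu_i^{(k)}$ times.

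For $(\roi)$: when $\la=(0,\dots,\la^{(k)},\dots,0)$ the diagram $[\la]$ is just the ordinary Young diagram of the partition $\la^{(k)}$, placed in the $k$-th component. By the definition of semi-standard tableau in \S\ref{Review CSA}, such a tableau is a filling of $[\la^{(k)}]$ by entries $(a,c)$ with $a\ge 1$ and $k\le c\le r$ (the bound $c\ge k$ is condition $(\roi)$ of that definition), weakly increasing along rows and strictly increasing down columns for the total order $(a,c)\le(a',c')\iff c<c'$, or $c=c'$ and $a\le a'$. Since the weight must lie in $\vL_{n,r}(\Bm)$, a letter $(a,c)$ can occur only if $a\le m_c$, so the admissible alphabet is the totally ordered set $A_k=\{(1,k)<\dots<(m_k,k)<(1,k+1)<\dots<(m_r,r)\}$ of cardinality $m_k+m_{k+1}+\dots+m_r$. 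Thus $\CT_0((0,\dots,\la^{(k)},\dots,0),-)$ is exactly the set of semi-standard Young tableaux of shape $\la^{(k)}$ over $A_k$, and — with the variable $x_{(a,c)}$ recording the multiplicity of the letter $(a,c)$ — their content generating function is by definition the Schur polynomial in the variables $\{x_{(a,c)}\mid (a,c)\in A_k\}=\bx_\Bm^{(k)}\cup\dots\cup\bx_\Bm^{(r)}$, which is $(\roi)$ (both sides being $0$ exactly when $\ell(\la^{(k)})>|A_k|$).

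For $(\roii)$: for a general $\la=(\la^{(1)},\dots,\la^{(r)})$ the diagram is the disjoint union $[\la]=\bigsqcup_{k=1}^r[\la^{(k)}]$ of the component diagrams, and each of the three defining conditions of a semi-standard tableau involves only boxes of a single component (condition $(\roii)$ relates $(i,j,k)$ and $(i,j+1,k)$, condition $(\roiii)$ relates $(i,j,k)$ and $(i+1,j,k)$, condition $(\roi)$ is box-by-box). Hence restriction gives a bijection between semi-standard tableaux of shape $\la$ and $r$-tuples of semi-standard tableaux of the shapes $(0,\dots,\la^{(k)},\dots,0)$, under which the weight of $T$ is the sum of the weights of the pieces. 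Passing to content generating functions turns this into $\wt S_\la(\bx_\Bm)=\prod_{k=1}^r\wt S_{(0,\dots,\la^{(k)},\dots,0)}(\bx_\Bm)$, which is $(\roii)$.

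For $(\roiii)$: write $\by_k=\bx_\Bm^{(k)}\cup\dots\cup\bx_\Bm^{(r)}$. Combining $(\roi)$ and $(\roii)$ with the classical Littlewood--Richardson rule $S_{\la^{(k)}}(\by_k)S_{\mu^{(k)}}(\by_k)=\sum_{\nu^{(k)}}\LR^{\nu^{(k)}}_{\la^{(k)}\mu^{(k)}}S_{\nu^{(k)}}(\by_k)$ for the product of two Schur polynomials in a common alphabet,
\begin{align*}
\wt S_\la(\bx_\Bm)\,\wt S_\mu(\bx_\Bm)
=\prod_{k=1}^r\Big(\sum_{\nu^{(k)}}\LR^{\nu^{(k)}}_{\la^{(k)}\mu^{(k)}}S_{\nu^{(k)}}(\by_k)\Big)
=\sum_{\nu=(\nu^{(1)},\dots,\nu^{(r)})}\Big(\prod_{k=1}^r\LR^{\nu^{(k)}}_{\la^{(k)}\mu^{(k)}}\Big)\prod_{k=1}^r S_{\nu^{(k)}}(\by_k).
\end{align*}
By $(\roi)$--$(\roii)$ again, $\prod_kS_{\nu^{(k)}}(\by_k)=\wt S_\nu(\bx_\Bm)$, and $\prod_k\LR^{\nu^{(k)}}_{\la^{(k)}\mu^{(k)}}=\LR^\nu_{\la\mu}$ by definition; a term with $\nu\notin\wt\vL_{\ge 0,r}^+(\Bm)$ contributes $0$, since then either some $\nu^{(k)}$ fails to be a partition (so its LR coefficient vanishes) or $\ell(\nu^{(k)})>|\by_k|$ (so $S_{\nu^{(k)}}(\by_k)=0$). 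This yields $(\roiii)$. I do not expect a genuine obstacle here: the work is essentially bookkeeping, and the one place demanding care is $(\roi)$ — matching the order on $\ZZ\times\ZZ$ from \S\ref{Review CSA} with the standard alphabet order, and tracking the interplay between $\wt\vL_{n,r}^+(\Bm)$ (which governs the admissible shapes) and $\vL_{n,r}(\Bm)$ (which restricts the alphabet to $A_k$) when some $m_k<n$.
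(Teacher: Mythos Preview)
Your proposal is correct and follows essentially the same approach as the paper: express $\wt S_\la$ via counts of semi-standard tableaux, identify the single-component case $(0,\dots,\la^{(k)},\dots,0)$ with the classical Schur polynomial over the alphabet $\bx_\Bm^{(k)}\cup\dots\cup\bx_\Bm^{(r)}$, factor the general case componentwise, and deduce (\roiii) from the Littlewood--Richardson rule. The only cosmetic difference is that in (\roii) the paper peels off one component at a time (proving $\wt S_{(\la^{(1)},\dots,\la^{(r)})}=\wt S_{(\la^{(1)},0,\dots,0)}\wt S_{(0,\la^{(2)},\dots,\la^{(r)})}$ and iterating), whereas you split into all $r$ factors at once.
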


\begin{proof}
(\roi). 
By the definition of the cellular basis of $\Sc_{n,r}(\Bm)$ in \cite{DJM98}, 
for $\la \in \wt{\vL}_{n,r}^+(\Bm)$, 
we have 
\begin{align}
\label{ch Dla semi-std}
\wt{S}_{\la}(\bx_{\Bm})=\ch \D(\la) = \sum_{\mu \in \vL_{n,r}(\Bm)} \sharp \CT_0(\la,\mu) x^\mu. 
\end{align}
Thus, we have 
\begin{align}
\label{Schur 1-1}
\wt{S}_{(0, \dots, \la^{(k)},\dots,0)} (\bx_\Bm) 
	= \sum_{\mu \in \vL_{n_k,r}(\Bm)} \sharp \CT_0((0, \dots, \la^{(k)},\dots,0) , \mu) x^\mu, 
\end{align}
where $n_k=\sum_{i=1}^{m_k} \la_i^{(k)}$. 
We see that 
\begin{align*}
\mu^{(1)}= \dots = \mu^{(k-1)}=0 
\text{ if }
\CT_0((0, \dots, \la^{(k)},\dots,0) , \mu) \not=\emptyset
\end{align*} 
by the definition of semi-standard tableaux. 
Thus, we have  
$\wt{S}_{(0, \dots, \la^{(k)},\dots,0)} (\bx_\Bm) \in \bigotimes_{l=k}^r \ZZ[\bx_{\Bm}^{(l)}]^{\FS_{m_k}}$.
Put 
\[ 
\vL^{\geq k}_{n_k,r}(\Bm) 
=\{ \mu=(\mu^{(1)},\dots,\mu^{(r)})  \in \vL_{n_k,r}(\Bm) \,|\, \mu^{(l)}=0 \text{ for } l=1,\dots,k-1\}.
\]
Put $m' = m_k+\dots + m_r$. 
We identify the set $\vL_{n_k,1}(m')$ with $\vL_{n_k,r}^{\geq k}(\Bm)$ by 
the bijection 
$\theta^k : \vL_{n_k,1}(m') \mapsto \vL_{n_k,r}^{\geq k}(\Bm)$ 
such that 
\[
(\theta^k(\mu))^{(k+l)}_i = 
\begin{cases} 
\mu_{i} & \text{ if } l=0,
\\
\mu_{m_{k} + m_{k+1} + \dots +m_{k+l-1} +i} & \text{ if } 1 \leq l \leq r-k  
\end{cases}
\] 
for $\mu=(\mu_1,\mu_2,\dots,\mu_{m'}) \in  \vL_{n_k,1}(m')$.
By the well-known fact, we can describe  
the Schur polynomial 
$S_{\la^{(k)}}(\bx_{\Bm}^{(k)} \cup \dots \cup \bx_{\Bm}^{(r)})$ 
as 
\begin{align}
\label{Schur 1-2}
S_{\la^{(k)}}(\bx_{\Bm}^{(k)} \cup \dots \cup \bx_{\Bm}^{(r)}) 
= \sum_{\mu \in \vL_{n_k,1}(m')} \sharp \CT_0(\la^{(k)}, \mu) x^\mu,
\end{align}
where 
we put 
$x^\mu = \prod_{i=1}^{m_k} x_{(i,k)}^{\mu_i}  
	\prod_{l=1}^{r-k} \prod_{i=1}^{m_l} x_{(i,k+l)}^{\mu_{m_{k} + m_{k+1} + \dots +m_{k+l-1} +i}} $. 
From the definition of semi-standard tableaux, 
we see that 
\[
\sharp \CT_0(\la^{(k)},\mu) = \sharp \CT_0((0, \dots, \la^{(k)},\dots,0) , \theta^k(\mu))
\] 
for $\mu \in \vL_{n_k,1}(m')$.  
Thus, by comparing the right hand sides of \eqref{Schur 1-1} and of \eqref{Schur 1-2}, 
we obtain (\roi). 

(\roii). 
First we prove that 
\begin{align}
\label{Schur decom}
\wt{S}_{(\la^{(1)},\la^{(2)},\dots,\la^{(r)})} (\bx_\Bm) 
= \wt{S}_{(\la^{(1)}, 0, \dots, 0)} (\bx_\Bm) \wt{S}_{(0, \la^{(2)},\dots, \la^{(r)})} (\bx_\Bm). 
\end{align}
By \eqref{ch Dla semi-std}, we have 
\begin{align}
\label{Schur decom 1}
 \wt{S}_{(\la^{(1)},\la^{(2)},\dots,\la^{(r)})} (\bx_\Bm)  
	= \sum_{\mu \in \vL_{n,r}(\Bm)} \sharp \CT_0(\la,\mu) \, x^\mu.
\end{align}
On the other hand, 
we have 
\begin{align}
\label{Schur decom 2}
\begin{split}
&\wt{S}_{(\la^{(1)}, 0, \dots, 0)} (\bx_\Bm) \wt{S}_{(0, \la^{(2)},\dots, \la^{(r)})} (\bx_\Bm) 
\\
&= 
(\sum_{\nu \in \vL_{n_1,r}(\Bm)} \sharp \CT_0((\la^{(1)},0,\dots,0),\nu) \, x^\nu)
(\sum_{\t \in \vL_{n',r}(\Bm)} \sharp \CT_0((0,\la^{(2)},\dots,\la^{(r)}),\t) \, x^\t) 
\\
&= 
\sum_{\mu \in \vL_{n,r}(\Bm)} 
\Big( \sum_{\nu \in \vL_{n_1,r}(\Bm), \t \in \vL_{n',r}(\Bm) \atop \nu + \t =\mu} 
	\hspace{-2em} 
	 \sharp \CT_0((\la^{(1)},0,\dots,0),\nu) \sharp \CT_0((0,\la^{(2)},\dots,\la^{(r)}),\t)
	\Big) x^\mu
\end{split}
\end{align}
where 
$n_1=\sum_{i=1}^{m_1}\la_i^{(1)}$ and $n'=n-n_1$. 
From the definition of semi-standard tableaux, 
we can check that 
\begin{align}
\label{semi std comparsion}
 \sharp \CT_0(\la,\mu)  = \sum_{\nu \in \vL_{n_1,r}(\Bm), \t \in \vL_{n',r}(\Bm) \atop \nu + \t =\mu} 
	\hspace{-2em} 
	 \sharp \CT_0((\la^{(1)},0,\dots,0),\nu) \sharp \CT_0((0,\la^{(2)},\dots,\la^{(r)}),\t).
\end{align}
Thus, \eqref{Schur decom 1}, \eqref{Schur decom 2} and \eqref{semi std comparsion} 
imply \eqref{Schur decom}. 
By applying a similar argument to 
$ \wt{S}_{(0, \la^{(2)},\dots, \la^{(r)})} (\bx_\Bm) $ inductively, 
we obtain (\roii). 

By (\roi) and (\roii), we have 
\begin{align*}
\wt{S}_\la (\bx_{\Bm}) \wt{S}_{\mu} (\bx_{\Bm})  
&= \big(\prod_{k=1}^r \wt{S}_{(0,\dots,\la^{(k)},\dots,0)}(\bx_{\Bm}) \big)
	 \big( \prod_{k=1}^r \wt{S}_{(0,\dots,\mu^{(k)},\dots,0)}(\bx_{\Bm}) \big)
\\
&= \big(\prod_{k=1}^r  S_{\la^{(k)}}(\bx_{\Bm}^{(k)} \cup \dots \cup \bx_{\Bm}^{(r)}) \big) 
	\big( \prod_{k=1}^r  S_{\mu^{(k)}}(\bx_{\Bm}^{(k)} \cup \dots \cup \bx_{\Bm}^{(r)}) \big) 
\\
&= \prod_{k=1}^r  S_{\la^{(k)}}(\bx_{\Bm}^{(k)} \cup \dots \cup \bx_{\Bm}^{(r)}) \big)
	S_{\mu^{(k)}}(\bx_{\Bm}^{(k)} \cup \dots \cup \bx_{\Bm}^{(r)}) \big) 
\\
&= \prod_{k=1}^r \Big( \sum_{\nu^{(k)} \in \vL^+_{\geq 0,1}(m_k+\dots+m_r)} 
	\LR_{\la^{(k)} \mu^{(k)}}^{\nu^{(k)}}  S_{\nu^{(k)}}(\bx_{\Bm}^{(k)} \cup \dots \cup \bx_{\Bm}^{(r)}) \Big)
\\
&=\sum_{\nu \in \wt\vL_{\geq 0,r}^+(\Bm)} \Big( \prod_{k=1}^r \LR_{\la^{(k)} \mu^{(k)}}^{\nu^{(k)}} \Big) 
			\prod_{k=1}^r \wt{S}_{(0,\dots,\nu^{(k)},\dots,0)}(\bx_{\Bm}) 
\\
&=\sum_{\nu \in \wt\vL_{\geq 0,r}^+(\Bm)} \LR_{\la\mu}^\nu
			\wt{S}_\nu(\bx_{\Bm}), 
\end{align*}
where we note that, if $\ell (\la^{(k)}) > m_k + \dots + m_r$ for some $k$, 
we have  
$S_{\la^{(k)}}(\bx_{\Bm}^{(k)} \cup \dots \cup \bx_{\Bm}^{(r)})=0$ 
and $\CT_0(\la,\mu)= \emptyset$  
for any $\mu \in \vL_{n,r}(\Bm)$. 
Now we obtained (\roiii). 
\end{proof}



\section{Tensor products for Weyl modules of cyclotomic $q$-Schur algebras at $q=1$}

By using the comultiplication $\D : U(\Fg_{\wt{Q}}(\Bm)) \ra U(\Fg_{\wt{Q}}(\Bm)) \otimes U(\Fg_{\wt{Q}}(\Bm))$ 
($\D(x)=x \otimes 1 + 1 \otimes x$), 
we define the $U(\Fg_{\wt{Q}}(\Bm))$-module $M \otimes N$ 
for $U(\Fg_{\wt{Q}}(\Bm))$-module $M$ and $N$. 
We regard $\Sc_{n,r}^{\mathbf{1}}(\Bm)$-modules ($n \geq 0$) 
as a $U(\Fg_{\wt{\BQ}}(\Bm))$-modules 
through the homomorphism $\Psi_{\mathbf{1}}$ in \eqref{hom U(gm) to Sc1}. 
Note that $\Sc_{n,r}^{\mathbf{1}}(\Bm)$ is semi-simple, 
and $\{\D(\la)\,|\, \la \in \vL_{n,r}^+(\Bm)\}$ 
gives a complete set of isomorphism classes of simple 
$\Sc_{n,r}^{\bf{1}}(\Bm)$-modules 
if $m_k \geq n$ for all $k=1,2, \dots, r-1$. 
Then, we have the following proposition. 

\begin{prop}
\label{Prop decom tensor Weyl}
Assume that $m_k \geq n$ for all $k=1,2,\dots,r-1$. 
Take $n_1, n_2 \in \ZZ_{>0}$ such that $n=n_1+n_2$. 
For $\la \in \vL_{n_1,r}^+(\Bm)$ (resp. $\mu \in \vL_{n_2,r}^+(\Bm)$), 
let $\D(\la)$ (resp. $\D(\mu)$) be the Weyl module of 
$\Sc_{n_1,r}^{\mathbf{1}}(\Bm)$ (resp. $\Sc_{n_2,r}^{\mathbf{1}}(\Bm)$) 
corresponding $\la$ (resp. $\mu$). 
Then we have 
\begin{align}
\label{decom tensor Weyl}
\D(\la) \otimes \D(\mu) \cong \bigoplus_{\nu \in \vL_{n,r}^+(\Bm)} \LR_{\la \mu}^\nu \D(\nu) 
\text{ as $U(\Fg_{\wt{\BQ}}(\Bm))$-modules}, 
\end{align}
where $\D(\nu)$ is the Weyl module of $\Sc_{n,r}^{\mathbf{1}}(\Bm)$ corresponding $\nu$, 
and $\LR_{\la \mu}^\nu \D(\nu) $ means the direct sum of $\LR_{\la\mu}^{\nu}$ copies of $\D(\nu)$. 
In particular, 
$\D(\la) \otimes \D(\mu) \in \Sc_{n,r}^{\mathbf{1}}(\Bm) \cmod$. 
\end{prop}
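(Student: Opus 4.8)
The plan is to reduce the module-theoretic statement \eqref{decom tensor Weyl} to the character identity in Proposition \ref{Prop ch} (\roiii), using semisimplicity of $\Sc_{n,r}^{\mathbf 1}(\Bm)$ and the fact that $\Psi_{\mathbf 1}$ is surjective. First I would observe that $\D(\la)\otimes\D(\mu)$, as a $U(\Fg_{\wt\BQ}(\Bm))$-module, lies in $\ZC_{\wt\BQ}^{\geq 0}(\Bm)$: each tensor factor is finite dimensional and, by Theorem \ref{Theorem Sc1 in ZC} (\roii), lies in $\Sc^{\mathbf 1}\cmod \subseteq \ZC_{\wt\BQ}^{\geq 0}(\Bm)$, and the category of weight modules is closed under tensor products via the coproduct $\D(x)=x\otimes 1+1\otimes x$ (weights add, and the eigenvalues of $\CI_{(j,l),t}$ on a tensor product stay in $\QQ(\wt\BQ)$ since $\CI_{(j,l),t}$ acts as $\CI_{(j,l),t}\otimes 1+1\otimes\CI_{(j,l),t}$ on the weight spaces, which is triangular with respect to the weight filtration). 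Since characters are additive on tensor products, $\ch(\D(\la)\otimes\D(\mu))=\ch\D(\la)\cdot\ch\D(\mu)$, which by Proposition \ref{Prop ch} (\roiii) equals $\sum_{\nu\in\wt\vL_{\geq0,r}^+(\Bm)}\LR_{\la\mu}^\nu\,\ch\D(\nu)$, and here only $\nu$ with $|\nu|=n$ contribute; moreover $\LR_{\la\mu}^\nu\neq 0$ forces $\nu^{(k)}$ to contain $\la^{(k)}$, so $\ell(\nu^{(k)})\le m_k$ for $k<r$ (as $\ell(\la^{(k)})\le m_k$ there), i.e. $\nu\in\vL_{n,r}^+(\Bm)=\wt\vL_{n,r}^+(\Bm)$.

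Next I would upgrade this equality of characters to an isomorphism of modules. The key point is that $\D(\la)\otimes\D(\mu)$ is a module over $\Sc_{n,r}^{\mathbf 1}(\Bm)$ through $\Psi_{\mathbf 1}$, not merely over $U(\Fg_{\wt\BQ}(\Bm))$: I would show that the weight-$\la'$ subspaces coincide, so that $\D(\la)\otimes\D(\mu)=\bigoplus_{\mu'\in\vL_{n,r}(\Bm)}1_{\mu'}(\D(\la)\otimes\D(\mu))$ with $1_{\mu'}$ acting as projection onto the $\mu'$-weight space (as in the proof of Theorem \ref{Thm Sc Rep}), and since the total weight is $n$, the action of $U(\Fg_{\wt\BQ}(\Bm))$ factors through $\Psi_{\mathbf 1}(U(\Fg_{\wt\BQ}(\Bm)))=\Sc_{n,r}^{\mathbf 1}(\Bm)$. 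Concretely, $\CX_{(i,k),t}^{\pm}$ acts on $\D(\la)\otimes\D(\mu)$ by $\CX_{(i,k),t}^{\pm}\otimes 1+1\otimes\CX_{(i,k),t}^{\pm}$, and all generators of $U(\Fg_{\wt\BQ}(\Bm))$ map into $\Sc_{n,r}^{\mathbf 1}(\Bm)$ under the diagonal; one checks the diagonal of $\Psi_{\mathbf 1}$ agrees with $\Psi_{\mathbf 1}$ composed with the comultiplication on $\Sc$ (or, more elementarily, that the $U$-submodule generated by any vector already carries the full $\Sc_{n,r}^{\mathbf 1}(\Bm)$-action by weight considerations). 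Then, because $\Sc_{n,r}^{\mathbf 1}(\Bm)$ is semisimple with simple modules exactly $\{\D(\nu)\mid\nu\in\vL_{n,r}^+(\Bm)\}$, the $\Sc_{n,r}^{\mathbf 1}(\Bm)$-module $\D(\la)\otimes\D(\mu)$ is determined up to isomorphism by its character, giving \eqref{decom tensor Weyl}; the final clause $\D(\la)\otimes\D(\mu)\in\Sc_{n,r}^{\mathbf 1}(\Bm)\cmod$ is then immediate.

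The main obstacle I anticipate is precisely the bookkeeping in this second paragraph: verifying that the $U(\Fg_{\wt\BQ}(\Bm))$-action on $\D(\la)\otimes\D(\mu)$ genuinely descends to a well-defined $\Sc_{n,r}^{\mathbf 1}(\Bm)$-action — equivalently, that $\ker(\Psi_{\mathbf 1}\colon U\to\Sc_{n,r}^{\mathbf 1}(\Bm))$ annihilates $\D(\la)\otimes\D(\mu)$. The cleanest route is to note that $\Sc_{n,r}^{\mathbf 1}(\Bm)$ for varying $n$ assemble so that the diagonal $U\to\Sc_{n_1,r}^{\mathbf 1}(\Bm)\otimes\Sc_{n_2,r}^{\mathbf 1}(\Bm)\hookrightarrow\operatorname{End}$ of $\D(\la)\otimes\D(\mu)$ factors through $\Sc_{n,r}^{\mathbf 1}(\Bm)$; since $\Psi_{\mathbf 1}$ is surjective and $\ker\Psi_{\mathbf 1}$ is generated (as a two-sided ideal, after the weight decomposition) by elements that act by the weight-graded projections, any element of $\ker\Psi_{\mathbf 1}$ acts as zero on every module in $\Sc_{n,r}^{\mathbf 1}(\Bm)\cmod$, hence on $\D(\la)\otimes\D(\mu)$ once we know the latter is a weight module supported in total weight $n$. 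Everything else — additivity of characters, the Littlewood–Richardson reduction to $\vL_{n,r}^+(\Bm)$, and the semisimplicity argument — is routine given Proposition \ref{Prop ch} and Theorem \ref{Theorem Sc1 in ZC}.
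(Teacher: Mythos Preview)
Your reduction to the character identity in Proposition \ref{Prop ch} (\roiii) is the right first move and matches the paper, but the second step --- showing that the $U(\Fg_{\wt\BQ}(\Bm))$-action on $\D(\la)\otimes\D(\mu)$ factors through $\Sc_{n,r}^{\mathbf 1}(\Bm)$ --- is a genuine gap, and the routes you propose to close it do not work. There is no comultiplication on $\Sc_{n,r}^{\mathbf 1}(\Bm)$ available in this paper (establishing one is precisely the long-term goal), so you cannot compare ``the diagonal of $\Psi_{\mathbf 1}$'' with ``$\Psi_{\mathbf 1}$ composed with the comultiplication on $\Sc$''. Your alternative claim that $\ker\Psi_{\mathbf 1}$ is generated by elements determined by weight projections is unsubstantiated, and the argument that elements of $\ker\Psi_{\mathbf 1}$ annihilate $\D(\la)\otimes\D(\mu)$ ``once we know the latter is a weight module supported in total weight $n$'' is circular: that is exactly the assertion $\D(\la)\otimes\D(\mu)\in\Sc_{n,r}^{\mathbf 1}(\Bm)\cmod$ you are trying to prove. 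More fundamentally, the character alone cannot distinguish the simple $U(\Fg_{\wt\BQ}(\Bm))$-modules $L(\nu,\Bvf)$ for different $\Bvf$, and $\ZC_{\wt\BQ}^{\geq 0}(\Bm)$ is not known to be semisimple, so a character match does not by itself give the module isomorphism.

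The paper avoids this problem by never assuming the action descends. Instead it introduces $\pi_{\Bm}(\tau)=(|\tau^{(1)}|,\dots,|\tau^{(r)}|)$ and observes that all weights of $\D(\la)\otimes\D(\mu)$ satisfy $\pi_{\Bm}(\tau)\leq\pi_{\Bm}(\la+\mu)$. The key computation (using that $L_{i+1}T_i=T_iL_i$ when $q=1$, via the argument in \cite{JM}) is that on any weight space $(\D(\la)\otimes\D(\mu))_\nu$ with $\pi_{\Bm}(\nu)=\pi_{\Bm}(\la+\mu)$, each $\CI_{(j,l),t}$ acts by the scalar $Q_{l-1}^t\nu_j^{(l)}$; by Theorem \ref{Theorem Sc1 in ZC} (\roii) this forces every composition factor with such $\nu$ to be $\D(\nu)$ itself. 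Comparing with the character identity then shows there are no composition factors with $\pi_{\Bm}(\nu)<\pi_{\Bm}(\la+\mu)$, so $[\D(\la)\otimes\D(\mu)]=\sum_\nu\LR_{\la\mu}^\nu[\D(\nu)]$ in $K_0$. Finally, since $\CX_{(m_k,k),t}^+$ kills the top weight layer, maximal vectors for $U(\Fg_{\wt\BQ}(\Bm))$ in that layer coincide with maximal vectors for the Levi $U(\Fgl_{m_1}\oplus\cdots\oplus\Fgl_{m_r})$, and the Levi decomposition of the top layer provides exactly $\LR_{\la\mu}^\nu$ independent maximal vectors of each weight $\nu$; this yields the direct sum decomposition and hence, a posteriori, membership in $\Sc_{n,r}^{\mathbf 1}(\Bm)\cmod$. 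What you are missing is the computation pinning down $\Bvf$ on the top layer --- that is the substantive content here.
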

\begin{proof}
For $\t \in P_{\geq 0}$, 
put 
\begin{align*}
\pi_{\Bm}(\t)= (|\t^{(1)}|, |\t^{(2)}|, \dots, |\t^{(r)}|) \in \ZZ_{\geq 0}^r, 
\end{align*}
where $|\t^{(l)}| = \sum_{j=1}^{m_l} \lan \t, h_{(j,l)} \ran $ for $l=1,\dots,r$. 
We denote by $\geq$ the lexicographic order on $\ZZ_{\geq 0}^r $. 
Then we have the weight space decomposition 
\begin{align}
\label{wt decom Dla tensor Dmu}
\D(\la) \otimes \D(\mu) 
= \bigoplus_{\t \in \vL_{n,r}(\Bm) \atop \pi_{\Bm}(\t) \leq \pi_{\Bm}(\la+\mu)} 
	(\D(\la) \otimes \D(\mu))_{\t}. 
\end{align}
On the other hand, it is clear that 
$\D(\la) \otimes \D(\mu) \in \ZC_{\wt{\BQ}}^{\geq 0} (\Bm)$. 
Thus, we have 
\begin{align}
\label{decom Dla tensor Dmu in K0}
[\D(\la) \otimes \D(\mu)] 
= \sum_{\nu \in \vL_{n,r}^+(\Bm) \atop \pi_{\Bm}(\nu) \leq \pi_{\Bm}(\la+\mu)} 
	\sum_{\Bvf} d_{\nu, \Bvf} [ L(\nu, \Bvf)] 
	\,\, 
	\text{ in } K_0( \ZC_{\wt{\BQ}}^{\geq 0}(\Bm)), 
\end{align}
where $d_{\nu, \Bvf}$ is the composition multiplicity of the simple highest weight $U(\Fg_{\wt{\BQ}}(\Bm))$-module 
$L(\nu, \Bvf)$ of highest weight $(\nu, \Bvf)$ in $\D(\la) \otimes \D(\mu)$. 

Note that $L_{i+1} T_i = T_i L_i$ and $L_i T_i = T_i L_{i+1}$ since $q=1$. 
Then,  
for $(j,l) \in \vG(\Bm)$ and $t \geq 1$, we see that 
\begin{align} 
\label{action CI on nu space}
\CI_{(j,l),t} \cdot v = Q_{l-1}^t \nu_{j}^{(l)} v 
\text{ for any } v \in (\D(\la) \otimes \D(\mu))_{\nu} 
\end{align}
if $\pi_{\Bm}(\nu) = \pi_{\Bm}(\la+\mu)$ 
by the argument in the proof of \cite[Proposition 3.7 and Theorem 3.10]{JM}. 
This implies that 
\begin{align}
\label{L(nu bvf) cong D(nu)}
L(\nu, \Bvf) \cong \D(\nu) 
\text{ if } d_{\nu,\Bvf} \not=0 \text{ and } \pi_{\Bm}(\nu) = \pi_{\Bm}(\la+\mu) 
\end{align}
by Theorem \ref{Theorem Sc1 in ZC} (\roii). 
By Proposition \ref{Prop ch} (\roiii) together with 
\eqref{decom Dla tensor Dmu in K0} and \eqref{L(nu bvf) cong D(nu)}, we have 
\begin{align}
\label{ch D(la) tensor D(mu)}
\ch (\D(\la) \otimes \D(\mu)) 
&= \wt{S}_{\la}(\bx_{\Bm}) \wt{S}_{\mu}(\bx_{\Bm})
\\ \notag 
&= \sum_{\nu \in \vL_{n,r}^+(\Bm)} \LR_{\la\mu}^{\nu} \wt{S}_\nu (\bx_{\Bm}) 
\\ \notag 
&= \sum_{\nu \in \vL_{n,r}^+(\Bm) \atop \pi_{\Bm}(\nu) = \pi_{\Bm}(\la+\mu)} d_{\nu} \wt{S}_{\nu}(\bx_{\Bm}) 
	+ \sum_{\nu \in \vL_{n,r}^+(\Bm) \atop \pi_{\Bm}(\nu) < \pi_{\Bm}(\la+\mu)} 
		\sum_{\Bvf} d_{\nu, \Bvf} \ch L(\nu, \Bvf), 
\end{align}
where $d_{\nu}$ is the composition multiplicity of $\D(\nu)$ in $\D(\la) \otimes \D(\mu)$.  
Note that $\LR_{\la \mu}^\nu =0$ unless $\pi_{\Bm}(\nu)= \pi_{\Bm}(\la+\mu)$, 
the equations \eqref{ch D(la) tensor D(mu)} imply 
$d_\nu = \LR_{\la \mu}^{\nu}$ if $\pi_{\Bm}(\nu)= \pi_{\Bm}(\la+\mu)$ 
and $d_{\nu, \Bvf}=0$ if $\pi_{\Bm}(\nu) < \pi_{\Bm}(\la+\mu)$. 
Thus, we have 
\begin{align}
\label{D(la) tensor D(mu) in K0}
[\D(\la) \otimes \D(\mu)] 
= \sum_{\nu \in \vL_{n,r}^+(\Bm)} \LR_{\la\mu}^\nu [\D(\nu)].
\end{align}

By \eqref{wt decom Dla tensor Dmu}, 
for any $k=1,2,\dots,r-1$ and any $t \geq 0$,  
we have 
\begin{align}
\label{CX+ mk vanish}
\CX_{(m_k,k),t}^+ \cdot \Big( \bigoplus_{\nu \in \vL_{n,r}(\Bm) \atop \pi_{\Bm}(\nu) = \pi_{\Bm}(\la+\mu)} 
	(\D(\la) \otimes \D(\mu))_{\nu} \Big) =0 
\end{align}
since $\pi_{\Bm}(\nu + \a_{(m_k,k)}) > \pi_{\Bm}(\nu)$. 
Then, 
by \eqref{action CI on nu space} and \eqref{CX+ mk vanish} together with the relation (L2), 
we see that  
\begin{align}
\label{number of maximal vector}
\begin{split}
&\{ v \in (\D(\la) \otimes \D(\mu))_{\nu} 
	\,|\, \CX_{(i,k),t}^+ \cdot v \text{ for all } (i,k) \in \vG'(\Bm) \text{ and } t \geq 0\} 
\\
&=\{ v \in (\D(\la) \otimes \D(\mu))_{\nu} \,|\, e_{(i,k)} \cdot v 
	\text{ for all } (i,k) \in \vG(\Bm) \setminus \{(m_k,k)\,|\, 1 \leq k \leq r\}\} 
\end{split}
\end{align}
for $\nu \in \vL_{n,r}^+(\Bm)$ such that $\pi_{\Bm}(\nu) =\pi_{\Bm}(\la+\mu)$,  
where $e_{(i,k)} \in U(\Fgl_{m_1} \oplus \dots \oplus \Fgl_{m_r})$ acts on $\D(\la) \otimes \D(\mu)$ 
through the injection  \eqref{injection g Levi to gQm}. 
On the other hand, 
$ \bigoplus_{\nu \in \vL_{n,r}(\Bm) \atop \pi_{\Bm}(\nu) = \pi_{\Bm}(\la+\mu)} 
	(\D(\la) \otimes \D(\mu))_{\nu} $ 
is a $U(\Fgl_{m_1} \oplus \dots \oplus \Fgl_{m_r})$-submodule of $\D(\la) \otimes \D(\mu)$ 
and we have 
\begin{align}
\label{decom D(la) tensor D(mu) nu as Levi}
\bigoplus_{\nu \in \vL_{n,r}(\Bm) \atop \pi_{\Bm}(\nu) = \pi_{\Bm}(\la+\mu)} 
	(\D(\la) \otimes \D(\mu))_{\nu} 
\cong \bigoplus_{\nu \in \vL_{n,r}^+(\Bm)} 
	\LR_{\la\mu}^{\nu} \D_{\Fgl_{m_1}}(\nu^{(1)}) \otimes \dots \otimes \D_{\Fgl_{m_r}}(\nu^{(r)})
\end{align}
as $U(\Fgl_{m_1} \oplus \dots \oplus \Fgl_{m_r})$-modules 
by comparing the character (note \cite[Lemma 2.6]{W-2}). 
By \eqref{D(la) tensor D(mu) in K0}, \eqref{number of maximal vector} 
and \eqref{decom D(la) tensor D(mu) nu as Levi}, 
we see that 
\begin{align*}
\D(\la) \otimes \D(\mu) \cong \bigoplus_{\nu \in \vL_{n,r}^+(\Bm)} \LR_{\la\mu}^\nu \D(\nu) 
\end{align*}
as $U(\Fg_{\wt{\BQ}}(\Bm))$-modules. 
\end{proof}


\remarks 
\begin{enumerate} 
\item 
For $M,N \in \ZC_{\wt{Q}}(\Bm)$, 
we see that 
$\ch (M \otimes N) = \ch(M) \ch (N)$ 
by definition of characters.  
Then 
the decomposition \eqref{decom tensor Weyl} 
gives an interpretation of the formula \eqref{Conjecture ch W} (Proposition \ref{Prop ch} (\roiii)) 
in the category $\ZC_{\wt{\BQ}}(\Bm)$. 

\item 
We conjecture that the algebra $\CU_{q,\wt{\BQ}}(\Bm)$ has a structure as a Hopf algebra. 
Then we also conjecture the similar decomposition for the tensor product of Weyl modules 
of $\Sc_{n,r}^{\wt{\KK}}(\Bm)$ ($n \geq 0$) as in \eqref{decom tensor Weyl}. 

\end{enumerate}



\end{document}